\documentclass[10pt]{article}
\usepackage{graphicx}
\usepackage{xcolor}
\usepackage{amssymb,amsfonts,amsthm,amsmath,calligra,tikz}
\usepackage[utf8]{inputenc}
\usepackage{accents}
\usepackage{slashed}
\usepackage{yfonts}
\usepackage{mathrsfs,pifont}
\theoremstyle{definition}
\usepackage{tikz}
\usepackage[all]{xy}

\usepackage[bookmarksnumbered=true, bookmarksopen=true]{hyperref}

\def\be{\begin{eqnarray}}
\def\ee{\end{eqnarray}}
\def\ben{\begin{eqnarray*}}
\def\een{\end{eqnarray*}}

\usepackage{geometry}
\geometry{margin=1in }

\allowdisplaybreaks

%%%%%%%%%%%%%%%%%%%%%%%%%%%%%%%%%%%%%%%%%%%%%%%%%%%%%%%%%%%%%%%%%%%%%%%%%%%%%%%%%%
%%%%%%%%%%%%%%%%%                MATH SYMBOLS                        %%%%%%%%%%%%%
%%%%%%%%%%%%%%%%%%%%%%%%%%%%%%%%%%%%%%%%%%%%%%%%%%%%%%%%%%%%%%%%%%%%%%%%%%%%%%%%%%
\def\matZ{{\mathbb{Z}}}
\def\matR{{\mathbb{R}}}

\def\matC{{\mathbb{C}}}

\newcommand{\cE}{\mathscr{E}}

\newcommand{\Or}{\textsf{O}}

\newcommand{\cD}{\mathcal{D}}

\newcommand{\cH}{\mathcal{H}}

\newcommand{\cL}{\mathcal{L}}

\newcommand{\cN}{\mathcal{N}}
\newcommand{\cO}{\mathcal{O}}

\newcommand{\cR}{\mathcal{R}}

\newcommand{\cW}{\mathcal{W}}

\newcommand{\bStab}{\mathbf{Stab}}

\newcommand{\End}{\operatorname{End}}
\newcommand{\Hom}{\operatorname{Hom}}
\newcommand{\inv}{\operatorname{inv}}
\newcommand{\Lie}{\operatorname{Lie}}
\newcommand{\Pic}{\operatorname{Pic}}
\newcommand{\pt}{\operatorname{pt}}
\newcommand{\Spec}{\operatorname{Spec}}
\newcommand{\Stab}{\operatorname{Stab}}
\newcommand{\Sym}{\operatorname{Sym}}

\newcommand{\calW}{\mathcal{W}}

\newcommand{\fS}{\mathfrak{S}}
\newcommand{\ft}{\mathfrak{t}}
%%%%%%%%%%%%%%%%%%%%%%%%%%%%%%%%%%%%%%%%%%%%%%%%%%%%%%%%%%%%%%%%%%%%%%%%%%%%%%%%%%
%%%%%%%%%%%%%%%%%               DIFFERENT ACTING TORUSES             %%%%%%%%%%%%%
%%%%%%%%%%%%%%%%%%%%%%%%%%%%%%%%%%%%%%%%%%%%%%%%%%%%%%%%%%%%%%%%%%%%%%%%%%%%%%%%%%
\newcommand{\bA}{\mathsf{A}}

\newcommand{\bT}{\mathsf{T}}
\newcommand{\bK}{\mathsf{K}}

\newcommand{\fs}{\mathfrak{s}}

%%%%%%%%%%%%%%%%%%%%%%%%%%%%%%%%%%%%%%%%%%%%%%%%%%%%%%%%%%%%%%%%%%%%%%%%%%%%%%%%%%
%%%%%%%%%%%%%%%%%               THEOREMS                             %%%%%%%%%%%%%
%%%%%%%%%%%%%%%%%%%%%%%%%%%%%%%%%%%%%%%%%%%%%%%%%%%%%%%%%%%%%%%%%%%%%%%%%%%%%%%%%%
\theoremstyle{definition}
\newtheorem{Definition}{Definition}
\newtheorem{Proposition}{Proposition}
\newtheorem{Lemma}{Lemma}
\newtheorem{Corollary}{Corollary}

\newtheorem{Theorem}{Theorem}

\newtheorem{Example}{Example}

\newcommand{\ahilb}{\mathbf{AH}}
\newcommand{\fC}{\mathfrak{C}} % chamber

\def\tb {{\cal{V}}}  % tautological bundle on the hilbert scheme
\def\fb {{\cal{W}}}

 % h conent

%%%%%%%%%%%%%%%%%%%%%%%%%%%%%%%%%%%%%%%%%%%%%%%%%%%%%%%%%%%%%%%%%%%%%%%%%%%%%%%%%%
%%%%%%%%%%%%%%%%%           GRRAPHIC SETTINGS                        %%%%%%%%%%%%%
%%%%%%%%%%%%%%%%%%%%%%%%%%%%%%%%%%%%%%%%%%%%%%%%%%%%%%%%%%%%%%%%%%%%%%%%%%%%%%%%%%
% NEW COMMAND TO ROTATE PICTURES
\makeatletter
\newcommand{\somespecialrotate}[3][]{%
\begingroup
\sbox\@tempboxa{#3}%
\@tempdima=.5\wd\@tempboxa
\sbox\@tempboxa{\rotatebox[#1]{#2}{\usebox\@tempboxa}}%
\advance\@tempdima by -.5\wd\@tempboxa
\mbox{\hskip\@tempdima\usebox\@tempboxa}%
\endgroup}
% SETTINGS FOR SCALE AND THICKNESS OF DRAWINGS
\setlength{\unitlength}{8 pt}
\linethickness{1 pt}
%%%%%%%%%%%%%%%%%%%%%%%%%%%%%%%%%%%%%%%%%%%%%%%%%%%%%%%%%%%%%%%%%%%%%%%%%%%%%%%%%%
%%%%%%%%%%%%%%%%%           LIST OF ALL PICTURES                     %%%%%%%%%%%%%
%%%%%%%%%%%%%%%%%%%%%%%%%%%%%%%%%%%%%%%%%%%%%%%%%%%%%%%%%%%%%%%%%%%%%%%%%%%%%%%%%%
% THE PICTURE OF A TREE FOR THE FRON PAGE
\newsavebox{\ybb}
\savebox{\ybb}(1,1){
	\put(0, 0){ \color{blue} \line(0, 1){ 36}} \put(2, 0){ \color{blue} \line(0, 1){ 36}} \put(4, 0){ \color{blue} \line(0, 1){ 30}} \put(6, 0){ \color{blue} \line(0, 1){ 28}} \put(8, 0){ \color{blue} \line(0, 1){ 26}} \put(10, 0){ \color{blue} \line(0, 1){ 24}} \put(12, 0){ \color{blue} \line(0, 1){ 20}} \put(14, 0){ \color{blue} \line(0, 1){ 16}} \put(16, 0){ \color{blue} \line(0, 1){ 16}} \put(18, 0){ \color{blue} \line(0, 1){ 14}} \put(20, 0){ \color{blue} \line(0, 1){ 12}} \put(22, 0){ \color{blue} \line(0, 1){ 10}} \put(24, 0){ \color{blue} \line(0, 1){ 10}} \put(26, 0){ \color{blue} \line(0, 1){ 8}} \put(28, 0){ \color{blue} \line(0, 1){ 6}} \put(30, 0){ \color{blue} \line(0, 1){ 4}} \put(32, 0){ \color{blue} \line(0, 1){ 2}} \put(34, 0){ \color{blue} \line(0, 1){ 2}} \put(36, 0){ \color{blue} \line(0, 1){ 2}} \put(0, 0){ \color{blue} \line(1, 0){ 36}} \put(0, 2){ \color{blue} \line(1, 0){ 36}} \put(0, 4){ \color{blue} \line(1, 0){ 30}} \put(0, 6){ \color{blue} \line(1, 0){ 28}} \put(0, 8){ \color{blue} \line(1, 0){ 26}} \put(0, 10){ \color{blue} \line(1, 0){ 24}} \put(0, 12){ \color{blue} \line(1, 0){ 20}} \put(0, 14){ \color{blue} \line(1, 0){ 18}} \put(0, 16){ \color{blue} \line(1, 0){ 16}} \put(0, 18){ \color{blue} \line(1, 0){ 12}} \put(0, 20){ \color{blue} \line(1, 0){ 12}} \put(0, 22){ \color{blue} \line(1, 0){ 10}} \put(0, 24){ \color{blue} \line(1, 0){ 10}} \put(0, 26){ \color{blue} \line(1, 0){ 8}} \put(0, 28){ \color{blue} \line(1, 0){ 6}} \put(0, 30){ \color{blue} \line(1, 0){ 4}} \put(0, 32){ \color{blue} \line(1, 0){ 2}} \put(0, 34){ \color{blue} \line(1, 0){ 2}} \put(0, 36){ \color{blue} \line(1, 0){ 2}}
	\put(1, 1){ \color{red} \line(0, 1){ 2}} \put(1, 3){ \color{red} \line(0, 1){ 2}} \put(1, 5){ \color{red} \line(0, 1){ 2}} \put(1, 7){ \color{red} \line(0, 1){ 2}} \put(1, 7){ \color{red} \line(1, 0){ 2}} \put(1, 9){ \color{red} \line(0, 1){ 2}} \put(1, 9){ \color{red} \line(1, 0){ 2}} \put(1, 11){ \color{red} \line(0, 1){ 2}} \put(1, 11){ \color{red} \line(1, 0){ 2}} \put(1, 13){ \color{red} \line(0, 1){ 2}} \put(1, 13){ \color{red} \line(1, 0){ 2}} \put(1, 15){ \color{red} \line(0, 1){ 2}} \put(1, 15){ \color{red} \line(1, 0){ 2}} \put(1, 17){ \color{red} \line(0, 1){ 2}} \put(1, 19){ \color{red} \line(0, 1){ 2}} \put(1, 19){ \color{red} \line(1, 0){ 2}} \put(1, 21){ \color{red} \line(0, 1){ 2}} \put(1, 21){ \color{red} \line(1, 0){ 2}} \put(1, 23){ \color{red} \line(0, 1){ 2}} \put(1, 23){ \color{red} \line(1, 0){ 2}} \put(1, 25){ \color{red} \line(0, 1){ 2}} \put(1, 25){ \color{red} \line(1, 0){ 2}} \put(1, 27){ \color{red} \line(0, 1){ 2}} \put(1, 27){ \color{red} \line(1, 0){ 2}} \put(1, 29){ \color{red} \line(0, 1){ 2}} \put(1, 29){ \color{red} \line(1, 0){ 2}} \put(1, 31){ \color{red} \line(0, 1){ 2}} \put(1, 33){ \color{red} \line(0, 1){ 2}} \put(3, 1){ \color{red} \line(0, 1){ 2}} \put(3, 1){ \color{red} \line(1, 0){ 2}} \put(3, 3){ \color{red} \line(0, 1){ 2}} \put(3, 3){ \color{red} \line(1, 0){ 2}} \put(3, 5){ \color{red} \line(0, 1){ 2}} \put(3, 5){ \color{red} \line(1, 0){ 2}} \put(3, 15){ \color{red} \line(1, 0){ 2}} \put(3, 17){ \color{red} \line(0, 1){ 2}} \put(3, 21){ \color{red} \line(1, 0){ 2}} \put(3, 23){ \color{red} \line(1, 0){ 2}} \put(3, 25){ \color{red} \line(1, 0){ 2}} \put(3, 27){ \color{red} \line(1, 0){ 2}} \put(5, 1){ \color{red} \line(1, 0){ 2}} \put(5, 3){ \color{red} \line(1, 0){ 2}} \put(5, 7){ \color{red} \line(0, 1){ 2}} \put(5, 7){ \color{red} \line(1, 0){ 2}} \put(5, 9){ \color{red} \line(0, 1){ 2}} \put(5, 11){ \color{red} \line(0, 1){ 2}} \put(5, 11){ \color{red} \line(1, 0){ 2}} \put(5, 13){ \color{red} \line(0, 1){ 2}} \put(5, 13){ \color{red} \line(1, 0){ 2}} \put(5, 17){ \color{red} \line(0, 1){ 2}} \put(5, 19){ \color{red} \line(0, 1){ 2}} \put(5, 19){ \color{red} \line(1, 0){ 2}} \put(5, 21){ \color{red} \line(1, 0){ 2}} \put(5, 25){ \color{red} \line(1, 0){ 2}} \put(7, 5){ \color{red} \line(0, 1){ 2}} \put(7, 7){ \color{red} \line(1, 0){ 2}} \put(7, 9){ \color{red} \line(0, 1){ 2}} \put(7, 11){ \color{red} \line(1, 0){ 2}} \put(7, 15){ \color{red} \line(0, 1){ 2}} \put(7, 17){ \color{red} \line(0, 1){ 2}} \put(7, 17){ \color{red} \line(1, 0){ 2}} \put(7, 23){ \color{red} \line(0, 1){ 2}} \put(7, 23){ \color{red} \line(1, 0){ 2}} \put(9, 1){ \color{red} \line(0, 1){ 2}} \put(9, 3){ \color{red} \line(0, 1){ 2}} \put(9, 3){ \color{red} \line(1, 0){ 2}} \put(9, 5){ \color{red} \line(0, 1){ 2}} \put(9, 9){ \color{red} \line(0, 1){ 2}} \put(9, 9){ \color{red} \line(1, 0){ 2}} \put(9, 11){ \color{red} \line(1, 0){ 2}} \put(9, 13){ \color{red} \line(0, 1){ 2}} \put(9, 15){ \color{red} \line(0, 1){ 2}} \put(9, 19){ \color{red} \line(0, 1){ 2}} \put(9, 19){ \color{red} \line(1, 0){ 2}} \put(9, 21){ \color{red} \line(0, 1){ 2}} \put(11, 1){ \color{red} \line(0, 1){ 2}} \put(11, 3){ \color{red} \line(1, 0){ 2}} \put(11, 5){ \color{red} \line(0, 1){ 2}} \put(11, 5){ \color{red} \line(1, 0){ 2}} \put(11, 7){ \color{red} \line(0, 1){ 2}} \put(11, 7){ \color{red} \line(1, 0){ 2}} \put(11, 9){ \color{red} \line(1, 0){ 2}} \put(11, 13){ \color{red} \line(0, 1){ 2}} \put(11, 13){ \color{red} \line(1, 0){ 2}} \put(11, 15){ \color{red} \line(0, 1){ 2}} \put(11, 15){ \color{red} \line(1, 0){ 2}} \put(11, 17){ \color{red} \line(0, 1){ 2}} \put(13, 1){ \color{red} \line(0, 1){ 2}} \put(13, 1){ \color{red} \line(1, 0){ 2}} \put(13, 5){ \color{red} \line(1, 0){ 2}} \put(13, 7){ \color{red} \line(1, 0){ 2}} \put(13, 9){ \color{red} \line(1, 0){ 2}} \put(13, 11){ \color{red} \line(0, 1){ 2}} \put(13, 13){ \color{red} \line(1, 0){ 2}} \put(13, 15){ \color{red} \line(1, 0){ 2}} \put(15, 1){ \color{red} \line(1, 0){ 2}} \put(15, 3){ \color{red} \line(0, 1){ 2}} \put(15, 5){ \color{red} \line(1, 0){ 2}} \put(15, 7){ \color{red} \line(1, 0){ 2}} \put(15, 11){ \color{red} \line(0, 1){ 2}} \put(15, 11){ \color{red} \line(1, 0){ 2}} \put(15, 13){ \color{red} \line(1, 0){ 2}} \put(17, 3){ \color{red} \line(0, 1){ 2}} \put(17, 7){ \color{red} \line(1, 0){ 2}} \put(17, 9){ \color{red} \line(0, 1){ 2}} \put(17, 11){ \color{red} \line(1, 0){ 2}} \put(19, 1){ \color{red} \line(0, 1){ 2}} \put(19, 1){ \color{red} \line(1, 0){ 2}} \put(19, 3){ \color{red} \line(0, 1){ 2}} \put(19, 3){ \color{red} \line(1, 0){ 2}} \put(19, 5){ \color{red} \line(0, 1){ 2}} \put(19, 5){ \color{red} \line(1, 0){ 2}} \put(19, 7){ \color{red} \line(1, 0){ 2}} \put(19, 9){ \color{red} \line(0, 1){ 2}} \put(19, 9){ \color{red} \line(1, 0){ 2}} \put(21, 3){ \color{red} \line(1, 0){ 2}} \put(21, 5){ \color{red} \line(1, 0){ 2}} \put(21, 7){ \color{red} \line(1, 0){ 2}} \put(21, 9){ \color{red} \line(1, 0){ 2}} \put(23, 1){ \color{red} \line(0, 1){ 2}} \put(23, 5){ \color{red} \line(1, 0){ 2}} \put(23, 7){ \color{red} \line(1, 0){ 2}} \put(25, 1){ \color{red} \line(0, 1){ 2}} \put(25, 1){ \color{red} \line(1, 0){ 2}} \put(25, 3){ \color{red} \line(0, 1){ 2}} \put(25, 5){ \color{red} \line(1, 0){ 2}} \put(27, 3){ \color{red} \line(0, 1){ 2}} \put(27, 3){ \color{red} \line(1, 0){ 2}} \put(29, 1){ \color{red} \line(0, 1){ 2}} \put(29, 1){ \color{red} \line(1, 0){ 2}} \put(31, 1){ \color{red} \line(1, 0){ 2}} \put(33, 1){ \color{red} \line(1, 0){ 2}}}

\newsavebox{\ydb}
\savebox{\ydb}(1,1){
	
	\put(0, 0){ \color{blue} \line(0, 1){ 6}} \put(2, 0){ \color{blue} \line(0, 1){ 6}} \put(4, 0){ \color{blue} \line(0, 1){ 6}} \put(6, 0){ \color{blue} \line(0,1){6}} \put(8, 0){ \color{blue} \line(0, 1){ 6}} \put(10, 0){ \color{blue} \line(0, 1){ 6}} \put(0, 0){ \color{blue} \line(1, 0){10}} \put(0, 2){ \color{blue} \line(1, 0){ 10}} \put(0, 4){ \color{blue} \line(1, 0){ 10}} \put(0, 6){ \color{blue} \line(1, 0){ 10}} \put(0, 6){ \color{green} \line(1, 0){ 2}} \put(2, 4){ \color{green} \line(1, 0){ 2}}\put(2, 4){ \color{green} \line(0, 1){ 2}} \put(4, 0){ \color{green} \line(0, 1){ 4}} 
	 	\put(1, 1){ \color{red} \line(0, 1){ 4}} 
	 	\put(1, 3){ \color{red} \line(1, 0){ 2}}
	 	\put(3, 1){ \color{red} \line(0, 1){ 2}} 
	 	\put(9, 1){ \color{red} \line(0, 1){ 4}}
	 	\put(3, 5){ \color{red} \line(1, 0){ 6}}
	 	 \put(5, 1){ \color{red} \line(0, 1){ 4}}
	 	\put(5, 1){ \color{red} \line(1, 0){ 2}} 
	 	\put(5, 3){ \color{red} \line(1, 0){ 2}}
}

\newcommand{\yd}{  \hskip 5pt  \usebox{\ydb} \vspace{ 11mm} \hskip 45pt   }

%%%%%%%%%%%%%%%%%%%%%%%%%%%%%%%%%%%%%%%%%%%%

\newsavebox{\ydn}
\savebox{\ydn}(1,1){
	
	\put(0, 0){ \color{blue} \line(0, 1){ 6}} \put(2, 0){ \color{blue} \line(0, 1){ 6}} \put(4, 0){ \color{blue} \line(0, 1){ 6}} \put(6, 0){ \color{blue} \line(0,1){6}} \put(8, 0){ \color{blue} \line(0, 1){ 6}} \put(10, 0){ \color{blue} \line(0, 1){ 6}} \put(0, 0){ \color{blue} \line(1, 0){10}} \put(0, 2){ \color{blue} \line(1, 0){ 10}} \put(0, 4){ \color{blue} \line(1, 0){ 10}} \put(0, 6){ \color{blue} \line(1, 0){ 10}} \put(0, 6){ \color{green} \line(1, 0){ 2}} \put(2, 4){ \color{green} \line(1, 0){ 2}}\put(2, 4){ \color{green} \line(0, 1){ 2}} \put(4, 0){ \color{green} \line(0, 1){ 4}} 

}

\newcommand{\yn}{  \hskip 5pt  \usebox{\ydn} \vspace{ 11mm} \hskip 45pt   }
%%%%%%%%%%%%%%%%%%%%%%%%%%%%%%%%%%%%%%%%%%%%

%%%%%%%%%%%%%%%%%%%%%%%%%%%%%%%%%%%%%%%%%%%%

\newsavebox{\ydx}
\savebox{\ydx}(1,1){
	
	\put(0, 0){ \color{blue} \line(0, 1){9}}
	\put(3, 0){ \color{blue} \line(0, 1){ 9}} 
	\put(6, 0){ \color{blue} \line(0, 1){ 9}}
	\put(9, 0){ \color{blue} \line(0,1){9}} 
	\put(12, 0){ \color{blue} \line(0, 1){ 9}}
	\put(15, 0){ \color{blue} \line(0, 1){ 9}} 
	\put(0, 0){ \color{blue} \line(1, 0){15}} 
	\put(0, 3){ \color{blue} \line(1, 0){ 15}}
	\put(0, 6){ \color{blue} \line(1, 0){ 15}}
	\put(0, 9){ \color{blue} \line(1, 0){ 15}}
	\put(0, 9){ \color{green} \line(1, 0){ 3}}
	\put(3, 6){ \color{green} \line(1, 0){ 3}}
	\put(3, 6){ \color{green} \line(0, 1){ 3}}
	\put(6, 0){ \color{green} \line(0, 1){ 6}}   
	\put(1.5, 1.3){\footnotesize{$a_1$}}   
	\put(1.1, 4.3){\footnotesize{$a_1 \hbar$}}
	\put(1.0, 7.4){\footnotesize{$a_1 \hbar^2$}}
	\put(4.5, 1.3){\footnotesize{$a_1$}}
	\put(4.1, 4.3){\footnotesize{$a_1 \hbar$}}
	\put(3.9, 7.4){\footnotesize{$a_2 \hbar^4$}}
	
	\put(7, 1.3){\footnotesize{$a_2 \hbar^3$}}
	\put(7, 4.3){\footnotesize{$a_2 \hbar^3$}}
	\put(7, 7.4){\footnotesize{$a_2 \hbar^3$}}
	
	\put(10, 1.3){\footnotesize{$a_2 \hbar^2$}}
	\put(10, 4.3){\footnotesize{$a_2 \hbar^2$}}
	\put(10, 7.4){\footnotesize{$a_2 \hbar^2$}}

	\put(13, 1.3){\footnotesize{$a_2 \hbar$}}
	\put(13, 4.3){\footnotesize{$a_2 \hbar$}}
	\put(13, 7.4){\footnotesize{$a_2 \hbar$}}
	
}

\newcommand{\yx}{  \hskip 5pt  \usebox{\ydx} \vspace{ 11mm} \hskip 45pt   }
%%%%%%%%%%%%%%%%%%%%%%%%%%%%%%%%%%%%%%%%%%%%

%%%%%%%%%%%%%%%%%%%%%%%%%%%%%%%%%%%%%%

\newsavebox{\ydbl}
\savebox{\ydbl}(1,1){
	
	\put(0, 0){ \color{blue} \line(0, 1){ 6}} \put(2, 0){ \color{blue} \line(0, 1){ 6}} \put(4, 0){ \color{blue} \line(0, 1){ 6}} \put(6, 0){ \color{blue} \line(0,1){6}} \put(8, 0){ \color{blue} \line(0, 1){ 6}} \put(10, 0){ \color{blue} \line(0, 1){ 6}} \put(0, 0){ \color{blue} \line(1, 0){10}} \put(0, 2){ \color{blue} \line(1, 0){ 10}} \put(0, 4){ \color{blue} \line(1, 0){ 10}} \put(0, 6){ \color{blue} \line(1, 0){ 10}} \put(0, 6){ \color{green} \line(1, 0){ 2}} \put(2, 4){ \color{green} \line(1, 0){ 2}}\put(2, 4){ \color{green} \line(0, 1){ 2}} \put(4, 0){ \color{green} \line(0, 1){ 4}} 
	\put(1, 1){ \color{red} \line(0, 1){ 4}} 	\put(1, 1){ \color{red} \line(1, 0){ 2}}
	\put(3, 1){ \color{red} \line(0, 1){ 2}} \put(9, 1){ \color{red} \line(0, 1){ 4}}
	\put(3, 5){ \color{red} \line(1, 0){ 6}} \put(7, 1){ \color{red} \line(0, 1){ 4}}
	\put(5, 1){ \color{red} \line(1, 0){ 2}} \put(5, 3){ \color{red} \line(1, 0){ 2}}
}

%%%%%%%%%%%%%%%%%%%%%%%%%%%%%%%%%%%%%%

% PICTURES USED IN THE EXAMPLE OF COMPUTING THE ELLIPTIC TREE WEIGHT
\newsavebox{\exoneb}
\savebox{\exoneb}(1,1){
		\put(0, 0){ \color{blue} \line(0, 1){ 4} }
		\put(2, 0){ \color{blue} \line(0, 1){ 4} } 
		
		\put(4, 0){ \color{blue} \line(0, 1){ 4} }
		\put(6, 0){ \color{blue} \line(0, 1){ 4} }
		 \put(0, 0){ \color{blue} \line(1, 0){ 6} } \put(0, 2){ \color{blue} \line(1, 0){ 6} } \put(0, 4){ \color{blue} \line(1, 0){ 6} }
	%%%%%%%%%%%%%%%55
	\put(1, 1){ \color{red} \line(0, 1){ 2}}
	\put(5, 1){ \color{red} \line(0, 1){ 2}}
	\put(1, 1){ \color{red} \line(1, 0){ 2}}
	\put(1, 3){ \color{red} \line(1, 0){ 2}} 
	}
\newcommand{\exone}{  \hskip -6pt  \usebox{\exoneb} \vspace{11mm} \hskip 35pt   }

%%%%%%%%%%%%%%%%%%%%%%%%%%%%%%%%%%%%%%%%%%%%%%%%%%%%%%%%%%
\newsavebox{\exonel}
\savebox{\exonel}(1,1){
	\put(0, 0){ \color{blue} \line(0, 1){ 4} }
	\put(2, 0){ \color{blue} \line(0, 1){ 4} } 
	
	\put(4, 0){ \color{blue} \line(0, 1){ 4} }
	\put(6, 0){ \color{blue} \line(0, 1){ 4} }
	\put(0, 0){ \color{blue} \line(1, 0){ 6} } \put(0, 2){ \color{blue} \line(1, 0){ 6} } \put(0, 4){ \color{blue} \line(1, 0){ 6} }
	%%%%%%%%%%%%%%%55
	\put(1, 1){ \color{red} \line(0, 1){ 2}}
	\put(1, 1){ \color{red} \line(1, 0){ 2}}
	\put(1, 3){ \color{red} \line(1, 0){ 2}} 
}
\newcommand{\exonl}{  \hskip -6pt  \usebox{\exonel} \vspace{11mm} \hskip 35pt   }

%%%%%%%%%%%%%%%%%%%%%%%%%%%%%%%%%%%%%%%%%%%%%%%%%%%%%%%%%%

%%%%%%%%%%%%%%%%%%%%%%%%%%%%%%%%%%%%%%%%%%%%%%%%%%%%%%%%%%
\newsavebox{\exonea}
\savebox{\exonea}(1,1){
	\put(0, 0){ \color{blue} \line(0, 1){ 2} }
	\put(2, 0){ \color{blue} \line(0, 1){ 2} } 
	\put(4, 0){ \color{blue} \line(0, 1){ 2} }
 
	\put(8, 0){ \color{blue} \line(0, 1){ 2} }
	\put(10, 0){ \color{blue} \line(0, 1){ 2} } 
	\put(12, 0){ \color{blue} \line(0, 1){ 2} } 
	\put(14, 0){ \color{blue} \line(0, 1){ 2} }

	\put(4, 0){ \color{blue} \line(0, 1){ 2} }
	\put(6, 0){ \color{green} \line(0, 1){ 2} }

	\put(0, 0){ \color{blue} \line(1, 0){ 14} } 
	\put(0, 2){ \color{blue} \line(1, 0){ 14} } 
	\put(0.8, 2.3){ \footnotesize{1} }
	\put(2.5, 2.3){ \footnotesize{...} }
	\put(4, 2.3){ \footnotesize{m-1}}
	\put(6.9, 2.3){ \footnotesize{m}}
	\put(9.9, 2.3){ \footnotesize{.....}}
	\put(12.9, 2.3){ \footnotesize{n-1}}

	%%%%%%%%%%%%%%%55

\put(1, 1){ \color{red} \line(1, 0){ 4}}

\put(7, 1){ \color{red} \line(1, 0){ 6}}
 
}
\newcommand{\exona}{  \hskip -6pt  \usebox{\exonea} \vspace{11mm} \hskip 35pt   }

%%%%%%%%%%%%%%%%%%%%%%%%%%%%%%%%%%%%%%%%%%%%%%%%%%%%%%%%%%

%%%%%%%%%%%%%%%%%%%%%%%%%%%%%%%%%%%%%%%%%%%%%%%%%%%%%%%%%%
\newsavebox{\exoneab}
\savebox{\exoneab}(1,1){
	\put(0, 0){ \color{blue} \line(0, 1){ 2} }
	\put(2, 0){ \color{blue} \line(0, 1){ 2} } 
	\put(4, 0){ \color{blue} \line(0, 1){ 2} }
	
	\put(8, 0){ \color{blue} \line(0, 1){ 2} }
	\put(10, 0){ \color{blue} \line(0, 1){ 2} } 
	\put(12, 0){ \color{blue} \line(0, 1){ 2} } 
	\put(14, 0){ \color{blue} \line(0, 1){ 2} }

	\put(4, 0){ \color{blue} \line(0, 1){ 2} }
	\put(6, 0){ \color{green} \line(0, 1){ 2} }

\put(0, 0){ \color{blue} \line(1, 0){ 14} } 
\put(0, 2){ \color{blue} \line(1, 0){ 14} }	
	%%%%%%%%%%%%%%%55
	
	\put(1, 1){ \color{red} \line(1, 0){ 4}}
	
	\put(7, 1){ \color{red} \line(1, 0){ 6}}
	
}
\newcommand{\exonab}{  \hskip -6pt  \usebox{\exoneab} \vspace{11mm} \hskip 35pt   }

%%%%%%%%%%%%%%%%%%%%%%%%%%%%%%%%%%%%%%%%%%%%%%%%%%%%%%%%%%

%%%%%%%%%%%%%%%%%%%%%%%%%%%%%%%%%%%%%%%%%%%%%%%%%%%%%%%%%%
\newsavebox{\exoneabl}
\savebox{\exoneabl}(1,1){
	\put(0, 0){ \color{blue} \line(0, 1){ 2} }
	\put(2, 0){ \color{blue} \line(0, 1){ 2} } 
	\put(4, 0){ \color{blue} \line(0, 1){ 2} }
	
	\put(8, 0){ \color{blue} \line(0, 1){ 2} }
	\put(10, 0){ \color{blue} \line(0, 1){ 2} } 
	\put(12, 0){ \color{blue} \line(0, 1){ 2} } 
	\put(14, 0){ \color{blue} \line(0, 1){ 2} }

	\put(4, 0){ \color{blue} \line(0, 1){ 2} }
	\put(6, 0){ \color{green} \line(0, 1){ 2} }

	\put(0, 0){ \color{blue} \line(1, 0){ 14} } 
	\put(0, 2){ \color{blue} \line(1, 0){ 14} }	
	%%%%%%%%%%%%%%%55
	
	\put(1, 1){ \color{red} \line(1, 0){ 4}}

}
\newcommand{\exonabl}{  \hskip -6pt  \usebox{\exoneabl} \vspace{11mm} \hskip 35pt   }

%%%%%%%%%%%%%%%%%%%%%%%%%%%%%%%%%%%%%%%%%%%%%%%%%%%%%%%%%%

%%%%%%%%%%%%%%%%%%%%%%%%%%%%%%%%%%%%%%%%%%%%%%%%%%%%%%%%%%
\newsavebox{\exoneabr}
\savebox{\exoneabr}(1,1){
	\put(0, 0){ \color{blue} \line(0, 1){ 2} }
	\put(2, 0){ \color{blue} \line(0, 1){ 2} } 
	\put(4, 0){ \color{blue} \line(0, 1){ 2} }
	
	\put(8, 0){ \color{blue} \line(0, 1){ 2} }
	\put(10, 0){ \color{blue} \line(0, 1){ 2} } 
	\put(12, 0){ \color{blue} \line(0, 1){ 2} } 
	\put(14, 0){ \color{blue} \line(0, 1){ 2} }

	\put(4, 0){ \color{blue} \line(0, 1){ 2} }
	\put(6, 0){ \color{green} \line(0, 1){ 2} }

	\put(0, 0){ \color{blue} \line(1, 0){ 14} } 
	\put(0, 2){ \color{blue} \line(1, 0){ 14} }	
	%%%%%%%%%%%%%%%55

	\put(7, 1){ \color{red} \line(1, 0){ 6}}
	
}
\newcommand{\exonabr}{  \hskip -6pt  \usebox{\exoneabr} \vspace{11mm} \hskip 35pt   }

%%%%%%%%%%%%%%%%%%%%%%%%%%%%%%%%%%%%%%%%%%%%%%%%%%%%%%%%%%

%%%%%%%%%%%%%%%%%%%%%%%%%%%%%%%%%%%%%%%%%%%%%%%%%%%%%%%%%%
\newsavebox{\exoner}
\savebox{\exoner}(1,1){
	\put(0, 0){ \color{blue} \line(0, 1){ 4} }
	\put(2, 0){ \color{blue} \line(0, 1){ 4} } 
	
	\put(4, 0){ \color{blue} \line(0, 1){ 4} }
	\put(6, 0){ \color{blue} \line(0, 1){ 4} }
	\put(0, 0){ \color{blue} \line(1, 0){ 6} } \put(0, 2){ \color{blue} \line(1, 0){ 6} } \put(0, 4){ \color{blue} \line(1, 0){ 6} }
	%%%%%%%%%%%%%%%55
		\put(5, 1){ \color{red} \line(0, 1){ 2}}
	
}
\newcommand{\exonr}{  \hskip -6pt  \usebox{\exoner} \vspace{11mm} \hskip 35pt   }

%%%%%%%%%%%%%%%%%%%%%%%%%%%%%%%%%%%%%%%%%%%%%%%%%%%%%%%%%%

\newsavebox{\extwob}
\savebox{\extwob}(1,1){
		\put(0, 0){ \color{blue} \line(0, 1){ 4} } \put(2, 0){ \color{blue} \line(0, 1){ 4} } \put(4, 0){ \color{blue} \line(0, 1){ 4} } \put(0, 0){ \color{blue} \line(1, 0){ 4} } \put(0, 2){ \color{blue} \line(1, 0){ 4} } \put(0, 4){ \color{blue} \line(1, 0){ 4} }
	%%%%%%%%%%%%%%%55
	\put(1, 1){ \color{red} \line(0, 1){ 2}}
	\put(3, 1){ \color{red} \line(0, 1){ 2}}
	\put(1, 3){ \color{red} \line(1, 0){ 2}}}

%%%%%%%%%%%%%%%%%%%%%%%%%%%%%%%%%%%%%%%%%%%%%%%%%%%%%%%%
\newsavebox{\rgb}

\savebox{\rgb}(1,1)
{   \put(4.5,2){\color{blue}\rule{16pt}{16pt}} 
	\put(6.5,4){\color{green}\rule{16pt}{16pt}}
	\put(8.5,6){\color{green}\rule{16pt}{16pt}}
	\put(10.5,6){\color{red}\rule{16pt}{16pt}}
	\put(8.5,4){\color{red}\rule{16pt}{16pt}}
	\put(6.5,2){\color{red}\rule{16pt}{16pt}}
	\put(2.5,0){\color{red}\rule{16pt}{16pt}}
	\put(0.5,0){\color{green}\rule{16pt}{16pt}}
	\put(2.5,2){\color{green}\rule{16pt}{16pt}}
	\put(0, 0) { \color{blue} \line(0, 1) { 12} } \put(2, 0) { \color{blue} \line(0, 1) { 12} } \put(4, 0) { \color{blue} \line(0, 1) { 10} } \put(6, 0) { \color{blue} \line(0, 1) { 10} } \put(8, 0) { \color{blue} \line(0, 1) { 10} } \put(10, 0) { \color{blue} \line(0, 1) { 8} } \put(12, 0) { \color{blue} \line(0, 1) { 8} } \put(14, 0) { \color{blue} \line(0, 1) { 4} } \put(0, 0) { \color{blue} \line(1, 0) { 14} } \put(0, 2) { \color{blue} \line(1, 0) { 14} } \put(0, 4) { \color{blue} \line(1, 0) { 14} } \put(0, 6) { \color{blue} \line(1, 0) { 12} } \put(0, 8) { \color{blue} \line(1, 0) { 12} } \put(0, 10) { \color{blue} \line(1, 0) { 8} } \put(0, 12) { \color{blue} \line(1, 0) { 2} }}

%%%%%%%%%%%%%%%%%%%%%%%%%%%%%%%%%%%%%%%%%%%%%%%%%%%%%%%%%%%

\newsavebox{\ppb}

\savebox{\ppb}(1,1)
{   \put(0.5,10){\color{gray}\rule{16pt}{16pt}} 
	\put(2.5,8){\color{gray}\rule{16pt}{16pt}}
	 \put(0.5,8){\color{gray}\rule{16pt}{16pt}}
	 \put(0.5,6){\color{gray}\rule{16pt}{16pt}} 
	%\put(8.5,6){\color{green}\rule{16pt}{16pt}}
	%\put(10.5,6){\color{red}\rule{16pt}{16pt}}
	%\put(8.5,4){\color{red}\rule{16pt}{16pt}}
	%\put(6.5,2){\color{red}\rule{16pt}{16pt}}
	%\put(2.5,0){\color{red}\rule{16pt}{16pt}}
	%\put(0.5,0){\color{green}\rule{16pt}{16pt}}
	%\put(2.5,2){\color{green}\rule{16pt}{16pt}}
	\put(0, 0) { \color{blue} \line(0, 1) { 12} } \put(2, 0) { \color{blue} \line(0, 1) { 12} } \put(4, 0) { \color{blue} \line(0, 1) { 10} } \put(6, 0) { \color{blue} \line(0, 1) { 10} } \put(8, 0) { \color{blue} \line(0, 1) { 10} } \put(10, 0) { \color{blue} \line(0, 1) { 8} } \put(12, 0) { \color{blue} \line(0, 1) { 8} } \put(14, 0) { \color{blue} \line(0, 1) { 4} } \put(0, 0) { \color{blue} \line(1, 0) { 14} } \put(0, 2) { \color{blue} \line(1, 0) { 14} } \put(0, 4) { \color{blue} \line(1, 0) { 14} } \put(0, 6) { \color{blue} \line(1, 0) { 12} } \put(0, 8) { \color{blue} \line(1, 0) { 12} } \put(0, 10) { \color{blue} \line(1, 0) { 8} } \put(0, 12) { \color{blue} \line(1, 0) { 2} }}

\begin{document}
\title{3d Mirror Symmetry and Elliptic Stable Envelopes}
\author{Richárd Rimányi, Andrey Smirnov, Alexander Varchenko, Zijun Zhou}
\date{}
\maketitle
\thispagestyle{empty}
	
\begin{abstract}
We consider  a pair of quiver varieties $(X;X\rq{})$ related by 3d mirror symmetry, where
$X =T^*{Gr}(k,n)$ is the cotangent bundle of the Grassmannian of $k$-planes of $n$-dimensional space. We give
formulas  for the elliptic stable envelopes on both sides. We show an existence of an equivariant elliptic
cohomology class on $X \times X\rq$  (the {\it Mother function}) whose restrictions to $X$ and $X\rq{}$ are
the elliptic stable envelopes of those varieties. This implies, that the restriction
matrices of the elliptic stable envelopes for $X$ and $X\rq{}$
are equal after transposition and  identification of the equivariant parameters on one side  with the K\"ahler parameters on the dual side. 
  
\end{abstract}
	
\setcounter{tocdepth}{2}

\setcounter{tocdepth}{2}

\tableofcontents

\section{Introduction}

\subsection{Mirror symmetries}

Mirror symmetry is one of the most important physics structures that enters the world of mathematics and arouses lots of attention in the last several decades. Its general philosophy is that a space $X$ should come with a dual $X'$ which, though usually different from and unrelated to $X$ in the appearance, admits some deep connections with $X$ in geometry. Mirror symmetry in 2 dimensions turns out to be extremely enligntening in the study of algebraic geometry, symplectic geometry, and representation theory. In particular, originated from the 2d topological string theory, the Gromov--Witten theory has an intimate connection with 2d mirror symmetry; for an introduction, see \cite{CK, Hori}. 

Similar types of duality also exists in 3 dimensions. More precisely, as introduced in \cite{BDG, Ga-Wit, HW, BDGH, PhysMir1, PhysMir2, PhysMir3,gla3d}, the 3d mirror symmetry is constructed between certain pairs of 3d $\cN=4$ supersymmetric gauge theories, under which they exchanged their \emph{Higgs branches} and \emph{Coulomb branches}, as well as their FI parameters and mass parameters. In mathematics, the $\cN = 4$ supersymmetries imply that the corresponding geometric object of our interest should admit a hyperK\"ahler structure, or if one prefers to stay in the algebraic context, a holomorphic symplectic structure. In particular, for theories of the class as mentioned above, the Higgs branch, which is a certain branch of its moduli of vacua, can be interpreted as a holomorphic symplectic quotient in mathematics, where the prequotient and group action are determined by the data defining the physics theory. The FI parameters and mass parameters of the theory are interpreted as \emph{K\"ahler parameters} and \emph{equivariant parameters} respectively. 

The Coulomb branch, however, did not have such a clear mathematical construction until recently \cite{Coulomb1, Nak1, BFN2}. In this general setting it is not a holomorphic symplectic quotient, and it is difficult to study its geometry. Nevertheless, in many special cases e.g., already appearing in the physics literature \cite{BDG, GaKor}, the Coulomb branch might also be taken as some holomorphic symplectic quotient. Those special cases include hypertoric varieties, Hilbert schemes of points on $\matC^2$, the moduli space of instantons on the resolved $A_N$ surfaces, and so on. For a mathematical exposition, see \cite{BLPW, matdu}, where 3d mirror symmetry is refered to as \emph{symplectic duality}.

A typical mirror symmetry statement for a space $X$ and its mirror $X'$, is to relate certain geometrically defined invariants on both sides. For example, in the application of 2d mirror symmetry to genus-zero Gromow--Witten theory, the $J$-function counting rational curves in $X$ is related to the $I$-function, which arises from the mirror theory. 

In the 3d case, instead of cohomological counting, one should consider counting in the K-theory. One of the K-theoretic enumerative theory in this setting, which we are particularly interested in, is developed by A. Okounkov and his collaborators \cite{pcmilec, MO2, OS, AOelliptic}. The 3d mirror symmetry statement in this theory looks like
\begin{equation} \label{VV}
V(X) \cong V(X'),
\end{equation}
where $X$ and $X'$ is a 3d mirror pair of hypertoric or Nakajima quiver varieties, and $V(X)$, $V(X')$ are the so-called \emph{vertex functions}, defined via equivariant K-theoretic counting of  quasimaps into $X$ and $X'$ \cite{pcmilec}. 

On both sides, the vertex functions, which depend on K\"ahler parameters $z_i$ and equivariant parameters $a_i$, can be realized as solutions of certain geometrically defined $q$-difference equations. We call those solutions that are holomorphic in K\"ahler parameters and meromorphic in equivariant parameters the \emph{$z$-solutions}, and those in the other way the \emph{$a$-solutions}. In particular, vertex functions are by definition $z$-solutions. 

Under the correspondence (\ref{VV}), the K\"ahler and equivariant parameters on $X$ and $X'$ are exchanged with each other, and hence $z$-solutions of one side should be mapped to $a$-solutions of the other side and vice versa. In particular, for the correspondence to make sense, (\ref{VV}) should involve a transition between a basis of $z$-solutions, and a basis of $a$-solutions. In \cite{AOelliptic}, this transition matrix is introduced geometrically as the \emph{elliptic stable envelope}.

\subsection{Elliptic stable envelopes}

The notion of stable envelopes first appear in \cite{MO} to generate a basis for Nakajima quiver varieties which admits many good properties. Their definition depends on a choice of cocharacter, or equivalently, a chamber in the Lie algebra of the torus that acting on the space $X$. The transition matrices between stable envelopes defined for different chambers turn out to be certain $R$-matrices, satisfying the Yang--Baxter equation and hence defining quantum group structures. The stable envelopes are generalized to K-theory \cite{pcmilec, MO2, OS}, where they not only depend on the choice of cocharacter $\sigma$, but also depend piecewise linearly on the choice of slope $s$, which lives in the space of K\"ahler parameters. 

In \cite{AOelliptic}, stable envelopes are further generalized to the equivariant elliptic cohomology, where the piecewise linear dependence on the slope $s$ is replaced by the meromorphic dependence to a K\"ahler parameter $z$. In particular, the elliptic version of the stable envelope is the most general structure, K-theoretic and cohomological 
stable envelopes can be considered as limits of elliptic.  The elliptic stable envelopes depends on both, the equivariant and K\"ahler parameters which makes it a natural object for the study of 3d mirror symmetry.

In this paper, we will concentrate on a special case where $X = T^* Gr(k, n)$, the cotangent bundle of the Grassmannian of $k$-dimensional subspaces in $\matC^n$.  This variety is a simplest example of Nakajima quiver variety associated to the $A_1$-quiver, with dimension vector $\textsf{v} = k$ and framing vector $\textsf{w} = n$. We will always assume that $n\geq 2k$ \footnote{Only in the case $n\geq 2k$ the dual variety $X'$ can also be realized as quiver variety.}. Its mirror, which we denote by $X'$, can also be constructed as a Nakajima quiver variety, associated to the $A_{n-1}$-quiver. It has dimension vector 
$$
\textsf{v}=(1,2,\dots,k-1,\underbrace{k,\dots,k}_{(n-2 k+1) \text{-times}},k-1,\dots,2,1)
$$
and framing vector
$$
\textsf{w}_i = \delta_{i,k} + \delta_{i,n-k}. 
$$

For Nakajima quiver varieties, there is always a torus action induced by that on the framing spaces. Let $\bT$ and $\bT'$ be the tori on $X$ and $X'$ respectively. They both have $n! / (k! (n-k)!)$ fixed points, which admit nice combinatorial descriptions. Elements in $X^\bT$ can be interpreted as $k$-subsets ${\bf p} \subset {\bf n} := \{1, 2, \cdots, n\}$, while $(X')^{\bT'}$ is the set of Young diagrams $\lambda$ that fit into a $k \times (n-k)$ rectangle. There is a natural bijection (\ref{pointbij}) between those fixed points
$$
\textsf{bj}:(X')^{\bT'} \xrightarrow{\sim} X^{\bT}. 
$$

We will consider the extended equivariant elliptic cohomology of $X$ and $X'$ under the corresponding framing torus actions, denoted by $\textsf{E}_{\bT}(X)$ and $\textsf{E}_{\bT'}(X')$ respectively. By definition, they are certain schemes, associated with structure maps which are finite (and hence affine)
$$
\textsf{E}_{\bT}(X) \to \cE_\bT \times \cE_{\textrm{Pic}_{\bT}(X)}, \qquad \textsf{E}_{\bT'}(X') \to \cE_{\bT'} \times \cE_{\textrm{Pic}_{\bT'}(X')},
$$
where $\cE_\bT \times \cE_{\textrm{Pic}_{\bT}(X)}$ and $\cE_{\bT'} \times \cE_{\textrm{Pic}_{\bT'}(X')}$ are powers of an elliptic curve $E$, the coordinates on which are the K\"aher and equivariant parameters. There is a natural identification (\ref{parident})
$$
\kappa: \bK \to \bT', \qquad \bT \to \bK'
$$
between the K\"ahler and equivariant tori of the two sides, which induces an isomorphism between $\cE_\bT \times \cE_{\textrm{Pic}_{\bT}(X)}$ and $\cE_{\bT'} \times \cE_{\textrm{Pic}_{\bT'}(X')}$. 

By localization theorems, the equivariant elliptic cohomology of $X$ has the form
$$
\textsf{E}_{\bT}(X)=\Big(\coprod\limits_{p\in X^{\bT}}\, \widehat{\Or}_{p}\Big) /\Delta,
$$
where each $\widehat\Or_{\bf p}$ is isomorphic to the base $\cE_\bT \times \cE_{\textrm{Pic}_{\bT}(X)}$. The $\bT$-action on $X$ is good enough, in the sense that it is of the \emph{GKM type}, which means that it admits finitely many isolated fixed points, and finitely many 1-dimensional orbits. Due to this GKM property, the gluing data $\Delta$ of $X$ is easy to describe: it is simply the gluing of $\widehat\Or_{\bf p}$ and $\widehat\Or_{\bf q}$ for those fixed points ${\bf p}$ and ${\bf q}$ connected by 1-dimesional $\bT$-orbits. For $X'$, $\textsf{E}_{\bT'}(X')$ also has the form as above; however, the gluing data $\Delta'$ is more complicated. 

By definition, the elliptic stable envelope $\Stab_{\sigma} ({\bf p})$ for a given fixed point ${\bf p} \in X^\bT$ is the section of a certain line bundle ${\cal T} ({\bf p})$. We will describe this section in terms of its components 
$$
T_{\bf p, q} := \left. \Stab_\sigma ({\bf p}) \right|_{\widehat\Or_{\bf q}},
$$
which are written explicitly in terms of theta functions and satisfy prescribed quasiperiodics and compatibility conditions. Similar for $X'$, we will describe the components 
$$
T'_{\lambda, \mu} := \left. \Stab'_{\sigma'} (\lambda) \right|_\mu. 
$$

\subsection{Coincidence of stable envelopes for dual variates}

Our main result is that the restriction matrices for elliptic stable envelopes on the dual varieties coincide (up to transposition and normalization by the diagonal elements):
\begin{Corollary} \label{corth1}
	{ \it Restriction matrices of elliptic stable envelopes for $X$ and $X'$ are related by:
		\be \label{coinc1}
		T_{\bf p,p} T'_{\lambda, \mu} = T'_{\mu,\mu} T_{\bf q,p}
		\ee	
		where ${\bf p}=\mathsf{bj}(\lambda)$, ${\bf q}=\mathsf{bj}(\mu)$ and parameters are identified by (\ref{parident}). }
\end{Corollary}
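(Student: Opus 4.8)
The plan is to deduce (\ref{coinc1}) from the existence of a single universal class on the product $X\times X'$, the \emph{Mother function}. Concretely, I would first write down an explicit equivariant elliptic cohomology class $\mathfrak{m}$ on $X\times X'$ --- a section of a line bundle over $\textsf{E}_{\bT}(X)\times\textsf{E}_{\bT'}(X')$, the two bases being glued through the identification $\kappa$ of (\ref{parident}) --- given by a product of theta functions in the Chern roots of the tautological bundles on both factors and in the K\"ahler and equivariant parameters. The heart of the argument is then the following two ``one-sided restriction'' properties of $\mathfrak{m}$: for $\mathbf p=\mathsf{bj}(\lambda)\in X^{\bT}$ and $\mathbf q=\mathsf{bj}(\mu)\in X^{\bT}$,
\begin{equation}
\mathfrak{m}\big|_{\{\mathbf p\}\times X'}=T_{\mathbf p,\mathbf p}\cdot\Stab'_{\sigma'}(\lambda),\qquad
\mathfrak{m}\big|_{X\times\{\mu\}}=T'_{\mu,\mu}\cdot\Stab_{\sigma}(\mathbf q),
\end{equation}
under the identification (\ref{parident}), where the scalars $T_{\mathbf p,\mathbf p}$ and $T'_{\mu,\mu}$ are exactly the diagonal normalization (``Euler'') factors of the two stable envelopes.

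Granting these, the corollary follows by a two-step restriction to a fixed point of $X\times X'$. Restricting the first identity further along $\mu\in(X')^{\bT'}$ gives $\mathfrak{m}\big|_{\{\mathbf p\}\times\{\mu\}}=T_{\mathbf p,\mathbf p}\,T'_{\lambda,\mu}$, while restricting the second along $\mathbf p\in X^{\bT}$ gives $\mathfrak{m}\big|_{\{\mathbf p\}\times\{\mu\}}=T'_{\mu,\mu}\,T_{\mathbf q,\mathbf p}$. Both are the pullback of $\mathfrak{m}$ along the inclusion of the single fixed point $(\mathbf p,\mu)\hookrightarrow X\times X'$ in equivariant elliptic cohomology, and iterated restriction to a point does not depend on the order in which the two factors are restricted; hence the two expressions agree, which is precisely (\ref{coinc1}).

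To establish the one-sided restriction properties I would invoke the uniqueness characterization of the elliptic stable envelope. Fixing $\mathbf p$ (hence $\lambda$), the class $T_{\mathbf p,\mathbf p}^{-1}\cdot\mathfrak{m}\big|_{\{\mathbf p\}\times X'}$ is a section of a line bundle on $\textsf{E}_{\bT'}(X')$, and one checks that it is pinned down by the same data as $\Stab'_{\sigma'}(\lambda)$: (i) it has the prescribed quasiperiodicity, i.e.\ it is a section of the correct line bundle $\mathcal{T}'(\lambda)$ --- a bookkeeping of the automorphy factors of the theta functions under the glued parameters; (ii) its diagonal component equals $T'_{\lambda,\lambda}$; and (iii) it satisfies the triangularity/holomorphy condition with respect to the partial order on $(X')^{\bT'}$ and is compatible with the gluing data $\Delta'$. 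Since (i)--(iii) determine $\Stab'_{\sigma'}(\lambda)$ uniquely, the first identity follows; the symmetric argument for the $X$ side is easier because the gluing $\Delta$ is of GKM type.

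The main obstacle is step (iii) on the $X'$ side: $X'$ is the $A_{n-1}$ quiver variety with the non-rectangular dimension vector recorded above, its gluing data $\Delta'$ is genuinely involved, and one must verify that the explicit Mother-function formula, after the $X$-variables are specialized to the fixed point $\mathbf p$ and translated through $\kappa$ and $\mathsf{bj}$, has exactly the support and regularity dictated by $\Delta'$ and the chamber $\sigma'$ --- and, crucially, that the leftover scalar is \emph{precisely} $T'_{\mu,\mu}$ rather than merely proportional to it. This reduces to a careful combinatorial comparison between the order on $k$-subsets $\mathbf p\subset\mathbf n$ and the dominance-type order on Young diagrams in a $k\times(n-k)$ box, controlled by the factorization properties of the theta functions; the hypothesis $n\ge 2k$ is what guarantees that $X'$ is itself a quiver variety and keeps this comparison tractable.
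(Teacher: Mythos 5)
Your proposal follows essentially the same route as the paper: Corollary \ref{corth1} is deduced from the Mother function theorem (Theorem \ref{mothfun1}/\ref{lbthm}) by restricting $\frak{m}$ to the fixed point $({\bf p},\mu)\in (X\times X')^{\bT\times\bT'}$ in the two possible orders and equating, with your normalization factors $T_{\bf p,p}=\Theta'_{\lambda}$ and $T'_{\mu,\mu}=\Theta_{\bf q}$ matching the paper's identification of the prefactors in the holomorphic normalizations (\ref{hnormx}), (\ref{holnd}). Your sketch of how to establish the one-sided restriction properties via the uniqueness characterization (quasiperiods, diagonal values, triangularity/holomorphy) is likewise the strategy the paper uses to prove the Mother function theorem itself in Section \ref{mainproof}.
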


In (\ref{coinc1}), the prefactors $T_{\bf p,p}$ and $T'_{\mu, \mu}$ have very simple expressions as product of theta functions. The explicit formula for matrix elements $T'_{\lambda, \mu}$ and $T_{\bf q,p}$, however, involves complicated summations. 

Explicit formulas (see Theorem \ref{ellxth} and \ref{mainth}) for elliptic stable envelopes are obtained by the \emph{abelianzation technique} \cite{Shenf,GenJacks,AOelliptic,EllHilb}. In the spirit of abelianization, the formula for $T_{\bf q,p}$ involves a symmetrization sum over the symmetric group $\fS_k$, the Weyl group of the gauge group $GL(k)$. However, the formula $T'_{\lambda, \mu}$ involves not only a symmetrization over $\fS_{n,k}$, the Weyl group of the corresponding gauge group, but also a \emph{sum over trees}. Similar phenomenon already appear in the abelianization formula for the elliptic stable envelopes of $\mathrm{Hilb} (\matC^2)$ \cite{EllHilb}. The reason for this sum over trees to occur is that in the abelianization for $X'$, the preimage of a point is no longer a point, as in the case of $X$. 

As a result, the correspondence (\ref{coinc}) we obtained here actually generates an infinite family of nontrivial identities among product of theta functions. See Section \ref{proofab} and \ref{proofnab} for examples in the simplest cases $k=1$ and $n=4, k=2$. In particular, in the $n=4, k=2$ case, we obtain the well-known 4-term theta identity.

Motivated by the correspondence (\ref{coinc}) and the Fourier--Mukai philosophy, a natural guess is that the identity might actually come from a universal \emph{``Mother function"} $\frak{m}$, living on the product $X \times X'$. Consider the following diagram of embeddings
$$
X=X\times\{\lambda\} \stackrel{i_{\lambda}}{ \longrightarrow} X\times X' 
\stackrel{i_{{\bf p}}}{ \longleftarrow} \{{\bf p}\}\times X' =X'. 
$$
Corollary \ref{corth1} then follows directly from our main theorem:
\begin{Theorem} \label{mothfun1}
	{\it 	
		There exists a holomorphic section ${\frak{m}}$ (\underline{the Mother function}) of a line bundle ${\frak{M}}$ on the~$\bT\times \bT^{'}$equivariant elliptic cohomology of $X\times X'$  such that
		$$
		i^{*}_{\lambda}({\frak{m}})={\bf Stab}({\bf p}), \qquad i^{*}_{{\bf p}}({\frak{m}})={\bf Stab}' (\lambda),
		$$
		where ${\bf p} = \mathsf{bj} (\lambda)$.}
\end{Theorem}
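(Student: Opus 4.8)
The plan is to construct the Mother function $\frak{m}$ directly as an explicit holomorphic section, by writing down a formula on $X \times X'$ whose restrictions to the two families of fixed-point charts reproduce the already-known abelianization formulas for $\Stab$ and $\Stab'$ (Theorems~\ref{ellxth} and~\ref{mainth}). Concretely, I would first set up the geometry: the extended elliptic cohomology $\textsf{E}_{\bT \times \bT'}(X \times X')$ is, by the K\"unneth-type localization decomposition, glued out of charts $\widehat\Or_{\bf p} \times \widehat\Or_\lambda$ indexed by pairs of fixed points, each chart being a power of $E$ with coordinates the equivariant parameters $a_i$ of $X$, the K\"ahler parameters $z_j$ of $X$, and the corresponding parameters of $X'$; but the identification $\kappa$ of (\ref{parident}) collapses these so that the equivariant parameters of $X'$ are the K\"ahler parameters of $X$ and vice versa. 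After this identification the candidate line bundle $\frak{M}$ is the one whose restriction to the chart over $({\bf p}, \lambda)$ has the same quasiperiodicity in the $a$-variables as $\mathcal{T}({\bf p})$ and the same quasiperiodicity in the dual ($z$-)variables as $\mathcal{T}'(\lambda)$; I would define it by this characteristic property and check it is well-defined across the gluing using the GKM description of $\Delta$ and the (more involved) description of $\Delta'$.

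The second step is to propose the actual formula for $\frak{m}$. The natural guess, following the Fourier–Mukai philosophy, is a single expression symmetric under both Weyl groups $\fS_k$ and $\fS_{n,k}$ simultaneously, built from theta functions in the Chern roots $x_1,\dots,x_k$ of the tautological bundle on $X$, the Chern roots of the tautological bundles on $X'$, the equivariant parameters, and the K\"ahler parameters — together with a sum over trees of the type appearing in the $\mathrm{Hilb}(\matC^2)$ abelianization formula \cite{EllHilb}. I would then verify that the restriction $i_\lambda^*$, which geometrically amounts to specializing the $X'$-Chern roots to the fixed point $\lambda$ (equivalently, to products of the dual equivariant/K\"ahler parameters as read off from the Young diagram, cf.\ the box-labeling picture), collapses the tree-sum and the $\fS_{n,k}$-symmetrization and returns exactly the formula for $\Stab_\sigma({\bf p})$; and symmetrically that $i_{\bf p}^*$ returns $\Stab'_{\sigma'}(\lambda)$. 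Because the elliptic stable envelope is uniquely characterized by its quasiperiodicity, its support/triangularity on the diagonal, and its normalization $T_{\bf p,p}$, it is enough to check that $i_\lambda^*(\frak{m})$ satisfies those three properties: holomorphicity and quasiperiodicity are inherited from $\frak{m}$, and the diagonal normalization plus the triangular vanishing at off-diagonal fixed points is a finite check once the specialization is carried out.

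A third, lighter step is to deduce Corollary~\ref{corth1}: evaluating $\frak{m}$ at a pair of fixed points $({\bf q}, \mu)$ in two ways — first restrict along $i_\lambda$ then to $\widehat\Or_{\bf q}$, versus first restrict along $i_\mu$ then to $\widehat\Or_\lambda$ — gives $T_{{\bf p},{\bf q}}$ one way and $T'_{\lambda,\mu}$ the other, and comparing with the normalizations yields (\ref{coinc1}); the transposition is exactly the asymmetry between restricting in the first versus the second factor. I would also record, as a sanity check, that specializing $n=4,k=2$ reproduces the 4-term theta identity promised in the introduction.

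The main obstacle will be Step~2: guessing the correct closed form for $\frak{m}$, in particular getting the tree-sum and the interplay of the two symmetrizations right, and then proving that the specialization $i_\lambda^*$ really does collapse it to the known $\Stab$ formula. The difficulty is precisely the one flagged in the text — in the abelianization of $X'$ the fibre of the abelianization map is not a point, which is what forces the sum over trees — so the restriction computation is not a routine substitution but requires showing a nontrivial combinatorial identity (a partial-fractions/residue identity for theta functions over spanning trees of the relevant graph) holds after specialization. A secondary obstacle is verifying that $\frak{m}$ descends through the complicated gluing $\Delta'$ on the $X'$ side; I expect this to follow from the compatibility conditions already built into the construction of $\Stab'$ in Theorem~\ref{mainth}, but it must be checked chart by chart along the $1$-dimensional orbit data.
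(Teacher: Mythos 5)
Your overall framing is right, but the core of your plan---Step 2, writing down an explicit closed-form Mother function on $X\times X'$ and checking both restrictions by substitution---is a genuine gap, and it is not how the paper proceeds. No such closed formula is known for $k\geq 2$; the paper only produces one in the abelian case $k=1$ (Theorem \ref{thkone}), and for general $k$ it deliberately avoids the problem. The paper's route is the reverse of yours: it first proves the transposition identity of restriction matrices, $\mathbf{T}'_{\lambda,\mu}=\mathbf{T}_{\mathbf{q},\mathbf{p}}$ under $\kappa$, by showing that the matrix $\mathbf{T}'_{\lambda,\mu}$ satisfies the three conditions of the uniqueness theorem for the stable envelope of $X$ (Theorem \ref{uqth}) --- which is tractable precisely because $X$ is GKM --- and only \emph{then} defines the Mother function abstractly as
$$
\tilde{\frak{m}}=\sum_{\mathbf{p},\mathbf{q}}(\mathbf{T})^{-1}_{\mathbf{q},\mathbf{p}}\,\mathbf{Stab}(\mathbf{p})\,\mathbf{Stab}'(\mathsf{bj}^{-1}(\mathbf{q})),
$$
pulled back along the elliptic Chern class embedding $\tilde{\mathbf{c}}$ of $\mathrm{Ell}_{\bT\times\bT'}(X\times X')$ into a product of symmetric powers of $E$. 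With the matrix identity in hand, the two restriction properties of $\frak{m}$ are then tautological. Your proposal has no substitute for this move: without an explicit candidate for $\frak{m}$, the restriction computation you describe cannot be started.

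A second, related underestimate: you write that the triangularity and normalization checks are ``a finite check once the specialization is carried out,'' and that holomorphicity is ``inherited from $\frak{m}$.'' In the paper these checks are the bulk of the work and cannot be inherited from anything, since $\frak{m}$ is built \emph{after} them. Verifying that $\mathbf{T}'_{\lambda,\mu}$ satisfies condition 1) requires the GKM gluing identities $\mathbf{T}'_{\lambda,\mu}|_{u_i=u_j}=\mathbf{T}'_{\nu,\mu}|_{u_i=u_j}$, proved via the refined abelianization formula (Proposition \ref{refin-formula}) and a cancellation-of-trees lemma (Lemma \ref{lemma-cancel}); and condition 3) (holomorphicity in the $u_i$) is not combinatorial at all---it is deduced from the location of singularities of the quantum difference equation of $X'$ (Corollary \ref{cor1}, via the pole subtraction theorem of \cite{AOelliptic}) combined again with tree cancellation. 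Your instinct to invoke the uniqueness characterization of the stable envelope is the right one, but it must be applied to the $X'$-restriction data viewed as a candidate section on the $X$ side, not to restrictions of a hypothetical $\frak{m}$.
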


The existence of the Mother function was already predicted by Aganagic and Okounkov in the original paper \cite{AOelliptic}. This paper originated from our attempt to check their conjecture and construct the mother function for the simplest examples of dual quiver varieties.

\subsection{Relation to ($\frak{gl}_n$, $\frak{gl}_m$)-duality}
The 3D-mirror symmetry for $A$-type quiver varieties is closely related with the so called $(\frak{gl}_n$, $\frak{gl}_m$)-duality in representation theory. For the case of $X$, which is $A_{1}$-quiver variety and $X'$ which is $A_{n-1}$ quiver variety, we are dealing with a particular example of ($\frak{gl}_n$, $\frak{gl}_2$)-duality (i.e., $m=2$).

Let $\matC^{2}(u)$ be the fundamental evaluation module with evaluation parameter $u$ of the quantum affine algebra $\mathcal{U}_\hbar(\widehat{\frak{gl}}_2)$. Similarly, let $\bigwedge^{\!k}_{\hbar}\matC^{n}(a)$ be the $k$-th fundamental evaluation module with the evaluation parameter $a$ of quantum affine algebra 
$\mathcal{U}_\hbar(\widehat{\frak{gl}}_n)$. Recall that the equivariant K-theory of quiver varieties are naturally equipped with an action of quantum affine algebras \cite{Nakfd}. In particular, for $X=T^{*}Gr(k,n)$ we have isomorphism of weight subspaces in $\mathcal{U}_\hbar(\widehat{\frak{gl}}_2)$-modules:
\be \label{sl2mods}
K_{\bT}(X)\cong \textrm{weight $k$ subspace in}\ \ \matC^2(u_1)\otimes \cdots \otimes \matC^2(u_n)
\ee
In geometry, the evaluation parameters $u_i$ are identified with equivariant parameters of torus $\bT$. Similarly, the dual variety $X^{'}$ is related to representation theory of $\mathcal{U}_\hbar(\widehat{\frak{gl}}_n)$:
\be \label{slnmods}
K_{\bT^{'}}(X')\subset \bigwedge_{\hbar}\nolimits^{\!\!k} \matC^{n}(a_1) \otimes \bigwedge_{\hbar}\nolimits^{\!\!n-k} \matC^{n}(a_2)
\ee
the corresponding weight subspace is spanned by the following vectors
$$
K_{\bT^{'}}(X')= Span \{ (e_{i_1}\wedge\dots \wedge e_{i_k}) \otimes (e_{j_1}\wedge\dots \wedge e_{j_{n-k}}), \{i_1,\dots i_k,j_1,\dots,j_{n-k}\}={\bf{n}}\},
$$
where $e_i$ is the canonical basis in $\matC^n$. So that both spaces have dimension $n!/(k! (n-k)!)$.

Let us recall that  the elliptic stable envelopes features in the representation theory as a building block for solutions of quantum Knizhnik-Zamilodchikov equations and quantum dynamical equations associated to affine quantum groups \cite{EV}.
The integral solutions of these equations have the form \cite{AOBethe,konno2,Push1,Push2}:
$$
\Psi_{p,q} \sim \int\limits_{C} \prod\limits_{i} dx_i\,  \Phi_p (x_1,\dots,x_n) \bStab_q (x_1,\dots,x_n)
$$
Here $\Psi_{p,q}$ represents the matrix of fundamental solution of these equations in some basis. The functions $\Phi_p (x_1,\dots,x_n)$ 
are the so called master functions 
and $\bStab_q (x_1,\dots,x_n)$ denotes the elliptic stable envelope of the fixed point (elliptic weight function). The variables of integration $x_i$ correspond to the  Chern roots of tautological bundles. 

Theorem \ref{mothfun1} implies, in particular, that 3D mirror symmetry for the pair $(X, X^{'})$ identifies $\mathcal{U}_\hbar(\widehat{\frak{gl}}_2)$ 
solutions in (\ref{sl2mods}) with $\mathcal{U}_\hbar(\widehat{\frak{gl}}_n)$ solutions in (\ref{slnmods}). Under this identification the evaluation parameters
turn to dynamical parameters of the dual side, so that the quantum Knizhnik-Zamolodchikov equations and dynamical equations change their roles. This way, our results suggest a new geometric explanation of 
($\frak{gl}_n$, $\frak{gl}_m$) and bispectral  dualities \cite{bisp,zotovzen,toled}.

\subsection{Further progress}
In this final section we would like to overview recent progress in the study of $3D$-mirror symmetry and elliptic stable envelopes made since the first release of this paper. 

In his last two papers \cite{Ok1,Ok2} A. Okounkov proves that the elliptic stable envelopes exist for very general examples of symplectic varieties, 
improving the results of the original paper \cite{AOelliptic} dealing only with quiver varieties. Applications of the elliptic stable envelopes to problems in enumerative geometry, such us constructing integral solutions of the quantum differential and difference equations, description of monodromies of these equations, etc., are the central topics of these papers.    

In particular, an interesting class of varieties for which the stable envelopes exists (by \cite{Ok1}) is given by the Cherkis-Nakajima-Takayama bow varieties \cite{NakajimaBows}. Unlike quiver varieties, the bow varieties are closed under $3D$-mirror symmetry, i.e., $3D$-mirror of a bow variety is a bow variety again. For instance, the mirror $X'$ for $X=T^{*}Gr(k,n)$ is a bow variety for every value $0\leq k \leq n$ (note that $X'$ is a bow variety but not a quiver variety if $n<2k$, which is why we consider only the ``quiver'' case $n\geq 2k$ in this paper).  
It is thus very natural to study the elliptic stable envelope classes  and the corresponding Mother functions for the bow varieties. This investigation is currently pursued in \cite{RS}. 

The results obtained in our paper were further generalized to the case of $X$ given by the cotangent bundles over complete flag varieties of type  $A_n$ in \cite{RSVZ2}. This result is further generalized to flag varieties of arbitrary type in \cite{RW}.  In \cite{ZS}  Theorem \ref{mothfun1} was proved for the hypertoric varieties, see also \cite{Zijtor} for the toric case. In particular, the Mother function for the hypertoric varieties can be written very explicitly, see  Theorem 6.4 in \cite{ZS}. The categorical generalization of Theorem \ref{mothfun1} for hypertoric varieties is recently proposed in  \cite{McBSheshYau}. In this case the elliptic cohomology of $X$ is substituted by the category of coherent sheaves on the spaces of loops in $X$ and  $\frak{m}$ is substituted by the kernel of a Fourier-Mukai transform describing the mirror symmetry. This leads to a possible categorification of the elliptic stable envelopes.

Alternative proofs of our results, based on analysis of the vertex functions and $q$-difference equations was given by H. Dinkins \cite{Dink5,Dink4}. Applications of $3D$-mirror symmetry in enumerative geometry of threefolds were also considered in \cite{HL}.   An approach to $3D$-mirror symmetry based on the theory of quantum opers is investigated by Koroteev-Zeitlin see \cite{KZop} for the current progress. 

The $3D$-mirror symmetry for the K-theoretic stable envelope (which are limits of the elliptic ones) is investigated in the ongoing project \cite{KS1,KS2}. We expect that this work results in new geometric theory of the quantum differential and quantum difference equations associated with symplectic varieties.

\section*{Acknowledgements}
First and foremost we are grateful to M.Aganagic and A.Okounkov for sharing their ideas with us. During the 2018 MSRI program ``Enumerative Geometry Beyond Numbers'' the authors learned from  A.Okounkov about his explicit formula for the Mother function in the hypertoric case. This very formula triggered our curiosity and encouraged us to look for non-abelian examples of these functions.  

The work of R.Rimányi is supported by the Simons Foundation grant 523882.
The work of A.Smirnov is supported by RFBR grant 18-01-00926 and by AMS travel grant. The work of A.Varchenko is supported by
NSF grant DMS-1665239. The work of Z.Zhou is supported by FRG grant 1564500.

\section{Overview of equivariant elliptic cohomology}
We start with a pedestrian exposition of the equivariant elliptic cohomology. For more detailed discussions we refer to \cite{ell1,ell2,ell3,ell4,ell5,ell6}. 

\subsection{Elliptic cohomology functor}

Let $X$ be a smooth variety endowed with an action of torus $\bT\cong (\matC^{\times})^r$. We say $X$ is a $\bT$-variety. Recall that taking spectrums of the equivariant cohomolory and K-theory, $\Spec H^*_\bT(X)$ can be viewed as an affine scheme over the Lie algebra of the torus $\Spec H^*_\bT ( \pt ) \cong \matC^r$, and $\Spec K_\bT(X)$ is an affine scheme over the algebraic torus $\Spec K_\bT (\pt) \cong  (\matC^\times)^r$. Equivariant elliptic cohomology is an elliptic analogue of this viewpoint. 

Let us fix an elliptic curve
$$
E=\matC^{\times}/q^{\matZ},
$$
i.e., fix the modular parameter $q$. The equivariant elliptic cohomology is a covariant functor:
$$
\textrm{Ell}_{\bT}: \{ \bT \textrm{-varieties} \} \rightarrow \{ \textrm{schemes}\},
$$
which assigns to a $\bT$-variety $X$ certain scheme $\textrm{Ell}_{\bT}(X)$.
For example, the equivariant elliptic cohomology of a point is
$$
\textrm{Ell}_{\bT}(\pt)=\bT/q^{\textrm{cochar}(\bT)} \cong E^{\dim(\bT)}.
$$ 
We denote this abelian variety by $\cE_{\bT}:=\textrm{Ell}_{\bT}(\pt)$.
We will refer to the coordinates on $\cE_{\bT}$ (same as coordinates on $\bT$) as  {\it equivariant parameters}.

Let $\pi: X\to \pt$ be the canonical projection to a point. The functoriality of the elliptic cohomology provides the map $\pi^{*}: \textrm{Ell}_{\bT}(X)\rightarrow \cE_{\bT}$. For each point $t\in \cE_\bT$, we take a small anallytic neighborhoods $U_t$, which is isomorphic via the exponential map to a small analytic neighborhood in $\matC^r$. Consider the sheaf of algebras
$$
\mathscr{H}_{U_t} := H^\bullet_\bT (X^{\bT_t}) \otimes_{H^\bullet_\bT (\pt)} \cO^{\mathrm{an}}_{U_t},
$$
where
$$
T_t := \bigcap_{\substack{\chi \in \mathrm{char} (\bT), \chi(t) = 0 }} \ker \chi \subset \bT. 
$$
Those algebras glue to a sheaf $\mathscr{H}$ over $\cE_\bT$, and we define $\textrm{Ell}_\bT (X) := \Spec_{\cE_\bT} \mathscr{H}$. The fiber of $\textrm{Ell}_\bT (X)$ over $t$ is obtained by setting local coordinates to $0$, as described in the following diagram \cite{AOelliptic}:

\be
\label{abdiag}
\ee
\vspace{-1.3cm}
\[
\xymatrix{
	\Spec H^{\bullet}(X^{\bT_{t}})  \ar@{^{(}->}[r] \ar[d]^{\pi^* } & \Spec H_\bT^{\bullet}(X^{\bT_{t}}) \ar[d]  & (\pi^*)^{-1} (U_t) \ar[l] \ar[r] \ar[d] &  \ar[d]^{\pi^* } \textrm{Ell}_{\bT}(X)  \\
	\{t\}   
	\ar@{^{(}->}[r]^{} & \matC^r  & U_t \ar[l] \ar[r] & 
	\cE_{\bT}. }
\]
This diagram describes a structure of the scheme $\textrm{Ell}_\bT (X)$
and gives one of several definitions of elliptic cohomology.

\begin{Example} \label{exam1}
	Let us consider a two-dimensional vector space $V=\matC^{2}$ with coordinates $(z_1,z_2)$, and a torus $\bT=(\matC^{\times})^2$ acting on it by scaling the coordinates: $(z_1,z_2)\to (u_1 z_1, u_2 z_2)$. Let us set $X=\mathbb{P}(V)$. The action of $\bT$ on $V$ induces a structure of  $\bT$-space on $X$. We have $\cE_{\bT}=E\times E$ and the equivariant parameters $u_1$ and $u_2$ represent the coordinates on the first and the second factor. Note that for a generic point $t=(u_1,u_2)\in \cE_{\bT}$ the fixed set $X^{\bT_t}$ consists of two points, which in homogeneous coordinates of $\mathbb{P}(V)$ are:
	$$
	p=[1:0], \qquad q=[0:1]. 
	$$  
	The stalk of $\mathscr{H}$ at $t$ is $H_\bT^\bullet (p \cup q) \otimes_{H_\bT^\bullet} \cO_{\cE_{\bT}, t}$, and the fiber is $H^{\bullet}(p \cup q)$. We conclude, that over a general point $t \in \cE_{\bT}$ the fiber of $\pi^*$ in (\ref{abdiag}) consists of two points. 
	
	At the points $t=(u_1,u_2)$ with $u_1=u_2$ the torus $\bT_{t}$ acts trivially on $X$, thus locally the sheaf $\mathscr{H}$ looks like
	$$
	H^\bullet_\bT (X^{\bT_t}) = H^\bullet_\bT (\mathbb{P}^1) = \matC [\delta u_1, \delta u_2, z] / (z - \delta u_1) (z - \delta u_2),
	$$
	where $\delta u_1$, $\delta u_2$ are local coordinates centered at $x$. Taking $\Spec$, this is the gluing of two copies of $\matC^2$ along the diagonal. Overall we obtain that
	$$
	\textrm{Ell}_{\bT}(X)= \Or_{p} \cup_\Delta \Or_{q} ,
	$$
	where $\Or_{p}\cong \Or_{q}\cong \cE_{\bT}$, and $\Delta$ denotes the gluing of these two abelian varieties along the diagonal
	$$
	\Delta=\{(u_1,u_2) \mid u_1=u_2\} \subset \cE_{\bT}.
	$$
\end{Example}

\subsection{GKM varieties}
We assume further that the set of fixed points $X^{\bT}$ is a finite set of isolated points. We will only encounter varieties of this type in our paper. In this case, for a generic one-parametric subgroup $\bT_{t}\subset \bT$ we have
$$
X^{\bT_{t}}=X^{\bT}. 
$$  
By the localization theorem, we know that the irreducible components of $\textrm{Ell}_\bT(X)$ are parameterized by fixed points $p\in X^\bT$, and each isomorphic to the base $\cE_\bT$. Therefore, similarly to Example \ref{exam1} we conclude that set-theoretically,  $\textrm{Ell}_{\bT}(X)$ is union of $|X^{\bT}|$ copies of $\cE_{\bT}$:
\be \label{elcoh}
\textrm{Ell}_{\bT}(X)=\Big(\coprod\limits_{p\in X^{\bT}}\, \Or_{p}\Big) /\Delta,
\ee
where $\Or_{p}\cong \cE_{\bT}$ and $/\Delta$ denotes the gluing of these abelian varieties along the subschemes $\Spec H^{\bullet}(X^{\bT_{t}}) $ corresponding to substori $\bT_{t}$ for which the fixed sets $X^{\bT_{t}}$ are larger than $X^{\bT}$. We call $\Or_{p}$ the $\bT$-orbit associated to the fixed point $p$ in $\textrm{Ell}_{\bT}(X)$.

In general, to describe the scheme structure of $\textrm{Ell}_\bT (X)$ in terms of the gluing data (\ref{elcoh}) can be quite involved. There is, however, a special case when it is relatively simple.

\begin{Definition} { \it 
		We say that $\bT$-variety $X$ is a GKM variety if it satisfies the following conditions:	
		\begin{itemize}
			\item $X^{\bT}$ is finite,
			\item for every two fixed points $p,q\in X^{\bT}$ there is no more than one $\bT$-equivariant curve connecting them. 
		\end{itemize}
	}	
\end{Definition}

Note that by definition, a GKM variety contains finitely many $\bT$-equivariant compact curves (i.e., curves starting and ending at fixed points). We note also that all these curves are rational $C\cong \mathbb{P}^1$ because $\bT$-action on $C$  exists only in this case.

For a compact curve $C$ connecting fixed points 
$p$ and $q$, let $\chi_{C} \in \textrm{Char}(\bT)= \textrm{Hom}(\cE_{\bT},E)$ be the character of the tangent space 
$T_p C$. For all points $t$ on the hyperplane $\chi^{\perp}_{C} \subset \cE_{\bT}$ we thus have $p,q,\in C\subset X^{\bT_{t}} $. As in example \ref{exam1}, this means that in (\ref{elcoh})
the $\bT$-orbits $\Or_{p}$ and $\Or_{q}$ in the scheme $\textrm{Ell}_{\bT}(X)$ are glued along the common hyperplane
$$
\Or_{p} \supset \chi^{\perp}_{C} \subset \Or_{q}.
$$
Note that the character of $T_{q} C$ is $-\chi_{C}$ 
so it does not matter which end point we choose as the first. In sum, we have:
\begin{Proposition} \label{progkm}
	{ \it If $X$ is a GKM variety, then}
	$$
	\textrm{Ell}_{\bT}(X)=\Big(\coprod\limits_{p\in X^{\bT}}\, \Or_{p}\Big) /\Delta, 
	$$	
	{\it where $/\Delta$ denotes the intersections of $\bT$-orbits $\Or_{p}$ and $\Or_{q}$ 
		along the hyperplanes}
	$$
	\Or_{p} \supset \chi^{\perp}_{C} \subset \Or_{q}, 
	$$
	{\it for all $p$ and $q$ connected by an equivariant curve $C$ where $\chi_{C}$ is the $\bT$-character of the tangent space $T_{p} C$. 
	The intersections of orbits $\Or_p$ and $\Or_q$ are transversal and hence the scheme $\mathrm{Ell}_\bT (X)$ is a variety with simple normal crossing singularities.  }
\end{Proposition}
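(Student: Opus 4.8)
The plan is to analyse the sheaf $\mathscr{H}$ on the base $\cE_\bT$ locally and to reduce every local computation to the model case of Example~\ref{exam1}. By the set-theoretic description (\ref{elcoh}) we already have $\textrm{Ell}_\bT(X)=\big(\coprod_{p\in X^\bT}\Or_p\big)/\Delta$ with the gluing $\Delta$ concentrated over the locus of $t\in\cE_\bT$ for which $X^{\bT_t}\supsetneq X^\bT$. The first step is to identify this ``resonance'' locus: if $X^{\bT_t}\supsetneq X^\bT$, then $X^{\bT_t}$ has a positive-dimensional component, which (having only finitely many $\bT$-fixed points) carries a positive-dimensional $\bT$-orbit and hence, passing to closures and using the GKM hypothesis, contains an equivariant curve $C\cong\mathbb{P}^1$ fixed pointwise by $\bT_t$; thus the character $\chi_C$ of $T_pC$ vanishes on $\bT_t$, i.e.\ $t\in\chi_C^\perp$. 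Conversely $C\subset X^{\bT_t}$ for every $t\in\chi_C^\perp$. Hence the resonance locus is the finite union of codimension-one subtori $D:=\bigcup_C\chi_C^\perp$, and over $\cE_\bT\setminus D$ the sheaf $\mathscr{H}$ is simply the structure sheaf of $\coprod_p\Or_p$.

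Next I would examine a generic point $t_0$ of a single component $\chi_C^\perp$. After removing finitely many further loci one has $X^{\bT_{t_0}}=C\sqcup(X^\bT\setminus\{p,q\})$, where $p,q$ are the endpoints of $C$, so
$$
\mathscr{H}_{t_0}=\Big(H^\bullet_\bT(C)\oplus\bigoplus_{r\ne p,q}H^\bullet_\bT(r)\Big)\otimes_{H^\bullet_\bT(\pt)}\cO^{\mathrm{an}}_{\cE_\bT,t_0}.
$$
The summands over the isolated fixed points $r$ contribute the orbits $\Or_r$ through $t_0$, unglued, while for the curve $C\cong\mathbb{P}^1$ equivariant localization produces exactly the ring of Example~\ref{exam1},
$$
H^\bullet_\bT(C)=\matC[\delta u_1,\delta u_2,z]/(z-\delta u_1)(z-\delta u_2),\qquad \chi_C=\delta u_1-\delta u_2,
$$
whose $\Spec$ is $\Or_p\cup\Or_q$ glued transversally along the hyperplane $\{\chi_C=0\}=\chi_C^\perp$ (the two sheets being $\{z=\delta u_1\}$ and $\{z=\delta u_2\}$). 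This establishes the claimed local structure near such $t_0$: $\Or_p$ and $\Or_q$ meet transversally along $\chi_C^\perp$, with $\chi_C$ the $\bT$-character of $T_pC$, and, as already noted in the text, the opposite character at $q$ cuts out the same hyperplane.

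Then I would globalise: since a generic point of $\chi_C^\perp$ forces the gluing of $\Or_p$ and $\Or_q$ along $\chi_C^\perp$ and gluing is a closed condition, this identification holds along all of $\chi_C^\perp$, and running over every equivariant curve gives the stated $\Delta$. For transversality and the normal-crossing conclusion at a general point $t_0$, one observes that $\mathscr{H}_{t_0}$ is, up to the trivial factors coming from the fixed points lying on no equivariant curve through $t_0$, the algebra $H^\bullet_\bT(Y)$ of $Y:=X^{\bT_{t_0}}$, itself a smooth GKM $\bT$-variety whose equivariant curves are exactly the $C$ with $t_0\in\chi_C^\perp$; by the GKM (Chang--Skjelbred) description, $H^\bullet_\bT(Y)=\{(f_p)\mid f_p\equiv f_q\bmod\chi_C\text{ along each edge }C\}$, whose $\Spec$, completed at $t_0$, is the union of the relevant $\Or_p$'s glued pairwise along the hyperplanes $\chi_C^\perp$. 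Each $\Or_p\cong\cE_\bT$ is an abelian variety, hence smooth, and the pairwise gluings are transverse by the previous step.

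The main obstacle is this last step at the deepest strata of $D$, where several $\chi_{C_i}^\perp$ are concurrent: upgrading ``pairwise transverse'' to ``simple normal crossings'' requires the tangent characters $\chi_{C_i}$ of the equivariant curves meeting at a fixed point $p$ to be in sufficiently general position for $\bigcup_{C\ni p}\chi_C^\perp$ to be a normal-crossing divisor inside $\Or_p$ --- a property of the one-skeleton that has to be read off from the explicit $\bT$-weights rather than from the bare GKM axioms. For the varieties relevant to this paper, $X=T^*Gr(k,n)$ with $n\ge 2k$ and its mirror $X'$, it can be verified directly from the combinatorial description of the fixed points and the weights; the conclusions that are robust for an arbitrary GKM variety are the explicit gluing description together with the transversality of the pairwise intersections.
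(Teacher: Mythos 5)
Your argument is essentially the paper's: both reduce to the stalkwise description $\mathscr{H}_t\cong H^\bullet_\bT(X^{\bT_t})\otimes\cO_{\cE_\bT,t}$, invoke the Chang--Skjelbred/GKM presentation of $H^\bullet_\bT(X^{\bT_t})$ as tuples $(f_p)$ with $f_p\equiv f_q \bmod \chi_C$, and use compatibility with equivariant localization to glue the local pictures; your generic-wall analysis via Example~\ref{exam1} is just a concrete instance of this. The one place you diverge is the ``obstacle'' you flag about simple normal crossings at deep strata, and there your worry is aimed at the wrong object: what must be in general position is not the arrangement $\bigcup_{C\ni p}\chi_C^\perp$ inside a single orbit $\Or_p$, but the linear forms cutting out the branches $\ft_p$ inside $\Spec H^\bullet_\bT(X^{\bT_t})$, which lives in a larger ambient space where concurrent walls acquire independent equations (exactly as the extra coordinate $z$ makes $z-u_1$, $z-u_2$, $z-u_3$ independent for $\mathbb{P}^2$, even though $u_1=u_2$, $u_2=u_3$, $u_1=u_3$ are concurrent in $\cE_\bT$). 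With that reading the normal-crossing statement is the content of the cited GKM property and needs no case-by-case weight check; note the paper itself does no more than cite \cite{GKM} and assert the transversality. A last minor point: for non-compact $X$ a one-dimensional orbit closure need not be a $\mathbb{P}^1$, but such orbits contribute no gluing since the corresponding component of $X^{\bT_t}$ retracts onto a single fixed point, so your identification of the resonance locus with $\bigcup_C\chi_C^\perp$ over compact $C$ survives.
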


\begin{proof}
	Locally around $t\in \cE_\bT$, the stalk of $\mathscr{H}$ is given by $H_\bT^\bullet (X^{\bT_t}) \otimes \cO_{\cE_\bT, t}$. Let $s \in \cE_\bT$ be another point, such that $T_s \supset T_t$. We have by $\bT_s$-equivariant localization,
	\begin{equation} \label{localization}
	H^\bullet_\bT (X^{\bT_t}) \otimes \mathrm{Frac} (H^\bullet_{\bT_s} (\pt) ) \cong H^\bullet_\bT (X^{\bT_s}) \otimes \mathrm{Frac} (H^\bullet_{\bT_s} (\pt) ).
	\end{equation}
	In other words, if $U_t$ and $U_s$ are small analytic neighborhoods around $t$ and $s$, such that $U_s \subset U_t$, then by definition of the elliptic cohomlogy, the restriction map of $\cH$ from $U_t$ to $U_s$ is equivalent to the isomorphism given by the $\bT_s$-equivariant localization. 
	
	By the property of equivariant cohomology of GKM varieties \cite{GKM}, the variety $\Spec H_\bT^\bullet (X^{\bT_t})$ is the union of $\ft_p$'s along hyperplanes $\chi_C^\perp$, where $\ft_p \cong \matC^r$ are Lie algebras of the torus, associated to fixed points. Moreover, the intersection of $\ft_p$'s for $p\in X^\bT$ is transversal. More precisely, we have the exact sequence
	$$
	\xymatrix{
    0 \ar[r] &	H^\bullet_\bT (X^{\bT_t}) \ar[r] &  H^\bullet_\bT (X^\bT) \ar[r] & H^\bullet_\bT (X_1^{\bT_t}, X^\bT),
}
	$$
	where $X_1^{\bT_t}$ is the 1-skeleton of $X^{\bT_t}$ under the $\bT$-action, and the last map is given by $\chi_C$ for all 1-dimensional orbits in $X^{\bT_t}$. 
	
	We see that the exact sequence is compatible with the localization isomorphism (\ref{localization}), which means that analytically, the local descriptions of $\mathrm{Ell}_\bT (X)$ glue over $\cE_\bT$, and globally $\mathrm{Ell}_\bT (X)$ can be described exactly as in the proposition. 
\end{proof}

Here by ``gluing", we mean the pushout in the category of schemes, in the sense of \cite{Schw}. 

The classical examples of GKM varieties include Grassmannians or more generally, partial flag varieties. For non-GKM varieties the structure of subschemes 
$\Spec H^{\bullet}(X^{\bT_{t}})$ and intersection of orbits in (\ref{elcoh}) can be more complicated. In particular, more than two orbits can intersect along the same hyperplane.

\subsection{Extended elliptic cohomology}
We define 
\be
\label{Kah}
\cE_{\textrm{Pic}(X)} :=\textrm{Pic}(X)\otimes_{\matZ} E \cong E^{\,\dim(\textrm{Pic}(X))}.
\ee 
For Nakajima quiver varieties $\textrm{Pic}(X)\cong \matZ^{|Q|}$ and thus $\cE_{\textrm{Pic}(X)}\cong E^{|Q|}$, where $|Q|$ denotes the number of vertices in the quiver. We will refer to the coordinates in this abelian variety as {\it K\"ahler parameters}. We will usually denote the K\"ahler parameters by the symbol $z_i$, $i=1,\dots,|Q|$.

The extended $\bT$-orbits are defined by
$$
\widehat{\Or}_{p}:={\Or}_{p}\times \cE_{\textrm{Pic}(X)} ,
$$
and the extended elliptic cohomology by
$$
\textsf{E}_{\bT}(X):=\textrm{Ell}_{\bT}(X)\times \cE_{\textrm{Pic}(X)}.
$$
In particular, if $X$ is GKM, $\textsf{E}_{\bT}(X)$ is a bouquet of extended orbits:
$$
\textsf{E}_{\bT}(X)=\Big(\coprod\limits_{p\in X^{\bT}}\, \widehat{\Or}_{p}\Big) /\Delta
$$
where $\Delta$ denotes the same gluing of orbits as in (\ref{elcoh}),
i.e., the extended orbits are glued only along the equivariant directions.

\subsection{Line bundles on elliptic cohomology} 

We have the following description of a line bundle on the variety $\textsf{E}_\bT (X)$. 

\begin{Proposition} \label{defel}
	
	Let $X$ be a GKM variety. 
	\indent  {\it 
		\begin{itemize}
			\item A line bundle ${\cal{T}}$ on the scheme $\textsf{E}_{\bT}(X)$ is a collection of line bundles ${\cal{T}}_p$ on extended orbits $\widehat{\Or}_{p}$, $p\in X^{\bT}$, which coincide on the intersections:
			$$
			\left.{\cal{T}}_p\right|_{\widehat{\Or}_{p}\cap \widehat{\Or}_{q}}= \left.{\cal{T}}_q\right|_{\widehat{\Or}_{p}\cap \widehat{\Or}_{q}},
			$$
			
			\item A meromorphic (holomorphic) section $s$ of a line bundle ${\cal{T}}$ is the collection of meromorphic (holomorphic) sections $s_p$ of ${\cal{T}}_p$ which agree on intersections:
			\be \label{eqseq}
			\left. s_p \right|_{\widehat{\Or}_{p}\cap \widehat{\Or}_{q}} =  \left. s_q \right|_{\widehat{\Or}_{p}\cap \widehat{\Or}_{q}}.
			\ee
	\end{itemize} }
\end{Proposition}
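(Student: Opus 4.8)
The plan is to read the Proposition as an instance of descent for quasi-coherent sheaves (Milnor--Ferrand patching) along the pushout presentation of $\textsf{E}_{\bT}(X)$ supplied by Proposition~\ref{progkm}. Recall from that Proposition that, as a scheme, $\textsf{E}_{\bT}(X)$ is the pushout --- in the sense of \cite{Schw} --- of the extended orbits $\widehat{\Or}_{p}$, $p\in X^{\bT}$, glued along the closed subschemes $\widehat{\Or}_{p}\cap\widehat{\Or}_{q}=\chi_{C}^{\perp}\times\cE_{\textrm{Pic}(X)}$, and that all these intersections are transversal, so that $\textsf{E}_{\bT}(X)$ has simple normal crossings. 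Since every structure map $\widehat{\Or}_{p}\to\cE_{\bT}\times\cE_{\textrm{Pic}(X)}$ is affine (indeed an isomorphism onto the base), it suffices to prove the statement Zariski-locally over the base: over an affine open $U\subset\cE_{\bT}\times\cE_{\textrm{Pic}(X)}$ the preimage $(\pi^{*})^{-1}(U)$ is a gluing of \emph{finitely many affine schemes along closed subschemes}, and line bundles on $\textsf{E}_{\bT}(X)$, together with their sections, are nothing but compatible families of the same over such $U$'s.

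First I would dispose of the basic two-piece case. If $Z=Z_{1}\cup_{W}Z_{2}$ is a pushout of affine schemes along a common closed subscheme $W=Z_{1}\cap Z_{2}$, then $\mathcal{O}_{Z}=\mathcal{O}_{Z_{1}}\times_{\mathcal{O}_{W}}\mathcal{O}_{Z_{2}}$ and, more generally, pullback induces an equivalence between $\mathrm{QCoh}(Z)$ and the fiber product of categories $\mathrm{QCoh}(Z_{1})\times_{\mathrm{QCoh}(W)}\mathrm{QCoh}(Z_{2})$; no cocycle datum intervenes, since $W$ is the only overlap. This equivalence carries invertible sheaves to invertible sheaves, invertibility being local on $Z$ and $\{Z_{1},Z_{2}\}$ a closed cover of $Z$. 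Likewise, a global section of a line bundle $\mathcal{T}$ on $Z$ is the same thing as a pair of sections $s_{1}$ of $\mathcal{T}|_{Z_{1}}$ and $s_{2}$ of $\mathcal{T}|_{Z_{2}}$ with $s_{1}|_{W}=s_{2}|_{W}$, because $\mathcal{T}$ itself is glued from $\mathcal{T}|_{Z_{1}}$ and $\mathcal{T}|_{Z_{2}}$ over $\mathcal{T}|_{W}$. This is precisely the asserted description when $|X^{\bT}|=2$.

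Next I would iterate over the fixed points. Order $X^{\bT}=\{p_{1},\dots,p_{N}\}$ and build $(\pi^{*})^{-1}(U)$ by attaching one orbit at a time: having glued $\widehat{\Or}_{p_{1}},\dots,\widehat{\Or}_{p_{i-1}}$ into a scheme $Z_{<i}$, attach $\widehat{\Or}_{p_{i}}$ along the closed subscheme $Z_{<i}\cap\widehat{\Or}_{p_{i}}=\bigcup_{j<i}\bigl(\widehat{\Or}_{p_{j}}\cap\widehat{\Or}_{p_{i}}\bigr)$, which by the simple normal crossings property of Proposition~\ref{progkm} is a reduced closed subscheme of $\widehat{\Or}_{p_{i}}$ --- a union of the subschemes $\chi_{C}^{\perp}\times\cE_{\textrm{Pic}(X)}$ --- satisfying the Ferrand pushout hypotheses. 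The datum required to carry the induction across the $i$-th step is an identification, over $Z_{<i}\cap\widehat{\Or}_{p_{i}}$, of the already-constructed sheaf with $\mathcal{T}_{p_{i}}$; since in the statement the $\mathcal{T}_{p}$ are asked to \emph{coincide} (not merely to be isomorphic) on each pairwise intersection, and since on a triple intersection $\widehat{\Or}_{p_{j}}\cap\widehat{\Or}_{p_{k}}\cap\widehat{\Or}_{p_{i}}$ both $\mathcal{T}_{p_{j}}$ and $\mathcal{T}_{p_{k}}$ agree with $\mathcal{T}_{p_{i}}$, these pairwise identifications are automatically consistent on the deeper strata and glue to the required identification over the whole union; no higher cocycle obstruction appears. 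Running the induction to $i=N$ and then re-gluing the local pictures over $\cE_{\bT}\times\cE_{\textrm{Pic}(X)}$ --- harmless, since the orbits are glued only along the equivariant directions, the $\cE_{\textrm{Pic}(X)}$-factor being carried along untouched --- reconstructs $\textsf{E}_{\bT}(X)$ and establishes both bullet points. The meromorphic case follows by the same argument applied to the sheaves of meromorphic sections (or, equivalently, by inverting a suitable holomorphic section).

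The main obstacle, and the only place the GKM hypothesis is genuinely used, is the bookkeeping that the successive pushouts really compute $(\pi^{*})^{-1}(U)$ and that at each step the pair $(Z_{<i},\widehat{\Or}_{p_{i}})$ meets the patching hypotheses: that $Z_{<i}\cap\widehat{\Or}_{p_{i}}$ is a genuine closed subscheme cut out compatibly on the two sides, and that the attaching morphism is affine. This is exactly what simple normal crossings --- hence Proposition~\ref{progkm}, hence the GKM property together with \cite{GKM} --- guarantees. Once that is in hand, the descent of line bundles and of their sections along the pushout is formal, and the Proposition follows.
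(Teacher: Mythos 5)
Your argument is correct and fills in exactly the justification the paper leaves implicit: Proposition \ref{defel} is stated there without proof, as a direct consequence of the pushout description of $\textsf{E}_{\bT}(X)$ furnished by Proposition \ref{progkm} (gluing of the $\widehat{\Or}_{p}$ along the transversal hyperplanes $\chi_{C}^{\perp}$, in the sense of \cite{Schw}), and your iterated Ferrand/Milnor patching along that pushout is the natural way to make this precise, including the correct observation that the literal coincidence of the $\mathcal{T}_{p}$ on pairwise intersections kills any cocycle obstruction. Two cosmetic points: the local models of $\mathrm{Ell}_{\bT}(X)$ are built from analytic neighborhoods $U_{t}$ of the base, so your localization should be taken in the analytic rather than the Zariski topology; and Milnor patching yields the fiber-product description for finite projective (in particular invertible) modules and their sections rather than for all of $\mathrm{QCoh}$, but that restricted statement is all your proof actually uses.
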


Since each orbit $\widehat \Or_p$ is isomorphic to the base $\cE_\bT \times \cE_{\textrm{Pic}(X)}$, each ${\cal{T}}_p$ is isomorphic via the pull-back along $\pi^*$ to a line bundle on the base. In practice, we often use the coordinates on the base to describe ${\cal{T}}_p$'s. 

\begin{Example}
	Characterization of line bundles and sections is more complicated for non-GKM varieties. Let $X = \mathbb{P}^1 \times \mathbb{P}^1$, with homogeneous coordinates $([x:y], [z:w])$, and  let $\matC^*$ acts on it by 
	$$
	t \cdot ([x:y], [z:w]) = ([x:ty], [z:tw]).
	$$
	There are four fixed points, but infinitely many $\matC^*$-invariant curves: the closure of $\{([x:y], [x:\lambda y])\}$ for any $\lambda \in \matC^*$ is a $\matC^*$-invariant curve, connecting the points $([1:0], [1:0])$ and $([0:1], [0:1])$. Locally near the identity $1\in \cE_{\matC^*}$, the elliptic cohomology $\mathrm{Ell}_{\matC^*} (X)$ looks like
	$$
	\Spec H_\bT^\bullet (X) = \Spec \matC [ H_1, H_2, u] / (H_1^2-u^2, H_2^2-u^2) \quad \to \quad \Spec \matC [u],
	$$
	where $u$ is the local coordinate near $1\in \cE_{\matC^*}$. $\Spec H_\bT^\bullet (X)$ consists of 4 lines in $\matC^3$ intersecting at the origin. 
	
	The gluing of 4 affine lines along the origin, as abstract schemes, would no longer be a subscheme in $\matC^3$, and hence not isomorphic to $\Spec H_\bT^\bullet (X)$. To express $\Spec H_\bT^\bullet (X)$ still as a gluing, one has to allow each orbit $\Or_p$ to have certain embedded non-reduced point at the origin. For an example of this type, see \cite{Schw}. 
\end{Example}

\subsection{Theta functions \label{thetasec}}
By Proposition \ref{defel}, to specify a line bundle ${\cal{T}}$ on $\textsf{E}_{\bT}(X)$ one needs to define line bundles ${\cal{T}}_p$
on each orbit $\widehat{\Or}_{p}$. As $\widehat{\Or}_{p}$ is an abelian variety, to fix ${\cal{T}}_p$ it suffices to describe the transformation properties of sections as we go around periods of $\widehat{\Or}_{p}$. 
In other words, to define ${\cal{T}}_p$ one needs to fix quasiperiods $w_i$ of sections
$$
s(x_i q) =  w_i s(x_i),
$$
for all coordinates $x_i$ on $\widehat{\Or}_{p}$, i.e., for all equivariant and K\"ahler parameters. 

The abelian variates $\widehat{\Or}_{p}$ are all some powers of $E$, which implies that sections of a line bundle on $\textsf{E}_{\bT}(X)$ 
can be expressed explicitly through the Jacobi theta function  associated with $E$:
$$
\theta(x):=(q x)_{\infty} (x^{1/2}-x^{-1/2}) (q/x)_{\infty}, \qquad (x)_{\infty}=\prod\limits_{i=0}^{\infty} (1-x q^i ).
$$ 
The elementary transformation properties of this function are:
\be \label{thettrans}
\theta(x q) =-\dfrac{1}{x \sqrt{q}} \theta(x), \qquad \theta(1/x)=-\theta(x).
\ee
We also extend it by linearity and define 
\be \label{thetathom}
\Theta\Big( {\sum}_i\, a_i -{\sum}_j b_j \Big) := \dfrac{\prod_i \theta(a_i)}{\prod_j \, \theta(b_j)}.
\ee
By definition, the elliptic stable envelope associated with a $\bT$-variety $X$
is a section of certain line bundle on $\textsf{E}_{\bT}(X)$ \cite{AOelliptic}. 
Thus, one can use theta-functions to give explicit formulas for stable envelopes, see Theorem~\ref{ellxth} for an example of such formulas. 

It will also be convenient to introduce the following combination:
$$
\phi(x,y)=\dfrac{\theta(x y)}{\theta(x) \theta(y)}.
$$
This function has the following quasiperiods:
$$
\phi(x q,y)=y^{-1} \phi(x q,y), \qquad \phi(x,y q) =x^{-1} \phi(x,y). 
$$
These transformation properties define the so-called Poincaré line bundle 
on the product of dual elliptic curves $E\times E^{\vee}$ with coordinates $x$ and $y$ and $\phi(x,y)$ is a meromorphic section of this bundle.

%%%%%%%%%%%%%%%%%%%%%%%%%%%%%%%%%%%%%%%%%%%%%%%%%%%%%%%%%%%%%%%%%%%%
%%%%%%%%%%%%%%%%%%%%%%%%%%%%%%%%%%%%%%%%%%%%%%%%%%%%%%%%%%%%%%%%%%%%
%%%%%%%%%%%%%%%%%%%%%%%%%%%%%%%%%%%%%%%%%%%%%%%%%%%%%%%%%%%%%%%%%%%%

\section{Elliptic Stable Envelope for $X$ \label{elx}}

In this section, we discuss algebraic variety 
$X=T^{*}Gr(k,n)$ -- the cotangent bundle over the Grassmannian of $k$-dimensional subspaces in an $n$-dimensional complex space.

\subsection{$X$ as a Nakajima quiver variety} 

We consider a Nakajima quiver variety $X$ defined by the $A_1$-quiver, with dimension ${\bf v}=k$ and framing ${\bf w}=n$. Explicitly, this variety has the following construction. Let $R= \Hom(\matC^{k},\matC^{n})$ be a vector space of complex $k\times n$ matrices. There is an obvious action of $GL(k)$ on this space, which extends to a Hamiltonian action on its cotangent bundle: 
$$
T^*R = R\oplus R^{*} \cong \Hom(\matC^{k},\matC^{n})\oplus \Hom(\matC^{n},\matC^{k}),
$$
with the Hamiltonian moment map 
$$
\mu: T^*R \to \frak{gl}(k)^*, \qquad  \mu(\textbf{j}, \textbf{i})=\textbf{i} \textbf{j}. 
$$
Then $X$ is defined as
$$
X := \mu^{-1}(0) \cap \{  \theta \textrm{-semistable points} \}/GL(k),
$$
where $\textbf{j}\in R$ and $\textbf{i}\in R^{*}$ are $n\times k$ and $k\times n$ matrices respectively. There are two choices of stability conditions $\theta<0$ and $\theta>0$. In the first case the semistable points are those pairs $(\textbf{j},\textbf{i})$ with injective $\textbf{j}$:
\be \label{thch}
\{  \theta \textrm{-semistable points} \}=\{(\textbf{j},\textbf{i})\in T^{*}R \mid \textrm{rank}(\textbf{j})=k\},
\ee
In the case $\theta>0$ the semistable points are $(\textbf{j},\textbf{i})$ with $\textbf{i}$ surjective \cite{ginzb}:
$$
\{  \theta \textrm{-semistable points} \}=\{(\textbf{j},\textbf{i})\in T^{*}R \mid \textrm{rank}(\textbf{i})=k\}.
$$
By construction $X$ is a smooth holomorphic symplectic variety.  In this paper, we choose  
$$\theta = (-1) \in \Lie_{\matR}(\bK),$$
where $\bK := U(1)$, as the stability condition defining $X$, in which case it is isomorphic to the cotangent bundle of the Grassmannian of complex $k$-dimensional vector subspaces in an $n$-dimensional space.

\subsection{Torus action on $X$}
Let $\bA = (\matC^{\times})^n$ be a torus acting on $\matC^{n}$  by scaling the coordinates:
\be
\label{corch}
(z_1,\dots, z_n) \quad \mapsto \quad (z_1 u_1^{-1},\dots, z_n u_n^{-1}),
\ee
which induces an action of $\bA$ on $T^*R$. We denote by $\matC^{\times}_{\hbar}$ the torus acting on $T^*R$ by scaling the second component:
$$
(\textbf{j},\textbf{i})\rightarrow (\textbf{j},\textbf{i}\,\hbar^{-1})
$$
We denote the whole torus $\bT=\bA \times \matC^{\times}_{\hbar}$. The action of $\bT$ preserves semistable locus of $\mu^{-1}(0)$ and thus descends to action on $X$. Simple check shows that the action of $\bA$ preserves the symplectic form on $X$, while $\matC^{\times}_{\hbar}$ scales it by $\hbar$.

Note that the action (\ref{corch}) leaves invariant $k$-dimensional 
subspaces spanned by arbitrary $k$ coordinate vectors. Thus, the set of $\bT$-fixed points $X^{\bT}$ consists of $n!/((n-k)! k!)$ points corresponding to $k$-dimensional coordinate subspaces in $\matC^n$. In other words, a fixed point ${\bf p} \in X^{\bT}$ is described by a $k$-subset in the set $\{1,2,\dots,n\}$.

\subsection{$\bT$-equivariant $K$-theory of $X$} 
Let us denote the tautological bundles on $X$ associated to $\matC^{k}$ and 
$\matC^{n}$ by $\tb$ and $\fb$ respectively. The bundle $\fb$ is a topologically trivial rank-$n$ vector bundle, because $\matC^n$ is a trivial representation of $GL(k)$. In contrast, $\tb$ is a nontrivial rank-$k$ subbundle of $\fb$. One can easily see that $\tb$ is the standard tautological bundle of $k$-subspaces on the Grassmannian. We assume that the tautological bundle splits in $K$-theory into
a sum of virtual line bundles,
\be
\label{tbun}
\tb=y_1+\cdots+y_k.
\ee
In other words, $y_i$ denote the Chern roots of $\tb$. The $\bT$-equivariant
$K$-theory of $X$ has the form:
$$
K_{\bT}(X)=\left.\matC[y^{\pm 1}_1,\dots,y^{\pm 1}_k]^{\frak{S}_{k}} \otimes \matC[u_i^{\pm 1},\hbar^{\pm 1}]\right/I
$$
where $\fS_k$ is the symmetric group of $k$ elements, and $I$ denotes the ideal of Laurent polynomials  vanishing at the fixed points, i.e., at (\ref{eval}). For our choice of stability condition,
the matrix $\textbf{j}$ representing a fixed point is of rank $k$, thus if ${\bf p}$ is a fixed point corresponding to the $k$-subset $\{{\bf p}_1,\dots,{\bf p}_k\} \subset \{1,2,\dots,n\}$, then 
\be
\label{eval}
\left. y_i \right|_{\bf p} = u^{-1}_{{\bf p}_i}, \ \ \ i=1,\dots,k.
\ee
This means that if a K-theory class is represented by a Laurent polynomial  
$f(y_i)$ then its restriction to a fixed point is given by the substitution
$\left.f(y_i)\right|_{{\bf p}}=f(u^{-1}_{{\bf p}_i})$. 

We note that for the opposite choice of the stability parameter $\theta>0$
the restriction would take the form $\left.f(y_i)\right|_{{\bf p}}=f(u^{-1}_{{\bf p}_i} \hbar^{-1})$, where the extra factor $\hbar^{-1}$ comes from the action of $\matC^{\times}_{\hbar}$ on the matrix ${\bf i}$, which is of rank $k$ for this choice of stability condition.

\subsection{Tangent and polarization bundles}
The definition of the elliptic stable envelope requires the choice of a  polarization and a chamber \cite{AOelliptic}. The polarization $T^{1/2}X$, as a virtual bundle, is a choice of the half of the tangent space. In other words,
$$
T X = T^{1/2} X + \hbar^{-1} T^{1/2} X^{*}.
$$
We choose the polarization dual to the canonical polarization (which is defined for all Nakajima varieties, see Example 3.3.3. in \cite{MO}):
\be
\label{polargr}
T^{1/2} X= \hbar^{-1} \fb^{*} \otimes \tb-\hbar^{-1} \tb^{*} \otimes  \tb.
\ee
Expressing $TX$ through the Chern roots by (\ref{tbun}) and restricting it to a fixed point ${\bf p}$ by (\ref{eval}), we find the $\bT$-character of the tangent space at ${\bf p}$ equals:
\be \label{tangr}
T_{{\bf p}} X=\sum\limits_{{i\in {\bf p}} \atop {j\in {\bf n}\setminus {\bf p}}}\Big( \dfrac{u_i}{u_j} +\hbar^{-1} \dfrac{u_j}{u_i} \Big),
\ee
where ${\bf p}$ denotes the $k$-subset in ${\bf n}=\{1,\dots,n\}$.

The definition of the stable envelope also requires the choice of a chamber, or equivalently, a cocharactor of the torus $\bA$. We choose $\sigma$ explicitly as
\be
\label{chamgr}
\sigma=(1,2,\dots,n) \in \Lie_{\matR}(\bA).
\ee 
The choice of $\sigma$ fixes the decomposition $T_{\bf p} X=N^{+}_{\bf p}\oplus N^{-}_{\bf p}$, where $N^{\pm}_{\bf p}$ are the subspaces whose $\bA$-characters take positive or negative values on $\sigma$. 
From (\ref{tangr}) we obtain:
\be \label{repat}
N^{-}_{\bf p}=\sum\limits_{{{i\in {\bf p},} \atop {j\in {\bf n}\setminus {\bf p}} ,}\atop {i<j}}\dfrac{u_i}{u_j} + \hbar^{-1} \sum\limits_{{{i\in {\bf p},} \atop {j\in {\bf n}\setminus {\bf p}} ,}\atop {i>j}}\dfrac{u_j}{u_i}, \qquad N^{+}_{\bf p}=\sum\limits_{{{i\in {\bf p},} \atop {j\in {\bf n}\setminus {\bf p}} ,}\atop {i>j}}\dfrac{u_i}{u_j} + \hbar^{-1} \sum\limits_{{{i\in {\bf p},} \atop {j\in {\bf n}\setminus {\bf p}} ,}\atop {i<j}}\dfrac{u_j}{u_i}.
\ee

\subsection{Elliptic cohomology of $X$ \label{elhohX}}
Let us first note that $X$ is a GKM variety. Two fixed points 
${\bf p}$, ${\bf q}$ are connected by an equivariant curve $C$ if and only if the corresponding $k$-subsets differ by one index ${\bf p}={\bf q}\setminus \{i\} \cup \{j\}$. In this case the $\bT$-character of the tangent space equals:
$$
T_{\bf p} C = u_j / u_i.
$$
By Proposition \ref{progkm} we conclude that the extended elliptic cohomology scheme equals:
\be \label{extcoh}
\textsf{E}_{\bT}(X)= \Big(\coprod\limits_{{\bf p} \in X^{\bT}} \, \widehat{\Or}_{{\bf p}} \Big)  /\Delta 
\ee 
with $\widehat{\Or}_{{\bf p}}\cong \cE_{\bT}\times \cE_{\textrm{Pic}(X)}$ and $/\Delta$ denotes gluing of abelian varieties $\widehat{\Or}_{{\bf p}}$ and $\widehat{\Or}_{{\bf q}}$ with 
${\bf p}={\bf q}\setminus \{i\} \cup \{j\}$ along the hyperplanes $u_i=u_j$. 

By definition, the elliptic stable envelope of a fixed point ${\bf p}$ is a section of the twisted Thom class of the polarization: 
\be \label{tlinebun}
{\cal{T}}({\bf p})  = \Theta(T^{1/2} X) \otimes \dots
\ee
which is a line bundle over scheme (\ref{extcoh}). Here $\Theta: K_{\bT}(X) \to \Pic(\textsf{E}_{\bT}(X))$ is the elliptic Thom class and $\dots$ denotes the twist by a certain explicit section, which depends on  ${\bf p}$. We refer to  Sections 2.5-2.8 of \cite{AOelliptic} for the details of this construction.

Sections of $\left.{\cal{T}}({\bf p})\right|_{\widehat{\Or}_{{\bf q}}}$ transform as the following explicit function 
%(see Section \ref{thetasec} for the notations)
\footnote{The variables $z_{u_i}$ correspond to K\"ahler variables of 
the $\bT$-{\it equivariant} Picard group. One checks that all quasiperiods of line bundles in these directions are trivial and the elliptic stable envelopes are actually independent on these variables, see discussion in Section 
3.3.7 of \cite{AOelliptic}. 
It is, however, convenient to keep these directions to describe shifts of stable envelopes by the index.}:
\be \label{univ}
{\mathcal{U}}_{{\bf p,q}}(X)=\Theta \Big(\left.T^{1/2} X\right|_{\bf q}\Big) \prod\limits_{i=1}^{k} \dfrac{\phi(u_{{\bf p}_i}^{-1},z^{-1})}{\phi(u_{{\bf q}_i}^{-1},z^{-1})}\prod\limits_{i=1}^{n}\,
\dfrac{\phi(u_i, z^{-1}_{u_i} \hbar^{D^{\bf p}_{i}  })}{
	\phi(u_i, z^{-1}_{u_i})}.
\ee
Here $\Theta\left(\left.T^{1/2} X\right|_{\bf q}\right)$ for the Laurent polynomial $\left.T^{1/2} X\right|_{\bf q}$ is given by a product of theta functions via (\ref{thetathom}). It has the same transformation properties as the elliptic Thom class  $\left.\Theta\Big(T^{1/2} X\Big)\right|_{\widehat{\Or}_{{\bf p}}}$. Similarly, other terms given by products in (\ref{univ}) describe  the transformation properties of the term denoted by $\dots$ in (\ref{tlinebun}).

 The powers ${D^{\bf p}_{i}}$ come from the {\it index} of the polarization bundle. They are computed as follows:
for our choice of polarization (\ref{polargr}) and chamber (\ref{chamgr}) the index of a fixed point ${\bf p}$ equals:
$$
\textrm{ind}_{{\bf p}} = \left.T^{1/2}X\right|_{{\bf p},>} =  \sum\limits_{{{i \in {\bf p}} \atop {j \notin {\bf p}}} \atop {j>i}} \frac{u_j}{u_i \hbar},
$$
and the integers $D^{\bf p}_{i}$ are the degrees of the index bundle, i.e., the degree in variable $u_i$ of the monomial:
$$
\det \textrm{ind}_{{\bf p}}=\prod\limits_{{{i \in {\bf p}} \atop {j \notin {\bf p}}} \atop {j>i}} \frac{u_j}{u_i \hbar}.
$$
Note that ${\cal U}_{{\bf p}, {\bf q}}$ are certain explicit products of the theta functions and their quasiperiods in all variables are easily determined from (\ref{thettrans}). In particular (\ref{univ}) conveniently packages the information about quasiperiods of the elliptic stable envelopes: the matrices of restrictions (\ref{resmat}) transform in all variables, under shifts by $q$, as~${\cal U}_{{\bf p}, {\bf q}}$.

%The collection $\{ {\cal U}_{{\bf p}, {\bf q}} \mid {\bf q} \in X^\bT \}$ is a meromorphic section of some line bundle which we denote by ${\cal{T}}({\bf p})$. It means that the transformation properties of sections of $\left.{\cal{T}}({\bf p})\right|_{\widehat{\Or}_{\bf q}}$ are given by quasiperiods of functions ${\mathcal{U}}_{{\bf p,q}}(X)$. 

The elliptic stable envelope $\Stab_{\sigma}({\bf p})$ of a fixed point ${\bf p}$ (corresponding to the choice of chamber $\sigma$ and polarization $T^{1/2}X$) is a section of ${\cal{T}}({\bf p})$ fixed uniquely by a list of properties \cite{AOelliptic}.
Alternative version of the elliptic stable envelope for cotangent bundles to partial flag variates was defined in \cite{Rim,FRV}.
Comparing explicit formulas for elliptic stable envelopes in the case of the variety $X$ from \cite{AOelliptic} and from \cite{Rim,FRV}
one observes that they differ by  a multiple.  The definition of \cite{Rim,FRV} is based on the fact that $X$ is a GKM variety, while definition of \cite{AOelliptic} is more general and is not restricted to GKM varieties. In fact, the Nakajima varieties are almost never GKM varieties.  In this paper we choose the approach of \cite{Rim,FRV}, 
because GKM structure of $X$ will simplify the computations. As we mentioned already, in the case of variety $X$ both approaches lead to the same explicit formulas, thus there is no ambiguity in this choice. 

\begin{Definition} \label{dfelgr}
	{ \it
		The elliptic stable envelope of a fixed point $\Stab_{\sigma}({\bf p})$
		is the unique section of ${\cal{T}}(\bf{p})$, such that its components 
		\be \label{resmat}
		T_{\bf p,q} := \left. \Stab_{\sigma}({\bf p}) \right|_{\widehat{\Or}_{\bf q}}
		\ee
		satisfy the following properties
		
		1)  $T_{{\bf p},{\bf p}}=\prod\limits_{{{i\in {\bf p},} \atop {j\in {\bf n}\setminus {\bf p}} ,}\atop {i<j}} \theta \Big(\dfrac{u_j}{u_i}\Big) \prod\limits_{{{i\in {\bf p},} \atop {j\in {\bf n}\setminus {\bf p}} ,}\atop {i>j}}\theta\Big(\dfrac{u_j}{u_i \hbar}  \Big)$.
		
		2) $T_{{\bf p},{\bf q}}=f_{\bf p,q}\, \prod\limits_{{{i\in {\bf q},} \atop {j\in {\bf n}\setminus {\bf q}} ,}\atop {i>j}}\theta\Big(\dfrac{u_j}{u_i} \hbar^{-1}\Big)$,  where
		$f_{\bf p,q}$ is holomorphic in parameters $u_i$.   } 
\end{Definition}
Let us note that the fact that $\Stab_{\sigma}({\bf p})$
is a section of ${\cal{T}}(\bf{p})$ implies that its restrictions 
$T_{\bf p,q}$ are sections of line bundles on abelian varieties 
$\widehat{\Or}_{\bf q}$ which have the same transformation properties in all variables as ${\mathcal{U}}_{{\bf p,q}}(X)$.

\subsection{Uniqueness of stable envelope for $X$} 
To justify the last definition, we need the following uniqueness theorem.  
\begin{Theorem} \label{uqth} [Appendix A, \cite{FRV}]
	{ \it	The matrix $T_{{\bf p}, {\bf q}}$ satisfying:
		
		1) 	For a given fixed ${\bf p}$,  the collection $\{T_{{\bf p}, {\bf q}} \mid {\bf q} \in X^\bT \}$ form a section of the line bundle $\cal{T}({\bf p})$ (as defined by (\ref{univ})). 
		
		2)  $T_{{\bf p},{\bf p}}=\prod\limits_{{{i\in {\bf p},} \atop {j\in {\bf n}\setminus {\bf p}} ,}\atop {i<j}} \theta \Big(\dfrac{u_j}{u_i}\Big) \prod\limits_{{{i\in {\bf p},} \atop {j\in {\bf n}\setminus {\bf p}} ,}\atop {i>j}}\theta\Big(\dfrac{u_j}{u_i \hbar}  \Big)$.
		
		3) $T_{{\bf p},{\bf q}}=f_{\bf p,q}\, \prod\limits_{{{i\in {\bf q},} \atop {j\in {\bf n}\setminus {\bf q}} ,}\atop {i>j}}\theta\Big(\dfrac{u_j}{u_i} \hbar^{-1}\Big)$, where
		$f_{\bf p,q}$ is holomorphic in parameters $u_i$.   
	
is unique. }

\end{Theorem}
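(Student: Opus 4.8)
The plan is to adapt the argument of \cite[Appendix~A]{FRV}, using the GKM description of $\textsf{E}_{\bT}(X)$ from (\ref{extcoh}) together with a zero-counting scheme in which the K\"ahler variable $z$ provides the final rigidity. Suppose $T_{{\bf p},{\bf q}}$ and $\widetilde{T}_{{\bf p},{\bf q}}$ both satisfy 1)--3); fix ${\bf p}$ and set $D_{{\bf q}}:=T_{{\bf p},{\bf q}}-\widetilde{T}_{{\bf p},{\bf q}}$. By 1), $\{D_{{\bf q}}\}_{{\bf q}\in X^{\bT}}$ is again a holomorphic section of ${\cal T}({\bf p})$: by Proposition~\ref{defel} the $D_{{\bf q}}$ are holomorphic functions on the abelian varieties $\widehat{\Or}_{{\bf q}}$ having exactly the quasiperiods of ${\cal U}_{{\bf p},{\bf q}}(X)$ in (\ref{univ}), and they agree along the gluing hyperplanes $\{u_i=u_j\}$ (for ${\bf q}'={\bf q}\setminus\{i\}\cup\{j\}$) as in (\ref{eqseq}). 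By 2), $D_{{\bf p}}=0$; by 3), every $D_{{\bf q}}$ is divisible, with quotient holomorphic in the $u_i$, by $\Theta^{-}_{{\bf q}}:=\prod_{i\in{\bf q},\,j\notin{\bf q},\,i>j}\theta(u_j/(u_i\hbar))$. It is enough to prove that every such $D$ vanishes.

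First I would do the elliptic-degree bookkeeping. From (\ref{univ}), up to monomials in $\hbar$ and $z$ the quasiperiod of $D_{{\bf q}}$ in a variable $u_m$ coincides with that of $\Theta(\left.T^{1/2}X\right|_{{\bf q}})=\prod_{i\in{\bf q},\,j\notin{\bf q}}\theta(u_j/(u_i\hbar))$ --- the $\phi$-factors in (\ref{univ}) contribute only $\hbar$- and $z$-powers --- so $D_{{\bf q}}$ has exactly $n-k$ zeros in each $u_m$ with $m\in{\bf q}$ and exactly $k$ zeros in each $u_m$ with $m\notin{\bf q}$ (in a fundamental annulus, the other variables generic). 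Divisibility by $\Theta^{-}_{{\bf q}}$ accounts for $\#\{j\notin{\bf q}:j<m\}$ of these when $m\in{\bf q}$, and for $\#\{i\in{\bf q}:i>m\}$ when $m\notin{\bf q}$, leaving deficits $\#\{j\notin{\bf q}:j>m\}$, respectively $\#\{i\in{\bf q}:i<m\}$.

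Now orient each edge of the GKM graph by ${\bf q}\to{\bf q}'$ whenever ${\bf q}'={\bf q}\setminus\{a\}\cup\{b\}$ with $b>a$ --- this is the attracting order attached to the chamber $\sigma=(1,\dots,n)$ of (\ref{chamgr}), and it is acyclic --- and fix a linear extension $\prec$; its maximum is either ${\bf p}$ itself, or ${\bf q}_{\mathrm{top}}=\{n-k+1,\dots,n\}$, for which $\Theta^{-}_{{\bf q}_{\mathrm{top}}}$ already equals the full product $\Theta(\left.T^{1/2}X\right|_{{\bf q}_{\mathrm{top}}})$. Argue by downward induction on $\prec$ that $D_{{\bf q}}=0$. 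In the inductive step, with $D_{{\bf q}'}=0$ for all ${\bf q}'\succ{\bf q}$: each outgoing edge ${\bf q}\to{\bf q}'$, ${\bf q}'={\bf q}\setminus\{a\}\cup\{b\}$, $b>a$, forces via (\ref{eqseq}) that $D_{{\bf q}}$ vanishes on $\{u_a=u_b\}$, hence is divisible by $\theta(u_a/u_b)$, a zero not among those coming from $\Theta^{-}_{{\bf q}}$; counting outgoing edges through a fixed index shows these additional zeros exactly fill the deficits above, in \emph{every} variable. Hence $D_{{\bf q}}=c_{{\bf q}}\cdot\Xi_{{\bf q}}$, where $\Xi_{{\bf q}}$ is the explicit product of all these theta-factors and $c_{{\bf q}}$ is holomorphic and independent of all equivariant parameters $u_i$. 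If ${\bf q}={\bf p}$ this is the base case. If ${\bf q}\neq{\bf p}$, examine the quasiperiod in the K\"ahler variable: by (\ref{univ}), $D_{{\bf q}}$ picks up the $u$-monomial $\prod_{i\in{\bf q}}u_i/\prod_{i\in{\bf p}}u_i$ under $z\mapsto qz$, which is nontrivial precisely because ${\bf q}\neq{\bf p}$, whereas $\Xi_{{\bf q}}$ is $z$-independent; thus $c_{{\bf q}}$ would have to acquire this $u$-dependent factor under $z\mapsto qz$, impossible for a function independent of the $u_i$. Therefore $c_{{\bf q}}=0$, $D_{{\bf q}}=0$, and the induction closes.

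The main obstacle I anticipate is the exact combinatorial matching in the third step: that the zeros imposed by 3) together with those imposed by the vanishing of the $\prec$-larger neighbours exhaust the full elliptic degree of ${\cal T}({\bf p})|_{\widehat{\Or}_{{\bf q}}}$ in every $u_m$ simultaneously. The relevant identities --- $\#\{j\notin{\bf q}:j<m\}+\#\{j\notin{\bf q}:j>m\}=n-k$ for $m\in{\bf q}$ and its counterpart for $m\notin{\bf q}$ --- are elementary, but one must still check that the forced zeros are in general position and, more delicately, that the quotient $c_{{\bf q}}=D_{{\bf q}}/\Xi_{{\bf q}}$ is honestly holomorphic, i.e.\ that the poles visible in the reference expression ${\cal U}_{{\bf p},{\bf q}}(X)$, coming from the $\phi$-factors, are spurious for genuine sections of ${\cal T}({\bf p})$. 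A minor point is to confirm the acyclicity of the chosen orientation of the GKM graph and that the linear extension $\prec$ behaves well whether ${\bf p}$ is processed first or in the middle.
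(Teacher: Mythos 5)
Your proposal is correct and follows the same strategy as the paper's own proof: induction along the attracting (Bruhat) order, with condition 3) plus the GKM gluing relation at the already-vanished larger neighbours ${\bf q}'={\bf q}\setminus\{a\}\cup\{b\}$, $b>a$, forcing the difference $D_{\bf q}$ to be divisible by the full product $T_{{\bf q},{\bf q}}$, and a quasiperiodicity argument killing the holomorphic remainder. The only place you genuinely diverge is the last step. The paper expands the remainder $f'_{{\bf p},{\bf q}}$ as a Laurent series in a single variable $u_i$ with $i\notin{\bf p}\cap{\bf q}$ and uses $f'(u_iq)=z^{\pm1}\hbar^m f'(u_i)$ (the $z^{\pm1}$ coming from the factors $\phi(u_{{\bf p}_i}^{-1},z^{-1})/\phi(u_{{\bf q}_i}^{-1},z^{-1})$ in (\ref{univ})) to conclude $f'=0$ for generic $z$; you read the same Poincar\'e-bundle factor in the dual $z$-direction instead. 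Both work, and your explicit degree bookkeeping is a correct (if not strictly necessary) verification that the forced zeros saturate the elliptic degree. One caveat: your intermediate assertion that degree matching makes $c_{\bf q}=D_{\bf q}/\Xi_{\bf q}$ independent of all $u_i$ is an overstatement. A holomorphic function $g$ on $\matC^{\times}$ with $g(qu)=\alpha\, g(u)$ for a constant $\alpha$ is zero if $\alpha\notin q^{\matZ}$, a monomial $c\,u^{k_0}$ if $\alpha=q^{k_0}$, and constant only when $\alpha=1$; you never check that the residual quasiperiod constant is $1$ (for $m\notin{\bf p}\cap{\bf q}$ it in fact contains $z^{\pm1}$, which is the paper's shortcut to $g=0$). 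This does not break your argument --- in the non-power-of-$q$ case you are done immediately, and in the monomial case your $z\mapsto qz$ contradiction still applies verbatim --- but the step should be stated as ``$c_{\bf q}$ is a monomial in the $u_i$ or zero'' rather than ``independent of the $u_i$.''
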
 

\begin{proof}
	Assume that we have two matrices which satisfy 1),2),3) and let $\kappa_{\bf p,q}$ be their difference. Assume that $\kappa_{\bf p,q}\neq 0$ for some ${\bf p}$. Let ${\bf q}$ be a maximal (in the partial order defined by the chamber\footnote{The partial order defined by a chamber $\sigma$ is
		$$
		{\bf p}\succ {\bf q}, \ \ \iff {\bf q} \in \textsf{Attr}^{f}_{\sigma}({\bf p})
		$$
		where $\textsf{Attr}^{f}_{\sigma}({\bf p})$ is the full attracting set of a fixed point ${\bf p}$, see Section 3.1 in \cite{AOelliptic}. 
		For $X=T^{*}Gr(k,n)$ and the chamber (\ref{chamgr}) this is the standard Bruhat order on $S_n/(S_k\times S_{n-k})$. For fixed points ${\bf p}=\{{\bf p}_1,\dots,{\bf p}_k\}$ and ${\bf q}=\{{\bf q}_1,\dots,{\bf q}_k\}$ with ${\bf p}_1<\dots<{\bf p}_k$, ${\bf q}_1<\dots<{\bf q}_k$ we have
	$$
	{\bf p} \succ {\bf q} \ \ \iff \ \ {\bf p}_i \geq {\bf q}_i, \ \  i=1,\dots,k.
$$ }) fixed point such that $\kappa_{\bf p,q}\neq 0$. By 3) we know that
	\be \label{kapp}
	\kappa_{{\bf p},{\bf q}}=f_{\bf p,q}\, \prod\limits_{{{i\in {\bf q},} \atop {j\in {\bf n}\setminus {\bf q}} ,}\atop {i>j}}\theta\Big(\dfrac{u_j}{u_i} \hbar^{-1}\Big),
	\ee
	where $f_{\bf p,q}$ is a holomorphic function of $u_i$.
	
	For $i \in {\bf q}$ and $j\in {\bf n}\setminus {\bf q} $ with $i<j$, consider the point ${\bf q}' ={\bf q}\setminus \{i\} \cup \{ j\}$. 
	By construction, ${\bf q}$ and ${\bf q}^{'}$ are connected by an equivariant curve with character $u_i/u_j$. The condition 1) means:
	$$
	\left.\Big(\kappa_{{\bf p},{\bf q}}-\kappa_{{\bf p},{\bf q}^{'}}\Big)\right|_{u_i=u_j}=0.
	$$
	By construction ${\bf q}^{'}\succ {\bf q}$ (in the order on fixed points) and thus $\kappa_{{\bf p},{\bf q}^{'}}=0$, which implies 
	$
	\left.\kappa_{{\bf p},{\bf q}} \right|_{u_i=u_j}=0.
	$
	Comparing with (\ref{kapp}) we conclude that $f_{\bf p,q}$ is divisible by $\theta(u_i/u_j)$. Going over all such pairs of $i,j$ we find:
	$$
	\kappa_{{\bf p},{\bf q}}=f^{'}_{\bf p,q}\,
	\prod\limits_{{{i\in {\bf q},} \atop {j\in {\bf n}\setminus {\bf q}} ,}\atop {i<j}}\theta\Big(\dfrac{u_i}{u_j}\Big)
	\prod\limits_{{{i\in {\bf q},} \atop {j\in {\bf n}\setminus {\bf q}} ,}\atop {i>j}}\theta\Big(\dfrac{u_j}{u_i} \hbar^{-1}\Big)=f^{'}_{\bf p,q}\,T_{{\bf q},{\bf q}},
	$$
	where $f^{'}_{\bf p,q}$ is holomorphic in $u_i$. As a holomorphic function in $u_i\in \matC^*$, it can be expanded as the Laurent series
	$f^{'}_{\bf p,q}=\sum\limits_{k\in \matZ} c_{k} u_i^k$ with nonzero radius of convergence. 
	
	The quasiperiods of functions $T_{{\bf p},{\bf q}}$ are the same as those of the functions  ${{\cal U}}_{{\bf p},{\bf q}}(X)$. In particular, for all $i \not \in {\bf p} \cap {\bf q}$ from (\ref{univ}) we find:
	$$
	f^{'}_{\bf p,q}(u_i q)=f^{'}_{\bf p,q}(u_i) z^{\pm 1} \hbar^{m} 
	$$
	for some integer $m$. We obtain:
	$$
	\sum\limits_{k\in \matZ} c_{k} (z^{\pm 1} \hbar^{m} - q^k ) u_i^k = 0
	$$
	and thus $c_k=0$ for all $k$, i.e., $f^{'}_{\bf p,q}=0$. 
\end{proof}

\subsection{Existence of elliptic stable envelope for $X$}
The following result is proven in \cite{AOelliptic,FRV,konno1}:
\begin{Theorem} \label{ellxth}
	{ \it For canonical polarization (\ref{polargr}) and chamber (\ref{chamgr}) the elliptic stable envelope of a fixed point ${\bf p} \in X^{\bT}$  has the following explicit form:
		
		\be 
		\label{stabgr}
		\begin{array}{|c|}
			\hline\\
			\Stab_{\sigma}({\bf p})=\Sym \left(
			\dfrac{\prod\limits_{l=1}^{k}\left(
				\prod\limits_{i=1}^{{\bf p}_l-1} \theta(y_l u_i\hbar^{-1}) \dfrac{\theta(y_l u_{{\bf p}_l} z^{-1} \hbar^{k-n+{\bf p}_l -2 l})}{\theta(z^{-1} \hbar^{k-n+{\bf p}_l -2 l}) } \prod\limits_{i={\bf p}_l+1}^{n} \theta(y_l u_i)	
				\right)}{\prod\limits_{1\leq i<j \leq k} \theta \Big( \dfrac{y_i}{y_j} \Big) \theta \Big( \dfrac{y_j}{y_i \hbar} \Big) }\right)\\
			\\
			\hline
		\end{array}
		\ee
		where the symbol $\Sym$ stands for the symmetrization over all Chern roots $y_1,\dots,y_k$.  \it}
\end{Theorem}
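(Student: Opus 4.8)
The plan is to use the uniqueness statement of Theorem~\ref{uqth}: it suffices to verify that the right‑hand side of (\ref{stabgr}), restricted to the orbits $\widehat{\Or}_{\bf q}$, produces a matrix $T_{\bf p,q}$ satisfying conditions 1)--3) there. By the localization rule (\ref{eval}) this restriction is the substitution of the numbers $u_{{\bf q}_1}^{-1},\dots,u_{{\bf q}_k}^{-1}$ for the Chern roots $y_1,\dots,y_k$; because of the overall symmetrization, $T_{\bf p,q}$ equals the sum over $\tau\in\fS_k$ of the summand of (\ref{stabgr}) evaluated at $y_l\mapsto u_{{\bf q}_{\tau(l)}}^{-1}$. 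So the proof is a direct check of the three axioms against this explicit sum.

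\emph{Condition 1) --- quasiperiods and gluing.} First I would check that $T_{\bf p,q}$ transforms, under $q$‑shifts of all equivariant and K\"ahler variables, exactly as ${\cal U}_{\bf p,q}(X)$ in (\ref{univ}); this is a bookkeeping computation with the two rules in (\ref{thettrans}), balancing the three blocks of theta factors in the numerator of (\ref{stabgr}), the symmetrization denominator, and the index‑shift power $\hbar^{k-n+{\bf p}_l-2l}$ against $\Theta(T^{1/2}X|_{\bf q})$ and the $\phi$‑factors of (\ref{univ}). The gluing condition (\ref{eqseq}), by the GKM description (\ref{extcoh}), only has to be checked at $u_i=u_j$ for ${\bf q}'={\bf q}\setminus\{i\}\cup\{j\}$, and there it is immediate: the multisets $\{u_m^{-1}:m\in{\bf q}\}$ and $\{u_m^{-1}:m\in{\bf q}'\}$ coincide and the remaining explicit $u$‑dependence of (\ref{stabgr}) is one and the same function. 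The nontrivial point inside condition 1) is that $T_{\bf p,q}$ is genuinely holomorphic in the $u_i$: the individual $\fS_k$‑summands have first–order poles along $u_a=u_b$ ($a,b\in{\bf q}$) coming from the factor $\theta(y_i/y_j)$ in the denominator, and these residues must cancel after symmetrization, exactly as for symmetrized elliptic weight functions.

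\emph{Condition 2) --- the diagonal.} For ${\bf q}={\bf p}$ only $\tau=\mathrm{id}$ survives: the $\tau$‑term contains $\theta(y_l u_{{\bf p}_{\tau(l)}})$ whenever ${\bf p}_{\tau(l)}>{\bf p}_l$, and $\theta(y_l u_{{\bf p}_{\tau(l)}})|_{y_l=u_{{\bf p}_{\tau(l)}}^{-1}}=\theta(1)=0$; hence a nonzero term needs ${\bf p}_{\tau(l)}\le{\bf p}_l$ for all $l$, which for increasing ${\bf p}$ forces $\tau=\mathrm{id}$. On that term the middle factor is $1$, and a direct cancellation matches each denominator factor $\theta(y_l/y_m)$, resp.\ $\theta(y_m/(y_l\hbar))$, against the numerator factor $\theta(y_l u_{{\bf p}_m})$ with ${\bf p}_m>{\bf p}_l$, resp.\ $\theta(y_m u_{{\bf p}_l}\hbar^{-1})$ with ${\bf p}_l<{\bf p}_m$; what survives is exactly the product over $i\in{\bf p}$, $j\notin{\bf p}$ asserted in 2).

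\emph{Condition 3) --- triangularity, the main obstacle.} One must show each off‑diagonal $T_{\bf p,q}$ is divisible by $\prod_{i\in{\bf q},\,j\notin{\bf q},\,i>j}\theta(u_j/(u_i\hbar))$ with holomorphic quotient. Unlike the diagonal case, several permutations can contribute here (those with ${\bf q}_{\tau(l)}\le{\bf p}_l$ for all $l$), so this is not visible term by term. I would prove it by showing $T_{\bf p,q}$ vanishes along every hyperplane $u_j=u_i\hbar$ with $i\in{\bf q}$, $j\notin{\bf q}$, $j<i$: on such a locus the theta factors attached to the root carrying the value $u_i^{-1}$ combine with those attached to the neighbouring roots into pairs that cancel between a permutation $\tau$ and the one obtained from it by a suitable transposition --- the residue/pole‑cancellation mechanism for symmetrized elliptic weight functions used in \cite{Rim,FRV,konno1}; holomorphy of the quotient then follows from the cancellation already invoked in condition 1). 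Once 1)--3) hold, Theorem~\ref{uqth} identifies the right‑hand side of (\ref{stabgr}) with $\Stab_\sigma({\bf p})$. An alternative I would keep in mind is to derive (\ref{stabgr}) from the abelianization theorem of \cite{AOelliptic}: factor by factor in $l$, the numerator of (\ref{stabgr}) is the elliptic stable envelope of the fixed point ${\bf p}_l$ on $T^{*}\mathbb{P}^{n-1}$ (up to the K\"ahler/$\hbar$‑shift coming from the index of $X$), and the symmetrization‑with‑denominator is precisely the abelianization recipe for the GIT quotient $X$; but this route still requires the same pole analysis, so condition 3) is the crux either way.
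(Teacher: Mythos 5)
Your proposal follows exactly the route the paper takes: the paper's proof of Theorem~\ref{ellxth} consists precisely of verifying properties 1)--3) of the uniqueness Theorem~\ref{uqth} for the explicit symmetrized expression (\ref{stabgr}), with the detailed pole/vanishing analysis deferred to \cite{FRV}. Your diagonal computation and your identification of the triangularity condition 3) (vanishing along $u_j=u_i\hbar$ via pairwise cancellation under transpositions in the symmetrization) as the crux are consistent with that argument, so this is essentially the paper's proof fleshed out.
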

Note that the components $T_{\bf p,q}$ are defined by this explicit formula as restriction $T_{\bf p,q}=\left. \Stab_{\theta,\sigma}({\bf p})\right|_{{\bf q}}=\left. \Stab_{\theta,\sigma}({\bf p})\right|_{y_i=u^{-1}_{{\bf q}_i}}$. The proof of this theorem is by checking the properties 1)-3) from Theorem \ref{uqth} explicitly, 
details can be found in \cite{FRV}. 

\subsection{Holomorphic normalization}
Note that the stable envelope (\ref{stabgr}) has poles in the K\"ahler parameter $z$.  
It will be more convenient to work with a different normalization of the stable envelope in which it is holomorphic in $z$:
\be \label{hnormx}
{\bf Stab}({\bf p}):=\Theta_{\bf p} \,  \Stab_{\sigma}({\bf p}), 
\ee
where $\Theta_{\bf p}$ is the section of a line bundle on the K\"ahler part 
$\cE_{\textrm{Pic}(X^{'})}$ defined explicitly by
\be \label{prefxor}
\Theta_{\bf p}=\prod_{m=1}^{k}\,\theta(z^{-1} \hbar^{k-n+{\bf p}_m }). 
\ee
(For $X'$ and $\bT'$, see Section \ref{elxd}.) Similarly to Theorem \ref{uqth}, the stable envelope ${\bf Stab}(\bf p)$ can be defined as a unique section of the twisted line bundle on $\textsf{E}_{\bT}(X)$:
\be \label{twlin}
\frak{M}({\bf p}) ={\cal{T}}({\bf p})\otimes \Theta_{{\bf p}}.
\ee
with diagonal restrictions (Property 2  in Theorem \ref{uqth}) given by $T_{\bf p,p} \Theta_{\bf p}$.
Note that the function $\Theta_{\bf p}$ only depends on K\"ahler variables. Thus, the twist of line bundle (\ref{twlin}) does not affect quasiperiods of stable envelopes in the equivariant parameters.

We will see that the section $\Theta_{{\bf p}}$ has the following geometric meaning: it represents the elliptic Thom class of the repelling normal bundle on the dual variety $X'$ (see (\ref{nchar})):
$$
\Theta_{\bf p}=\Theta(N^{'-}_{\bf \lambda}),
$$ 
where $\lambda$ is related to ${\bf p}$ by (\ref{pointbij}), with parameter $a_1 / a_2$ related to K\"ahler parameter $z$ by (\ref{parident}).

%%%%%%%%%%%%%%%%%%%%%%%%%%%%%%%%%%%%%%%%%%%%%%%%%%%%%%%%%%%%%%%%%%%%%%%
%%%%%%%%%%%%%%%%%%%%%%%%%%%%%%%%%%%%%%%%%%%%%%%%%%%%%%%%%%%%%%%%%%%%%%%
%%%%%%%%%%%%%%%%%%%%%%%%%%%%%%%%%%%%%%%%%%%%%%%%%%%%%%%%%%%%%%%%%%%%%%%

\section{Elliptic Stable Envelope for $X'$ \label{elxd}} 

\subsection{$X'$ as a Nakajima quiver variety}

From now on we always assume that $n\geq 2k$. In this section we consider the variety $X'$ which is a Nakajima quiver variety
associated to the $A_{n-1}$ quiver. This variety is defined by the framing dimension vector:
$$
\textsf{w}_i=\delta_{k,i}+\delta_{n- k,i},
$$  
i.e., all framing spaces are trivial except those at position $k$ and $n-k$.
Both non-trivial framing spaces are one-dimensional. The dimension vector has the form $$\textsf{v}=(1,2,\dots,k-1,\underbrace{k,\dots,k}_{(n-2 k+1) \text{-times}},k-1,\dots,2,1).$$
By definition, this variety is given by the following symplectic reduction. Let us consider the vector space:
\be \label{repr}
R=\bigoplus_{i=1}^{n-2} \Hom(\matC^{\textsf{v}_i},\matC^{\textsf{v}_{i+1}})
\bigoplus \Hom(\matC,\matC^{\textsf{v}_k}) \bigoplus \Hom(\matC^{\textsf{v}_{n-k}}, \matC),
\ee
and denote the representatives by $({\bf a}_{l},{\bf i}_{k},{\bf j}_{n-k})$, $l=1,\dots,n-2$. Similarly, the dual vector space:
$$
R^{*}=\bigoplus_{i=1}^{n-2} \Hom(\matC^{\textsf{v}_{i+1}},\matC^{\textsf{v}_i} )
\bigoplus \Hom(\matC^{\textsf{v}_k},\matC) \bigoplus \Hom(\matC, \matC^{\textsf{v}_{n-k}})
$$
with representatives by $({\bf b}_{l},{\bf j}_{k},{\bf i}_{n-k})$. We consider the symplectic space $T^*R=R \oplus R^*$ and the moment map
$$
\mu: T^{*}R \rightarrow \bigoplus_{i=1}^{n-1} \frak{gl}(\textsf{v}_{i})^*.
$$
Denote ${\bf a}=\oplus_i {\bf a}_i$,  ${\bf b}=\oplus_i {\bf b}_i$, ${\bf i}=\oplus_i {\bf i}_i$ and ${\bf j}=\oplus_i {\bf j}_i$, then the moment takes the explicit form 
$\mu(  ( {\bf a}, {\bf b}, {\bf i}, {\bf j}  ) = [{\bf b}, {\bf a}] + {\bf i} \circ {\bf j}.$
With this notation $X'$ is defined as the quotient:
$$
X' :=\mu^{-1}(0)\cap \{ \theta' \textrm{-semistable points} \}/\prod\limits_{i=1}^{n-1} GL(\textsf{v}_{i}).
$$
We will use the canonical choice of the stability parameter~\footnote{We use the same notations for stability condition as in the Maulik-Okounkov \cite{MO}. In particular, for us the stability parameter 
	$\theta=(\theta_i)$ corresponds to a character $\chi: \prod_{i} GL(v_i) \rightarrow \matC^{\times}$ given by
	$$
	\chi : (g_{i}) \mapsto \prod_{i} (\det g_i)^{\theta_i}.
	$$
	This notation is opposite to one used in Ginzburg's lectures \cite{ginzb}, where $\theta$ corresponds to the character $\prod_{i} (\det g_i)^{-\theta_i}$.
}  
\be \label{stabch}
\theta' = (1,1,\dots,1) \in \Lie_\matR (\bK'),
\ee
where $\bK' := U(1)^{n-1}$. The set of the $\theta'$-semistable points in $T^*R$ is described as follows : a point $( ( {\bf a}, {\bf i}_{k},{\bf j}_{n-k}),( {\bf b} ,{\bf j}_{k},{\bf i}_{n-k}) ) \in \mu^{-1}(0) $ is $\theta'$-semistable, if and only if the image of ${\bf i}_k \oplus {\bf i}_{n-k}$ under the actions of $\{ {\bf a}_l, {\bf b}_l , 1 \leq l \leq n-2 \}$ generate the entire space $\bigoplus_{i=1}^{n-1} \matC^{\textsf{v}_i}. $

\subsection{Tautological bundles over $X'$ \label{tbxp}}
We denote  by $\tb_i$ the rank $\textsf{v}_i$ tautological vector bundle on $X'$ associated to $\matC^{\textsf{v}_i}$. It will be convenient to represent the dimension vector and associated tautological bundles using the following combinatorial description. Let us consider a rectangle $\textsf{R}_{n,k}$ with dimensions $k\times (n-k)$. We turn $\textsf{R}_{n,k}$ by $45^{\circ}$ as in the Fig.\ref{examdia}. We will denote by $\Box=(i,j)\in \textsf{R}_{n,k}$ a box in $\textsf{R}_{n,k}$ with coordinates $(i,j)$, $i=1,\dots, n-k$
and $j=1,\dots,k$. We define a function of \emph{diagonal number} on boxes: 
$$
c_{\Box}=i-j+k.
$$ 
Note that $1 \leq c_{\Box} \leq n-1$.  It may be convenient to visualize $c_{\Box}$ as the horizontal coordinate of a box $\Box$ as in Fig.\ref{bij} in Section \ref{bijsec}. The total number of boxes with $c_{\Box}=i$ is $\textsf{v}_i=\dim \tb_i$. With a box $(i,j)$ we associate a variable $x_{i j}$.  It will be convenient to think about the set of $x_{i j}$ with the same $c_{\Box}$ as  Chern roots of tautological bundles, such that in $K$-theory we have:
$$
\tb_m=\sum\limits_{c_{\Box}=m } x_{\Box}.
$$
The tautological bundles $\tb_i$ generate the equivariant $K$-theory of $X'$. The $K$-theory classes are represented by Laurent polynomials in $x_{\Box}$:

$$
K_{\bT'}(X')=\matC[x^{\pm 1}_{i j}]^{\frak{S}_{n,k}}\otimes \matC[a_1^{\pm},a_2^{\pm},\hbar^{\pm}]/I,
$$
where $\bT'$ is the torus described in the next subsection. These are the Laurent polynomials symmetric with respect to each group of Chern roots, i.e., invariant under the group:
\be
\label{symg}
\frak{S}_{n,k}=\prod\limits_{i=1}^{n-1} \frak{S}_{\textsf{v}_i}.
\ee
where $\frak{S}_{\textsf{v}_l}$ acts by permutations on $x_{i j}$ with
$c_{\Box}=l$. The ideal $I$ is the ideal of polynomials which restricts to zero at every fixed point:
$$
I=\{ f(x_{i,j}): \left.f(x_{i,j})\right|_{x_{ij}=\varphi^{\lambda}_{ij}}, \forall \lambda \in (X')^{\bT^{'}} \}.
$$
see (\ref{fpsubs}) below. 

\subsection{Torus action on $X'$}

Let $\bA' = (\matC^{\times})^2$ be a 2-dimensional torus acting on the framing space $\matC \oplus \matC$ by 
$$
(z_1, z_2) \mapsto (z_1 a_1, z_2 a_2 ). 
$$
Let $\matC_\hbar^\times$ be the 1-dimensional torus acting on $T^*R$ by scaling the cotangent fiber
$$
(  ( {\bf a}, {\bf i}_{k},{\bf j}_{n-2k}),( {\bf b} ,{\bf j}_{k},{\bf i}_{n-2k})  ) \mapsto (  ( {\bf a}, {\bf i}_{k},{\bf j}_{n-2k}), \hbar ( {\bf b} ,{\bf j}_{k},{\bf i}_{n-2k})  ). 
$$
Denote their product by $\bT' = \bA' \times \matC_\hbar^\times$.  The fixed loci in $X'$ under the $\bA'$-action admit a \emph{tensor product} decomposition:
$$
(X')^{\bA'} = \coprod_{\textsf{v}^{(1)} + \textsf{v}^{(2)} = \textsf{v} } \mathcal{M} ( \textsf{v}^{(1)}, \delta_k) \times \mathcal{M} (\textsf{v}^{(2)}, \delta_{n-2k} ), 
$$
where $\mathcal{M} ( \textsf{v}^{(1)}, \delta_k)$ is the quiver variety associated with the $A_{n-1}$   quiver with dimension vector $\textsf{v}^{(1)}$, framing vector $\delta_k$ and the same stability condition $\theta'$; similar with $\mathcal{M} (\textsf{v}^{(2)}, \delta_{n-2k} )$. 

We now give a combinatorial description of the quiver variety $\mathcal{M} ( \textsf{v}^{(1)}, \delta_k)$. By definition, a representative of a point in $\mathcal{M} ( \textsf{v}^{(1)}, \delta_k)$ takes the form 
$
( {\bf a}, {\bf i}, {\bf b}, {\bf j} ).
$
It is $\theta'$-semistable, if and only if the image of ${\bf i}$ under the actions of all ${\bf a}$ and ${\bf b}$'s generate the space
$$
\textsf{V}^{(1)} := \bigoplus_{i=1}^{n-1} \matC^{\textsf{v}_i^{(1)}}. 
$$  
One can show that in this case, as an analogue of Lemma 2.8 in \cite{NakLec}, we must have ${\bf j} = 0$.  The moment map equation, together with ${\bf j}_k = 0$ implies that ${\bf a}$ commutes with ${\bf b}$, as operaters on $\textsf{V}^{(1)}$. Therefore, we see that $V^{(1)}$ is spanned by vectors ${\bf a}^i {\bf b}^j {\bf i}_k (1)$, which if nonzero, lie in $\matC^{\textsf{v}_{i-j+k}^{(1)}}$. The stability condition implies that the set $\{ (i,j) \in \matZ^2_{> 0} \mid {\bf a}^{i-1} {\bf b}^{j-1} {\bf i}_k (1) \neq 0 \}$ form a Young diagram, which corresponds to a partition $\lambda$. 

In summary, the quiver variety $\mathcal{M} ( \textsf{v}^{(1)}, \delta_k)$ is either empty or a single point, where the latter case only happens when there exists a partition $\lambda$, whose number of boxes in the $m$-th diagonal is ${\bf v}_{m+k}^{(1)}$. The quiver variety $\mathcal{M} ({\bf v}^{(2)}, \delta_{n-2k} )$ can be described in exactly the same way. 

The restriction of Chern roots to the fixed point can be determined as follows. Consider
	$$
	{\bf a}^{i-1} {\bf b}^{j-1} {\bf i}_k : \matC \to \textsf{V}_{i-j+k}. 
	$$ 
	The action of the group $GL(\textsf{v}^{(1)})$ on ${\bf a}^{i-1} {\bf b}^{j-1} {\bf i}_k$ is
	$$
	{\bf a} \mapsto g {\bf a} g^{-1}, \qquad {\bf b} \mapsto g {\bf b} g^{-1} , \qquad {\bf i}_k \mapsto g_k {\bf i}_k,
	$$
	where $g = (g_1, \cdots, g_{n-1}) \in \prod_i\, GL(\textsf{v}^{(1)}_i)$. So
	$$
	{\bf a}^{i-1} {\bf b}^{j-1} {\bf i}_k \mapsto g {\bf a}^{i-1} {\bf b}^{j-1} {\bf i}_k, 
	$$
	and the action of $\bA'$ on the framing space $\matC$, $z \mapsto a_1 z$, induces the action
	$$
	{\bf a}^{i-1} {\bf b}^{j-1} {\bf i}_k \mapsto  a_1^{-1}  {\bf a}^{i-1} {\bf b}^{j-1} {\bf i}_k. 
	$$
	Here $a_1$ becomes $a_1^{-1}$ because the framing $\matC$ is the domain space of ${\bf i}_k$. To determine the restriction of the Chern root $\varphi_{ij}$, we need $g$ to compensate the action of $\bT'$, i.e.
	$$
	g_i = a_1, \qquad \forall i.
	$$
	So the ($\bA'$-equivariant) restriction is 
	$ \varphi_{ij} = a_1$.
	For the $\hbar$-weight, $\matC_\hbar^\times$ acts on ${\bf b}$ directly by $\hbar$. So  the $\bT'$-equivariant restriction is
	$\varphi_{ij} = a_1 \hbar^{j-1}$. Exactly same consideration applies to the second part $\mathcal{M} ({\bf v}^{(2)}, \delta_{n-2k} )$.

Let us summarize the above discussion.  The set of fixed points $(X')^{\bT'}$ is a finite set labeled by Young diagrams which fit into rectangle $\textsf{R}_{n,k}$. If $\lambda$ is such a diagram we denote its complement in the $(n-k)\times k$ rectangle $\textsf{R}_{n,k}$  by $\bar{\lambda}$. It is easy to see  that $\bar{\lambda}$ is also a Young diagram. The Young diagrams $\lambda$ and $\bar{\lambda}$ divide the rectangle $\textsf{R}_{n,k}$ into two non-intersecting set of boxes.

If $\lambda \in (X')^{\bT'}$ is a fixed point, then the restriction of the Chern roots $x_{\Box}$ of the tautological bundles are given by the following formula:
\be \label{fpsubs}
\left.x_{\Box}\right|_{\lambda}=\varphi^{\lambda}_{\Box}:=\left\{ 
\begin{array}{ll}
	a_1 \hbar^{j-1},& \textrm{if} \ \  (i,j) \in \lambda,\\
	a_2 \hbar^{n-k-i+1},& \textrm{if} \ \  (i,j) \in \bar{\lambda}
\end{array}\right.
\ee
Our notations should be clear from the following example:

%\vspace{1mm}
\noindent
\begin{Example}
Let us fix $n=8, k=3$ and consider a Young diagram $\lambda=[3,2]$,
then $\bar{\lambda}=[4,3,3]$. The union of $\lambda$ and $\bar{\lambda}$ 
is the rectangular of dimensions $5\times 3$:

\vspace{7mm}

\begin{figure}[h!]
	$$
	[3,2]+[4,3,3]=\somespecialrotate[origin=c]{0}{\hskip 0cm  \yn}
	$$
	\vspace{2mm}
	\caption{\label{fry} An example of a fixed point represented by $[3,2]\in \textsf{R}_{8,3}$ }
\end{figure}
The values of Chern roots (which correspond to boxes of $\textsf{R}_{8,3}$) are given in Fig.\ref{fval}:
\vspace{6mm}
\begin{figure}[h!]
	$$
	\somespecialrotate[origin=c]{0}{\hskip -2cm \scalebox{0.8}{\yx}}
	$$
	\vspace{2mm}
	\caption{\label{fval} The values of $\varphi^{\lambda}_{\Box}$ for $\lambda=[3,2]$ and $n=8,k=3$.}
\end{figure}

\end{Example}

\subsection{Tangent and polarization bundles for $X'$}
To define the elliptic stable envelope we need to specify a polarization and a chamber. We choose the canonical polarization:
\be
\label{dualpol}
T^{1/2}X' =a_1^{-1} \tb_{k}+a_2 \tb_{n-k}^{*} + \sum\limits_{i=1}^{n-2} \tb_{i+1} \tb_{i}^{*} -\sum\limits_{i=1}^{n-1} \tb_i^{*} \tb_i ,
\ee
such that the virtual tangent space takes the form:
$$
TX'=T^{1/2}X'+(T^{1/2}X')^{*} \otimes\hbar^{-1}.
$$
We choose a chamber in the following form:
\be \label{dualco}
\sigma' : (0,1) \in \Lie_{\matR}(\bA').
\ee
The character of the tangent space at a fixed point $\lambda\in (X')^{\bT}$ can be computed by restriction (\ref{fpsubs}):
$$
T_{\lambda} X'=\left.TX' \right|_{\lambda}.
$$
The tangent space at a fixed point decomposes into attracting and repelling parts:
$$
T_{\lambda} X'=N^{'+}_{\lambda}\oplus N^{'-}_{\lambda}, 
$$
where $N^{'\pm}_{\lambda}$ are the subspaces with $\bA$-characters which take positive and negative values on the cocharacter (\ref{dualco}) respectively. Explicitly these characters equal:
\be \label{nchar}
N^{'-}_{\lambda}=\sum\limits_{m=1}^{k} \dfrac{a_1}{a_2} \hbar^{2 k -n +{\bf{p}}_m -2m-1},  \qquad N^{'+}_{\lambda}=\sum\limits_{m=1}^{k} \dfrac{a_2}{a_1} \hbar^{-2 k +n -{\bf{p}}_m +2m}
\ee
where ${\bf p}=\{{\bf p}_1,\dots,{\bf p}_k\}=\textsf{bj}(\lambda)$, for $\textsf{bj}$ described in (\ref{pointbij}). 
\subsection{Elliptic cohomology of $X'$}
The extended elliptic cohomology scheme of $X'$ is a bouquet of $\bT'$ orbits (as a set)
\be \label{elschememirr}
\textsf{E}_{\bT'}(X'):=\coprod_{\lambda\in (X')^{\bT'}}\, \widehat{\Or}'_{\lambda}/\Delta',
\ee
where $\widehat{\Or}'_{\lambda}\cong \cE_{\bT'}\times \cE_{{\textrm{Pic}} (X')}$. The equivariant parameters and K\"ahler parameters of $X'$ are identified with the coordinates in the first and second factor of $\widehat{\Or}'_{\lambda}$ respectively.

By definition, the elliptic stable envelope classes are sections of the twisted elliptic Thom class of the polarization (see discussion in Section \ref{elhohX}):
$$
{\cal{T}}'(\lambda)= \Theta(T^{1/2} X') \otimes \dots
$$
which is a line bundle over the scheme (\ref{elschememirr}) \footnote{${\cal{T}}'(\lambda)$ depends on $\lambda$ via twist terms denoted by $\dots$}.
Sections of the line bundles $\left.{\cal{T}}'(\lambda)\right|_{\widehat{\Or}'_{\mu}}$ over abelian varieties $\widehat{\Or}'_{\mu}$ have the same transformation properties as the following function:
\be \label{ufund}
{\cal{U}}_{\lambda,\mu}(X')=\Theta\Big(\left.T^{1/2} X' \right|_{\mu}\Big) \prod\limits_{\Box\in \textsf{R}_{n,k}} \dfrac{\phi(\varphi^{\lambda}_{\Box},z^{-1}_{c_\Box})}{\phi(\varphi^{\mu}_{\Box},z^{-1}_{c_\Box})}\prod\limits_{i=1}^{2}\dfrac{\phi(a_i,z_{a_i}^{-1}\hbar^{D^{\lambda}_{i}}  )}{\phi(a_i,z_{a_i}^{-1}  )}.
\ee
The powers $D^{\lambda}_{i}$ are determined as follows: let us consider the index of the fixed point
$$
\textrm{ind}_{\lambda}=\left.T^{1/2} X'\right|_{\lambda,>}
$$
The symbol $>$ means that we choose only the $\bT'$-weights of polarization 
$\left.T^{1/2} X'\right|_{\lambda}$  which are positive at~$\sigma'$. Let $\textrm{det}(\textrm{ind}_{\lambda})$ denote the product of all these weights, then $D_i^{\lambda}$ is a degree of this monomial in variable~$a_i$. 

%For fixed $\lambda$  the functions ${\cal{U}}_{\lambda,\mu}(X')$ are components of a meromorphic section of a line bundle on $\textsf{E}_{\bT'}(X')$ \cite{AOelliptic}. We denote this line bundle by ${\cal{T}}' (\lambda)$.
%By definition, the sections of restrictions $\left.{\cal{T}}'(\lambda)\right|_{\widehat{\Or}'_{\mu}}$ have the same quasiperiods (in all, equivariant and K\"ahler variables) as the function ${\cal{U}}_{\lambda,\mu}(X')$. 

The elliptic stable envelope $\Stab'_{\sigma'} ({\lambda})$ of a fixed point $\lambda$ is a section of ${\cal{T}}'(\lambda)$, which is specified by a list of conditions similar to those of Definition \ref{dfelgr}. The quiver variety $X'$ is not of GKM type. In particular, for $k>1$ it contains families of curves connecting two fixed points. This means that the gluing procedure of orbits and the condition of agreement for sections on different components are more complicated. 

%Nevertheless, the stable envelope $\Stab'_{\sigma'} ({\lambda})$ can be described by explicit formula using the abelianization technique (see Section \ref{abform}), where it is expressed as a global class, instead of components on orbits. The line bundle ${\cal U}'$ can also be read out from the $q$-quasiperiods of the explicit formula, instead of its restrictions to orbits. In this sense, we don't need to worry about the gluing description of the elliptic cohomology and stable envelope in the non-GKM case. 

\subsection{Holomorphic normalization}
It will be convenient to work with stable envelopes which differ from one defined in \cite{AOelliptic} by normalization
\be \label{holnd}
{\bf Stab}' (\lambda)=\Theta'_\lambda \, \Stab'_{\sigma'} ({\lambda})
\ee
with prefactor $\Theta'_\lambda$ given by
\be \label{thptefdual}
\Theta'_\lambda=\prod\limits_{{{i\in {\bf p},} \atop {j\in {\bf n}\setminus {\bf p}} ,}\atop {i<j}} \theta \Big(\dfrac{u_j}{u_i}\Big) \prod\limits_{{{i\in {\bf p},} \atop {j\in {\bf n}\setminus {\bf p}} ,}\atop {i>j}}\theta\Big(\dfrac{u_j}{u_i \hbar}  \Big)
\ee
where ${\bf p}=\textsf{bj}(\lambda)$ (see (\ref{pointbij}) below) and 
variables $u_i$ are related to K\"ahler parameters $z_i$ through (\ref{parident}). The stable envelope ${\bf Stab}' ({\lambda})$ is a section of the twisted line bundle on $\textsf{E}_{\bT^{'}}(X^{'})$
\be \label{mdu}
\frak{M}' (\lambda) ={\cal{T}}' (\lambda)\otimes \Theta'_\lambda.
\ee
As the function $\Theta'_{\lambda}$ only depends on K\"ahler variables this twist does not affect quasiperiods of stable envelopes in equivariant parameters. Note that the section $\Theta'_{\lambda}$ 
coincides with the diagonal elements $T_{\bf p,p}$ of the restriction matrix of stable envelopes (see Definition \ref{defel}). Up to a sign, 
is coincides with $\Theta(N^{-}_{\bf p})$ for the repelling part of the normal bundle (\ref{repat}).
We will see that in this normalization the stable envelopes are {\it holomorphic sections} of $\frak{M}'(\lambda)$.

\section{\label{abform} Abelianization formula for elliptic stable envelope for $X'$}

\subsection{Non-K\"ahler part of stable envelope} 
Define a function in the boxes of the rectangle $\textsf{R}_{n,k}$ by:
$$
\rho^{\lambda}_{\Box}=\left\{\begin{array}{ll}
i+j, & \textrm{if} \ \  \Box \in \lambda\\
-i-j, & \textrm{if} \ \ \Box \not\in \lambda
\end{array}\right. 
$$
The following function describes the part of elliptic stable envelope of a fixed point $\lambda$ which is independent on K\"ahler parameters:
\be \label{shenpart}
\textbf{S}^{n,k}_{\lambda}=(-1)^{k(n-k)} \frac{\prod\limits_{{c_{I}=k} \atop {I \in \lambda}} \theta\Big(\dfrac{x_{I}}{a_1}\Big)
	\prod\limits_{{c_{I}=k} \atop {I \not \in \lambda}} \theta\Big(\dfrac{a_1}{x_{I} \hbar}\Big)
	\prod\limits_{c_{I}=n-k} \theta\Big(\dfrac{a_2 \hbar}{x_{I}}\Big)
	\prod\limits_{{c_{I}+1=c_{J}} \atop {\rho^{\lambda}_{I}>\rho^{\lambda}_{J}}} \theta\Big( \dfrac{x_{J} \hbar}{x_I} \Big) \prod\limits_{{c_{I}+1=c_{J}} \atop {\rho^{\lambda}_{I}<\rho^{\lambda}_{J}}} \theta\Big( \dfrac{x_{I}}{x_{J}} \Big)}{\prod\limits_{{c_{I}=c_{J}} \atop { \rho^{\lambda}_{I}>\rho^{\lambda}_{J} }   }  \theta \Big( \dfrac{x_I}{x_J} \Big) \theta \Big( \dfrac{x_I}{x_J \hbar} \Big) }
\ee
where all products run over boxes in $\textsf{R}_{n,k}$ which satisfy the specified conditions. For example, $\prod\limits_{{c_{I}=k} \atop {I \not \in \lambda}}$ denotes a product over all boxes $I\in \lambda$ 
and projection $c_I=k$. Similarly, $\prod\limits_{{c_{I}=c_{J}} \atop { \rho^{\lambda}_{I}>\rho^{\lambda}_{J} }   }$ denotes double product over all boxes $I,J\in \textsf{R}_{n,k}$ with $c_{I}=c_{J}$ and $\rho^{\lambda}_{I}>\rho^{\lambda}_{J}$. 

\vspace{2mm}
\noindent 
\begin{Example}
$$
\begin{array}{l}
\textbf{S}^{3,1}_{[1]}=\theta \left( {\dfrac {x_{{1,1}}}{a_{{1}}}} \right) \theta \left( {
	\dfrac {a_{{2}} \hbar}{x_{{2,1}}}} \right) \theta \left( {\dfrac {x_{{2,1}}\hbar
	}{x_{{1,1}}}} \right)
,
\\
\\
\textbf{S}^{4,2}_{[1,1]}=\frac{\theta \left( {\dfrac {x_{{1,1}}}{a_{{1}}}} \right) \theta \left( {
		\dfrac {a_{{2}}\hbar}{x_{{1,1}}}} \right) \theta \left( {\dfrac {a_{{1}}}{\hbar x
			_{{2,2}}}} \right) \theta \left( {\dfrac {a_{{2}}\hbar}{x_{{2,2}}}}
	\right) \theta \left( {\dfrac {x_{{1,1}}}{x_{{2,1}}}} \right) \theta
	\left( {\dfrac {x_{{1,2}}}{x_{{1,1}}}} \right) \theta \left( {\dfrac {\hbar
			x_{{2,2}}}{x_{{1,2}}}} \right) \theta \left( {\dfrac {x_{{2,2}}}{x_{{2,
					1}}}} \right)
}{\theta \left( {\dfrac {x_{{1,1}}}{x_{{2,2}}}} \right) \theta \left( {
		\dfrac {x_{{1,1}}}{\hbar x_{{2,2}}}} \right)  },
\\
\\
\textbf{S}^{4,2}_{[2]}=\dfrac{\theta \left( {\frac {x_{{1,1}}}{a_{{1}}}} \right) \theta \left( {
		\dfrac {a_{{2}} \hbar}{x_{{1,1}}}} \right) \theta \left( {\dfrac {a_{{1}}}{\hbar x
			_{{2,2}}}} \right) \theta \left( {\dfrac {a_{{2}}\hbar}{x_{{2,2}}}}
	\right) \theta \left( {\dfrac {x_{{2,1}} \hbar}{x_{{1,1}}}} \right) \theta
	\left( {\dfrac {x_{{1,1}} \hbar}{x_{{1,2}}}} \right) \theta \left( {\dfrac {
			\hbar x_{{2,2}}}{x_{{1,2}}}} \right) \theta \left( {\dfrac {x_{{2,2}}}{x_{{2
					,1}}}} \right)
}{\theta \left( {\dfrac {x_{{1,1}}}{x_{{2,2}}}} \right) \theta \left( {
		\dfrac {x_{{1,1}}}{\hbar x_{{2,2}}}} \right) }.
\end{array} 
$$

\end{Example}

\subsection{Trees in Young diagrams}
Let us consider a Young diagram $\lambda$. We will say that two boxes $\Box_1 = (i_1, j_1) , \Box_2 = (i_2, j_2)  \in \lambda$ are \textit{adjacent} if
$$
i_{1}=i_{2}, \quad |j_{1}-j_{2}|=1  \qquad  \textrm{ or } \qquad  j_{1}=j_{2}, \quad  |i_{1}-i_{2}|=1.$$
\begin{Definition} {\it
 	A $\lambda$-tree is a rooted tree with:
	
	($\star$) a set of vertices given by the boxes of a partition $\lambda$,

	($\star,\star$) a root at the box $r=(1,1)$,
	
	($\star,\star,\star$) edges connecting only the adjacent boxes.}
\end{Definition}
Note that the number of $\lambda$-trees depends on the shape of $\lambda$. In particular, there is exactly one
tree for ``hooks''  $\lambda=(\lambda_1,1,\cdots,1)$.

We assume that each edge of a $\lambda$-tree is oriented in a certain way. In particular, on a set of edges we have two well-defined functions
$$ 
h,t : \{\textrm{edges of a tree}\} \longrightarrow \{\textrm{boxes of} \   \lambda \}, 
$$ which for an edge $e$ return its head $h(e)\in \lambda$ and tail $t(e)\in \lambda$  boxes respectively. In this paper we will work with a distinguished \textit{canonical orientation} on $\lambda$-trees.  
\begin{Definition}{\it
	We say that a $\lambda$-tree has canonical orientation if all edges
	are oriented from the root to the end points of the tree.}	
\end{Definition}

For a box $\Box \in \lambda$ and a canonically oriented $\lambda$-tree
$\ft$ we have a well-defined canonically oriented subtree $[\Box, \ft] \subset \ft$ with root at $\Box$.  In particular, $[r,\ft] = \ft$ for a root $r$ of $\ft$.

We rotate the rectangle $\textsf{R}_{n,k}$ by  $45^{\circ}$ as in the Fig.\ref{examdia}, such that the horizontal coordinate of the box is equal to $c_\Box$.
The boundary of a Young diagram $\lambda\subset \textsf{R}_{n,k}$ is a graph $\Gamma$ of a piecewise linear function. We define a function on boxes in $\textsf{R}_{n,k}$ by:
\be \label{betefun}
\beta^{(1)}_{\lambda}(\Box)=\left\{\begin{array}{ll}
	+1&\textrm{if $\Box \in \lambda$ and $\Gamma$ has maximum above $\Box$ }\\
	-1&\textrm{if $\Box \in \lambda$ and $\Gamma$  has minimum above $\Box$}\\
	0& \textrm{else} \end{array} \right.
\ee
Note that $\beta^{(1)}_{\lambda}(\Box)=0$ for all $\Box\in \bar{\lambda}$. For example, the Fig.\ref{examdia} gives the values of $\beta^{(1)}_{\lambda}(\Box)$ for $\lambda=(4,4,4,3,3,2) \in \textsf{R}_{10,4}$.
\begin{figure}[h]
	\hskip 50mm
	\begin{tikzpicture}[draw=blue]
	\draw [line width=1pt] (-7,0) -- (-9,2);
	\node [left] at (-6.75,0.5) {$-1$};
	\node [left] at (-6.75,1.5) {$-1$};
	\node [left] at (-6.75,2.5) {$-1$};
	\node [left] at (-6.75,3.5) {$0$};
	%%%%%%%%%%%%%%%%%%%%%%%%%%%%%%%%%%%%%%%%%%55
	\node [left] at (-5.75,1.5) {$1$};
	\node [left] at (-5.75,2.5) {$1$};
	\node [left] at (-5.75,3.5) {$1$};
	\node [left] at (-5.75,4.5) {$0$};
	%%%%%%%%%%%%%%%%%%%%%%%%%%%%%%%%%%%%%%%%%%%%
	\node [left] at (-6.25,1) {$0$};
	\node [left] at (-6.25,2) {$0$};
	\node [left] at (-6.25,3) {$0$};
	\node [left] at (-6.25,4) {$0$};
	%%%%%%%%%%%%%%%%%%%%%%%%%%%%%%%%%%%%%%%%%%%%
	\node [left] at (-7.25,1) {$1$};
	\node [left] at (-7.25,2) {$1$};
	\node [left] at (-7.25,3) {$1$};
	%%%%%%%%%%%%%%%%%%%%%%%%%%%%%%%%%%%%%%%%%%%%
	\node [left] at (-7.75,1.5) {$0$};
	\node [left] at (-7.75,2.5) {$0$};
	%%%%%%%%%%%%%%%%%%%%%%%%%%%%%%%%%%%%%%%%%%%%
	\node [left] at (-5.25,3) {$-1$};
	\node [left] at (-5.25,2) {$-1$};
	\node [left] at (-5.25,4) {$0$};
	%%%%%%%%%%%%%%%%%%%%%%%%%%%%%%%%%%%%%%%%%%
	\node [left] at (-4.75,3.5) {$1$};
	\node [left] at (-4.75,2.5) {$1$};
	%%%%%%%%%%%%%%%%%%%%%%%%%%%%%%%%%%%%%%%%%%
	\node [left] at (-4.25,3) {$0$};

	\node [left] at (-8.25,2) {$0$};
	\draw [line width=1pt] (-9,2) -- (-6,5);
	\draw [line width=1pt] (-8,1) -- (-5,4);
	\draw [line width=1pt] (-8.5,1.5) -- (-5.5,4.5);

	\draw [line width=1pt,green] (-7,3) -- (-6,4);
	\draw [line width=1pt,green] (-9,2) -- (-7.5,3.5);
	\draw [line width=1pt,green] (-5.5,3.5) -- (-5,4);
	
	\draw [line width=1pt] (-7.5,0.5) -- (-4.5,3.5);
	\draw [line width=1pt] (-6.5,0.5) -- (-4,3);
	\draw [line width=1pt] (-7,0) -- (-5.5,1.5);

	\draw [line width=1pt] (-6.5,0.5) -- (-8.5,2.5);
	\draw [line width=1pt] (-6,1) -- (-8,3);
	\draw [line width=1pt] (-5.5,1.5) -- (-7.5,3.5);
	\draw [line width=1pt] (-5,2) -- (-7,4);
	%\draw [line width=1pt] (-4.5,2.5) -- (-6,4);
	%\draw [line width=1pt] (-4,3) -- (-5,4);
	
	\draw [line width=1pt] (-4,3) -- (-6,5);
	\draw [line width=1pt] (-4.5,2.5) -- (-6.5,4.5);
	
	\draw [line width=1pt, green] (-6,4) -- (-5.5,3.5);
	\draw [line width=1pt,green] (-4,3) -- (-5,4);
	\draw [line width=1pt,green] (-7,3) -- (-7.5,3.5);
	
	\end{tikzpicture}
	\caption{Values of function $\beta^{(1)}_{\lambda}(\Box)$ for the diagram $\lambda=(4,4,4,3,3,2) \in \textsf{R}_{10,4}$.
		The boundary $\Gamma$ of the Young diagram $\lambda$ is denoted by green color. \label{examdia} }
\end{figure} 

\noindent
We also define 
$$
\beta^{(2)}_{\lambda}(\Box) =\left\{\begin{array}{ll}
+1 & \textrm{if}\ \  c_\Box <k \\
-1 & \textrm{if} \ \ c_\Box > n-k \\
0 & \textrm{else}
\end{array}\right.  
$$
and we set 
\be
\label{betafun}
\textsf{v}(\Box)=\beta^{(1)}_{\lambda}(\Box)+\beta^{(2)}_{\lambda}(\Box).
\ee

\subsection{K\"ahler part of the stable envelope} 
Let $\lambda\subset \textsf{R}_{n,k}$ be a Young diagram and $\bar{\lambda}=\textsf{R}_{n,k}\setminus \lambda$ is the complement Young diagram as above. 
Let $\ft \cup \bar\ft$ be the (disjoint) union of $\lambda$-tree $\ft$ and $\bar{\lambda}$-tree $\bar\ft$. We define a function:
$$
\textbf{W}^{Ell}(\ft \cup \bar\ft;x_i,z_i):=
\textbf{W}^{Ell}(\ft;x_i,z_i) \textbf{W}^{Ell}(\bar\ft, x_i,z_i),
$$
for the elliptic weight of a tree, where
\be \label{wpartell}
\textbf{W}^{Ell}(\ft;x_i,z_i):=(-1)^{\kappa (\ft)} \phi \Big( \frac{\varphi^{\lambda}_{r}}{x_r}, \prod\limits_{\Box \in [r, \ft]} z^{-1}_{c_\Box} \hbar^{-\textsf{v}(\Box)} \Big) \prod\limits_{e\in \ft} \phi\Big(\dfrac{x_{t(e)} \varphi^{\lambda}_{h(e)}}{\varphi^{\lambda}_{t(e)} x_{h(e)}}, \prod\limits_{\Box \in [h(e), \ft]} z^{-1}_{c_\Box} \hbar^{-\textsf{v}(\Box)}\Big),
\ee
and similarly for $\textbf{W}^{Ell}(\bar\ft, x_i,z_i)$. 

Here $\Box\in \ft$ or $e\in \ft$ means the box or edge belongs to the tree. The sign of a tree depends on the number $\kappa (\ft)$ which is equal to the number of edges in the tree with wrong orientation. In other words, $\kappa(\ft)$ is the number of edges in $\ft$ directed down or to the left, while $\kappa(\bar\ft)$ is the number of edges in $\bar\ft$ directed up or to the right. To avoid ambiguity, we also define
$\textbf{W}^{Ell}(\ft;x_i,z_i):=1$ for a tree in the empty Young diagram.  

\vspace{4mm}
\noindent
\begin{Example} 
Let us consider a Young diagram $[2,2]\subset  \textsf{R}_{5,2}$
with trees
$\exone \hspace{5mm}$. 

\vspace{-5mm} \noindent
By definition we have:
$$\textbf{W}^{Ell}\Big(\exone \hspace{5mm} \Big)=\textbf{W}^{Ell}\Big(\exonl \hspace{5mm} \Big) \textbf{W}^{Ell}\Big(\exonr \hspace{5mm} \Big).
$$

\vspace{-30mm} \noindent
In this case we have six boxes with the following characters:
$$
\varphi^{\lambda}_{1 1}= a_1,\ \  \varphi^{\lambda}_{2 1}= a_1, \ \ \varphi^{\lambda}_{3 1}= a_2 \hbar, \ \ \varphi^{\lambda}_{1 2}= a_1 \hbar, \ \ \varphi^{\lambda}_{2 2}= a_1 \hbar, \ \ \varphi^{\lambda}_{3 2}= a_2 \hbar.
$$
Similarly for the $\hbar$-weights of boxes (\ref{betafun}) we obtain:
$$
\begin{array}{l}
\beta(1,1)=\beta^{(1)}(1,1)+\beta^{(2)}(1,1)=1+0=1,\\
\beta(1,2)=\beta^{(1)}(1,2)+\beta^{(2)}(1,2)=0+1=1,\\
\beta(2,1)=\beta^{(1)}(2,1)+\beta^{(2)}(2,1)=0+0=0,\\
\beta(2,2)=\beta^{(1)}(2,2)+\beta^{(2)}(2,2)=1+0=1,\\
\beta(3,1)=\beta^{(1)}(3,1)+\beta^{(2)}(3,1)=0-1=-1,\\
\beta(3,2)=\beta^{(1)}(3,2)+\beta^{(2)}(3,2)=0+0=0.
\end{array}
$$

\vspace{3mm}

\noindent
First, let us consider $\textbf{W}^{Ell}\Big(\exonl \hspace{5mm} \Big)$. \vspace{-6mm} In this case we have a tree with the root at $r=(1,1)$ and three edges with the following heard and tails: 
$$
t(e_1)=(1,1), \ h(e_1)=(1,2), \  t(e_2)=(1,1), \ h(e_2)=(2,1), \ t(e_3)=(1,2), \ h(e_3)=(2,2).
$$ 
For the first factor in (\ref{wpartell}) we obtain:
$$
\phi \Big( \frac{\varphi^{\lambda}_r}{x_r},\prod\limits_{\Box \in [r, \ft]} z^{-1}_{c_\Box} \hbar^{-\textsf{v}(\Box)} \Big) = \phi \Big( \frac{a_1}{x_{1,1}}, z_{1}^{-1} z_{2}^{-2} z_{3}^{-1} \hbar^{-3} \Big)
$$
For the edges in the product (\ref{wpartell}) we obtain:
$$
\phi\Big(\dfrac{x_{t(e_1)} \varphi^{\lambda}_{h(e_1)}}{\varphi^{\lambda}_{t(e_1)} x_{h(e_1)}}, \prod\limits_{\Box \in [h(e_1), \ft]} z^{-1}_{c_\Box} \hbar^{-\textsf{v}(\Box)}\Big)=\phi \Big( \dfrac{x_{1 1}}{x_{1 2} } \hbar, z^{-1}_{1} z^{-1}_{2} \hbar^{-2}  \Big),
$$
$$
\phi\Big(\dfrac{x_{t(e_2)} \varphi^{\lambda}_{h(e_2)}}{\varphi^{\lambda}_{t(e_2)} x_{h(e_2)}}, \prod\limits_{\Box \in [h(e_2), \ft]} z^{-1}_{c_\Box} \hbar^{-\textsf{v}(\Box)}\Big)=\phi \Big( \dfrac{x_{1 1}}{x_{2 1} }, z_{3}^{-1}  \Big),
$$

$$
\phi\Big(\dfrac{x_{t(e_3)} \varphi^{\lambda}_{h(e_3)}}{\varphi^{\lambda}_{t(e_3)} x_{h(e_3)}}, \prod\limits_{\Box \in [h(e_3), \ft]} z^{-1}_{c_\Box} \hbar^{-\textsf{v}(\Box)}\Big)=\phi \Big( \dfrac{x_{1 2}}{x_{2 2} } , z_{2}^{-1} \hbar^{-1}  \Big).
$$
Thus, overall we obtain:
$$
\textbf{W}^{Ell}\Big(\exonl \hspace{5mm} \Big)=
\phi \Big( \frac{a_1}{x_{1,1}}, \dfrac{1}{z_{1} z_{2}^{2} z_{3} \hbar^{3}} \Big) \phi \Big( \dfrac{x_{1 1}\hbar}{x_{1 2} }, \dfrac{1}{z_{1} z_{2} \hbar^{2}}  \Big)
\phi \Big( \dfrac{x_{1 1} }{x_{2 1}} , \dfrac{1}{ z_{3}}  \Big)
\phi \Big( \dfrac{x_{1 2}}{x_{2 2} } , \dfrac{1}{ z_{2} \hbar}  \Big).
$$
Similarly, for the second multiple we obtain:
$$
\textbf{W}^{Ell}\Big(\exonr \hspace{5mm} \Big)=\phi \Big( \frac{a_2 \hbar}{x_{3 2}},\dfrac{\hbar}{z_3 z_4} \Big) \phi \Big( \dfrac{x_{3 2}}{x_{3 1}}, \dfrac{\hbar}{z_{4}} \Big).
$$
\end{Example}

\subsection{Formula for elliptic stable envelope \label{upsion}} 
\begin{Definition} {\it
	The skeleton $\Gamma_{\lambda}$ of a partition $\lambda$ is the graph, whose vertices are given by the set of boxes
	of $\lambda$ and whose edges connect all adjacent boxes in $\lambda$.}
\end{Definition}

\begin{Definition} \label{lsdef}{\it
	A} $\reflectbox{\textsf{L}}${\it-shaped subgraph in $\lambda$ is a subgraph $\gamma\subset \Gamma_{\lambda}$ consisting of two edges
	$\gamma=\{\delta_1,\delta_2\}$ with the following end boxes:
	\be 
	\label{gshapped} \delta_{1,1}=(i,j), \qquad \delta_{2,1}=\delta_{1,2}=(i+1,j), \qquad \delta_{2,2}=(i+1,j+1).
	\ee}
\end{Definition} \noindent
It is easy to see that the total number of  \reflectbox{\textsf{L}}-shaped subgraphs in $\lambda$ is equal to 
\be
\label{nofsh}
m=\sum\limits_{l \in \matZ}( \textsf{d}_{l}(\lambda)-1),
\ee
where $\textsf{d}_{l}(\lambda)$ is the number of boxes in the $l$-diagonal of $\lambda$ 
\be
\label{dfun}
\textsf{d}_{l}(\lambda)=\# \{\Box \in \lambda \mid c_\Box =l \}.
\ee 
There is a special set of $\lambda$-trees, constructed as follows.  For each $\reflectbox{\textsf{L}}$-shaped subgraph $\gamma_i$ in $\lambda$ we choose one of its two edges. We have $2^m$ of such choices. For each such choice the set  of edges $\Gamma_\lambda \setminus \{ \delta_i \}$ is a $\lambda$-tree. We denote the set of $2^m$ $\lambda$-trees which appear this way by $\Upsilon_\lambda$.

Now let us define $\Upsilon_{n,k}= \Upsilon_\lambda \times \Upsilon_{\bar{\lambda}}$, whose elements of are pairs of trees $(\ft ,\bar\ft)$, where $\ft$ is a $\lambda$-tree with root $(1,1)$, $\bar\ft$ is a $\bar\lambda$-tree with root $(n-k,k)$. Both trees are constructed in the way described as above, and they are disjoint, i.e., do not have common vertices. 

\vspace{2mm}
\noindent
\begin{Example} \vspace{2mm}
Let us consider $\lambda=[3,2]\in \textsf{R}_{8,3}$ and $\bar \lambda=[4,3,3]$. 
A typical element of $\Upsilon_{8,3}$ looks like:

\vspace{5mm}

$$
\somespecialrotate[origin=c]{0}{\hskip -1cm\yd  $\ \ \ \ \  \ \ \ \ \ \in \Upsilon_{8,3}$}  
$$

\vspace{5mm}

\end{Example}

\noindent
The following theorem can be proved using the same arguments as in \cite{EllHilb}.
\begin{Theorem} \label{mainth} {\it
	The elliptic stable envelope of a fixed point $\lambda$ for the chamber $\sigma'$ defined by (\ref{dualco})
	and polarization (\ref{dualpol}) has the following form:
	\be \label{ellipticenvelope}
	\begin{array}{|c|}\hline
		\\
		\ \ \ \Stab'_{\sigma'}(\lambda) = \Sym_{\frak{S}_{n,k}}\Big( \mathbf{S}^{n,k}_{\lambda} \sum\limits_{(\ft, \bar\ft) \in \Upsilon_{n,k} } \mathbf{W}^{Ell}(\ft \cup \bar\ft) \Big) \ \ \\\
		\\
		\hline
	\end{array}
	\ee
	where the symbol $\Sym_{\frak{S}_{n,k}}$ denotes a sum over all permutations in the group (\ref{symg}). }
\end{Theorem}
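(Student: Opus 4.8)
The plan is to follow the abelianization strategy of \cite{EllHilb}, producing the formula (\ref{ellipticenvelope}) verbatim and then concluding by the uniqueness of the elliptic stable envelope. The first step is to introduce the \emph{abelian} quiver variety $X'_{\mathrm{ab}}$, obtained from the construction of $X'$ by replacing the gauge group $\prod_i GL(\textsf{v}_i)$ by its maximal torus $\prod_i(\matC^\times)^{\textsf{v}_i}$, keeping the stability condition $\theta'$ of (\ref{stabch}) and the torus $\bT'$. There is the standard abelianization correspondence $X'_{\mathrm{ab}}\hookleftarrow Z\twoheadrightarrow X'$, and, as in \cite{AOelliptic,EllHilb}, it induces a relation between the elliptic stable envelope of $X'$ and that of $X'_{\mathrm{ab}}$: $\Stab'_{\sigma'}(\lambda)$ is the $\fS_{n,k}$-symmetrization of the $X'_{\mathrm{ab}}$-stable envelope, divided by the elliptic Thom class of the ``root'' (off-diagonal gauge) directions $\prod_{c_I=c_J,\,\rho^\lambda_I>\rho^\lambda_J}\theta(x_I/x_J)\,\theta(x_I/(x_J\hbar))$, which is exactly the denominator of $\mathbf{S}^{n,k}_\lambda$ in (\ref{shenpart}). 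I would first make this correspondence precise for the present quiver, checking that the chamber $\sigma'$ of (\ref{dualco}), the polarization (\ref{dualpol}) and the index/twist data encoded in (\ref{ufund}) restrict compatibly along $Z$, so that the two sides are sections of matching line bundles.

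The next step is to compute the $X'_{\mathrm{ab}}$-stable envelope, and this is where $X'$ departs from $X=T^{*}Gr(k,n)$, for which the abelianization fibre is a single point. Since the framing is one-dimensional and concentrated at the vertices $k$ and $n-k$, reconstructing a fixed Young diagram $\lambda$ from the framing vectors by successive applications of the arrow maps $\mathbf{a},\mathbf{b}$ requires the choice of a spanning structure of the skeleton $\Gamma_\lambda$ rooted at $(1,1)$ (and of $\Gamma_{\bar\lambda}$ rooted at $(n-k,k)$); the abelian fixed points contributing over $\lambda$ are then indexed precisely by $\Upsilon_\lambda\times\Upsilon_{\bar\lambda}$. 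As the abelian stable envelope is built from $A_1$-type (that is, $T^{*}\mathbb{P}^1$-like) factors along each arrow of the quiver, it can be computed by iterating the elliptic triangle recursion of \cite{AOelliptic,EllHilb}; this produces, for each pair $(\ft,\bar\ft)$, exactly the product of $\phi$-factors over root and edges constituting $\mathbf{W}^{Ell}(\ft\cup\bar\ft)$, with the sign $(-1)^{\kappa(\ft)}$ recording the edges traversed against the chamber $\sigma'$ and the Kähler arguments $\prod_{\Box\in[\,\cdot\,,\ft]}z^{-1}_{c_\Box}\hbar^{-\textsf{v}(\Box)}$ bookkeeping the index accumulated along subtrees. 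A finite, box-local computation then identifies the exponent as $\textsf{v}(\Box)=\beta^{(1)}_\lambda(\Box)+\beta^{(2)}_\lambda(\Box)$: the summand $\beta^{(1)}_\lambda$ comes from the arrow terms $\tb_{i+1}\tb_i^{*}$ of the polarization, which are sensitive to the local extrema of $\partial\lambda$, and $\beta^{(2)}_\lambda$ from the framing terms $a_1^{-1}\tb_k$ and $a_2\tb_{n-k}^{*}$, which are sensitive to whether $c_\Box<k$ or $c_\Box>n-k$. Summing over the fibre and feeding the result through the abelianization relation yields (\ref{ellipticenvelope}).

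It remains to verify that the right-hand side of (\ref{ellipticenvelope}) satisfies the defining properties of $\Stab'_{\sigma'}(\lambda)$ and to appeal to uniqueness; since $X'$ is not GKM, I would invoke the general characterization of \cite{AOelliptic} rather than Theorem \ref{uqth}. Three things must be checked. (i) The symmetrized expression is a holomorphic section of $\mathcal{T}'(\lambda)$, i.e.\ its quasiperiods in all $x_\Box$, $a_i$, $z_i$ coincide with those of $\mathcal{U}_{\lambda,\mu}(X')$ of (\ref{ufund}); this is a direct computation from the transformation rules (\ref{thettrans}) and the periods of $\phi$, the $z$-monomials attached to the subtrees $[r,\ft]$ and $[h(e),\ft]$ being arranged exactly so that the $\fS_{n,k}$-symmetrization is well defined and the equivariant quasiperiods reproduce the index exponents $D^\lambda_i$. (ii) The diagonal restriction equals, up to sign, the elliptic Thom class $\Theta(N^{'-}_{\lambda})$ of (\ref{nchar}): under $x_\Box\mapsto\varphi^\lambda_\Box$ all terms of the tree sum but the one aligned with the cell order of $\lambda$ are killed by zeros of $\mathbf{S}^{n,k}_\lambda$ or of the $\phi$-factors, and the symmetrization collapses. (iii) The triangularity/support condition: $\Stab'_{\sigma'}(\lambda)\big|_\mu$ vanishes for $\mu$ outside the full attracting set of $\lambda$, and is of the prescribed degree in general.

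The main obstacle, and the reason the argument is the one of \cite{EllHilb} rather than the GKM one of \cite{FRV}, is twofold. First, identifying the abelianization fibre over $\lambda$ with $\Upsilon_\lambda\times\Upsilon_{\bar\lambda}$ and tracking the signs $(-1)^{\kappa}$ together with the $\hbar$- and $z$-shifts precisely: this is purely combinatorial but delicate, and it is here that the sum over trees genuinely arises. Second, property (iii): for the non-GKM variety $X'$ one cannot reduce holomorphicity to a curve-by-curve gluing analysis, so one must instead show directly that the sum over $\Upsilon_\lambda$ is exactly the combination for which the residues of the abelian expression at the spurious hyperplanes $x_I=x_J$ and $x_I=\hbar x_J$ cancel after $\fS_{n,k}$-symmetrization. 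Everything else — the transformation-law bookkeeping, the diagonal computation, and the $\bar\lambda$-half, which is handled symmetrically — is routine, if lengthy.
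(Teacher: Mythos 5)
Your proposal follows essentially the same route as the paper: abelianization of $X'$ in the style of \cite{EllHilb}, identification of the abelianization fibre over a fixed point with pairs of trees in $\Upsilon_\lambda\times\Upsilon_{\bar\lambda}$ (via the factorization of the $\bA'$-fixed locus into two zero-dimensional $A_\infty$ quiver varieties), the triangle lemma to assemble the $\phi$-factors edge by edge, and matching of quasiperiods against (\ref{ufund}) to pin down the $\hbar$-shifts $\textsf{v}(\Box)$. The only real difference is that the paper does not need your concluding verification-plus-uniqueness step: it invokes the abelianization formula (74) of \cite{AOelliptic} directly, together with the key identity $\pi^{'}_{*}\circ \j^{'*}_{+}\circ(\j^{'}_{-*})^{-1}\bigl(\sum_{(\ft,\bar\ft)}\Stab_{\fC^{''}}(\ft,\bar\ft)\bigr)=1$ (Theorem 5 of \cite{EllHilb}), so the resulting composition is the stable envelope by construction and the delicate non-GKM support check you flag as the main obstacle in your item (iii) is avoided entirely.
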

\begin{proof}
The proof of this theorem is bases on the abelianization of elliptic stable envelope 
developed in Section 4.3 of \cite{AOelliptic} which, in turn, is inspired
by the abelianization of stable envelopes in cohomology \cite{Shenf}.  The proof follows closely the proof of the main result of \cite{EllHilb}. To keep the presentation short we will refer to  the corresponding results in these papers when possible. We also refer to \cite{AOelliptic,EllHilb} for definitions of all maps and objects appearing here. 

Let us denote by  $\textbf{AX}'$ the abelianization of the Nakajima variety $X'$.
This is a hypertoric variety defined by the following symplectic reduction
$$
\textbf{AX}':=T^*R/\!\!/\!\!/\!\!/ S
$$ 
where $R$ is given by (\ref{repr}) and $S$ is the maximal torus of $\prod_i^n GL(\textsf{v}_i)$. The stability condition for this symplectic reduction is defined by 
(\ref{stabch}). Let ($\lambda,\bar{\lambda}$) be a $\bT'$ fixed points in
$X'$. By definition ($\lambda,\bar{\lambda}$) is a zero-dimensional Nakajima quiver variety of type $A_{n-1}$.  We will denote by $\textbf{AX}'_{(\lambda,\bar{\lambda})}$ the abelianization of this Nakajima variety. It is a hypertoric subvariety $\textbf{AX}'_{(\lambda,\bar{\lambda})}\subset \textbf{AX}'$ fixed by the action of torus $\bA$. We denote by $\textrm{Stab}^{'}_{\fC}$ the elliptic stable envelope
map for these hypertoric varieties. The chamber $\fC$ here is the chamber of $\bA$  defined by cocharacter (\ref{dualco}). 

The abelianization diagram for Nakajima varieties (see (74) in \cite{AOelliptic}) expresses the elliptic stable envelope of the fixed point ($\lambda,\bar{\lambda}$) in $X'$ as the following composition:
\be \label{abla}
\textrm{Stab}_{\fC}(\lambda,\bar{\lambda})=\pi_{*}\circ \j^{*}_{+}\circ (\j_{- *})^{-1}\circ \textrm{Stab}_{\fC}'  \circ \j^{'}_{- *}\circ(\j^{'*}_{+})^{-1}\circ\pi^{'-1}_{*}
\ee
For the definition of all maps here we refer to Section 4.3 in \cite{AOelliptic}.   

\begin{Lemma} \label{faclem}
	{\it The Nakajima quiver variety $(\lambda,\bar{\lambda})$ is a direct product of two
		zero-dimensional Nakajima varieties of $A_{\infty}$-type corresponding to dimension vectors given by $\lambda$ and $\bar{\lambda}$. The abelianization 
		$\mathbf{AX'}_{(\lambda,\bar{\lambda})}=\ahilb_{\lambda} \times \ahilb_{\bar{\lambda}}$ where $\ahilb_{\lambda}$ denotes the abelianization of 
		$A_{\infty}$ Nakajima variety corresponding to $\lambda$.}
\end{Lemma}
\begin{proof}
	The fixed point set of a Nakajima quiver variety with respect to action of the framing torus is  isomorphic to the direct product of Nakajima varieties for the same quiver and one-dimensional framings (this property of quiver varieties is known as tensor product structure). Non-empty $A_{n-1}$ quiver varieties with one-dimensional
	framing are all zero-dimensional $A_{\infty}$ quiver varieties and have dimension vectors corresponding to 
	Young diagrams \cite{dinksmir2}.  	
\end{proof}

\begin{Corollary} \label{corfac}
	{ \it The abelianization maps $\j^{'}_{- *}, \j^{'*}_{+}, \pi^{'}_{*}$ factor into direct products of maps:
		$$
		\j^{'}_{- *}=\j^{'}_{1,- *}\times \j^{'}_{2,- *}, \ \ \  \j^{'*}_{+}= \j^{'*}_{1,+}\times  \j^{'*}_{2,+}, \ \ \ \pi^{'}_{*}=\pi^{'}_{1,*}\times \pi^{'}_{2,*}
		$$	
		where $(\j^{'}_{1,- *},\j^{'*}_{1,+},\pi^{'}_{1,*})$ are maps for zero-dimensional Nakajima quiver variety 
		$\lambda$ (i.e. $A_{\infty}$ quiver variety with one-dimensional framing and the dimension vector given by $\lambda$) and its abelianization $\ahilb_{\lambda}$;  similarly $(\j^{'}_{2,- *},\j^{'*}_{2,+},\pi^{'}_{2,*})$
		are the abelianization maps for $\bar\lambda$ and $\ahilb_{\bar\lambda}$. }
\end{Corollary}

The hypertoric varieties $\textbf{AH}_{\lambda}$ were considered in Section 6 of \cite{EllHilb}. In particular, it was shown that $\textbf{AH}_{\lambda}$ contains fixed points (of a maximal torus acting on $\textbf{AH}_{\lambda}$ by automorphisms) labeled by $\lambda$-trees. For trees $\ft$, $\bar\ft$ in $\lambda$ and $\bar\lambda$ we denote by the same symbols the corresponding fixed points in $\textbf{AH}_{\lambda}$ and $\textbf{AH}_{\bar\lambda}$.

Let $\matC^{\times}_{\ft}, \matC^{\times}_{\bar\ft}$ be one-dimensional tori acting on $\ahilb_{\lambda}$ and $\ahilb_{\bar\lambda}$ respectively with chambers $\fC^{''}_1,\bar{\fC}^{''}_2$ as defined in Section 6.3 of \cite{EllHilb}. We denote the corresponding chamber in 
$\matC^{\times}_{\ft}\times \matC^{\times}_{\bar\ft}$ by $\fC^{''}$ (such that $\fC^{''}_1,\bar{\fC}^{''}_2$ are one-dimensional faces of $\fC^{''}$). 
Finally, we denote by $\fC^{'}$ the chamber of the torus $\bA\times \matC^{\times}_{\ft}\times \matC^{\times}_{\bar\ft}$ corresponding to the chambers $\fC$ and $\fC^{''}$ (i.e., such that $\fC$ and $\fC{''}$ are faces of the chamber $\fC^{'}$).  

The following is a version of Proposition 6 from \cite{EllHilb} for the case of $X'$:

\begin{Proposition} \label{facprop} {\it
		Up to a shift of K\"ahler parameters $z_i\to z_i \hbar^{m_i}$, $m_i \in \matZ$, the elliptic stable of $\matC^{\times}_{\ft}\times \matC^{\times}_{\bar\ft}$-fixed point $(\ft,\bar{\ft})$ 	in $\mathbf{AX}'_{(\lambda,\bar{\lambda})}$ corresponding to the chamber $\fC^{''}$ equals:
		\be \label{facstab}
		\Stab_{\fC^{''}}(\ft,\bar{\ft})=\Stab_{\fC^{''}_1}(\ft) \Stab_{{\fC}^{''}_2}(\bar{\ft}),
		\ee
		where 
		%$\textrm{Stab}_{\fC^{''}_1}(\ft)$ is the elliptic stable envelope of $\matC^{\times}_{\ft}$-fixed point $\ft$ in $\ahilb_{\lambda}$ corresponding to the chamber $\fC^{''}_1$ and similarly for $\textrm{Stab}_{{\fC}^{''}_2}(\bar{\ft})$. 
		%This envelopes are give explicitly by:
		$$
		\begin{array}{l}
		\Stab_{\fC^{''}_1}(\ft)=\prod\limits_{{c_{I}=k}\atop{I\in \lambda}} \theta\Big(\dfrac{x_I}{a_1}\Big) 
		\prod\limits_{{{c_{I}+1=c_{J}} \atop {\rho^{\lambda}_{I}>\rho^{\lambda}_{J}}} \atop {I,J \in \lambda}} \theta\Big( \dfrac{x_{J} \hbar}{x_I} \Big) \prod\limits_{{{c_{I}+1=c_{J}} \atop {\rho^{\lambda}_{I}<\rho^{\lambda}_{J}}}
			\atop {I,J \in \lambda}} \theta\Big( \dfrac{x_{I}}{x_{J}} \Big)\, W_\ft (z_i), \\
		\Stab_{{\fC}^{''}_2}(\ft)=\prod\limits_{{c_{I}=n-k}\atop{I\in \bar{\lambda}}} \theta\Big(\dfrac{a_2 \hbar}{x_I}\Big) 
		\prod\limits_{{{c_{I}+1=c_{J}} \atop {\rho^{\lambda}_{I}>\rho^{\lambda}_{J}}} \atop {I,J \in \bar{\lambda}}} \theta\Big( \dfrac{x_{J} \hbar}{x_I} \Big) \prod\limits_{{{c_{I}+1=c_{J}} \atop {\rho^{\lambda}_{I}<\rho^{\lambda}_{J}}}
			\atop {I,J \in \bar{\lambda}}} \theta\Big( \dfrac{x_{I}}{x_{J}} \Big) \, W_{\bar\ft}(z_i),
		\end{array}
		$$
		the elliptic stable envelope of $\bA\times\matC^{\times}_{\ft}\times \matC^{\times}_{\bar\ft}$-fixed point $(\ft,\bar{\ft})$ 	in $\mathbf{AX}^{'}$ 
		is given by
		\be \label{stabprime}
		\Stab_{\fC^{'}}(\ft,\bar{\ft})=\prod\limits_{{c_{I}=k} \atop {I \in \lambda}} \theta\Big(\dfrac{x_{I}}{a_1}\Big)
		\prod\limits_{{c_{I}=k} \atop {I \not \in \lambda}} \theta\Big(\dfrac{a_1}{x_{I} \hbar}\Big)
		\prod\limits_{c_{I}=n-k} \theta\Big(\dfrac{a_2 \hbar}{x_{I}}\Big)
		\prod\limits_{{c_{I}+1=c_{J}} \atop {\rho^{\lambda}_{I}>\rho^{\lambda}_{J}}} \theta\Big( \dfrac{x_{J} \hbar}{x_I} \Big) \prod\limits_{{c_{I}+1=c_{J}} \atop {\rho^{\lambda}_{I}<\rho^{\lambda}_{J}}} \theta\Big( \dfrac{x_{I}}{x_{J}} \Big)
		W_{\ft}(z_i) W_{\bar\ft}(z_i)
		\ee
		with
		\be
		\label{Wpart}
		W_{\ft}(z_i) =(-1)^{\kappa_{\textbf{t}}} \phi \Big( x_r, \prod\limits_{i=1}^{n}  z_i \Big) \prod\limits_{e\in \textbf{t}} \phi\Big(\dfrac{x_{h(e)} \varphi^{\lambda}_{t(e)}}{x_{t(e)}\varphi^{\lambda}_{h(e)}}, \prod\limits_{i \in [h(e),\ft]} z_i\Big)
		\ee}
\end{Proposition}
\begin{proof}
	By Lemma \ref{faclem}, elliptic stable envelope of a fixed point factors 
	to a product of elliptic stable envelopes (\ref{facstab}). 
	The explicit formulas for elliptic stable envelopes of $\ft$ in $\ahilb_{\lambda}$
	and are given by Proposition 6 in \cite{EllHilb} which gives the above explicit formulas.    
\end{proof}
We note that the variables $z_i$, $i\in\textsf{R}_{k,n}$ in (\ref{Wpart}) denote the K\"ahler parameters  associated to the line bundles $x_i$ on the abelianization $\textbf{AX}'$. 

\begin{Proposition}
	$$	
	\pi^{'}_{*} \circ \j^{'*}_{+}\circ (\j^{'}_{- *})^{-1} \Big(\sum\limits_{(\ft, \bar\ft) \in \Upsilon_{n,k} } \textrm{Stab}_{\fC^{''}}(\ft,\bar{\ft})\Big) =1.
	$$
\end{Proposition} 
\begin{proof}
	By Corollary \ref{corfac} all  $\j^{'}_{- *}, \j^{'*}_{+}, \pi^{'}_{*}$ factor in into the direct product of maps. By Proposition \ref{facprop} the stable envelope of the fixed point $(\ft,\bar{\ft})$ also factorizes into a product of stable envelopes. This gives:
	$$
	\begin{array}{l}
	\pi^{'}_{*} \circ \j^{'*}_{+}\circ (\j^{'}_{- *})^{-1} \Big(\sum\limits_{(\ft, \bar\ft) \in \Upsilon_{n,k} } \textrm{Stab}_{\fC^{''}}(\ft,\bar{\ft})\Big)=\\
	\left(\pi^{'}_{1,*} \circ \j^{'*}_{1,+}\circ (\j^{'}_{1,- *})^{-1} \Big(\sum\limits_{\ft} \textrm{Stab}_{\fC^{''}_1}(\ft)\Big)\right) 
	\left(\pi^{'}_{2,*} \circ \j^{'*}_{2,+}\circ (\j^{'}_{2,- *})^{-1} \Big(\sum\limits_{\bar \ft} \textrm{Stab}_{\fC^{''}_2}(\bar \ft)\Big)\right)
	\end{array}
	$$	
	Each factor here is equal to $1$ by Theorem 5 in \cite{EllHilb}.   
\end{proof} 
The last proposition implies that the abelianization formula (\ref{abla}) can be written in the form:
$$
\textrm{Stab}(\lambda,\bar{\lambda})=\pi_{*}\circ \j^{*}_{+}\circ (\j_{- *})^{-1}\circ \textrm{Stab}_{\fC}^{'}\Big(\sum\limits_{(\ft, \bar\ft) \in \Upsilon_{n,k} } \textrm{Stab}_{\fC^{''}}(\ft,\bar{\ft})\Big)=\pi_{*}\circ \j^{*}_{+}\circ (\j_{- *})^{-1} \Big(\sum\limits_{(\ft, \bar\ft) \in \Upsilon_{n,k} } \textrm{Stab}_{\fC^{'}}(\ft,\bar{\ft})\Big),
$$
where the second identity $\textrm{Stab}_{\fC^{'}}=\textrm{Stab}_{\fC}^{'} \circ \textrm{Stab}_{\fC^{''}}$ is the triangle lemma for elliptic stable envelope,
see Section 3.6 in \cite{AOelliptic}. We now see that the last expression coincides with (\ref{ellipticenvelope}). Indeed, the numerator of (\ref{ellipticenvelope}) 
is given by (\ref{stabprime}), the product $\prod\limits_{{c_{I}=c_{J}} \atop { \rho^{\lambda}_{I}>\rho^{\lambda}_{J} }   }  \theta \Big( \dfrac{x_I}{x_J} \Big) $ in the denominator of (\ref{shenpart}) comes from pushforward $\pi_{*}$ computed by localization, similarly $\prod\limits_{{c_{I}=c_{J}} \atop { \rho^{\lambda}_{I}>\rho^{\lambda}_{J} }   }  \theta \Big( \dfrac{x_I}{x_J \hbar} \Big) $ comes from the pushforward $(\j_{- *})^{-1}$. We refer to Section 4.3 of \cite{AOBethe} for
computations of the corresponding normal bundles to maps $\pi$ and $\j_{-}$. 

By definition, the K\"ahler parameters $z_l$, $l=1,\dots,n-1$ of the Nakajima variety $X^{'}$ are parameters associated to tautological line bundles $\cL_l=\det \tb_l$. Expressed in the corresponding Chern roots these line bundles have the form $\cL_m=\prod\limits_{{i \in \textsf{R}_{n,k}} \atop {c_{i}=m}} x_{\Box}$. This means that the  K\"ahler parameters $z_i$, $i \in \textsf{R}_{k,n}$  corresponding to the line bundles $x_i$ on the abelianization $\textbf{AX}'$  restrict to the K\"ahler parameters  of $X'$ by $z_{i}\to z_{c_{i}}$. This substitution gives desired dependence of stable envelope on K\"ahler variables. 

The last step is to find correct shifts of the K\"ahler variables by powers of $\hbar$.
Indeed, the proposition (\ref{facprop}) provides the explicit formulas for elliptic stable envelopes up to shifts $z_i \to z_i \hbar^{m_i}$ for some integers $m_i$. 
The values of $m_i$ are uniquely determined by the condition that quasi-periods
$x_{i}\to x_i q$ of (\ref{ellipticenvelope}) coincide with the quasi-periods of the section (\ref{ufund}). A calculation repeating the last part of Section 8.3 in \cite{EllHilb} gives exactly the combinatorial formula for the $\hbar$-powers (\ref{betafun}). 
\end{proof}

\subsection{Refined formula}
In this subsection, we prove a refined version of formula (\ref{ellipticenvelope}), in the sense that when restricted to another fixed point $\mu$, the summation will be rewritten as depending on the trees $\bar\ft$ only, but not on the trees $\ft$. The refined formula will be of crucial use to us in the proof of the main theorem.

Given a fixed point $\lambda$,  the original formula (\ref{ellipticenvelope}) has the following structure (for simplicity we omit the chamber subscript $\sigma'$):
$$
\Stab' (\lambda)  = \sum_{\sigma\in \fS_{n,k}, \ft, 
	\bar\ft} \frac{\cN^\sigma}{\cD^\sigma}  \cR^\sigma (\ft, \bar\ft) \cW^\sigma (\ft, \bar\ft) ,
$$
where we denote
\be  \nonumber
\cN &:=& (-1)^{k (n-k)-1} \prod_{\substack{c_I = k \\ I \in \lambda \\  I\neq (1,1) }} \theta \Big( \dfrac{x_I}{a_1} \Big) \prod_{\substack{c_I = k \\ I \not\in \lambda }} \theta \Big( \frac{a_1}{x_I \hbar} \Big) \prod_{ \substack{ c_I = n-k \\ I\neq (n-k, k) }} \theta \Big( \frac{a_2 \hbar}{x_I} \Big) \prod_{\substack{c_I + 1 = c_J \\ \rho^\lambda_I > \rho^\lambda_J \\ ( I \leftrightarrow J ) \not\in \Gamma_\lambda \cup \Gamma_{\bar\lambda} }} \theta \Big( \frac{x_J \hbar}{x_I} \Big) \prod_{\substack{ c_I + 1 = c_J \\ \rho^\lambda_I < \rho^\lambda_J \\ ( I \leftrightarrow J ) \not\in \Gamma_\lambda \cup \Gamma_{\bar\lambda} }} \theta \Big( \dfrac{x_I}{x_J} \Big), \\ \nonumber
\\  \nonumber
\cD &:=& \prod_{c_I = c_J, \, \rho^\lambda_I > \rho^\lambda_J} \theta \Big( \frac{x_I}{x_J} \Big) \prod_{c_I = c_J, \, \rho^\lambda_I > \rho^\lambda_J +2} \theta \Big( \frac{x_I}{x_J \hbar} \Big), \\ \nonumber
\\ \nonumber
\cR (\ft, \bar\ft) &:=& \dfrac{ \prod_{\substack{c_I + 1 = c_J, \, \rho^\lambda_I = \rho^\lambda_J+1 \\ ( I \leftrightarrow J ) \in \Gamma_\lambda \backslash \ft \cup \Gamma_{\bar\lambda} \backslash \bar\ft }} \theta \Big( \dfrac{x_J \hbar}{x_I} \Big) \prod_{\substack{ c_I + 1 = c_J, \, \rho^\lambda_I + 1 = \rho^\lambda_J \\ ( I \leftrightarrow J ) \in \Gamma_\lambda \backslash \ft \cup \Gamma_{\bar\lambda} \backslash \bar\ft }} \theta \Big( \dfrac{x_I}{x_J} \Big) }{ \prod_{c_I = c_J, \, \rho^\lambda_i = \rho^\lambda_j +2} \theta \Big( \dfrac{x_I}{x_J \hbar} \Big) } \\ \nonumber
\\ \nonumber
\cW (\ft, \bar\ft) &:=&  \dfrac{ \theta \Big( \dfrac{a_1}{x_r}  \prod\limits_{I\in [r, \ft]} z_{c_I}^{-1} \hbar^{-\textsf{v}(I)} \Big) }{\theta \Big( \prod\limits_{I \in [r, \ft]} z_{c_I}^{-1} \hbar^{-\textsf{v}(I)} )}  \prod_{e\in \ft} \dfrac{ \theta \Big( \dfrac{x_{t(e)} \varphi^\lambda_{h(e)} }{x_{h(e)} \varphi^\lambda_{t(e)} } \prod\limits_{I\in [h(e), \ft]} z_{c_I}^{-1} \hbar^{-\textsf{v}(I)} \Big) }{ \theta \Big( \prod\limits_{I\in [h(e), \ft]} z_{c_I}^{-1} \hbar^{-\textsf{v}(I)} \Big)  } \\ \nonumber
&& \cdot  \dfrac{\theta \Big( \dfrac{a_2 \hbar }{x_{\bar r}} \prod\limits_{I \in [\bar r, \bar\ft]} z_{c_I}^{-1} \hbar^{-\textsf{v} (I)} \Big) }{ \theta \Big( \prod\limits_{I\in [\bar r, \bar\ft]} z_{c_I}^{-1} \hbar^{-\textsf{v}(I)}  \Big) }  \prod_{e\in \bar\ft} \dfrac{ \theta \Big( \dfrac{x_{t(e)} \varphi^\lambda_{h(e)} }{x_{h(e)} \varphi^\lambda_{t(e)} } \prod\limits_{I\in [h(e), \bar\ft]} z_{c_I}^{-1} \hbar^{-\textsf{v}(I)} \Big) }{ \theta \Big( \prod\limits_{I\in [h(e), \bar\ft]} z_{c_I}^{-1} \hbar^{-\textsf{v}(I)} \Big)} ,
\ee
and $\cN^\sigma$, $\cD^\sigma$, $\cR^\sigma (\ft, \bar\ft)$, $\cW^\sigma (\ft, \bar\ft)$ are the functions obtained by permuting $x_i$'s via $\sigma\in \fS_{n,k}$ in $\cN$, $\cD$, $\cR$, $\cW$. 

We would like to consider its restriction to a fixed point $\nu \supset \lambda$; in other words, to evaluate $x_I = \varphi_I^\nu$.
The symmetrization ensures that $\Stab' (\lambda)$ does not have poles for those values of $x_I$'s, and hence $\left. \Stab' (\lambda) \right|_\nu$ is well-defined. 

For an individual term such as
$$
\frac{\cN^\sigma}{\cD^\sigma}  \cR^\sigma (\ft, \bar\ft) \cW^\sigma (\ft, \bar\ft),
$$
however, its restriction to $\nu$ is not well-defined; in other words, it may depend on the order we approach the limit $x_I = \varphi_I^\nu$. We discuss these properties  in more details here. 

\begin{Lemma} \label{ND-well-defined}
{\it	The restriction to $\nu$ of
	$$
	\frac{\cN^\sigma}{\cD^\sigma}
	$$
	is well-defined, i.e., does not depend on the ordering of evaluation.} 
\end{Lemma}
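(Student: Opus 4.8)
\textit{Proof plan.} The statement is equivalent to saying that the iterated specialization $x_\Box \mapsto \varphi^\nu_\Box$ of the ratio of products of theta functions $\cN^\sigma/\cD^\sigma$ gives the same answer in every order; the way I would establish this is to show that each factor of $\cD^\sigma$ that actually degenerates to $\theta(1)=0$ under the substitution is cancelled, via the reflection rule $\theta(1/x)=-\theta(x)$, by a factor of $\cN^\sigma$, so that the dangerous factors disappear before any $0/0$ ambiguity can arise. The first step is to pin down those dangerous factors. Since $\sigma\in\fS_{n,k}=\prod_l\fS_{\textsf{v}_l}$ permutes Chern roots only within a fixed diagonal, every factor of $\cD^\sigma$ has the shape $\theta\big(\varphi^\nu_{\sigma(I)}/\varphi^\nu_{\sigma(J)}\big)$ or $\theta\big(\varphi^\nu_{\sigma(I)}/(\varphi^\nu_{\sigma(J)}\hbar)\big)$ with $\sigma(I),\sigma(J)$ on one diagonal $c=l$. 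For generic $a_1,a_2,\hbar$ relative to $q$, the first type never vanishes (distinct boxes of a diagonal get distinct $\varphi^\nu$-weights), and the second vanishes precisely when $\sigma(I),\sigma(J)$ lie in the same region ($\nu$ or $\bar\nu$) and are related by a $(1,1)$-shift. Combining this with the constraints $c_I=c_J$ and $\rho^\lambda_I>\rho^\lambda_J+2$ built into the definition of $\cD$, and with the fact that $\lambda\subseteq\nu$ forces both $\nu$ and $\bar\nu$ to be order ideals, produces a short explicit list of the dangerous factors together with the permutations $\sigma$ and box-pairs $(I,J)$ producing them.

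For each dangerous factor I would exhibit the matching factor of $\cN^\sigma$ by a telescoping of weights: the coincidence $\varphi^\nu_{\sigma(I)}=\hbar\,\varphi^\nu_{\sigma(J)}$ can be written as a product of single-step ratios $\varphi^\nu_B/\varphi^\nu_{B'}$ along a lattice path inside $\nu$ (resp.\ $\bar\nu$) joining $\sigma(I)$ to $\sigma(J)$, and the corresponding single-step theta factors $\theta(x_{J'}\hbar/x_{I'})$, $\theta(x_{I'}/x_{J'})$ with $c_{I'}+1=c_{J'}$, together with the boundary factors $\theta(x_I/a_1)$, $\theta(a_1/(x_I\hbar))$, $\theta(a_2\hbar/x_I)$ recording where $\nu$ meets the diagonals $c=k$ and $c=n-k$, are exactly the adjacent-diagonal and boundary factors entering $\cN$ (the $\reflectbox{\textsf{L}}$-shape/adjacency configuration imposed by the order-ideal property is the one appearing in $\Upsilon_{n,k}$). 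The cleanest way to organize this bookkeeping is the factorization already used in the proof of Theorem~\ref{mainth}: restricting to the abelianized hypertoric pieces $\ahilb_\lambda$ and $\ahilb_{\bar\lambda}$ (Lemma~\ref{faclem}, Corollary~\ref{corfac}) makes $\cN^\sigma$, $\cD^\sigma$ and the specialization $x_\Box\mapsto\varphi^\nu_\Box$ all split as products over the two Young diagrams and then over diagonals, reducing the whole claim to a one-diagonal statement that can be checked by direct theta-function manipulation of the type carried out in \cite{EllHilb}.

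The main obstacle will be to make this cancellation work \emph{for every specialization order}, not just for a convenient one: one must ensure that, whenever a given factor of $\cD^\sigma$ is about to vanish, the factor of $\cN^\sigma$ needed to cancel it has already become available, i.e.\ depends only on Chern roots that are still free or on the already-specialized ones in the required way. This requires a careful multiplicity analysis, because on a diagonal of $\nu$ several boxes may carry the same $\varphi^\nu$-weight and hence force several factors of $\cD^\sigma$ to vanish at once; one has to check that the vanishing supplied by $\cN^\sigma$ dominates that of $\cD^\sigma$ at each such coincidence and that the pairing between numerator and denominator factors can be chosen compatibly with any order of evaluation.
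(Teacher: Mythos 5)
Your overall strategy---classify the factors of $\cD^\sigma$ that vanish under $x_\Box\mapsto\varphi^\nu_\Box$ and pair each with a compensating zero of $\cN^\sigma$---is the intended one: the paper's own proof of this lemma is a one-line reference to Proposition 9 of \cite{EllHilb}, where precisely this bookkeeping is carried out for the abelianized Hilbert scheme, and your identification of the dangerous factors is correct (for generic $a_1,a_2,\hbar$ the factors $\theta(x_{\sigma(I)}/x_{\sigma(J)})$ never vanish at $\nu$, while $\theta\big(x_{\sigma(I)}/(x_{\sigma(J)}\hbar)\big)$ vanishes exactly when $\sigma(I)$ and $\sigma(J)$ lie in the same region of $\textsf{R}_{n,k}$ relative to $\nu$ and differ by a $(1,1)$-shift). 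So the route is the same as the paper's; the problem is that your write-up stops exactly where the lemma begins.

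The unproved core is the ``careful multiplicity analysis'' you defer in your last paragraph, and it is not a routine check, for two concrete reasons. First, the compensating zeros of $\cN^\sigma$ do not lie on the same divisors as the zeros of $\cD^\sigma$: the dangerous denominator factors vanish along $\{x_{A}=x_{B}\hbar\}$ for \emph{same-diagonal} boxes $A=\sigma(I)$, $B=\sigma(J)$, whereas the numerator factors available to you vanish along $\{x_C=x_D\}$ or $\{x_C\hbar=x_D\}$ for \emph{adjacent-diagonal} pairs. Hence no factor of $\cD^\sigma$ divides $\cN^\sigma$, the ratio genuinely has poles through the point $\varphi^\nu$, and one is in a true $0/0$ situation (of the type $uv/(u-v)$ at the origin) whose independence of the evaluation order must be argued from the staged structure of the limit---merely observing that the numerator ``vanishes to at least the same order at the point'' is not sufficient. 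Second, your telescoping along a lattice path wants to use the adjacent-diagonal thetas attached to the two intermediate boxes of the $2\times 2$ square spanned by $\sigma(J)$ and $\sigma(I)=\sigma(J)+(1,1)$; but when such an adjacent pair lies in $\Gamma_\lambda\cup\Gamma_{\bar\lambda}$ its theta factor is excluded from $\cN$ (it sits in $\cR(\ft,\bar\ft)$), so the pairing cannot always be made inside $\cN^\sigma$ alone. One must first use the constraint $\rho^\lambda_I>\rho^\lambda_J+2$ in the definition of $\cD$ (which removes the $\rho$-adjacent same-diagonal pairs, whose $\hbar$-shifted thetas live in the denominator of $\cR$ instead) and then track how the permutation $\sigma$, which is constrained only diagonal-by-diagonal and not box-by-box, interacts with the $\lambda$-geometry entering $\rho^\lambda$ versus the $\nu$-geometry entering $\varphi^\nu$. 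Until these two points are pinned down, the assertion that ``the vanishing supplied by $\cN^\sigma$ dominates that of $\cD^\sigma$'' remains exactly the statement to be proved.
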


\begin{proof}
	The proof is the same as Proposition 9 of \cite{EllHilb}.
\end{proof}

\begin{Lemma} \label{fix-nubar}
{\it	If
	$$
	\left. \frac{\cN^\sigma}{\cD^\sigma} \right|_\nu \neq 0,
	$$
	then $\sigma$ fixes every box in $\bar\nu$.}
\end{Lemma}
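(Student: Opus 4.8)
\emph{Proof strategy.}
The plan is to track, for each $\sigma\in\fS_{n,k}$, exactly which theta factors of $\cN^\sigma$ and $\cD^\sigma$ vanish under the substitution $x_\Box=\varphi^\nu_\Box$, and to show that whenever $\sigma$ moves a box of $\bar\nu$ the numerator vanishes strictly more than the denominator, so that the restriction — which by Lemma~\ref{ND-well-defined} is a well-defined \emph{finite} number — is forced to be $0$. This is the contrapositive of the assertion. (If $\nu=\textsf{R}_{n,k}$ then $\bar\nu=\emptyset$ and there is nothing to prove.)

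First I collect the relevant numerics. By (\ref{fpsubs}), on the diagonal $c_\Box=\ell$ the boxes of $\nu$ receive values $a_1\hbar^{j-1}$ and those of $\bar\nu$ receive $a_2\hbar^{n-k-i+1}$; working with generic $a_1,a_2,\hbar$ — harmless, since the claimed identity is algebraic in them — these values are pairwise distinct on a fixed diagonal, a $\nu$-value is never equal to, nor $\hbar^{\pm1}$ times, a $\bar\nu$-value, and no $\varphi^\nu_\Box$ equals $a_2$ or $a_1\hbar^{-1}$. On the diagonal $n-k$ the box of largest row index is the corner $c_0:=(n-k,k)$; it lies in $\bar\nu$ (as $\nu$ is not the full rectangle) and is the \emph{only} box of that diagonal whose value is $a_2\hbar$. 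Since $\sigma$ permutes the boxes of each diagonal and $\nu\supseteq\lambda$, the $\nu$-boxes (resp. $\bar\nu$-boxes) of a diagonal form an initial (resp. terminal) run in the row index, so $\rho^\lambda$ is strictly monotone along the $\bar\nu$-boxes of a diagonal, and $\bar\nu$ is precisely the set of boxes of $\bar\lambda$ lying outside $\nu$. Finally, every theta factor of $\cN$ is of ``mixed'' type — $\theta(x_A/a_i)$, $\theta(a_i\hbar^{\pm1}/x_A)$, or $\theta(\hbar^{\pm1}x_A/x_B)$ with $c_A\neq c_B$ — whereas every factor of $\cD$ is a same-diagonal ratio $\theta(x_A/x_B)$ or $\theta(x_A/x_B\hbar)$; in particular $\cN^\sigma$ has no poles, and the only factors of $\cD^\sigma$ that can vanish at $x=\varphi^\nu$ are the $\theta(x_{\sigma(I)}/x_{\sigma(J)}\hbar)$ with $\rho^\lambda_I>\rho^\lambda_J+2$ for which $\sigma(I),\sigma(J)$ are consecutive inside one $\nu$- or $\bar\nu$-run of a diagonal.

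The base case concerns the corner: if $\sigma(c_0)\neq c_0$, set $I_0:=\sigma^{-1}(c_0)\neq c_0$ (so $c_{I_0}=n-k$); then the factor $\theta(a_2\hbar/x_{I_0})$ of $\cN$ becomes $\theta(a_2\hbar/x_{c_0})$ in $\cN^\sigma$ and vanishes at $x_{c_0}=\varphi^\nu_{c_0}=a_2\hbar$. Evaluating $x_{c_0}$ last (the remaining variables at $\varphi^\nu$) and using the numerics above to enumerate the factors of $\cN^\sigma$ and of $\cD^\sigma$ that contain $x_{c_0}$, one finds that the vanishing order of $\cN^\sigma$ at $x_{c_0}=a_2\hbar$ strictly exceeds that of $\cD^\sigma$; by Lemma~\ref{ND-well-defined} this forces $\left.\cN^\sigma/\cD^\sigma\right|_\nu=0$. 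For the general case I would run a descent: order the boxes of $\bar\nu$ by decreasing $i+j$ (distance from $c_0$), assume $\sigma$ fixes every $\bar\nu$-box preceding $B$, and, if $\sigma(B)\neq B$, produce a mixed factor of $\cN^\sigma$ forced to vanish at $\varphi^\nu$ — either the anchoring factor $\theta(a_2\hbar/x_I)$ when $B$ is near diagonal $n-k$, or one of the $\theta(\hbar^{\pm1}x_A/x_B)$ linking $B$ across a consecutive diagonal to an already-fixed box along an edge \emph{not} in $\Gamma_\lambda\cup\Gamma_{\bar\lambda}$ — and then conclude by the same order comparison.

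The genuinely delicate point, and the one I expect to be the main obstacle, is precisely this comparison of vanishing orders ``beyond what $\cD^\sigma$ can cancel'': because (by Lemma~\ref{ND-well-defined}) the individual summands only make sense after the ratio is formed, one cannot reason factor-by-factor but must compare, along a chosen approach $x\to\varphi^\nu$ and diagonal by diagonal, the total order of vanishing of $\cN^\sigma$ against that of $\cD^\sigma$, and show the former wins strictly whenever $\sigma|_{\bar\nu}\neq\mathrm{id}$. This is exactly the bookkeeping carried out in Proposition~9 and in the computation at the end of Section~8.3 of \cite{EllHilb} for $\mathrm{Hilb}(\matC^2)$; here the only structural difference is that each diagonal now carries two arithmetic progressions of values — the $a_1$-family on the $\nu$-boxes and the $a_2$-family on the $\bar\nu$-boxes — in place of one, with the $a_2$-family playing the role of the Hilbert-scheme ``complement'', and the same count goes through. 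Converting, via Lemma~\ref{ND-well-defined}, the strict inequality of orders into $\left.\cN^\sigma/\cD^\sigma\right|_\nu=0$ completes the proof.
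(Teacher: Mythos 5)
Your proposal follows essentially the same route as the paper's proof: the base case pins down the corner $(n-k,k)$ via the factors $\theta(a_2\hbar/x_{\sigma(I)})$ with $c_I=n-k$, and the induction then moves outward from the corner (decreasing $i+j$, equivalently increasing $\rho$), at each step exhibiting a mixed theta factor of $\cN^\sigma$ that vanishes at $\nu$ because the displaced box is non-adjacent to an already-fixed neighbour. The one point you handle differently is the conversion of ``a numerator factor vanishes'' into ``the ratio vanishes'': you propose to redo an order-of-vanishing count (deferred to \cite{EllHilb}), whereas the paper obtains this immediately from Lemma \ref{ND-well-defined} — the same underlying fact, since that lemma is itself proved by the very bookkeeping you cite.
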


\begin{proof}
	Suppose that $\left. \dfrac{\cN^\sigma}{\cD^\sigma} \right|_\nu \neq 0$. Then by Lemma \ref{ND-well-defined}, $\cN^\sigma$ contains no factors that vanish when restricted to $\nu$. First note that $\cN^\sigma$ contains
	$$
	\prod_{c_i = n-k, \, i\neq (n-k, k)} \theta \Big( \frac{a_2 \hbar}{x_{\sigma(i)}} \Big),
	$$
	which vanishes unless $\sigma(i) \neq (n-k, k)$ for any $i\neq (n-k, k)$. Hence $\sigma (n-k, k) = (n-k, k)$.
	
	We proceed by induction on the $\rho$-values of boxes in $\bar\nu$. Assume that $\sigma$ fixes every box with $\rho \leq \rho_0$. Consider a box $(a,b)$ with $\rho(a,b) = \rho_0 +2$. Then either $(a+1, b)$ or $(a, b+1)$ lies in $\bar\nu$, and both of them have $\rho = \rho_0$. Suppose $\sigma^{-1} (a,b) \neq (a,b)$, then it is adjacent to neither $(a+1, b)$ nor $(a, b+1)$, and by induction hypothesis, $\rho_{\sigma^{-1} (a,b)} > \rho_{a+1, b}, \rho_{a, b+1}$. We see that $\cN^\sigma$ contains the factor
	$$
	\theta \Big( \frac{x_{\sigma (a+1, b)}}{x_{\sigma (\sigma^{-1} (a,b))}} \Big) = \theta \Big( \frac{x_{a+1, b}}{x_{ab}} \Big)  \qquad \text{ or } \qquad \theta \Big( \frac{x_{\sigma (\sigma^{-1} (a,b))}}{x_{\sigma (a, b+1)} \hbar} \Big) = \theta \Big( \frac{x_{ab}}{x_{a, b+1} \hbar} \Big) ,
	$$
	which vanishes at $\nu$. Hence $\sigma$ must fix $(a,b)$ and the lemma holds. 
\end{proof}

\begin{Lemma}
	{\it If
	$$
	\frac{\cN^\sigma}{\cD^\sigma} \bigg|_\nu \neq 0
	$$
	then $\sigma$ preserves the set of boxes of  $\lambda$.}
\end{Lemma}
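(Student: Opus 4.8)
The plan is to argue by a continuation of the inductive scheme used for Lemma~\ref{fix-nubar}. Recall that $\sigma\in\fS_{n,k}=\prod_l\fS_{\textsf{v}_l}$ permutes the boxes on each diagonal $c_\Box=l$ among themselves; in particular it preserves $\textsf{R}_{n,k}$, and by Lemma~\ref{fix-nubar} it fixes every box of $\bar\nu$ and hence also preserves the set $\nu=\textsf{R}_{n,k}\setminus\bar\nu$. Since on each diagonal the $\lambda$-boxes form a finite set of fixed cardinality, it is enough to show that $\sigma$ maps every box of $\lambda$ into $\lambda$; injectivity then upgrades this to $\sigma(\lambda)=\lambda$ as a set, and consequently $\sigma$ also preserves $\nu\setminus\lambda$ and $\bar\lambda$.

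First I would isolate the theta-factors of $\cN^\sigma$ that are available. By Lemma~\ref{ND-well-defined} the restriction $\cN^\sigma/\cD^\sigma|_\nu$ is well defined, and its being nonzero forces every factor of $\cN^\sigma$ not cancelled against a factor of $\cD^\sigma$ to remain nonzero after the substitution $x_I=\varphi^\nu_I$ of~(\ref{fpsubs}). As observed in the proof of Lemma~\ref{fix-nubar}, the factors of $\cD^\sigma$ only pair boxes lying on the \emph{same} diagonal, so any factor of $\cN^\sigma$ attached either to two boxes on \emph{consecutive} diagonals or to one box together with $a_1$ or $a_2$ survives. The ones I will use are: (i) $\theta(x_{\sigma(I)}/a_1)$ for $I\in\lambda$ with $c_I=k$ and $I\neq(1,1)$, which at $\nu$ equals $\theta(\hbar^{\,j'-1})$ with $j'$ the column of $\sigma(I)$, hence vanishes precisely when $\sigma(I)=(1,1)$; (ii) the consecutive-diagonal factors $\theta(x_{\sigma(J)}\hbar/x_{\sigma(I)})$ and $\theta(x_{\sigma(I)}/x_{\sigma(J)})$ indexed by boxes $I,J$ with $c_I+1=c_J$, the prescribed $\rho^\lambda$-inequality, and $\{I,J\}\notin\Gamma_\lambda\cup\Gamma_{\bar\lambda}$; and (iii) the boundary weights $\theta(a_1/(x_{\sigma(I)}\hbar))$ and $\theta(a_2\hbar/x_{\sigma(I)})$ on the diagonals $c=k$ and $c=n-k$, used exactly as in Lemma~\ref{fix-nubar}.

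Then I would run the induction on the value of $\rho^\lambda$, continuing past the range already treated in Lemma~\ref{fix-nubar}. Assume for contradiction that some box of $\lambda$ is sent out of $\lambda$; since $\sigma$ preserves $\nu$, it is sent into $\nu\setminus\lambda$, equivalently some box of $\nu\setminus\lambda$ is pulled into $\lambda$. Choosing such a ``misplaced'' box $I$ with $\rho^\lambda_I$ minimal, minimality guarantees that on every box of smaller $\rho^\lambda$ the membership in $\lambda$ is already respected by $\sigma$. Using the contiguity of the $\lambda$-segment (and of the $\bar\lambda$-segment) on each diagonal — which is exactly what makes $\sigma(I)$ lie strictly higher or strictly lower along its diagonal than it should — one locates an adjacent box $J$ on a neighbouring diagonal for which the defining edge $\{I,J\}$ is absent from $\Gamma_\lambda\cup\Gamma_{\bar\lambda}$. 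The corresponding factor of type~(ii) then survives in $\cN^\sigma$ and, evaluated at $\nu$ via~(\ref{fpsubs}), vanishes, contradicting Lemma~\ref{ND-well-defined}. The diagonals $c=k$ and $c=n-k$ are handled separately with factor~(i) and the $a_2$-weight, again following the pattern of Lemma~\ref{fix-nubar}.

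The main obstacle is step~(ii). For boxes of $\bar\nu$ the values $\varphi^\nu$ are pairwise distinct (distinct $\hbar$-powers times $a_2$), and that distinctness is what powered Lemma~\ref{fix-nubar}; but on $\lambda\cup(\nu\setminus\lambda)$ every value is of the shape $a_1\hbar^{\,\mathrm{column}-1}$, so two boxes in the same column cannot be separated at the fixed point $\nu$. Hence $\sigma$ cannot be pinned down box-by-box — only set-wise — and the required vanishing has to be extracted from the $\rho^\lambda$-dependent exclusion $\{I,J\}\notin\Gamma_\lambda\cup\Gamma_{\bar\lambda}$ rather than from separation of coordinates. Verifying, in each configuration that can arise, that the relevant edge is genuinely missing from both skeletons (so that its theta-factor really occurs in $\cN$, rather than being absorbed into $\cR$) is the delicate combinatorial core of the argument, and it is precisely there that the Young-diagram shapes of $\lambda$ and $\bar\lambda$ must be used.
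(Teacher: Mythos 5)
Your proposal identifies the right ingredients (the surviving consecutive\-/diagonal theta factors of $\cN^\sigma$, reduction to the previous lemma for $\bar\nu$, an inductive scheme), but it stops short of the actual proof: the step where, for a hypothetical misplaced box, you must \emph{exhibit} a factor of $\cN^\sigma$ that survives against $\cD^\sigma$ and vanishes at $\nu$ is only asserted (``one locates an adjacent box $J$ \dots''), and you yourself flag the verification as ``the delicate combinatorial core'' left to be done. That is precisely the content of the lemma, so as written this is a plan rather than a proof. Moreover, the difficulty is mislocated: you expect to have to check case by case that an edge $\{I,J\}$ is absent from $\Gamma_\lambda\cup\Gamma_{\bar\lambda}$, but for the pairs that actually do the work one box lies in $\lambda$ and the other in $\nu\backslash\lambda\subset\bar\lambda$, so the edge is automatically in neither skeleton and the corresponding factor of $\cN$ is always present. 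No delicate edge bookkeeping is needed.

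The paper's proof also organizes the induction differently, and this matters. It inducts on the \emph{diagonal} (content), not on $\rho^\lambda$-values of a minimal misplaced box: assuming $\sigma$ preserves the $l$-th diagonal of $\lambda$, it pairs a box $Y$ of the $(l+1)$-th diagonal of $\lambda$ against \emph{all} boxes $X^l_1,X^l_2,\dots$ of the $l$-th diagonal of $\nu\backslash\lambda$, producing the full product $\prod_m\theta\bigl(x_{X^l_m}/x_{\sigma(Y)}\bigr)$ in $\cN^\sigma$ (with a separate single-factor argument when the $l$-th diagonal of $\nu\backslash\lambda$ is empty, and the $c=k$ column handled via $\theta(x_I/a_1)$ as in the base case). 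Because the restriction $\varphi^\nu$ depends only on the row within $\nu$, wherever $\sigma(Y)$ lands in $\nu\backslash\lambda$ some factor of this product (or a companion factor $\theta(x_Z/x_{\sigma(Y)})$ with $Z\in\lambda$ the box to the left) vanishes. Your choice of a ``misplaced box with $\rho^\lambda$ minimal'' does not supply the induction hypothesis this argument needs, namely control of $\sigma$ on the \emph{adjacent diagonal}; boxes of smaller $\rho^\lambda$ are not the boxes you must pair against. To repair the proposal you would need to switch to the diagonal-by-diagonal induction and write out the three cases explicitly, at which point you recover the paper's argument.
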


\begin{proof}
	We proceed by induction on the diagonals. For the initial step, we need to show that the box with least content in $\lambda$, denoted by $(1,b)$, is fixed by $\sigma$. If $(1,b+1)\in \bar\nu$, then $(2,b+1) \in \bar\nu$, and $\sigma$ fixes $(1,b)$ by Lemma \ref{fix-nubar}. Now assume that $(1,b+1)\in \nu\backslash \lambda$.  Let $X_1 = (1,b+1), X_2, \cdots$ be the boxes in the diagonal of $\nu\backslash \lambda$ with one less content than $(1,b)$. Since $\rho_{X_i} < 0 < \rho_{1,b}$, by Lemma \ref{fix-nubar} we always have in $\cN^\sigma$ the factor
	$$
	\prod_{m \geq 1} \theta \Big( \frac{x_{\sigma(X_m) }}{x_{\sigma(1,b)}} \Big) = \prod_{m \geq 1} \theta \Big( \frac{x_{X_m}}{x_{\sigma(1,b)}} \Big),
	$$
	which vanishes at $\nu$ unless $\sigma (1,b)$ has no box to the left of it. This implies $\sigma (1,b) = (1,b)$. 
	
	Now assume that $\sigma$ preserves the $l$-th diagonal of $\lambda$. Consider the $(l+1)$-th diagonal. There are several cases.
	
	\begin{itemize}
		
		\item Both the $l$-th and $(l+1)$-th diagonals of $\nu\backslash\lambda$ are empty. The lemma holds trivially for $l+1$.
		
		\item $\nu\backslash \lambda$ is empty in the $l$-th diagonal, but has one box $X_1^{l+1}$ in the $(l+1)$-th diagonal.
		
		In this case, let $Y_1^l, Y_2^l, \cdots$ be boxes in the $l$-th diagonal of $\lambda$. In $\cN^\sigma$, there is the theta factor 
		$$
		\prod_{m \geq 1} \theta \Big( \frac{x_{\sigma( Y_m^l )}}{x_{\sigma (X_1^{l+1})} \hbar } \Big) = \prod_{m \geq 1} \theta \Big( \frac{x_{Y_m^l}}{x_{\sigma (X_1^{l+1})} \hbar } \Big) ,
		$$
		which vanishes at $\nu$ unless $\sigma (X_1^{l+1}) = X_1^{l+1}$. Hence $\sigma$ preserves the $(l+1)$-th diagonal of $\lambda$.
		
		\item The $l$-th diagonal of $\nu\backslash\lambda$ is nonempty.
		
		In this case, let $X_1^l, X_2^l, \cdots$ be the boxes in the $l$-th diagonal of $\nu\backslash \lambda$, and consider a general box $Y$ in the $(l+1)$-th diagonal of $\lambda$. We have in $\cN^\sigma$ the factor
		$$
		\prod_{m \geq 1} \theta \Big( \frac{x_{\sigma(X_m^l)}}{x_{\sigma (Y)}} \Big) = \prod_{m\geq 1} \theta \Big( \frac{x_{X_m^l}}{x_{\sigma (Y)}} \Big) .
		$$
		If $\sigma(Y) \not\in \lambda$, then it must be in $\nu\backslash \lambda$. Let $Z$ be the box to the left of $\sigma(Y)$, which must either also lie in $\nu\backslash \lambda$  and has to be one of those $X_i^l$'s, or lie in $\lambda$. In the former case the product vanishes at $\nu$; in the latter case we have another factor $\theta \Big( \dfrac{x_Z}{x_{\sigma(Y)}} \Big)$, which also vanishes at $\nu$.	
	\end{itemize}
	
	The lemma holds by induction.
\end{proof}

Consider the subgroups in $\fS_{n,k}$ defined as
$$
\fS_{\nu\backslash \lambda} := \{ \sigma \mid \sigma \text{ fixes each box in } \lambda \cup \bar\nu \}, \qquad \fS_{\bar\lambda} := \{ \sigma \mid \sigma \text{ fixes each box in } \lambda \}.
$$

\begin{Lemma}
{\it	If
	$$
	\frac{\cN^\sigma}{\cD^\sigma} \bigg|_\nu  \neq 0,
	$$
	then $\sigma \in \fS_{\nu\backslash \lambda}$.}
\end{Lemma}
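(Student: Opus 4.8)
The plan is to build on the three lemmas just established, which give that $\sigma$ fixes every box of $\bar\nu$ pointwise and preserves the set of boxes of $\lambda$; since every element of $\fS_{n,k}$ permutes boxes only within a single diagonal and $\sigma$ fixes the $\bar\nu$-part of each diagonal, it follows that $\sigma$ also preserves the set $\nu\backslash\lambda$. Hence it remains only to upgrade ``$\sigma$ preserves $\lambda$'' to ``$\sigma$ fixes every box of $\lambda$'', after which $\sigma\in\fS_{\nu\backslash\lambda}$ holds by definition. Throughout I will use, exactly as in the proofs above, that $\cN^\sigma/\cD^\sigma\big|_\nu\neq 0$ forces $\cN^\sigma$ to contain no theta factor vanishing at $\nu$ (Lemma~\ref{ND-well-defined}), together with the numerical input that in any diagonal $c$ the values $\varphi^\nu_\Box$ run through the consecutive monomials $a_1, a_1\hbar, \dots, a_1\hbar^{\textsf{d}_c(\nu)-1}$ on the boxes of $\nu$ (counting from inside to outside), so that the boxes of $\lambda$ there carry precisely the powers $a_1, a_1\hbar, \dots, a_1\hbar^{\textsf{d}_c(\lambda)-1}$.

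First I would treat the base case $\sigma(1,1)=(1,1)$: the factor $\prod_{c_I=k,\,I\in\lambda,\,I\neq(1,1)}\theta(x_{\sigma(I)}/a_1)$ of $\cN^\sigma$ cannot vanish at $\nu$, but $\varphi^\nu$ attains the value $a_1$ only on the box $(1,1)$ among the boxes of $\lambda$ on diagonal $k$; since $\sigma$ permutes those boxes, this forces $\sigma(1,1)=(1,1)$. Then I would induct on $\rho^\lambda$ (equivalently, on the grid-distance to the root along the skeleton $\Gamma_\lambda$, which equals $\rho^\lambda-2$ on $\lambda$): assuming $\sigma$ fixes every box of $\lambda$ with $\rho^\lambda\le\rho_0$, take a box $Y$ with $\rho^\lambda_Y=\rho_0+1$ and show $\sigma(Y)=Y$. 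The subtlety is that the skeleton edge joining $Y$ to its tree-parent lies in $\Gamma_\lambda$ and is therefore excluded from the products defining $\cN$, so one cannot propagate ``fixedness'' directly along the tree. Instead I would exploit the grid-neighbours of $Y$ in the two adjacent diagonals that lie \emph{outside} $\lambda$: such a pair $(Y,W)$ has $(Y\leftrightarrow W)\notin\Gamma_\lambda\cup\Gamma_{\bar\lambda}$ and hence contributes a factor $\theta(x_W/x_Y)$ or $\theta(x_Y\hbar/x_W)$ to $\cN$, and I would combine the non-vanishing of these factors at $\nu$ with the facts that $\sigma$ fixes $\bar\nu$, permutes $\nu\backslash\lambda$, and is already known to fix the smaller-$\rho^\lambda$ boxes of $\lambda$, so as to read off the position of $\sigma(Y)$ inside the $\lambda$-segment of its diagonal from the $\hbar$-grading above. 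As in the proof of the previous lemma, this step splits into the same three cases (both adjacent diagonals of $\nu\backslash\lambda$ empty, one box, or non-empty), supplemented when necessary by the finer factors $\theta(x_{\sigma(I)}/x_{\sigma(J)}\hbar)$ arising from pairs with $\rho^\lambda_I>\rho^\lambda_J+2$, which are what separate positions within a segment carrying non-consecutive $\hbar$-powers.

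Once $\sigma$ fixes every box of $\lambda$, together with the already-established fact that it fixes every box of $\bar\nu$, the definition of $\fS_{\nu\backslash\lambda}$ gives $\sigma\in\fS_{\nu\backslash\lambda}$. I expect the main obstacle to be precisely this inductive step: because the skeleton edges are absent from $\cN$, the bookkeeping must be routed through the boxes of $\nu\backslash\lambda$ and $\bar\nu$ and through the $\hbar^{\pm1}$-shifted theta factors, and one has to verify in each of the three geometric cases that any choice with $\sigma(Y)\neq Y$ produces a genuine zero of some factor of $\cN^\sigma$ at $\nu$. This is the analogue, for the pair $(\lambda,\nu)$ considered here, of Proposition~9 and the discussion following it in \cite{EllHilb}, and I would follow that template closely.
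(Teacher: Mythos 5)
Your reduction (it remains to upgrade ``$\sigma$ preserves the set $\lambda$'' to ``$\sigma$ fixes each box of $\lambda$''), your base case $\sigma(1,1)=(1,1)$ via the factors $\theta(x_{\sigma(I)}/a_1)$, and your choice to induct on $\rho^\lambda$ all match the paper, whose entire proof is the remark that the argument of Lemma \ref{fix-nubar} repeats verbatim on $\lambda$. The problem is your inductive step. The obstacle you identify --- that the skeleton edge from $Y$ to its parent lies in $\Gamma_\lambda$ and is excluded from $\cN$ --- is not actually an obstacle, because the factor one uses is not attached to the pair $(P,Y)$ but to the pair $(P,\sigma^{-1}(Y))$: if $P$ is a neighbour of $Y$ in $\lambda$ with $\rho_P=\rho_Y-1$ (already fixed by induction) and $Z=\sigma^{-1}(Y)\neq Y$, then $Z$ lies in the diagonal of $Y$ with $\rho_Z\geq\rho_Y+2$, so $Z$ is adjacent to neither of the two boxes of that diagonal neighbouring $P$ (one is $Y$, the other has smaller $\rho$ and is already fixed). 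Hence the pair $(P,Z)$ \emph{does} contribute a factor $\theta(x_{\sigma(P)}/x_{\sigma(Z)})=\theta(x_P/x_Y)$ or $\theta(x_{\sigma(P)}\hbar/x_{\sigma(Z)})=\theta(x_P\hbar/x_Y)$ to $\cN^\sigma$, and since $\varphi^\nu_P$ and $\varphi^\nu_Y$ are $a_1\hbar^{b-1}$ resp.\ $a_1\hbar^{b-2}$ on $\lambda$, this factor is $\theta(1)=0$ at $\nu$ --- exactly the mechanism of Lemma \ref{fix-nubar}.

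The detour you propose instead does not close the argument. First, an interior box $Y$ of $\lambda$ has all four grid-neighbours in $\lambda$, so there are no pairs $(Y,W)$ with $W\notin\lambda$ in the adjacent diagonals, and your source of vanishing factors disappears for precisely those boxes. Second, boxes $W\in\nu\backslash\lambda$ are at this stage only known to be permuted among themselves, not fixed, so factors involving them do not pin anything down. Third, the ``finer'' factors $\theta(x_{\sigma(I)}/(x_{\sigma(J)}\hbar))$ with $c_I=c_J$ and $\rho_I>\rho_J+2$ live in the \emph{denominator} $\cD^\sigma$; a zero there cannot be used to conclude that the ratio $\cN^\sigma/\cD^\sigma$ vanishes (Lemma \ref{ND-well-defined} guarantees the ratio is well defined, so a denominator zero is always matched, and the whole argument must run through uncancelled zeros of the numerator). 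Replace the inductive step by the direct analogue of Lemma \ref{fix-nubar} sketched above and the proof is complete.
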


\begin{proof}
	The proof is exactly the same as Lemma \ref{fix-nubar}, by induction on the $\rho$-values of boxes. 
\end{proof}

Now we would like to restrict the formula to the fixed point $\nu$, in a specific choice of limit. We call the following the \emph{row limit} for $\lambda$: first take 
$$
x_I = x_J
$$
for each pair of boxes $I, J\in \lambda$; then take any limit $x_I \to \varphi_I^\nu$ of the remaining variables. 

By previous lemmas, we see that only $\sigma \in \fS_{\nu \backslash\lambda}$ survives. Moreover, under the row limit, one can see that only one tree $\ft$ (which contains all rows of $\lambda$) survives, and one can write all terms independent of trees in $\lambda$:
$$
\cR^\sigma (\ft, \bar\ft) = (-1)^{m(\lambda)} \cR^\sigma (\bar\ft), \qquad \cW^\sigma (\ft, \bar\ft) = \cW^\sigma (\bar\ft),
$$
where $m(\lambda) =\sum\limits_{l \in \matZ}( \textsf{d}_{l}(\lambda)-1)$, and
\be \nonumber
\cR (\bar\ft) &:=& \dfrac{ \prod\limits_{\substack{c_I + 1 = c_J, \, \rho_I = \rho_J+1 \\ ( I \leftrightarrow J ) \in  \Gamma_{\bar\lambda} \backslash \bar\ft }} \theta \Big( \dfrac{x_J \hbar}{x_I} \Big) \prod\limits_{\substack{ c_I + 1 = c_J, \, \rho_I + 1 = \rho_J \\ ( I \leftrightarrow J ) \in  \Gamma_{\bar\lambda} \backslash \bar\ft }} \theta \Big( \dfrac{x_I}{x_J} \Big) }{ \prod\limits_{\substack{ c_I = c_J, \, \rho_I = \rho_J +2 \\ I, J \in \bar\lambda}} \theta \Big( \dfrac{x_I}{x_J \hbar} \Big) },  \\
\nonumber
\\ \nonumber
\cW ( \bar\ft) &:=&  \dfrac{\theta \Big( \dfrac{a_2 \hbar }{x_{\bar r}} \prod\limits_{I \in [\bar r, \bar\ft]} z_{c_I}^{-1} \hbar^{- \textsf{v} (I)} \Big) }{ \theta \Big( \prod\limits_{I\in [\bar r, \bar\ft]} z_{c_I}^{-1} \hbar^{-\textsf{v} (I)}  \Big)}  \prod_{e\in \bar\ft} \dfrac{ \theta \Big( \dfrac{x_{t(e)} \varphi^\lambda_{h(e)} }{x_{h(e)} \varphi^\lambda_{t(e)} } \prod\limits_{I \in [h(e), \bar\ft]} z_{c_I}^{-1} \hbar^{-\textsf{v} (I)} \Big) }{ \theta \Big( \prod\limits_{I \in [h(e), \bar\ft]} z_{c_I}^{-1} \hbar^{-\textsf{v}(I)} \Big)}. \nonumber
\ee
For $\cN^\sigma$, $\cD^\sigma$ and $\sigma \in \fS_{\bar \lambda}$, we have the factorization
$$
\frac{\cN^\sigma}{\cD^\sigma} = \frac{\cN_\lambda}{\cD_\lambda} \cdot \widetilde N^{', -}_\lambda \cdot \frac{\cN^\sigma_{\bar\lambda}}{\cD^\sigma_{\bar\lambda}},
$$
where
\be \nonumber
\Theta (\widetilde N^{', -}_\lambda) &=& (-1)^{k(n-k)-1} 
\dfrac{\prod\limits_{\substack{c_I = k \\ I \not\in \lambda}} \theta \Big( \dfrac{a_1}{x_I \hbar} \Big) \prod\limits_{\substack{c_I = n-k \\ I\in \lambda}} \theta \Big( \dfrac{a_2 \hbar}{x_I} \Big) \prod\limits_{\substack{c_I + 1 = c_J \\  I \in \lambda, \,  J \in \bar\lambda }} \theta \Big( \dfrac{x_J \hbar}{x_I} \Big)  \prod\limits_{\substack{ c_I + 1 = c_J \\  I \in \bar\lambda , \ J \in \lambda }} \theta (\dfrac{x_I}{x_J})  
}{
	 \prod\limits_{\substack{ c_I = c_J \\ I \in \lambda, \  J \in \bar\lambda }} \theta \Big( \dfrac{x_I}{x_J} \Big)  \theta \Big( \dfrac{x_I}{x_J \hbar} \Big) },  \\ \nonumber
\\ \nonumber
\cN^\sigma_{\bar\lambda} &=&  \prod_{\substack{c_I = n-k, \, i\in \bar\lambda \\ i\neq (n-k, k) } } \theta \Big( \dfrac{a_2 \hbar}{x_I} \Big) \prod_{\substack{c_I + 1 = c_J, \,  \rho_I > \rho_J \\ ( I \leftrightarrow J ) \not\in \Gamma_{\bar\lambda}, \,  I,  J \in \bar\lambda }} \theta \Big( \dfrac{x_{\sigma(J) \hbar}}{x_{\sigma(I)}} \Big) \prod_{\substack{ c_I + 1 = c_J, \,  \rho_I < \rho_J \\ ( I \leftrightarrow J ) \not\in  \Gamma_{\bar\lambda}, \, I, J \in \bar\lambda  }} \theta \Big( \frac{x_{\sigma(I)}}{x_{\sigma(J)}} \Big) \\ \nonumber
\\ \nonumber
\cN_{\lambda} &=& \prod_{\substack{ c_I = k, \,  I \in \lambda \\ I \neq (1,1) } } \theta \Big( \frac{x_I}{a_1} \Big) \prod_{\substack{c_I + 1 = c_J, \,  \rho_I > \rho_J \\ ( I \leftrightarrow J ) \not\in \Gamma_{\lambda}, \,  I,  J \in \lambda }} \theta \Big( \frac{x_J \hbar}{x_I} \Big) \prod_{\substack{ c_I + 1 = c_J, \,  \rho_I < \rho_J \\ ( I \leftrightarrow J ) \not\in  \Gamma_{\lambda}, \, I, J \in \lambda  }} \theta \Big( \frac{x_I}{x_J} \Big),
\ee
$$
\cD^\sigma_{\bar\lambda} = \prod_{\substack{ c_I = c_J, \,  \rho_I > \rho_J \\ I, J \in \bar\lambda }} \theta \Big( \frac{x_{\sigma(I)}}{x_{\sigma(J)}} \Big) \prod_{\substack{ c_I = c_J, \, \rho_I > \rho_J +2 \\ I, J \in \bar\lambda }} \theta \Big( \frac{x_{\sigma(I)}}{x_{\sigma(J)} \hbar} \Big), \qquad  \cD_{\lambda} = \prod_{\substack{ c_I = c_J, \,  \rho_I > \rho_J \\ I, J \in \lambda }} \theta \Big( \frac{x_I}{x_J} \Big) \prod_{\substack{ c_I = c_J, \, \rho_I > \rho_J +2 \\ I, J \in \lambda }} \theta \Big( \frac{x_I}{x_J \hbar} \Big). 
$$
In summary, we have the following refined formula:
\begin{Proposition} {\it \label{refin-formula}
	For any choice of limit $x_i \to \varphi_i^\nu$ for $i\in \bar\lambda$, we have
	$$
	\left. \Stab' (\lambda) \right|_\nu = \epsilon (\lambda) \, \left. \Theta (\widetilde N^{', -}_\lambda) \right|_\nu \cdot \sum_{\sigma \in \fS_{\nu \backslash \lambda} , \bar\ft} \frac{\cN^\sigma_{\bar\lambda}}{\cD^\sigma_{\bar\lambda}} \cR^\sigma (\bar\ft) \cW^\sigma (\bar\ft) \bigg|_\nu,
	$$
	where
	$$
	\epsilon (\lambda) := (-1)^{m(\lambda)} \prod_{\substack{ c_I + 1 = c_J \\ (I\leftrightarrow J) \not\in \Gamma_\lambda, \, I,J \in \lambda }} (-1) . 
	$$
	As a corollary, we have the following identity in elliptic cohomology:
	\be \label{refined}
	\begin{array}{|c|}\hline
		\\
		\ \ \ \Stab' (\lambda)  = \epsilon (\lambda) \, \Theta (\widetilde N^{', -}_\lambda) \sum\limits_{\sigma \in \fS_{\bar\lambda} , \bar\ft} \dfrac{\cN^\sigma_{\bar\lambda}}{\cD^\sigma_{\bar\lambda}} \cR^\sigma (\bar\ft) \cW^\sigma (\bar\ft) .  \ \ \\\
		\\
		\hline
	\end{array}
	\ee }
\end{Proposition}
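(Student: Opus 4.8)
The plan is to establish first the pointwise identity in Proposition~\ref{refin-formula} --- the formula for $\left.\Stab'(\lambda)\right|_\nu$ --- and then deduce the cohomological identity (\ref{refined}) from it. As a preliminary bookkeeping step I would expand the symmetrization in (\ref{ellipticenvelope}) and verify that $\Stab'(\lambda) = \sum_{\sigma\in\fS_{n,k},\,\ft,\,\bar\ft} \frac{\cN^\sigma}{\cD^\sigma}\,\cR^\sigma(\ft,\bar\ft)\,\cW^\sigma(\ft,\bar\ft)$. Here one checks that $\mathbf{S}^{n,k}_\lambda\cdot\mathbf{W}^{Ell}(\ft\cup\bar\ft)$ equals $\frac{\cN}{\cD}\cR(\ft,\bar\ft)\cW(\ft,\bar\ft)$ by sorting the theta-factors of $\mathbf{S}^{n,k}_\lambda$ attached to diagonal-adjacent pairs into three groups: pairs lying on an edge of the tree $\ft\cup\bar\ft$, pairs lying on a skeleton edge that was deleted to form the tree (these are exactly the $m(\lambda)+m(\bar\lambda)$ edges removed in the $\Upsilon$-construction, and they reassemble into $\cR$), and the remaining non-skeleton diagonal pairs, which together with the boundary factors give $\cN$. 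The factors of the first group are absorbed one at a time into the $\phi$'s of $\mathbf{W}^{Ell}$ via $\phi(u,w)\theta(u)=\theta(uw)/\theta(w)$; this also accounts for the absence of the root boxes $(1,1)$ and $(n-k,k)$ in $\cN$. All signs are collected into prefactors.

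Next I would restrict to a fixed point $\nu\supseteq\lambda$ using the \emph{row limit}: set $x_I=x_J$ whenever $I,J$ are boxes in a common row of $\lambda$, and then send the remaining variables to $\varphi^\nu$. The four lemmas already proved --- Lemma~\ref{ND-well-defined} on the order-independence of $\left.\frac{\cN^\sigma}{\cD^\sigma}\right|_\nu$, Lemma~\ref{fix-nubar}, and the two subsequent ones --- show that $\frac{\cN^\sigma}{\cD^\sigma}$ restricts to zero unless $\sigma\in\fS_{\nu\backslash\lambda}$; I would then check that for such $\sigma$ the remaining factors $\cR^\sigma$ and $\cW^\sigma$ have no poles at $\nu$ in the row limit, so that every summand with $\sigma\notin\fS_{\nu\backslash\lambda}$ genuinely vanishes. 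As in \cite{EllHilb}, the row limit is designed so that within the sum over $\lambda$-trees only a single tree $\ft_0$ survives, and for the surviving summands I would plug in the factorizations recorded just before the Proposition, namely $\frac{\cN^\sigma}{\cD^\sigma}=\frac{\cN_\lambda}{\cD_\lambda}\,\Theta(\widetilde N^{',-}_\lambda)\,\frac{\cN^\sigma_{\bar\lambda}}{\cD^\sigma_{\bar\lambda}}$ for $\sigma\in\fS_{\bar\lambda}$, together with $\cR^\sigma(\ft_0,\bar\ft)=(-1)^{m(\lambda)}\cR^\sigma(\bar\ft)$ and $\cW^\sigma(\ft_0,\bar\ft)=\cW^\sigma(\bar\ft)$, and finally verify that the $\lambda$-dependent pieces $\frac{\cN_\lambda}{\cD_\lambda}$ and $(-1)^{m(\lambda)}$ together with the residual orientation signs collapse to the constant $\epsilon(\lambda)$. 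This yields the first display of the Proposition; its independence of the chosen limit $x_i\to\varphi_i^\nu$, $i\in\bar\lambda$, follows since the only limiting ambiguity is absorbed into $\frac{\cN^\sigma_{\bar\lambda}}{\cD^\sigma_{\bar\lambda}}$ exactly as in Lemma~\ref{ND-well-defined}.

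For the cohomological identity (\ref{refined}), denote its right-hand side by $F$; it is an explicit function of the Chern roots $x_{ij}$. Restricting $F$ to an arbitrary fixed point $\nu$ and applying the same four lemmas, only the summands with $\sigma\in\fS_{\nu\backslash\lambda}\subseteq\fS_{\bar\lambda}$ survive, so $\left.F\right|_\nu$ equals the right-hand side of the pointwise identity; by that identity this is $\left.\Stab'(\lambda)\right|_\nu$ when $\nu\supseteq\lambda$, while for $\nu\not\supseteq\lambda$ both $\left.F\right|_\nu$ (the factor $\Theta(\widetilde N^{',-}_\lambda)$, respectively a boundary theta-factor of $\cN^\sigma_{\bar\lambda}$, contains a vanishing theta) and $\left.\Stab'(\lambda)\right|_\nu$ (by the support property of the stable envelope on the attracting set) are zero. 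Hence $F-\Stab'(\lambda)$ vanishes at every fixed point of $X'$, i.e.\ it lies in the ideal of classes restricting to zero at all fixed points, so $F$ and $\Stab'(\lambda)$ represent the same class in $\textsf{E}_{\bT'}(X')$.

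I expect the main obstacle to be the row-limit analysis in the second paragraph: one must simultaneously show that each individual summand --- not merely the full symmetrized sum --- has a well-defined value at $\nu$, that the sum over $\lambda$-trees degenerates to one term, and that the $\lambda$-parts of $\frac{\cN^\sigma}{\cD^\sigma}$, $\cR^\sigma$ and $\cW^\sigma$ recombine into the single sign $\epsilon(\lambda)$, which requires a careful diagonal-by-diagonal tracking of the zeros and poles of the theta-products, in the style of Proposition~9 and Section~8 of \cite{EllHilb}.
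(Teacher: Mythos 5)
Your proposal follows the same route as the paper's proof: expand the symmetrization into the $\frac{\cN^\sigma}{\cD^\sigma}\cR^\sigma\cW^\sigma$ form, take the row limit at $\nu$, invoke the preceding lemmas to cut the sum down to $\sigma\in\fS_{\nu\backslash\lambda}$ and a single surviving $\lambda$-tree, apply the recorded factorizations, and reduce the residual $\lambda$-dependent factor $\left.\frac{\cN_\lambda}{\cD_\lambda}\right|_\nu$ to the sign $\epsilon(\lambda)$ (which the paper obtains by specializing Proposition 10 of \cite{EllHilb}). The deduction of (\ref{refined}) by comparing restrictions at all fixed points is also the intended argument, so the proposal is correct and essentially identical in approach.
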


\begin{proof}
	Computations above show that
	$$
	\left. \Stab' (\lambda) \right|_\nu = (-1)^{m(\lambda)} \frac{\cN_\lambda}{\cD_\lambda} \bigg|_\nu \cdot \Theta (\widetilde N^{', -}_\lambda) \bigg|_\nu \cdot \sum_{\sigma \in \fS_{\nu \backslash \lambda} , \bar\ft} \frac{\cN^\sigma_{\bar\lambda}}{\cD^\sigma_{\bar\lambda}} \cR^\sigma (\bar\ft) \cW^\sigma (\bar\ft) \bigg|_\nu. 
	$$
	The refined formula is proved by the following lemma. 
\end{proof}

\begin{Lemma}
	$$
	\frac{\cN_\lambda}{\cD_\lambda} \bigg|_\nu =  \prod_{\substack{ c_I + 1 = c_J \\ (I\leftrightarrow J) \not\in \Gamma_\lambda, \, I,J \in \lambda }} (-1).
	$$
\end{Lemma}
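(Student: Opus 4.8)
The plan is a direct specialization followed by bookkeeping of the resulting theta-factors. Since both $\cN_\lambda$ and $\cD_\lambda$ only involve the variables $x_I$ with $I\in\lambda$, and $\left.x_I\right|_\nu=\varphi^\nu_I=a_1\hbar^{\,j-1}$ for a box $I=(i,j)\in\lambda\subseteq\nu$, after substitution every theta-argument appearing becomes an integral power of $\hbar$; a one-line check using $c_I=c_J$ shows that no argument occurring in $\cN_\lambda$ or $\cD_\lambda$ becomes $1$, so this is a genuine specialization and no limiting procedure is needed here. Throughout I would write $h_j:=\#\{i:(i,j)\in\lambda\}$ for the column heights, so that $h_1\ge h_2\ge\cdots$ since $\lambda$ is an order ideal.

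First I would classify the factors. In $\cN_\lambda$ the first product contributes $\theta(\hbar^{\,j-1})$ for each $j\ge 2$ with $(j,j)\in\lambda$, i.e. with $h_j\ge j$. The content-consecutive pairs $c_J=c_I+1$ not lying on the skeleton $\Gamma_\lambda$ are exactly those of the form $I=(p,q),\ J=(p-m,q-m-1)$ (which have $\rho_I>\rho_J$) or $I=(p,q),\ J=(p+m+1,q+m)$ (which have $\rho_I<\rho_J$) with $m\ge 1$; these are the index sets of the second and third products, and after substitution each such factor equals $\theta(x_J\hbar/x_I)=\theta(\hbar^{-m})$, resp. $\theta(x_I/x_J)=\theta(\hbar^{-m})$, which by $\theta(1/x)=-\theta(x)$ equals $-\theta(\hbar^{m})$. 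In $\cD_\lambda$ both products are indexed by pairs $I=(p,q),\ J=(p-d,q-d)\in\lambda$ with $d\ge 1$ (resp. $d\ge 2$), giving $\theta(\hbar^{d})$ (resp. $\theta(\hbar^{\,d-1})$). A key point is that, because $\lambda$ is an order ideal, in the ``downward'' cases ($J=(p-m,q-m-1)$ and $J=(p-d,q-d)$) membership $J\in\lambda$ is automatic once $I\in\lambda$ and the coordinates are positive, while in the ``upward'' case $J=(p+m+1,q+m)$ it is the honest constraint $p+m+1\le h_{q+m}$.

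Two things then have to be read off. For the sign: only the second and third products of $\cN_\lambda$ carry a $-1$, one per pair, and the union of their index sets is precisely $\{(I,J):I,J\in\lambda,\ c_J=c_I+1,\ (I\leftrightarrow J)\notin\Gamma_\lambda\}$ — indeed $c_J=c_I+1$ forces $\rho_I-\rho_J$ to be odd, hence nonzero, and the sign of $\rho_I-\rho_J$ partitions the non-adjacent such pairs between the two products — so the overall sign is exactly the claimed right-hand side. For the modulus: I would show the remaining factors $\theta(\hbar^m)$ with $m>0$ cancel, by fixing $t\ge 1$ and computing the exponent of $\theta(\hbar^{t})$ on each side in terms of the $h_j$, using the parametrizations above together with $\#\{p:(p,j)\in\lambda,\ p\ge a\}=\max(0,h_j-a+1)$. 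The numerator exponent of $\theta(\hbar^{t})$ comes out as $\mathbf 1_{\{h_{t+1}\ge t+1\}}+\sum_{j\ge t+2}\max(0,h_j-t)+\sum_{j\ge t+1}\max(0,h_j-t-1)$ and the denominator exponent as $\sum_{j\ge t+1}\max(0,h_j-t)+\sum_{j\ge t+2}\max(0,h_j-t-1)$; subtracting, the tails $\sum_{j\ge t+1}\max(0,h_j-t)$ and $\sum_{j\ge t+1}\max(0,h_j-t-1)$ cancel and one is left with $\mathbf 1_{\{h_{t+1}\ge t+1\}}-\max(0,h_{t+1}-t)+\max(0,h_{t+1}-t-1)$, which is $0$ in each of the two cases $h_{t+1}\le t$ and $h_{t+1}\ge t+1$. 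Hence $\left.\cN_\lambda/\cD_\lambda\right|_\nu$ equals the pure sign, which is the assertion.

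The substance of the argument lies entirely in this bookkeeping: the main thing to get right is matching the index ranges when the five products are rewritten as sums over column heights (in particular, that a pair of columns at distance $d$ only exists for $j\ge d+1$), so that the final difference telescopes cleanly; a secondary point, used silently above, is that in the ``downward'' cases $J\in\lambda$ is automatic, which is exactly where the Young-diagram (order-ideal) property is needed.
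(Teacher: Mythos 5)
Your proof is correct, and it takes a genuinely different route from the paper's. The paper disposes of this lemma in one line by specializing a previously established identity (Proposition 10 of the elliptic-Hilbert-scheme paper, with $t_1=\hbar^{-1}$, $t_2=1$, $x_I\mapsto x_I/a_1$), so its argument is short but entirely dependent on an external reference. You instead verify the identity from scratch: you correctly observe that only variables $x_I$ with $I\in\lambda$ occur, that the evaluation at $\nu$ therefore agrees with the evaluation at $\lambda$ and involves no limiting procedure (every theta-argument becomes a nonzero power of $\hbar$), and you then parametrize the five index sets explicitly. Your identification of the non-adjacent content-consecutive pairs as $J=(p-m,q-m-1)$ or $J=(p+m+1,q+m)$ with $m\ge 1$, the resulting values $\theta(\hbar^{-m})=-\theta(\hbar^m)$, and the use of the order-ideal property to decide which membership condition is binding are all accurate; the multiplicity count of $\theta(\hbar^t)$ in numerator and denominator in terms of the column heights $h_j$ checks out, and the difference does telescope to $\mathbf 1_{\{h_{t+1}\ge t+1\}}-\max(0,h_{t+1}-t)+\max(0,h_{t+1}-t-1)=0$ in both cases. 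The sign bookkeeping is also right, since $c_J=c_I+1$ forces $\rho_I\ne\rho_J$, so the two signed products of $\cN_\lambda$ exactly partition the index set of the claimed right-hand side. What your approach buys is a self-contained, checkable proof that does not require the reader to unwind the cited proposition; what it costs is length, and it re-derives a cancellation that the authors had already packaged once in the abelianization of $\mathrm{Hilb}(\matC^2)$.
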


\begin{proof}
	Let $t_1 = \hbar^{-1}$, $t_2 = 1$, $x_I \mapsto x_I / a_1$ in Proposition 10 of \cite{EllHilb}. We have
	$$
	\frac{\cN_\lambda}{\cD_\lambda} \bigg|_\nu =    \frac{\cN_\lambda}{\cD_\lambda} \bigg|_\lambda =   \prod_{\substack{ c_I + 1 = c_J, \, \rho_I < \rho_J \\ (I \leftrightarrow J) \not\in \Gamma_\lambda, \, I,J \in \lambda }} (-1) \cdot \prod_{\substack{ c_I + 1 = c_J, \, \rho_I > \rho_J \\ (I \leftrightarrow J) \not\in \Gamma_\lambda, \, I,J \in \lambda }} (-1). 
	$$
\end{proof}

%%%%%%%%%%%%%%%%%%%%%%%%%%%%%%%%%%%%%%%%%%%%%%%%%%%%%%%%%%%%%%%%%%%%%%%%
%%%%%%%%%%%%%%%%%%%%%%%%%%%%%%%%%%%%%%%%%%%%%%%%%%%%%%%%%%%%%%%%%%%%%%%%
%%%%%%%%%%%%%%%%%%%%%%%%%%%%%%%%%%%%%%%%%%%%%%%%%%%%%%%%%%%%%%%%%%%%%%%%

\section{The Mother function \label{identsec}}
\subsection{Bijection on fixed points \label{bijsec}}
Recall that the set $X^{\bT}$ consists of $n!/((n-k)! k!)$ fixed points corresponding to $k$-subsets ${\bf p}=\{{\bf p}_1,\dots,{\bf p}_k\}$ in the set ${\bf n}=\{1,2,\dots,n\}$. 
On the dual side, the set $(X')^{\bT'}$ consists of the same number of fixed points, labeled by Young diagrams $\lambda$ which fit into the rectangle $\textsf{R}_{n,k}$ with dimensions $(n-k)\times k$. There is a natural bijection
\be 
\label{pointbij}
\textsf{bj}:(X')^{\bT'} \xrightarrow{\sim} X^{\bT}
\ee
defined in the following way. 

Let $\lambda \in (X')^{\bT'}$ be a fixed point.  The boundary of the Young diagram $\lambda$ is the graph of a piecewise linear function with exactly $n$-segments. 
Clearly, we have exactly $k$-segments where this graph has slope $-1$. This way we obtain a $k$-subset in ${\bf p}\subset \{1,2,\dots,n\}$ which defines a fixed point in $X^{\bT}$. For example, consider a Young diagram $\lambda=[4,4,4,3,3,2]$ in $\textsf{R}_{10,4}$ as in the Fig.\ref{bij}. Clearly, the boundary of $\lambda$ has negative slope  at segments $4,7,9,10$, thus ${\bf p}=\{4,7,9,10\}$. 

\vspace{6mm}

\begin{figure}[h]
	\hskip 50mm
	\begin{tikzpicture}[draw=blue]
	\draw [line width=1pt] (-7,0) -- (-9,2);
	
	\draw [line width=1pt] (-9,2) -- (-6,5);
	\draw [line width=1pt] (-8,1) -- (-5,4);
	\draw [line width=1pt] (-8.5,1.5) -- (-5.5,4.5);

	\draw [line width=1pt,green] (-7,3) -- (-6,4);
	\draw [line width=1pt,green] (-9,2) -- (-7.5,3.5);
	\draw [line width=1pt,green] (-5.5,3.5) -- (-5,4);
	
	\draw [line width=1pt] (-7.5,0.5) -- (-4.5,3.5);
	\draw [line width=1pt] (-6.5,0.5) -- (-4,3);
	\draw [line width=1pt] (-7,0) -- (-5.5,1.5);

	\draw [line width=1pt] (-6.5,0.5) -- (-8.5,2.5);
	\draw [line width=1pt] (-6,1) -- (-8,3);
	\draw [line width=1pt] (-5.5,1.5) -- (-7.5,3.5);
	\draw [line width=1pt] (-5,2) -- (-7,4);
	%\draw [line width=1pt] (-4.5,2.5) -- (-6,4);
	%\draw [line width=1pt] (-4,3) -- (-5,4);
	
	\draw [line width=1pt] (-4,3) -- (-6,5);
	\draw [line width=1pt] (-4.5,2.5) -- (-6.5,4.5);
	
	\draw [line width=1pt, green] (-6,4) -- (-5.5,3.5);
	\draw [line width=1pt,green] (-4,3) -- (-5,4);
	\draw [line width=1pt,green] (-7,3) -- (-7.5,3.5);

	\draw [line width=1pt,dashed,red] (-7.25,3.25) -- (-7.25,0);
	\draw [line width=1pt,dashed,red] (-5.75,3.75) -- (-5.75,0);
	\draw [line width=1pt,dashed,red] (-4.75,3.75) -- (-4.75,0);
	\draw [line width=1pt,dashed,red] (-4.25,3.25) -- (-4.25,0);
	\draw [->] (-9,0) -- (-9,5);
	\draw [->] (-9,0) -- (-3,0);
	
	\draw [blue,fill=black] (-8.75,0) circle (0.5ex);
	\draw [blue,fill=black] (-8.25,0) circle (0.5ex); 
	\draw [blue,fill=black] (-7.75,0) circle (0.5ex);  
	\draw [blue,fill=red] (-7.25,0) circle (0.5ex);
	\draw [blue,fill=black] (-6.75,0) circle (0.5ex);	
	\draw [blue,fill=black] (-6.25,0) circle (0.5ex);
	\draw [blue,fill=red] (-5.75,0) circle (0.5ex);
	\draw [blue,fill=black] (-5.25,0) circle (0.5ex);
	\draw [blue,fill=red] (-4.75,0) circle (0.5ex);	
	\draw [blue,fill=red] (-4.25,0) circle (0.5ex);
	
	\node [left] at (-8.60,-0.25) {\begin{tiny} $1$ \end{tiny}};
	\node [left] at (-8.10,-0.25) {\begin{tiny} $2$ \end{tiny}};
	\node [left] at (-7.60,-0.25) {\begin{tiny} $3$ \end{tiny}};
	\node [left] at (-7.10,-0.25) {\begin{tiny} $4$ \end{tiny}};
	\node [left] at (-6.60,-0.25) {\begin{tiny} $5$ \end{tiny}};
	\node [left] at (-6.10,-0.25) {\begin{tiny} $6$ \end{tiny}};
	\node [left] at (-5.60,-0.25) {\begin{tiny} $7$ \end{tiny}};
	\node [left] at (-5.10,-0.25) {\begin{tiny} $8$ \end{tiny}};
	\node [left] at (-4.60,-0.25) {\begin{tiny} $9$ \end{tiny}};
	\node [left] at (-4.10,-0.25) {\begin{tiny} $10$ \end{tiny}};
	
	\end{tikzpicture}
	\caption{The point $\lambda=[4,4,4,3,3,2]\subset \textsf{R}_{10,4}$ corresponds to ${\bf p}=\{4,7,9,10\} \subset \{1,2,\dots,10\}$. \label{bij}}
\end{figure}

We note that this bijection preserves the standard dominant ordering on the set of fixed points. For instance in the case $n=4,k=2$ the fixed points on $X$ are labeled by $2$-subsets in $\{1,2,3,4\}$, which are ordered as:
$$
X^{\bT} = \{ \{1,2\} , \{1,3\} , \{1,4\} , \{2,3\} , \{2,4\} , \{3,4\} \}.
$$
The fixed points on $X'$ correspond to Young diagrams  which fit into
$2\times 2$ rectangle. The bijection above gives the following ordered list of fixed points in $X'$:
$$
(X')^{\bT'} = \{ \emptyset,[1],[1,1],[2],[2,1],[2,2] \}.
$$

\subsection{Identification of equivariant and K\"ahler parameters}
Recall that the coordinates on  the abelian variety $\widehat{\Or}_{\bf p}=\cE_{\bT}\times \cE_{\textrm{Pic}(X)}$ are the equivariant parameters $u_i/u_{i+1},\hbar$ and the K\"ahler parameter $z$. The coordinates on
$\widehat{\Or}'_{\lambda}=\cE_{\bT'}\times \cE_{\textrm{Pic}(X')}$ are the equivariant parameters $a_1/a_2,\hbar$ and K\"ahler parameters $z_1,\dots,z_{n-1}$.   Let us consider an isomorphism identifying the equivariant and K\"ahler tori on the dual sides
$$
\kappa: \bT' \rightarrow \bK , \ \  \bK' \rightarrow \bT,
$$
defined explicitly by
\be
\label{parident}
\frac{a_1}{a_2} \mapsto z \hbar^{k-1}, \qquad \hbar \mapsto \hbar^{-1}, \qquad z_i \mapsto \left\{\begin{array}{ll} 
	\dfrac{u_{i} \hbar}{u_{i+1}},&  i<k,\\ 
	\dfrac{u_{i}}{u_{i+1}},&  k \leq i\leq n-k,\\
	\dfrac{u_{i} }{u_{i+1}\hbar},&  i>n-k.   \end{array}\right.
\ee
Recall that the stability and chamber parameters for $X$ 
are defined by the following vectors:
$$
\sigma =(1,2,\dots,n) \in \Lie_\matR (\bA), \qquad \theta = (-1) \in \Lie_\matR (\bK).
$$
Using the map (\ref{parident}) we find that:
$$
d\kappa^{-1} (\sigma) = (-1,\dots, -1) = -\theta', \qquad d\kappa^{-1} (\theta) = (1) = -\sigma'.  
$$
We see that the isomorphisms $\kappa$ is chosen such that the stability parameters are matched to chamber parameters on the dual side. 

\subsection{Mother function and 3d mirror symmetry}
For the $(\bT\times \bT')$-variety $X\times X'$ we consider
equivariant embeddings defined by fixed points:
\be \label{eqma}
X=X\times\{\lambda\} \stackrel{i_{\lambda}}{ \longrightarrow} X\times X' 
\stackrel{i_{{\bf p}}}{ \longleftarrow} \{{\bf p}\}\times X' =X'
\ee
We consider $X\times\{\lambda\}$ as a $\bT\times \bT^{'}$ variety with
trivial action on the second component. This gives  
$$
\textrm{Ell}_{\bT\times \bT^{'}}(X\times\{\lambda\})=\textrm{Ell}_{\bT}(X) \times \cE_{\bT^{'}}=\textsf{E}_{\bT}(X),
$$
where in the last equality we used the isomorphism $\kappa$ to identify $\cE_{\textrm{Pic}(X)}=\cE_{\bT^{'}}$.
Similarly, 
$$
\textrm{Ell}_{\bT^{'}}(\{{\bf p}\}\times X')=\textsf{E}_{\bT^{'}}(X'). 
$$
We conclude that $\bT\times \bT^{'}$-equivariant embeddings (\ref{eqma}) induce the following maps of extended elliptic cohomologies:
$$
\textsf{E}_{\bT}(X)\stackrel{i^{*}_{\lambda}}{\longrightarrow} \textrm{Ell}_{\bT\times \bT'}(X\times X') \stackrel{i^{*}_{\bf p}}{\longleftarrow} \textsf{E}_{\bT'}(X').
$$
Here is our main result.
\begin{Theorem} \label{lbthm}
\noindent	
	
	{\it 	
\begin{itemize}	
	\item	
		There exists a line bundle ${\frak{M}}$ on $\mathrm{Ell}_{\bT\times \bT'}(X\times X')$ such that 
		$$
		(i^{*}_{\lambda})^{*}( {{\frak{M}}}) ={{\frak{M}}}({\bf p}), \qquad (i^{*}_{{\bf p}})^{*}( {{\frak{M}}})={{\frak{M}}}'({\lambda}).
		$$
		where ${\bf p}=\mathsf{bj}(\lambda)$.	
		\item 
		There exists a holomorphic section ${\frak{m}}$ (\underline{the Mother function}) of ${\frak{M}}$,  such that
		$$
		(i^{*}_{\lambda})^*({\frak{m}})={\bf Stab}({\bf p}), \qquad (i^{*}_{{\bf p}})^{*}({\frak{m}})={\bf Stab}' (\lambda),
		$$
		where ${\bf p} = \mathsf{bj} (\lambda)$
\end{itemize}		
	}		
\end{Theorem}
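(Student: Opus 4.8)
The plan is to reduce Theorem \ref{lbthm} to the matrix identity (\ref{coinc1}) of Corollary \ref{corth1}, and then to prove that identity by confronting the two explicit formulas for the stable envelopes. Throughout I use $\kappa$ from (\ref{parident}) to identify $\cE_{\textrm{Pic}(X)}$ with $\cE_{\bT'}$ and $\cE_{\textrm{Pic}(X')}$ with $\cE_{\bT}$, exactly as in the paragraph preceding the theorem, so that $\textsf E_{\bT}(X)$ and $\textsf E_{\bT'}(X')$ live over a common base and $\mathrm{Ell}_{\bT\times\bT'}(X\times X')$ is, set-theoretically, a bouquet of orbits $\widehat{\Or}_{\bf q}\times\widehat{\Or}'_{\mu}$ indexed by $({\bf q},\mu)\in X^{\bT}\times(X')^{\bT'}$, with $i^{*}_{\lambda}$ and $i^{*}_{\bf p}$ the inclusions of the sub-bouquets over $\Or'_{\lambda}$ and $\Or_{\bf p}$. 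With this description, a holomorphic section $\frak{m}$ with $(i^{*}_{\lambda})^{*}\frak{m}={\bf Stab}(\mathsf{bj}(\lambda))$ and $(i^{*}_{\bf p})^{*}\frak{m}={\bf Stab}'(\mathsf{bj}^{-1}({\bf p}))$ is forced, on each orbit, to have component (both sides read as functions on $\widehat{\Or}_{\bf q}\times\widehat{\Or}'_{\mu}$ via $\kappa$)
$$
\frak{m}_{{\bf q},\mu}=\left.{\bf Stab}(\mathsf{bj}(\mu))\right|_{\widehat{\Or}_{\bf q}}=\left.{\bf Stab}'(\mathsf{bj}^{-1}({\bf q}))\right|_{\widehat{\Or}'_{\mu}},
$$
so $\frak{m}$ exists if and only if these two expressions agree; and by Theorem \ref{uqth}, its $X'$-analogue, and the holomorphy of ${\bf Stab}$, ${\bf Stab}'$, nothing else must be checked. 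Writing ${\bf p}=\mathsf{bj}(\lambda)$, ${\bf q}=\mathsf{bj}(\mu)$ and unwinding the normalizations (\ref{hnormx}), (\ref{holnd}) together with $\Theta'_{\lambda}=T_{\bf p,p}$ and $\Theta_{\bf p}=\Theta(N^{'-}_{\lambda})=\pm T'_{\mu,\mu}$ (as noted in Sections \ref{elx}--\ref{elxd}), this agreement is precisely (\ref{coinc1}).

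Next I would build the line bundle $\frak{M}$ by prescribing on the orbit $\widehat{\Or}_{\bf q}\times\widehat{\Or}'_{\mu}$ the quasiperiods of ${\mathcal U}_{{\bf p},{\bf q}}(X)\,\Theta_{\bf p}$ from (\ref{univ})--(\ref{prefxor}). First one checks that, after $\kappa$, these coincide with the quasiperiods of ${\mathcal U}_{\lambda,\mu}(X')\,\Theta'_{\lambda}$ from (\ref{ufund}), (\ref{thptefdual}): a finite computation with (\ref{thettrans}) and the Poincaré-bundle transformation of $\phi$, in which $\kappa$ turns the index degrees $D^{\bf p}_{i}$ into $D^{\lambda}_{i}$ and matches the $\hbar$-weights with the combinatorial function $\textsf{v}(\Box)$ of (\ref{betafun}); this is what makes $\frak M(\mathbf p)$ and $\frak M'(\lambda)$ simultaneously pullbacks of one bundle. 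Compatibility of the local line bundles on orbit intersections is then inherited from the two factors: along the $X$-directions $X$ is GKM, so the gluing is the transversal hyperplane gluing of Proposition \ref{progkm} and it suffices that $\{{\bf Stab}(\mathsf{bj}(\mu))\}_{\mu}$ be a section on $\textsf E_{\bT}(X)$; along the (more involved, non-GKM) $X'$-directions it suffices that $\{{\bf Stab}'(\mathsf{bj}^{-1}({\bf q}))\}_{\bf q}$ be a section on $\textsf E_{\bT'}(X')$, which is Theorem \ref{mainth}. This proves the first bullet and reduces the second bullet to (\ref{coinc1}).

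Finally, to prove (\ref{coinc1}) I would restrict the refined abelianization formula of Proposition \ref{refin-formula} to the fixed point $\mu$ and match it term-by-term against $\left.\Stab_{\sigma}(\mathsf{bj}(\mu))\right|_{\bf p}$ obtained from the symmetrized formula (\ref{stabgr}) via $y_{i}\mapsto u^{-1}_{{\bf p}_{i}}$ and $\kappa$. The refined formula is the device that makes this tractable: on restriction to $\mu$ only the permutations in $\fS_{\mu\backslash\lambda}\subset\fS_{\bar\lambda}$ survive and the sum over $\lambda$-trees collapses, leaving a sum over $\bar\ft$ in $\bar\lambda$ only, so the $X'$-side becomes a single symmetrization of the same combinatorial shape as the $\fS_{k}$-symmetrization on the $X$-side. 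I would carry out the matching by induction on the dominance order, using the GKM vanishing $\left.(\,\cdot\,)\right|_{u_{i}=u_{j}}=0$ on the $X$-side (as in the proof of Theorem \ref{uqth}) to pin down the surviving summands and Proposition \ref{facprop} together with the closing lemma of Section \ref{abform} (evaluating $\cN_{\lambda}/\cD_{\lambda}$ at $\lambda$ to a sign) to track the prefactors; in the cases $k=1$ and $n=4,k=2$ this reduces to the explicit theta identity, the four-term relation in the latter.

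\textbf{Main obstacle.} The genuine difficulty is this last step: equating an $\fS_{k}$-symmetrization with an $\fS_{n,k}$-symmetrization combined with a sum over trees. Even after the collapse afforded by Proposition \ref{refin-formula} this is a nontrivial family of theta-function identities, and the real labour lies in the signs ($\epsilon(\lambda)$, the tree signs $\kappa(\ft)$, the $(-1)$'s from $\cN_{\lambda}/\cD_{\lambda}$) and in the $\hbar$-shifts $\textsf{v}(\Box)$ needed to make the two quasiperiod structures agree under $\kappa$. Everything else — the scheme-theoretic description of $\mathrm{Ell}_{\bT\times\bT'}(X\times X')$, the construction of $\frak{M}$, and the passage from (\ref{coinc1}) to the global section $\frak{m}$ — is formal once the uniqueness theorems are available.
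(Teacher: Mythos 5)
Your reduction of Theorem \ref{lbthm} to the matrix identity (\ref{coinc1}), and your construction of $\frak{M}$ and $\frak{m}$ from that identity, match the paper's strategy (the paper realizes $\frak m$ slightly differently, as the pullback along the elliptic Chern-class embedding of the explicit bilinear expression (\ref{mtil}), but this part is formal in both treatments). The genuine gap is in how you propose to prove (\ref{coinc1}) itself. You plan a direct term-by-term matching of the $\fS_{n,k}$-symmetrized tree sum of Proposition \ref{refin-formula} against the $\fS_k$-symmetrized formula (\ref{stabgr}), by induction on the dominance order — and you yourself flag this as the ``main obstacle'' without resolving it. It is not resolvable by bookkeeping of signs and $\hbar$-shifts: already for $n=4,k=2$ the equality of individual matrix entries is the nontrivial four-term theta identity (\ref{fourtermr}), so no termwise correspondence exists. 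The paper avoids this entirely: it proves (\ref{coinc1}) by showing that the matrix ${\bf T}'_{\lambda,\mu}$ satisfies the three characterizing properties of the uniqueness Theorem \ref{uqth} (correct quasiperiods plus the GKM conditions (\ref{gkmcon}), correct diagonal, and divisibility with holomorphic quotient), whence it must coincide with ${\bf T}_{{\bf q},{\bf p}}$. The GKM conditions for ${\bf T}'$ are themselves nontrivial and are established via the tree-cancellation Lemma \ref{lemma-cancel}.

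A second, related gap: you write that ``by Theorem \ref{uqth}, its $X'$-analogue, and the holomorphy of ${\bf Stab}$, ${\bf Stab}'$, nothing else must be checked.'' But the holomorphy of ${\bf Stab}'(\lambda)$ in the variables $u_i$ (the K\"ahler parameters of $X'$ after $\kappa$) is precisely one of the hypotheses of Theorem \ref{uqth} that must be \emph{proved}, not assumed: the individual summands in the tree formula (\ref{refined}) visibly carry poles from $\cD^\sigma_{\bar\lambda}$ and from the denominators of the $\phi$-factors in $\cW^\sigma(\bar\ft)$, and one must show these cancel in the full sum. In the paper this is Theorem \ref{holo-thm}, whose proof requires external input — the location of singularities of the quantum difference equation of $X'$ (Proposition \ref{prsl}) together with the pole-subtraction theorem of Aganagic--Okounkov to restrict candidate poles to $u_i/u_j\hbar^s q^p$, and then Lemma \ref{lemma-cancel} again to kill the residues at $u_i=u_j$. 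Without this input your argument cannot close, since neither the quasiperiod computation nor the GKM conditions by themselves rule out the poles present in each summand.
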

We will prove this theorem in Section \ref{mainproof}.  This theorem implies that 
(up to normalization by diagonal elements) the restriction matrices of elliptic stable envelopes of $X$ and $X'$ are related by transposition: 

(Similarly with notations in Definition \ref{dfelgr}, we denote $T'_{\lambda, \mu} := \left. Stab' (\lambda) \right|_{\widehat\Or'_\mu}$; we also use the simplified notation $(-)|_{\bf p}$ for $(-)|_{\widehat\Or_{\bf p}}$.)
\begin{Corollary} \label{corth}
	{ \it The restriction matrices of the elliptic stable envelopes for $X$ and $X'$
		in the basis of fixed points are related by:
		\be \label{coinc}
		T_{\bf p,p} T'_{\lambda, \mu} = T'_{\mu,\mu} T_{\bf q,p}
		\ee	
		where ${\bf p}=\mathsf{bj}(\lambda)$, ${\bf q}=\mathsf{bj}(\mu)$ and parameters are identified by (\ref{parident}). }
\end{Corollary}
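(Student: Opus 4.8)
The plan is to deduce Corollary~\ref{corth} directly from Theorem~\ref{lbthm} (existence of the Mother function $\frak{m}$) by a functoriality argument in equivariant elliptic cohomology; the substantive work is entirely contained in Theorem~\ref{lbthm}, proved in Section~\ref{mainproof}, and what remains is essentially a formal diagram chase together with bookkeeping against the parameter dictionary $\kappa$ of (\ref{parident}).

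Fix $\lambda,\mu\in (X')^{\bT'}$ and set ${\bf p}=\mathsf{bj}(\lambda)$, ${\bf q}=\mathsf{bj}(\mu)$. The pair $({\bf p},\mu)$ is a $\bT\times\bT'$-fixed point of $X\times X'$, hence labels a component $\widehat{\Or}_{({\bf p},\mu)}\cong\cE_{\bT\times\bT'}$ of $\textrm{Ell}_{\bT\times\bT'}(X\times X')$, and I would compute the restriction $\frak{m}\big|_{\widehat{\Or}_{({\bf p},\mu)}}$ in two ways. The inclusion of this fixed point into $X\times X'$ factors as
$$
\{({\bf p},\mu)\}\ \hookrightarrow\ X\times\{\mu\}\ \stackrel{i_{\mu}}{\hookrightarrow}\ X\times X'
\qquad\text{and also as}\qquad
\{({\bf p},\mu)\}\ \hookrightarrow\ \{{\bf p}\}\times X'\ \stackrel{i_{{\bf p}}}{\hookrightarrow}\ X\times X';
$$
since $\textrm{Ell}$ is functorial, restriction of a section to a component $\widehat{\Or}$ is pullback along the inclusion of the corresponding fixed point, and pullbacks compose, both factorizations produce the same section $\frak{m}\big|_{\widehat{\Or}_{({\bf p},\mu)}}$. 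By Theorem~\ref{lbthm} applied to $\mu$, $(i^{*}_{\mu})^{*}\frak{m}={\bf Stab}({\bf q})$ on $\textsf{E}_{\bT}(X)$, so the first factorization yields ${\bf Stab}({\bf q})\big|_{\widehat{\Or}_{{\bf p}}}$; by Theorem~\ref{lbthm} applied to $\lambda$, $(i^{*}_{{\bf p}})^{*}\frak{m}={\bf Stab}'(\lambda)$ on $\textsf{E}_{\bT'}(X')$, so the second yields ${\bf Stab}'(\lambda)\big|_{\widehat{\Or}'_{\mu}}$. Using the holomorphic normalizations ${\bf Stab}({\bf q})=\Theta_{{\bf q}}\,\Stab_{\sigma}({\bf q})$ and ${\bf Stab}'(\lambda)=\Theta'_{\lambda}\,\Stab'_{\sigma'}(\lambda)$ of (\ref{hnormx}) and (\ref{holnd}), equating the two restrictions gives
$$
\Theta_{{\bf q}}\,T_{{\bf q},{\bf p}}\ =\ \Theta'_{\lambda}\,T'_{\lambda,\mu}
$$
as functions on $\widehat{\Or}_{({\bf p},\mu)}$, i.e.\ after the identification (\ref{parident}).

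It then remains to recognize the two prefactors as diagonal matrix elements. On one hand, $\Theta'_{\lambda}$ of (\ref{thptefdual}), written through ${\bf p}=\mathsf{bj}(\lambda)$, is literally the same theta product as $T_{{\bf p},{\bf p}}$ in Definition~\ref{dfelgr}, so $\Theta'_{\lambda}=T_{{\bf p},{\bf p}}$. On the other hand, $\Theta_{{\bf q}}$ of (\ref{prefxor}) is, as recorded at the end of Section~\ref{elx}, the elliptic Thom class $\Theta(N^{'-}_{\mu})$ of the repelling normal bundle (\ref{nchar}) at the dual fixed point $\mu=\mathsf{bj}^{-1}({\bf q})$, while the normalization axiom pinning down $\Stab'_{\sigma'}$ (the analogue of property~1 of Definition~\ref{dfelgr}) identifies the diagonal entry $T'_{\mu,\mu}=\Stab'_{\sigma'}(\mu)\big|_{\widehat{\Or}'_{\mu}}$ with that same Thom class, so $\Theta_{{\bf q}}=T'_{\mu,\mu}$. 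Substituting these into the displayed relation gives $T'_{\mu,\mu}\,T_{{\bf q},{\bf p}}=T_{{\bf p},{\bf p}}\,T'_{\lambda,\mu}$, which is (\ref{coinc}). There is no real obstacle here beyond Theorem~\ref{lbthm} itself; the only points needing care are, at the scheme level, that both factorizations of the fixed-point inclusion induce the same map onto $\widehat{\Or}_{({\bf p},\mu)}$ --- immediate from functoriality of $\textrm{Ell}$ and the localization description of $\textrm{Ell}_{\bT\times\bT'}(X\times X')$ --- and that the prefactor identities hold verbatim under $\kappa$, where at worst a sign must be absorbed using $\theta(1/x)=-\theta(x)$.
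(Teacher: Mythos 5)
Your proposal is correct and follows essentially the same route as the paper's own proof: restrict the Mother function to the fixed point $({\bf p},\mu)$ in two ways to get $\Theta_{\bf q}\,T_{{\bf q},{\bf p}}=\Theta'_{\lambda}\,T'_{\lambda,\mu}$, then identify $\Theta'_{\lambda}=T_{{\bf p},{\bf p}}$ by direct comparison of formulas and $\Theta_{\bf q}=T'_{\mu,\mu}$ via the Thom class of the repelling normal bundle. The extra care you take with the two factorizations of the fixed-point inclusion is a harmless elaboration of what the paper leaves implicit.
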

\begin{proof}
	For fixed points $\lambda,\mu \in (X')^{\bT'}$, let  ${\bf p}=\textsf{bj}(\lambda)$, ${\bf q}=\textsf{bj}(\mu)$ denote the corresponding fixed points in $X^{\bT}$.	
	Note that $(X\times X')^{\bT\times \bT'}=X^{\bT}\times (X')^{\bT'}$. Let us consider the point $({\bf p},\mu)$ from this set. By Theorem \ref{lbthm}  we have
	$$
	\left.{\bf Stab}({\bf q})\right|_{\bf p}=\left.\frak{m}\right|_{({\bf p},\mu)}=\left.{\bf Stab}' (\lambda)\right|_{\bf \mu}
	$$	
	By definition (\ref{hnormx}) (\ref{holnd}) we have $\left.{\bf Stab}' (\lambda)\right|_{\bf \mu}=\Theta'_{\lambda}  T^{'}_{\lambda,\mu}$, $\left.{\bf Stab}({\bf q})\right|_{\bf p}=\Theta_{\bf q} T_{\bf q,p}$. In the standard normalization of elliptic stable envelope, the diagonal elements of the restriction matrix are given by normal bundles of repelling part of the normal bundles:
$$
T_{\bf p,p}=\Theta(N^{-}_{\bf p}), \qquad T'_{\lambda,\lambda}=\Theta(N^{'-}_{\lambda}),
$$	
with $N^{-}_{\bf p}$ and $N^{'-}_{\lambda}$ as in (\ref{repat}), (\ref{nchar}). We see that $\Theta'_{\lambda}=T_{\bf p,p}$, 	$\Theta_{\bf q}=T'_{\mu,\mu}$. 
\end{proof}
As we will see in Section \ref{proofnab}, the equality (\ref{coinc}) encodes certain infinite family of highly nontrivial identities for theta function.

\section{The Mother function in case $k=1$ \label{proofab}} 
Before we prove the Theorem \ref{lbthm} in general, it might be very instructive 
to check its prediction in the  case $k=1$. In this case the formulas for stable envelopes for $X$ and $X'$ are simple enough to compute the Mother function explicitly.  

\subsection{Explicit formula for the mother function}
In the case $k=1$ both $X$ and $X'$ are hypertoric, $X=T^{*} \mathbb{P}^{n-1}$ and $X'$ is isomorphic to the $A_{n-1}$ surface (resolution of singularity $\matC^{2}/\matZ_{n}$). The map $\kappa$ has the following form:
\be
\label{iden}
\begin{array}{l}
	\kappa: \qquad \hbar \mapsto \dfrac{1}{\hbar}, \quad \dfrac{a_1}{a_2} \mapsto z, \quad  z_1 \mapsto \dfrac{u_1}{u_2}, \quad \cdots, \quad z_{n-1} \mapsto \dfrac{u_{n-1}}{u_n}.
\end{array}
\ee
We denote by $y=y_1$ the Chern root of the tautological bundle on $X$ 
and by $x_i=x_{i,1}$, $i=1,\cdots,n-1$ the Chern roots of tautological bundles on $X'$. For symmetry, we also denote by $x_0=a_1$ and $x_n=a_2$. 
In these notations we have:

\begin{Theorem} \label{thkone}
	{\it	In the case $k=1$, the Mother function equals:
		\be \label{mothone}
		{\frak{m}}=(-1)^n \prod\limits_{i=0}^{n} \theta\Big(\frac{x_i \hbar}{x_{i-1}} u_i y\Big).
		\ee	
	}
\end{Theorem}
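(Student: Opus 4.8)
The plan is to identify the explicitly given product in \eqref{mothone} with the Mother function whose existence is already supplied by Theorem~\ref{lbthm}. Recall that $\frak{m}$ is the \emph{unique} holomorphic section of the line bundle $\frak{M}$ on $\mathrm{Ell}_{\bT\times\bT'}(X\times X')$ with $i_\lambda^*\frak{m}={\bf Stab}({\bf p})$ and $i_{\bf p}^*\frak{m}={\bf Stab}'(\lambda)$ for ${\bf p}=\textsf{bj}(\lambda)$. For $k=1$ both $X=T^{*}\mathbb{P}^{n-1}$ and the $A_{n-1}$ surface $X'$ are toric, hence GKM; therefore $X\times X'$ is GKM, $\mathrm{Ell}_{\bT\times\bT'}(X\times X')$ is reduced (Proposition~\ref{progkm}), and it is covered set-theoretically by the images of the embeddings $i_\lambda$, $\lambda\in(X')^{\bT'}$. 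Hence a section of $\frak{M}$ is determined by the collection $\{i_\lambda^*(-)\}_\lambda$, so it is enough to check (i) that the right-hand side of \eqref{mothone} transforms as a section of $\frak{M}$, and (ii) that its pullback along $i_\lambda$ equals ${\bf Stab}(\textsf{bj}(\lambda))$ for every $\lambda$; then $i_{\bf p}^*\frak{m}={\bf Stab}'(\lambda)$ follows automatically. (One could also verify both restrictions directly, the formula in \eqref{mothone} being nearly symmetric under interchanging the two roles, but (ii) is the lighter computation, so I would do that first.)

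For (ii): when $k=1$ the fixed points of $X'$ are the one-row partitions $\lambda=[j]$, $0\le j\le n-1$, and the bijection \eqref{pointbij} of Section~\ref{bijsec} gives $\textsf{bj}([j])=\{j+1\}$. Pulling back along $i_\lambda$ means substituting the fixed-point values \eqref{fpsubs} of the Chern roots $x_1,\dots,x_{n-1}$ of $X'$; with the convention $x_0=a_1$, $x_n=a_2$ of Section~\ref{proofab} these are $x_i|_{[j]}=a_1$ for $0\le i\le j$ and $x_i|_{[j]}=a_2\hbar^{\,n-i}$ for $j+1\le i\le n$, so the consecutive-ratio factors telescope:
$$
\frac{x_i\hbar}{x_{i-1}}\Big|_{[j]}=\begin{cases}\hbar,& 1\le i\le j,\\[1mm] \dfrac{a_2}{a_1}\hbar^{\,n-j},& i=j+1,\\[1mm] 1,& j+2\le i\le n.\end{cases}
$$
Therefore the pullback of the right-hand side of \eqref{mothone} equals
$$
(-1)^n\Big(\prod_{i=1}^{j}\theta(\hbar\,u_i y)\Big)\,\theta\Big(\frac{a_2}{a_1}\hbar^{\,n-j}u_{j+1}y\Big)\,\Big(\prod_{i=j+2}^{n}\theta(u_i y)\Big).
$$
Applying the identification $\kappa$ of \eqref{parident} — under which $\hbar\mapsto\hbar^{-1}$, $a_2/a_1\mapsto z^{-1}$, and the twist $\hbar^{k-1}$ is trivial at $k=1$ — and rewriting the factors with $\theta(1/x)=-\theta(x)$ from \eqref{thettrans} (which is where the sign $(-1)^n$ goes), the three blocks turn into, with $y=y_1$ and ${\bf p}_1=j+1$, exactly $\prod_{i=1}^{{\bf p}_1-1}\theta(y_1u_i\hbar^{-1})$, the numerator $\theta(y_1u_{{\bf p}_1}z^{-1}\hbar^{\,k-n+{\bf p}_1-2})$ of the middle term of \eqref{stabgr}, and $\prod_{i={\bf p}_1+1}^{n}\theta(y_1u_i)$; the missing denominator is supplied by the holomorphic-normalization prefactor $\Theta_{\bf p}$ of \eqref{prefxor}, which at $k=1$ is the single factor $\theta(z^{-1}\hbar^{\,k-n+{\bf p}_1-2})$ clearing the lone $z$-pole of \eqref{stabgr}. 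So the pullback is $\Theta_{\bf p}\,\Stab_\sigma({\bf p})={\bf Stab}({\bf p})$, establishing (ii).

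For (i): since the right-hand side of \eqref{mothone} is a product of theta functions, its quasi-period in each of $u_i$, $y$, $x_i$, $\hbar$ is read off immediately from \eqref{thettrans}; what has to be verified is that, through $\kappa$, these quasi-periods are the ones prescribed by $\frak{M}$ — equivalently, that the $x_i$-quasi-periods reproduce the Poincar\'e-type factors $\phi(\varphi^\lambda_\Box,z_{c_\Box}^{-1})$ of $\mathcal{U}_{\lambda,\mu}(X')$ in \eqref{ufund}, while the $u_i,y$-quasi-periods reproduce the factors $\phi(u_i,z_{u_i}^{-1}\hbar^{D_i^{\bf p}})$ of $\mathcal{U}_{{\bf p},{\bf q}}(X)$ in \eqref{univ}. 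The telescoping of (ii) is precisely what makes these match. Granting this, (i) and (ii) identify the product in \eqref{mothone} with $\frak{m}$.

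The part I expect to require real care is the bookkeeping of $\hbar$-powers: one must reconcile the index degrees $D_i^{\bf p}$, $D_i^{\lambda}$ of \eqref{univ}--\eqref{ufund} with the exponent $n-j$ that the telescoping produces at the unique ``step'' $i=j+1$ of $[j]$, and check that every other position contributes exponent $0$ or $1$ exactly as dictated by $\textsf{v}(\Box)=\beta^{(1)}_\lambda(\Box)+\beta^{(2)}_\lambda(\Box)$; this is the point at which the explicit formula \eqref{ellipticenvelope} for ${\bf Stab}'$ has to be brought in. The case $k=1$ remains manageable precisely because the twist $\hbar^{k-1}$ in \eqref{parident} vanishes, $\Upsilon_{n,1}$ consists of a single pair of trees (there are no \reflectbox{\textsf{L}}-shaped subgraphs in a one-row partition), and no symmetrization is present. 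As an independent sanity check I would verify directly that $\mathbf{S}^{n,1}_{[j]}\cdot\mathbf{W}^{Ell}(\ft\cup\bar\ft)$ — in which the $\phi$-denominators telescope against the theta factors of $\mathbf{S}^{n,1}_{[j]}$ — collapses to the plain product obtained by pulling $\frak{m}$ back along $i_{\bf p}$; this is the simplest case of the ``infinite family of $\theta$-identities'' mentioned after Corollary~\ref{corth}.
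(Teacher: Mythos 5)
Your argument is correct, but its logical structure is genuinely different from the paper's. The paper's proof of Theorem \ref{thkone} is a self-contained, two-sided computation: it first evaluates the tree formula (\ref{ellipticenvelope}) for $\Stab'(\lambda_m)$ (there is a single pair of trees, so the sum collapses to the product (\ref{stabad})--(\ref{Stil})), then evaluates (\ref{stabgr}) for $\Stab(m)$, and verifies \emph{both} identities $T_{m,m}\,\Stab'(\lambda_m)=\frak{m}|_{y=u_m^{-1}}$ and $T'_{\lambda_m,\lambda_m}\,\Stab(m)=\frak{m}|_{\lambda_m}$ directly. You instead verify only the $i_\lambda^*$-restriction (your (ii), which reproduces the paper's $X$-side computation via the same telescoping of $x_i\hbar/x_{i-1}$ at the fixed point) and then import the other restriction from the existence and rigidity statement of Theorem \ref{lbthm}: since every component $\Or_{({\bf p},\lambda)}$ of the GKM scheme is reached by some $i_\lambda$, agreement of all $i_\lambda^*$-pullbacks forces your product to coincide with $\frak{m}$, whence $i_{\bf p}^*$ comes for free (and your step (i) is then redundant). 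This is valid and non-circular, since the proof of Theorem \ref{lbthm} in Section \ref{mainproof} does not use Theorem \ref{thkone}; what it buys is that you never have to engage with the tree formula for $\Stab'$. What it costs is exactly the point of Section \ref{proofab} as the paper frames it: the $k=1$ case is meant as an \emph{independent} confirmation of Theorem \ref{lbthm}, so the $X'$-side collapse of $\mathbf{S}^{n,1}_{\lambda_m}\mathbf{W}^{Ell}$ to a product of theta functions --- which you defer to a ``sanity check'' --- is not optional in the paper's treatment; it \emph{is} half of the proof. Two small points: the product in (\ref{mothone}) should run over $i=1,\dots,n$ (the $i=0$ factor would involve the undefined $x_{-1}$), as both you and the paper's Section 7.4 implicitly assume; and the sign $(-1)^n$ is not produced by $\theta(1/x)=-\theta(x)$ but is simply carried by the normalization $T'_{\lambda_m,\lambda_m}=(-1)^n\theta(z^{-1}\hbar^{-n+m-1})$ in (\ref{Ttil}).
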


\subsection{Stable envelope for $X'$}
First, let us consider the elliptic stable envelopes of the fixed points in $X'$. In the case $k=1$ the fixed points on the variety $X'$ are labeled by Young diagrams inside the $1 \times (n-1)$ rectangle. There are exactly $n$ such Young diagrams $\lambda_m=[\underbrace{1,1,\dots, 1}_{m-1}]$ with $m=0,\cdots, n-1$. To compute the stable envelope of $\lambda_m$,
we need to consider trees in $\lambda_m$ and $\bar{\lambda}_{m}$. Obviously, there is only one possible tree in this case, see Fig.\ref{abdia}: 

\begin{figure}[h!]
	$$
	\hspace{-30mm} \vspace{-10mm} \exona 
	$$
	\caption{\label{abdia}The tree for the fixed point representing  $[1^{m-1}]\subset \textsf{R}_{n,1}$} 
\end{figure}

\noindent
For (\ref{shenpart}) we obtain:
$$
\textbf{S}^{n,1}_{\lambda_m}=(-1)^{n-1} \theta \Big( \frac{x_1}{a_1} \Big) \theta \Big( \frac{a_2 \hbar}{x_{n-1}} \Big) \prod\limits_{i=1}^{m-2} \theta \Big( \frac{x_{i}}{x_{i+1}} \Big) \times \theta \Big( \frac{x_{m} \hbar}{x_{m-1}} \Big) \times \prod\limits_{i=m}^{n-2} \theta \Big( \frac{x_{i+1} \hbar}{x_i} \Big).
$$
To compute the K\"ahler part of the stable envelope (\ref{wpartell})
we note that $\beta^{(2)}_{\lambda_m}=0$ for all boxes of $ \textsf{R}_{n,1}$  and $\beta^{(1)}_{\lambda}$ is equal to zero for all boxes except the box $(m-1,1)$ where it is equal to $1$. Thus
$\beta_{\lambda}((i,1))=\delta_{i,m-1}$.
\ben
&& \textbf{W}^{Ell}\Big( \exonab \hspace{28mm} \vspace{-7mm} \Big) \\
&=&
\textbf{W}^{Ell}\Big(\exonabl \hspace{28mm}  \Big)\times \textbf{W}^{Ell}\Big(\exonabr \hspace{28mm} \vspace{-23mm}\Big) \\
&=& 
\phi \Big( \frac{a_1}{x_{1}},\hbar^{-1} \prod\limits_{i=1}^{m-1} z_{i}^{-1} \Big)   \prod\limits_{i=1}^{m-2} \phi \Big( \frac{x_i}{x_{i+1}},\hbar^{-1} \prod\limits_{j=i+1}^{m-1} z_{j}^{-1} \Big) \times  \phi \Big( \frac{a_2\hbar}{ x_{n-1}},\prod\limits_{i=m}^{n-1} z_{i}^{-1} \Big) 
\prod\limits_{i=m}^{n-2} \phi \Big( \dfrac{x_{i+1}\hbar}{x_{i}},\prod\limits_{j=i}^{m} z_{j}^{-1} \Big).
\een
We conclude that: 
\be \label{stabad}
\Stab' (\lambda_m) = \textbf{S}^{n,1}_{\lambda_m} \textbf{W}^{Ell}_{\lambda_m} = (-1)^n \dfrac{\prod\limits_{i=1}^{m-1} \theta \Big( \dfrac{x_{i}}{x_{i-1}}\hbar \prod\limits_{j=i}^{m-1} z_{j} \Big)
	\times \theta \Big( \dfrac{x_m}{x_{m-1}} \hbar \Big) \times  \prod\limits_{i=m+1}^{n} \theta \Big( \dfrac{x_{i}}{x_{i-1}}\hbar \prod\limits_{j=m}^{i-1} z_{j}^{-1} \Big) }
{\prod\limits_{i=1}^{m-1} \theta \Big( \hbar \prod\limits_{j=i}^{m-1} z_{j} \Big)   \times  \prod\limits_{i=m+1}^{n} \theta \Big( \prod\limits_{j=m}^{i-1} z_{j}^{-1} \Big) }
\ee
where we denote $x_0=a_1$ and $x_n=a_2$.
The restriction of stable envelope to fixed points is given by evaluation of Chern roots (\ref{fpsubs}). In this case the restriction to to $m$-th fixed point is given by:
\be
\label{fpsub}
\{ x_1=a_1, \quad \cdots,  \quad x_{m-1}=a_1, \quad  x_m=a_2 \hbar^{n-m}, \quad \cdots, \quad x_{n-1}=a_2 \hbar \}
\ee
Thus, for the diagonal matrix elements of restriction matrix we obtain:
$$
T'_{\lambda_m, \lambda_m} = \left. \Stab' (\lambda_m)\right|_{\lambda_m}= (-1)^n \theta \Big( \dfrac{a_2}{a_1} \hbar^{n-m+1} \Big).
$$
Finally, the stable envelope written in terms of parameters of $X$, i.e., all with the parameters substituted by (\ref{iden}), equals:
\be
\label{Stil}
\Stab' (\lambda_m)= (-1)^n \dfrac{\prod\limits_{i=1}^{n} \theta \Big( \dfrac{x_i}{x_{i-1} \hbar} \dfrac{u_i}{u_m} \Big) }{\prod\limits_{i=1}^{m-1} \theta \Big( \dfrac{u_i}{u_m \hbar} \Big) \prod\limits_{i=m+1}^{n} \theta \Big( \dfrac{u_i}{u_m} \Big)},
\ee
with diagonal elements of the restriction matrix:
\be
\label{Ttil}
T'_{\lambda_m, \lambda_m} =(-1)^n \theta(z^{-1} \hbar^{-n+m-1} ).
\ee

\subsection{Stable envelope for $X$}
Under the bijection of fixed points we have $\textsf{bj}(\lambda_m) = \{m\} \subset {\bf n}$. From (\ref{stabgr}) for the stable envelope of $X$ in the case $k=1$ we obtain:
\be
\label{stbx}
\Stab( m )= \prod\limits_{i=1}^{m-1} \theta \Big( \dfrac{y u_i}{\hbar} \Big) \times 
\dfrac{\theta(y u_m z^{-1} \hbar^{-n+m-1})}{\theta(z^{-1} \hbar^{-n+m-1})}\times \prod\limits_{i=m+1}^{n} \theta(y u_i).
\ee
The restriction to the $m$-th fixed point is given by substitution 
$y=u_m^{-1}$. Thus, for diagonal of restriction matrix we obtain:
\be
\label{tdia}
T_{m,m}=  \left. \Stab ( m ) \right|_{m}=  \prod\limits_{i=1}^{m-1}\theta(\frac{u_i}{u_m\hbar}) \prod\limits_{i=m+1}^{n} \theta(\frac{u_i}{u_m})
\ee

\subsection{Stable envelopes are restrictions of the Mother functions} 
We are now ready to check Theorem \ref{thkone} in the $k=1$ case. Note that (\ref{tdia}) 
gives exactly the denominator of (\ref{Stil}) and we obtain:
$$
{\bf Stab}' (\lambda_m)=T_{m,m} \Stab' (\lambda_m)= (-1)^n \prod\limits_{i=1}^{n} \theta(\frac{x_i}{x_{i-1} \hbar} \frac{u_i}{u_m} )=  \left.{\bf {\frak{m}}}\right|_m
$$
where ${\frak{m}}$ is defined by (\ref{mothone}) by $\left.{\frak{m}} \right|_m$ we denotes the restriction of this class to the $m$-th fixed point on $X$, i.e. the evaluation $y=u_m^{-1}$.  Similarly, we note that (\ref{Ttil}) is exactly the denominator of (\ref{stbx}) and we obtain:
$$
{\bf Stab}(m)=T'_{\lambda_m, \lambda_m} \Stab(m)  = (-1)^n \prod\limits_{i=1}^{m-1} \theta(\frac{y u_i}{ \hbar})\times  \theta( y u_m z^{-1} \hbar^{-n+m-1}) \times\prod\limits_{i=m+1}^{n} \theta(y u_i)= \left.{\frak{m}}\right|_{\lambda_m}
$$
where $\left.{\frak{m}}\right|_{\lambda_m}$ denoted the restriction to $\lambda_m$ on $X'$, i.e. the substitution (\ref{fpsub}) (one should not forget to substitute $\hbar\to \hbar^{-1}$ in (\ref{fpsub}), as all formulas written in terms of the  parameters of $X$).  Theorem~\ref{thkone} for $k=1$ is proven.

%%%%%%%%%%%%%%%%%%%%%%%%%%%%%%%%%%%%%%%%%%%%%%%%%%%%%%%%%%%%%%%%%%%%%%%%
%%%%%%%%%%%%%%%%%%%%%%%%%%%%%%%%%%%%%%%%%%%%%%%%%%%%%%%%%%%%%%%%%%%%%%%%
%%%%%%%%%%%%%%%%%%%%%%%%%%%%%%%%%%%%%%%%%%%%%%%%%%%%%%%%%%%%%%%%%%%%%%%%
%%%%%%%%%%%%%%%%%%%%%%%%%%%%%%%%%%%%%%%%%%%%%%%%%%%%%%%%%%%%%%%%%%%%%%%%

\section{Simplest non-abelian case $n=4, k=2$ \label{proofnab}}

\subsection{Identification of parameters and fixed points}
In the case $k=1$ considered in the previous section, the matrix elements of restriction matrices $T'_{\lambda, \mu}$ and $T_{\bf p, q}$ factorize into a product of theta functions and Theorem \ref{lbthm} can be proved by explicit computation. In contrast, when $k\geq2$ the matrix elements are much more complicated. 
In particular, Theorem \ref{lbthm} (and Corollary \ref{corth})  gives a set of  very non-trivial identities satisfied by the theta functions. In this section we consider the simplest example with $n=4$ and $k=2$.  
In this case the fixed points on $X$ are labeled by $2$-subsets in $\{1,2,3,4\}$. We consider the basis ordered as:
$$
X^{\bT} = \{ \{ 1,2 \} , \{ 1,3 \} , \{  1,4 \} , \{ 2,3 \} , \{ 2,4 \} , \{ 3,4 \} \}.
$$
The fixed points on $X'$ correspond to Young diagrams  which fit into a $2\times 2$ square. The bijection on the fixed points described in the Section \ref{bijsec} gives the corresponding points on $X'$ (in the same order):
$$
(X')^{\bT'} =\{ \emptyset ,[1],[1,1],[2],[2,1],[2,2] \}. 
$$
The identification of K\"ahler and equivariant parameters (\ref{parident}) in this case reads:
\be \label{identwo}
\kappa: \qquad \dfrac{a_1}{a_2} \mapsto z \hbar , \quad \hbar \mapsto {\hbar}^{-1}, \quad z_{{1}} \mapsto {\frac {u_{{1}}h}{u_{{2}}}}, \quad z_{{2}} \mapsto {
	\frac {u_{{2}}}{u_{{3}}}}, \quad z_{{3}} \mapsto {\frac {u_{{3}}}{u_{{4}}h}}.
\ee
We will denote a fixed point simply by  its number $m = 1,\cdots,6$. For example,
$T_{2,3}$ will denote the coefficient of the restriction matrix for $X$ given by
$T_{\{1,3\},\{1,4\}}$. Similarly, $T'_{1,4}$ denotes $T'_{\emptyset ,[2]}$ on the dual side $X'$.

\subsection{Explicit expressions for stable envelopes}
Using (\ref{stabgr}),(\ref{hnormx}),(\ref{ellipticenvelope}), (\ref{holnd}) one can compute explicit expressions for stable envelopes. We list two of them here for example (after applying $\kappa$ (\ref{identwo}) ):
$$
{\bf Stab}(6)= \dfrac{
\theta \Big( {\dfrac {y_{{1}}u_{{1}}}{\hbar}} \Big) \theta \Big( {
		\dfrac{y_{{1}}u_{{2}}}{\hbar} } \Big) \theta \Big( {\dfrac {y_{{1}}u_
			{{3}}}{z \hbar}}  \Big) \theta ( y_1 u_4 ) \theta \Big( 
		\dfrac {y_{{2}}u_{{1}}}{\hbar} \Big) \theta \Big( \dfrac{y_2 u_2}{\hbar}  \Big) \theta \Big( {\dfrac {y_{{2}}u_{{3}}}{\hbar} } \Big) 
	\theta \Big( {\dfrac {y_{{2}}u_{{4}}}{z\hbar^2} } \Big)
}{
\theta \Big( {\dfrac {y_{{1}}}{y_{{2}}}} \Big) \theta \Big( {\dfrac{y_2}{y_1 \hbar} } \Big) } 
+ ( y_1\leftrightarrow y_2 )
$$ 
\ben
{\bf Stab}' (1) &=& \dfrac{\theta \Big( \dfrac{u_4}{u_1} \Big) \theta \Big( \dfrac{u_4}{u_2} \Big) \theta \Big( \dfrac{a_2 u_3 u_4}{\hbar x_{{2,2}}u_{{2}}u_{
				{1}}} \Big) \theta \Big( \dfrac{x_{2,2} u_3}{x_{1,2} \hbar u_1} \Big) \theta \Big( \dfrac{x_{2,2} u_4}{x_{2,1} u_3} \Big) \theta \Big( \dfrac{x_{1,2} u_3}{x_{1,1} u_2} \Big) \theta \Big( \dfrac{x_{2,1}}{x_{1,1} \hbar} \Big) \theta \Big( {\dfrac {\hbar a_1}{x_{{1,1}}}} \Big) 
	\theta \Big( {\dfrac {\hbar a_1}{x_{{2,2}}}} \Big) \theta \Big( \dfrac{a_2}{x_{1,1} \hbar} \Big) 
}{
\theta \Big( \dfrac{u_3 u_4}{u_1 u_2} \Big) \theta
	\Big( {\dfrac {u_{{4}}}{u_{{3}}}} \Big) \theta \Big( {\dfrac {x_{{1
					,1}}}{x_{{2,2}}}} \Big) \theta \Big( {\dfrac {\hbar x_{{1,1}}}{x_{{2,2}}}
	} \Big)} \\
&& + \frac{\theta \Big( \dfrac{u_3}{u_1} \Big) \theta \Big( \dfrac{u_3}{u_2} \Big) \theta \Big( \dfrac{a_2 u_3 u_4}{\hbar x_{2,2} u_1 u_2}  \Big) \theta \Big( \dfrac{x_{2,2} u_4}{x_{1,2} \hbar u_1} \Big) \theta \Big( \dfrac{x_{1,2} u_4}{x_{1,1} u_2} \Big) \theta \Big( \dfrac {x_{{1,1}}
			u_{{4}}\hbar}{x_{{2,1}}u_{{3}}} \Big) \theta \Big( {\dfrac {x_{{2,2}}}{
			x_{{2,1}}}} \Big) \theta \Big( {\dfrac {\hbar a_1}{x_{{1,1}}}}
	\Big) \theta \Big( {\dfrac {\hbar a_1}{x_{{2,2}}}} \Big) \theta
	\Big( \dfrac {a_2}{\hbar x_{{1,1}}} \Big) 
}{
\theta \Big( \dfrac{u_3 u_4}{u_1 u_2}  \Big) \theta
	\Big( {\dfrac {u_{{4}}}{u_{{3}}}} \Big) \theta \Big( {\dfrac {x_{{1
					,1}}}{x_{{2,2}}}} \Big) \theta \Big( {\dfrac {\hbar x_{{1,1}}}{x_{{2,2}}}
	} \Big)} \\
&& +  \left( {x_{1,1}\leftrightarrow x_{2,2}} \right), 
\een
where we denote $x_0=a_1,x_m=a_2$.

\subsection{Theorem \ref{lbthm} in case $n=4,k=2$}
The Corollary \ref{corth1} means that the functions above are related by the following identities:
$$
\left.{\bf Stab}(a)\right|_{b}=\left.{\bf Stab}' (b)\right|_{a}, 
$$
where the restriction to the fixed points on $X$ is given by substitution of 
variables $y_i$ (\ref{eval}). The restrictions to the fixed points on $X'$
are defined by (\ref{fpsubs}) (together with identification of parameters (\ref{identwo})!). We only compute non-zero restrictions and only those $\left.{\bf Stab}(a)\right|_{b}$ with $a\neq b$ (the case $a=b$ is trivial). 
\\

For example: 
\begin{small}	
	
\noindent 
\underline{Case $a=2, b=1$:}	
$$
\left.{\bf Stab}' (1)\right|_{2}= \theta \left( z{\hbar}^{3} \right) \theta \left( \hbar \right) \theta \left( {
	\frac {u_{{1}}}{u_{{3}}}} \right) \theta \left( {\frac {zu_{{2}}{\hbar}^{3
	}}{u_{{3}}}} \right) \theta \left( {\frac {u_{{1}}}{u_{{4}}}} \right) 
\theta \left( {\frac {u_{{2}}}{u_{{4}}}} \right),
$$
$$
\left.{\bf Stab}(2)\right|_{1}= \theta \left( z{\hbar}^{3} \right) \theta \left( \hbar \right) \theta \left( {
	\frac {u_{{1}}}{u_{{3}}}} \right) \theta \left( {\frac {zu_{{2}}{\hbar}^{3
	}}{u_{{3}}}} \right) \theta \left( {\frac {u_{{1}}}{u_{{4}}}} \right) 
\theta \left( {\frac {u_{{2}}}{u_{{4}}}} \right).
$$
\end{small}

We see that for $(a,b) = (2,1)$ the two are trivially equal as product of theta functions, which also happens in cases $(a,b) = (3,2), (4,2), (5,2), (5,3), (4,3), (6,3), (5,4), (6,5)$. However, the identity is nontrivial for the remaining cases $(a,b)=(3,1),(4,1),(5,1),(6,1),(6,2),(6,4)$. 
\\

\begin{small}

\noindent 
\underline{Case $a=3, b=1$:}

$$
\left.{\bf Stab}' (1)\right|_{3}=\frac{\theta \left( \hbar \right) \theta \left( {\frac {u_{{1}}}{u_{{4}}}}
	\right) \theta \left( {\frac {u_{{1}}}{u_{{3}}}} \right)  \left( 
	\theta \left( {\frac {zu_{{3}}{\hbar}^{2}}{u_{{4}}}} \right) \theta
	\left( {\frac {z u_{{2}}{\hbar}^{3}}{u_{{3}}}} \right) \theta \left( {
		\frac {u_{{2}}}{u_{{4}}}} \right) \theta \left( \hbar \right) -\theta
	\left( {\frac {u_{{2}}}{u_{{3}}}} \right) \theta \left( {\frac {hu_{{
					4}}}{u_{{3}}}} \right) \theta \left( z{\hbar}^{2} \right) \theta \left( {
		\frac {z u_{{2}}{\hbar}^{3}}{u_{{4}}}} \right)  \right)}{\theta \left( {\frac {u_{{3}}}{u_{{4}}}} \right)},
$$
$$
\left.{\bf Stab}(3)\right|_{1}=\theta \left( {\frac {u_{{1}}}{u_{{3}}}} \right) \theta \left( {\frac 
	{u_{{1}}}{u_{{4}}}} \right) \theta \left( h \right) \theta \left( {
	\frac {\hbar u_{{2}}}{u_{{3}}}} \right) \theta \left( {\frac {z{\hbar}^{2}u_{{2
	}}}{u_{{4}}}} \right) \theta \left( z{\hbar}^{3} \right).
$$
\noindent 
\underline{Case $a=4, b=1$:}
$$
\left.{\bf Stab}' (1)\right|_{4}=\theta \left( {\frac {u_{{1}}}{u_{{4}}}} \right) \theta \left( {\frac 
	{u_{{2}}}{u_{{4}}}} \right) \theta \left( \hbar \right) \theta \left( {
	\frac {\hbar u_{{2}}}{u_{{3}}}} \right) \theta \left( z{\hbar}^{3} \right) 
\theta \left( {\frac {z{\hbar}^{2}u_{{1}}}{u_{{3}}}} \right),
$$
$$
\left.{\bf Stab}(4)\right|_{1}=\frac{\theta \left( \hbar \right) \theta \left( {\frac {u_{{1}}}{u_{{4}}}}
	\right) \theta \left( {\frac {u_{{2}}}{u_{{4}}}} \right)  \left( 
	\theta \left( {\frac {u_{{1}}}{u_{{3}}}} \right) \theta \left( {\frac 
		{z{\hbar}^{2}u_{{1}}}{u_{{2}}}} \right) \theta \left( \hbar \right) \theta
	\left( {\frac {zu_{{2}}{\hbar}^{3}}{u_{{3}}}} \right) -\theta \left( {
		\frac {u_{{2}}}{u_{{3}}}} \right) \theta \left( {\frac {\hbar u_{{2}}}{u_{{
					1}}}} \right) \theta \left( {\frac {zu_{{1}}{\hbar}^{3}}{u_{{3}}}}
	\right) \theta \left( z{\hbar}^{2} \right)  \right)
}{\theta \left( {\frac {u_{{1}}}{u_{{2}}}} \right)}.
$$
\noindent 
\underline{Case $a=5, b=1$:}
$$
\left.{\bf Stab}' (1)\right|_{5}= \frac{\theta \left( \hbar \right)  \left( -\theta \left( {\frac {z{\hbar}^{2}u_{{1}}
		}{u_{{4}}}} \right) \theta \left( {\frac {\hbar u_{{2}}}{u_{{4}}}} \right) 
	\theta \left( {\frac {\hbar u_{{4}}}{u_{{3}}}} \right) \theta \left( z{\hbar}^{
		2} \right) \theta \left( {\frac {u_{{1}}}{u_{{3}}}} \right) \theta
	\left( {\frac {u_{{2}}}{u_{{3}}}} \right) +\theta \left( {\frac {z{h}
			^{2}u_{{1}}}{u_{{3}}}} \right) \theta \left( {\frac {\hbar u_{{2}}}{u_{{3}}
	}} \right) \theta \left( {\frac {zu_{{3}}{\hbar}^{2}}{u_{{4}}}} \right) 
	\theta \left( {\frac {u_{{1}}}{u_{{4}}}} \right) \theta \left( {\frac 
		{u_{{2}}}{u_{{4}}}} \right) \theta \left( \hbar \right)  \right) 
}{\theta \left( {\frac {u_{{3}}}{u_{{4}}}} \right) },
$$
$$
\left.{\bf Stab}(5)\right|_{1}= \frac{\theta \left( \hbar \right)  \left( \theta \left( {\frac {u_{{1}}}{u_{{3}}
	}} \right) \theta \left( {\frac {u_{{1}}}{u_{{4}}}} \right) \theta
	\left( {\frac {z{h}^{2}u_{{1}}}{u_{{2}}}} \right) \theta \left( {
		\frac {\hbar u_{{2}}}{u_{{3}}}} \right) \theta \left( {\frac {z{\hbar}^{2}u_{{2
		}}}{u_{{4}}}} \right) \theta \left( \hbar \right) -\theta \left( {\frac {u
			_{{2}}}{u_{{3}}}} \right) \theta \left( {\frac {u_{{2}}}{u_{{4}}}}
	\right) \theta \left( {\frac {\hbar u_{{1}}}{u_{{3}}}} \right) \theta
	\left( {\frac {z{\hbar}^{2}u_{{1}}}{u_{{4}}}} \right) \theta \left( {
		\frac {\hbar u_{{2}}}{u_{{1}}}} \right) \theta \left( z{\hbar}^{2} \right) 
	\right)
}{\theta \left( {\frac {u_{{1}}}{u_{{2}}}} \right)}.
$$
\noindent 
\underline{Case $a=6, b=1$:}
\ben
\left.{\bf Stab}' (1)\right|_{6} &=& \dfrac{1}{\theta \left( {\frac {u_{{1}}u_{{2}}}{u_{{3}}u_{{4}}}} \right) \theta
	\left( {\frac {u_{{3}}}{u_{{4}}}} \right)}\left(  
\theta \left( {\frac {\hbar u_{{2}}}{u_{{3}}}} \right) \theta
\left( {
	\frac {\hbar u_{{1}}}{u_{{3}}}} \right) \theta \left( {\frac {u_{{3}}\hbar}{u_{
			{4}}}} \right) \theta \left( z{\hbar}^{2} \right) \theta \left( {\frac {z
		u_{{1}}u_{{2}}}{u_{{3}}u_{{4}}}} \right) \theta \left( {\frac {u_{{1}}
	}{u_{{4}}}} \right) \theta \left( {\frac {u_{{2}}}{u_{{4}}}} \right) 
\theta \left( \hbar \right)\right. \\
&& - \theta \left( {\frac {\hbar u_{{2}}}{u_{{4}}}}
\right) \theta \left( {\frac {\hbar u_{{1}}}{u_{{4}}}} \right) \theta
\left( {\frac {\hbar u_{{4}}}{u_{{3}}}} \right) \theta \left( z{\hbar}^{2}
\right) \theta \left( \hbar \right) \theta \left( {\frac {zhu_{{1}}u_{{2}
	}}{u_{{3}}u_{{4}}}} \right) \theta \left( {\frac {u_{{1}}}{u_{{3}}}}
\right) \theta \left( {\frac {u_{{2}}}{u_{{3}}}} \right) \\ 
&& - \left. \theta
\left( {\frac {u_{{2}}}{u_{{3}}}} \right) \theta \left( {\frac {z{\hbar}^
		{2}u_{{2}}u_{{1}}}{u_{{3}}u_{{4}}}} \right) \theta \left( {\hbar}^{2}
\right) \theta \left( {\frac {u_{{1}}}{u_{{3}}}} \right) \theta
\left( {\frac {u_{{3}}}{u_{{4}}}} \right) \theta \left( z \hbar \right) 
\theta \left( {\frac {u_{{1}}}{u_{{4}}}} \right) \theta \left( {\frac 
	{u_{{2}}}{u_{{4}}}} \right) 
\right),
\een
$$
\left.{\bf Stab}(6)\right|_{1}=\frac{  \theta \left( \hbar \right)^{2} \left( \theta \left( {
		\frac {u_{{1}}}{u_{{4}}}} \right) \theta \left( {\frac {\hbar u_{{1}}}{u_{{
					2}}}} \right) \theta \left( {\frac {z\hbar u_{{1}}}{u_{{3}}}} \right) 
	\theta \left( {\frac {\hbar u_{{2}}}{u_{{3}}}} \right) \theta \left( {
		\frac {z{\hbar}^{2}u_{{2}}}{u_{{4}}}} \right) -\theta \left( {\frac {u_{{2
		}}}{u_{{4}}}} \right) \theta \left( {\frac {z\hbar u_{{2}}}{u_{{3}}}}
	\right) \theta \left( {\frac {\hbar u_{{1}}}{u_{{3}}}} \right) \theta
	\left( {\frac {z{\hbar}^{2}u_{{1}}}{u_{{4}}}} \right) \theta \left( {
		\frac {\hbar u_{{2}}}{u_{{1}}}} \right)  \right)
}{\theta \left( {\frac {u_{{1}}}{u_{{2}}}} \right)}.
$$

\noindent 
\underline{Case $a=6, b=2$:}
$$
\left.{\bf Stab}' (2)\right|_{6}=\theta \left( {\frac {u_{{3}}\hbar}{u_{{2}}}} \right) \theta \left( \hbar
\right) \theta \left( {\frac {\hbar u_{{1}}}{u_{{2}}}} \right) \theta
\left( {\frac {z\hbar u_{{1}}}{u_{{4}}}} \right) \theta \left( z{\hbar}^{2}
\right) \theta \left( {\frac {u_{{3}}\hbar}{u_{{4}}}} \right),
$$
$$
\left.{\bf Stab}(6)\right|_{2}=\frac{\theta \left( \hbar \right) \theta \left( {\frac {\hbar u_{{1}}}{u_{{2}}}}
	\right) \theta \left( {\frac {u_{{3}}\hbar}{u_{{2}}}} \right)  \left( 
	\theta \left( {\frac {u_{{1}}}{u_{{4}}}} \right) \theta \left( {\frac 
		{z\hbar u_{{1}}}{u_{{3}}}} \right) \theta \left( {\frac {zu_{{3}}{\hbar}^{2}}{u
			_{{4}}}} \right) \theta \left( \hbar \right) -\theta \left( z\hbar \right) 
	\theta \left( {\frac {z{\hbar}^{2}u_{{1}}}{u_{{4}}}} \right) \theta
	\left( {\frac {u_{{3}}\hbar}{u_{{1}}}} \right) \theta \left( {\frac {u_{{
					3}}}{u_{{4}}}} \right)  \right)
}{\theta \left( {\frac {u_{{1}}}{u_{{3}}}} \right).} 
$$

\noindent 
\underline{Case $a=6, b=4$:}
$$
\left.{\bf Stab}' (4)\right|_{6}=\theta \left( {\frac {u_{{3}}\hbar}{u_{{1}}}} \right) \theta \left( {
	\frac {\hbar u_{{2}}}{u_{{1}}}} \right) \theta \left( {\frac {z\hbar u_{{2}}}{u_
		{{4}}}} \right) \theta \left( {\frac {u_{{3}}\hbar}{u_{{4}}}} \right) 
\theta \left( \hbar \right) \theta \left( z{\hbar}^{2} \right),
$$
$$
\left.{\bf Stab}(6)\right|_{4}=\frac{\theta \left( \hbar \right) \theta \left( {\frac {\hbar u_{{2}}}{u_{{1}}}}
	\right) \theta \left( {\frac {u_{{3}}\hbar}{u_{{1}}}} \right)  \left( 
	\theta \left( {\frac {u_{{2}}}{u_{{4}}}} \right) \theta \left( {\frac 
		{z\hbar u_{{2}}}{u_{{3}}}} \right) \theta \left( {\frac {zu_{{3}}{\hbar}^{2}}{u
			_{{4}}}} \right) \theta \left( \hbar \right) -\theta \left( z\hbar \right) 
	\theta \left( {\frac {z{\hbar}^{2}u_{{2}}}{u_{{4}}}} \right) \theta
	\left( {\frac {u_{{3}}\hbar}{u_{{2}}}} \right) \theta \left( {\frac {u_{{
					3}}}{u_{{4}}}} \right)  \right)}{
	\theta \left( {\frac {u_{{2}}}{u_{{3}}}} \right). }
$$
\end{small}

\subsection{Identities for theta functions}

In all these cases the identity follows from the well-known $3$-term identity
\be \label{treeeterm}
\theta \Big( \dfrac{a y_1}{x} \Big) \theta \Big( \dfrac{h y_2}{x} \Big) \theta \Big( \dfrac{h y_1}{y_2} \Big) \theta (a)=\theta \Big( \dfrac{a h y_2}{x} \Big) \theta \Big( \dfrac{y_2}{x} \Big) \theta \Big( \dfrac{y_1}{y_2} \Big) \theta \Big( \dfrac{a}{h} \Big)+\theta \Big( \dfrac{h y_2}{x} \Big) \theta \Big( \dfrac{a y_2}{x} \Big) \theta (h)
\theta \Big( \dfrac{a y_1}{y_2} \Big), 
\ee
and $4$-term identity for theta functions:
\be \label{fourtermr}
\begin{array}{l}
	\theta \left( h \right) \theta \Big( {\dfrac {y_{{1}}}{y_{{2}}}}
	\Big) \theta \Big( {\dfrac {hy_{{1}}}{x_{{1}}}} \Big) \theta
	\Big( {\dfrac {a_{{2}}hy_{{2}}}{x_{{1}}}} \Big) \theta \Big( {
		\dfrac {a_{{1}}a_{{2}}hy_{{1}}}{x_{{2}}}} \Big) \theta \Big( {\dfrac 
		{x_{{2}}}{y_{{2}}}} \Big) \theta \Big( {\dfrac {a_{{1}}x_{{2}}}{x_{{
					1}}}} \Big)\\
	\\ -\theta \Big( {\dfrac {a_{{1}}a_{{2}}hy_{{1}}}{x_{{1}}}}
	\Big) \theta \Big( {\dfrac {x_{{1}}}{y_{{2}}}} \Big) \theta
	\Big( {\dfrac {hy_{{1}}}{x_{{2}}}} \Big) \theta \Big( {\dfrac {a_{{
					2}}hy_{{2}}}{x_{{2}}}} \Big) \theta \Big( {\dfrac {hx_{{2}}}{x_{{1}}
	}} \Big) \theta \Big( {\dfrac {y_{{1}}}{y_{{2}}}} \Big) \theta
	\left( a_{{1}} \right) \\
	\\ 
	=-\theta \left( h \right) \theta \Big( {\dfrac {x_{{1}}}{x_{{2}}}}
	\Big) \theta \Big( {\dfrac {a_{{1}}a_{{2}}hy_{{2}}}{x_{{1}}}}
	\Big) \theta \Big( {\dfrac {a_{{2}}hy_{{1}}}{x_{{2}}}} \Big) 
	\theta \Big( {\dfrac {a_{{1}}y_{{1}}}{y_{{2}}}} \Big) \theta \Big( 
	{\dfrac {hy_{{1}}}{x_{{1}}}} \Big) \theta \Big( {\dfrac {x_{{2}}}{y_{
				{2}}}} \Big) \\
	\\
	+\theta \Big( {\dfrac {hy_{{2}}}{x_{{1}}}} \Big) 
	\theta \Big( {\dfrac {x_{{2}}}{y_{{1}}}} \Big) \theta \Big( {\dfrac 
		{x_{{1}}}{x_{{2}}}} \Big) \theta \Big( {\dfrac {hy_{{1}}}{y_{{2}}}}
	\Big) \theta \Big( {\dfrac {a_{{1}}a_{{2}}hy_{{1}}}{x_{{1}}}}
	\Big) \theta \Big( {\dfrac {a_{{2}}hy_{{2}}}{x_{{2}}}} \Big) 
	\theta \left( a_{{1}} \right)
\end{array}
\ee
Let us check the identity for the most complicated case $a=6,b=1$. The other cases are analyzed in the same manner.  First, we specialize the parameters in the 4-term relation
(\ref{fourtermr}) to the following values:
$$
\left\{ a_{{1}}={\hbar}^{-1}, \quad a_{{2}}=z \hbar , \quad x_{{1}}=u_{{3}}, \quad x_{{2}}=u_{{4}}, \quad y_{{1}}=u_{{2}}, \quad y_{{2}}=u_{{1}}, \quad h=\hbar \right\}. 
$$
After this substitution the above 4-term (up to a common multiple $\theta(\hbar)$) takes the form:
\be
\label{fourterm}
\begin{array}{l}
	-\theta \left( {\frac {u_{{1}}}{u_{{4}}}} \right) \theta \left( {
		\frac {\hbar u_{{3}}}{u_{{4}}}} \right) \theta \left( {\frac {u_{{1}}}{u_{{
					2}}}} \right) \theta \left( {\frac {zu_{{1}}{\hbar}^{2}}{u_{{3}}}}
	\right) \theta \left( {\frac {z\hbar u_{{2}}}{u_{{4}}}} \right) \theta
	\left( {\frac {\hbar u_{{2}}}{u_{{3}}}} \right) +\theta \left( {\frac {u_{
				{1}}}{u_{{2}}}} \right) \theta \left( {\frac {\hbar u_{{2}}}{u_{{4}}}}
	\right) \theta \left( {\frac {\hbar u_{{4}}}{u_{{3}}}} \right) \theta
	\left( {\frac {u_{{1}}}{u_{{3}}}} \right) \theta \left( {\frac {zhu_{
				{2}}}{u_{{3}}}} \right) \theta \left( {\frac {zu_{{1}}{\hbar}^{2}}{u_{{4}}
	}} \right) \\
	\\
	= -\theta \left( {\frac {u_{{1}}}{u_{{4}}}} \right) \theta \left( {
		\frac {u_{{3}}}{u_{{4}}}} \right) \theta \left( {\frac {\hbar u_{{1}}}{u_{{
					2}}}} \right) \theta \left( {\frac {z\hbar u_{{1}}}{u_{{3}}}} \right) 
	\theta \left( {\frac {z{\hbar}^{2}u_{{2}}}{u_{{4}}}} \right) \theta
	\left( {\frac {\hbar u_{{2}}}{u_{{3}}}} \right) +\theta \left( {\frac {u_{
				{3}}}{u_{{4}}}} \right) \theta \left( {\frac {u_{{2}}}{u_{{4}}}}
	\right) \theta \left( {\frac {\hbar u_{{1}}}{u_{{3}}}} \right) \theta
	\left( {\frac {\hbar u_{{2}}}{u_{{1}}}} \right) \theta \left( {\frac {zhu_
			{{2}}}{u_{{3}}}} \right) \theta \left( {\frac {zu_{{1}}{\hbar }^{2}}{u_{{4}
	}}} \right)
\end{array}
\ee
Now, the identity for $a=6,b=1$ has the form:
$$
A_1+A_2+A_3=B_1+B_2
$$
where the terms have the following explicit form (after clearing the denominators):
$$
\begin{array}{l}
A_1=\theta \left( {\frac {u_{{1}}}{u_{{2}}}} \right) \theta \left( {\frac 
	{\hbar u_{{2}}}{u_{{3}}}} \right) \theta \left( {\frac {\hbar u_{{1}}}{u_{{3}}}}
\right) \theta \left( {\frac {\hbar u_{{3}}}{u_{{4}}}} \right) \theta
\left( z{h}^{2} \right) \theta \left( {\frac {z\hbar u_{{1}}u_{{2}}}{u_{{3
		}}u_{{4}}}} \right) \theta \left( {\frac {u_{{1}}}{u_{{4}}}} \right) 
\theta \left( {\frac {u_{{2}}}{u_{{4}}}} \right) \theta \left( \hbar
\right)\\
\\
A_2=-\theta \left( {\frac {u_{{1}}}{u_{{2}}}} \right) \theta \left( {
	\frac {\hbar u_{{2}}}{u_{{4}}}} \right) \theta \left( {\frac {\hbar u_{{1}}}{u_{
			{4}}}} \right) \theta \left( {\frac {\hbar u_{{4}}}{u_{{3}}}} \right) 
\theta \left( z{\hbar}^{2} \right) \theta \left( \hbar \right) \theta \left( {
	\frac {z\hbar u_{{1}}u_{{2}}}{u_{{3}}u_{{4}}}} \right) \theta \left( {
	\frac {u_{{1}}}{u_{{3}}}} \right) \theta \left( {\frac {u_{{2}}}{u_{{3
}}}} \right)
\\
\\
A_3=-\theta \left( {\frac {u_{{1}}}{u_{{2}}}} \right) \theta \left( {
	\frac {u_{{2}}}{u_{{3}}}} \right) \theta \left( {\frac {z{\hbar}^{2}u_{{2}
		}u_{{1}}}{u_{{3}}u_{{4}}}} \right) \theta \left( {\hbar}^{2} \right) 
\theta \left( {\frac {u_{{1}}}{u_{{3}}}} \right) \theta \left( {\frac 
	{u_{{3}}}{u_{{4}}}} \right) \theta \left( z\hbar \right) \theta \left( {
	\frac {u_{{1}}}{u_{{4}}}} \right) \theta \left( {\frac {u_{{2}}}{u_{{4
}}}} \right)\\
\\
B_1=\theta \left( \hbar \right) ^{2}\theta \left( {\frac {u_{
			{1}}u_{{2}}}{u_{{3}}u_{{4}}}} \right) \theta \left( {\frac {u_{{3}}}{u
		_{{4}}}} \right) \theta \left( {\frac {u_{{1}}}{u_{{4}}}} \right) 
\theta \left( {\frac {\hbar u_{{1}}}{u_{{2}}}} \right) \theta \left( {
	\frac {z\hbar u_{{1}}}{u_{{3}}}} \right) \theta \left( {\frac {\hbar u_{{2}}}{u_
		{{3}}}} \right) \theta \left( {\frac {z{\hbar}^{2}u_{{2}}}{u_{{4}}}}
\right) \\
\\
B_2=-\theta \left( \hbar \right) ^{2}\theta \left( {\frac {u_
		{{1}}u_{{2}}}{u_{{3}}u_{{4}}}} \right) \theta \left( {\frac {u_{{3}}}{
		u_{{4}}}} \right) \theta \left( {\frac {u_{{2}}}{u_{{4}}}} \right) 
\theta \left( {\frac {z\hbar u_{{2}}}{u_{{3}}}} \right) \theta \left( {
	\frac {\hbar u_{{1}}}{u_{{3}}}} \right) \theta \left( {\frac {zu_{{1}}{\hbar }^{
			2}}{u_{{4}}}} \right) \theta \left( {\frac {\hbar u_{{2}}}{u_{{1}}}}
\right) 
\end{array}
$$
For some values of the parameters the three term relation (\ref{treeeterm}) can be written in 
the form:
$$
\begin{array}{l}
\theta \left( z{\hbar}^{2} \right) \theta \left( {\frac {u_{{2}}}{u_{{4}}}
} \right) \theta \left( {\frac {\hbar u_{{1}}}{u_{{3}}}} \right) \theta
\left( {\frac {z\hbar u_{{1}}u_{{2}}}{u_{{3}}u_{{4}}}} \right) =-\theta
\left( {\frac {\hbar u_{{4}}}{u_{{2}}}} \right) \theta \left( {\frac {z{h}
		^{2}u_{{2}}u_{{1}}}{u_{{3}}u_{{4}}}} \right) \theta \left( {\frac {u_{
			{1}}}{u_{{3}}}} \right) \theta \left( z\hbar \right) +\theta \left( {
	\frac {u_{{1}}u_{{2}}}{u_{{3}}u_{{4}}}} \right) \theta \left( h
\right) \theta \left( {\frac {z\hbar u_{{2}}}{u_{{4}}}} \right) \theta
\left( {\frac {zu_{{1}}{\hbar}^{2}}{u_{{3}}}} \right)
\end{array}
$$
and thus for $A_1$ we can write:
\begin{small}
\ben
A_1 &=& -\theta \left( {\frac {\hbar u_{{4}}}{u_{{2}}}} \right) \theta \left( {
	\frac {u_{{1}}}{u_{{2}}}} \right) \theta \left( {\frac {\hbar u_{{2}}}{u_{{
				3}}}} \right) \theta \left( {\frac {\hbar u_{{3}}}{u_{{4}}}} \right) \theta
\left( {\frac {u_{{1}}}{u_{{4}}}} \right) \theta \left( \hbar \right) 
\theta \left( {\frac {u_{{1}}}{u_{{3}}}} \right) \theta \left( {\frac 
	{z{\hbar}^{2}u_{{2}}u_{{1}}}{u_{{3}}u_{{4}}}} \right) \theta \left( z\hbar
\right) \\
&& + \theta \left( {\frac {zu_{{1}}{\hbar}^{2}}{u_{{3}}}} \right) 
\theta \left( {\frac {z\hbar u_{{2}}}{u_{{4}}}} \right) \theta \left( {
	\frac {\hbar u_{{2}}}{u_{{3}}}} \right) \theta \left( {\frac {u_{{1}}}{u_{{
				4}}}} \right) \theta \left( \hbar \right)^{2}\theta
\left( {\frac {u_{{1}}}{u_{{2}}}} \right) \theta \left( {\frac {u_{{1
		}}u_{{2}}}{u_{{3}}u_{{4}}}} \right) \theta \left( {\frac {\hbar u_{{3}}}{u_
		{{4}}}} \right) . 
\een
\end{small}
Similarly we can write the 3-term relation as:
$$
\begin{array}{l}
\theta \left( z{\hbar}^{2} \right) \theta \left( {\frac {u_{{2}}}{u_{{3}}}
} \right) \theta \left( {\frac {\hbar u_{{1}}}{u_{{4}}}} \right) \theta
\left( {\frac {z\hbar u_{{1}}u_{{2}}}{u_{{3}}u_{{4}}}} \right) =-\theta
\left( {\frac {\hbar u_{{3}}}{u_{{2}}}} \right) \theta \left( {\frac {z{\hbar}
		^{2}u_{{2}}u_{{1}}}{u_{{3}}u_{{4}}}} \right) \theta \left( {\frac {u_{
			{1}}}{u_{{4}}}} \right) \theta \left( zh \right) +\theta \left( {
	\frac {u_{{1}}u_{{2}}}{u_{{3}}u_{{4}}}} \right) \theta \left( h
\right) \theta \left( {\frac {z\hbar u_{{2}}}{u_{{3}}}} \right) \theta
\left( {\frac {zu_{{1}}{\hbar}^{2}}{u_{{4}}}} \right)
\end{array}
$$
and thus:
\begin{small}
\ben
A_2 &=& \theta \left( {\frac {\hbar u_{{3}}}{u_{{2}}}} \right) \theta \left( {
	\frac {u_{{1}}}{u_{{2}}}} \right) \theta \left( {\frac {u_{{1}}}{u_{{4
}}}} \right) \theta \left( \hbar \right) \theta \left( {\frac {\hbar u_{{2}}}{u
		_{{4}}}} \right) \theta \left( {\frac {\hbar u_{{4}}}{u_{{3}}}} \right) 
\theta \left( {\frac {u_{{1}}}{u_{{3}}}} \right) \theta \left( {\frac 
	{z{\hbar}^{2}u_{{2}}u_{{1}}}{u_{{3}}u_{{4}}}} \right) \theta \left( zh
\right) \\
&&  -\theta \left( {\frac {u_{{1}}}{u_{{2}}}} \right)  
\theta \left( \hbar \right)^{2}\theta \left( {\frac {\hbar u_{{2}}}{u
		_{{4}}}} \right) \theta \left( {\frac {\hbar u_{{4}}}{u_{{3}}}} \right) 
\theta \left( {\frac {u_{{1}}}{u_{{3}}}} \right) \theta \left( {\frac 
	{u_{{1}}u_{{2}}}{u_{{3}}u_{{4}}}} \right) \theta \left( {\frac {z\hbar u_{{
				2}}}{u_{{3}}}} \right) \theta \left( {\frac {zu_{{1}}{\hbar }^{2}}{u_{{4}}}
} \right)  
\een
\end{small}
Finally,
$$
\begin{array}{l}
\theta \left( {\hbar}^{2} \right) \theta \left( {\frac {u_{{2}}}{u_{{3}}}}
\right) \theta \left( {\frac {u_{{2}}}{u_{{4}}}} \right) \theta
\left( {\frac {u_{{3}}}{u_{{4}}}} \right) =\theta \left( \hbar \right) 
\theta \left( {\frac {\hbar u_{{2}}}{u_{{4}}}} \right) \theta \left( {
	\frac {\hbar u_{{4}}}{u_{{3}}}} \right) \theta \left( {\frac {\hbar u_{{3}}}{u_{
			{2}}}} \right) -\theta \left( {\frac {\hbar u_{{4}}}{u_{{2}}}} \right) 
\theta \left( {\frac {\hbar u_{{2}}}{u_{{3}}}} \right) \theta \left( \hbar
\right) \theta \left( {\frac {\hbar u_{{3}}}{u_{{4}}}} \right) 
\end{array}
$$
which gives:
\begin{small}
\ben
A_3 &=& \theta \left( {\frac {\hbar u_{{4}}}{u_{{2}}}} \right) \theta \left( {
	\frac {u_{{1}}}{u_{{2}}}} \right) \theta \left( {\frac {\hbar u_{{2}}}{u_{{
				3}}}} \right) \theta \left( {\frac {\hbar u_{{3}}}{u_{{4}}}} \right) \theta
\left( {\frac {u_{{1}}}{u_{{4}}}} \right) \theta \left( \hbar \right) 
\theta \left( {\frac {u_{{1}}}{u_{{3}}}} \right) \theta \left( {\frac 
	{z{\hbar}^{2}u_{{2}}u_{{1}}}{u_{{3}}u_{{4}}}} \right) \theta \left( zh
\right) \\
&& - \theta \left( {\frac {\hbar u_{{3}}}{u_{{2}}}} \right) \theta
\left( {\frac {u_{{1}}}{u_{{2}}}} \right) \theta \left( {\frac {u_{{1
	}}}{u_{{4}}}} \right) \theta \left( \hbar \right) \theta \left( {\frac {\hbar u
		_{{2}}}{u_{{4}}}} \right) \theta \left( {\frac {\hbar u_{{4}}}{u_{{3}}}}
\right) \theta \left( {\frac {u_{{1}}}{u_{{3}}}} \right) \theta
\left( {\frac {z{\hbar}^{2}u_{{2}}u_{{1}}}{u_{{3}}u_{{4}}}} \right) 
\theta \left( z\hbar \right). 
\een
\end{small}
Several terms in the sum $A_1+A_2+A_3$ cancels and we obtain:
\begin{small}
\ben
A_1+A_2+A_3 &=& \theta \left( \hbar \right) ^{2}\theta \left( {\frac {u_{
			{1}}}{u_{{2}}}} \right) \theta \left( {\frac {u_{{1}}u_{{2}}}{u_{{3}}u
		_{{4}}}} \right)  \left( \theta \left( {\frac {zu_{{1}}{\hbar}^{2}}{u_{{3}
}}} \right) \theta \left( {\frac {z\hbar u_{{2}}}{u_{{4}}}} \right) \theta
\left( {\frac {\hbar u_{{2}}}{u_{{3}}}} \right) \theta \left( {\frac {hu_{
			{3}}}{u_{{4}}}} \right) \theta \left( {\frac {u_{{1}}}{u_{{4}}}}
\right) \right. \\
&& \left.  -\theta \left( {\frac {\hbar u_{{2}}}{u_{{4}}}} \right) \theta
\left( {\frac {\hbar u_{{4}}}{u_{{3}}}} \right) \theta \left( {\frac {zhu_
		{{2}}}{u_{{3}}}} \right) \theta \left( {\frac {zu_{{1}}{\hbar}^{2}}{u_{{4}
}}} \right) \theta \left( {\frac {u_{{1}}}{u_{{3}}}} \right)  \right) 
\een
\end{small}
Now, modulo a common multiple $\theta \left( \hbar \right) ^{2} \theta \left( {\frac {u_{{1}}u_{{2}}}{u_{{3}}u_{{4}}}} \right)$ the relation $A_1+A_2+A_3=B_1+B_2$
is exactly the 4-term relation (\ref{fourterm}). 

\section{Proof of Theorem \ref{lbthm} \label{mainproof}}
Let us first discuss the idea of the proof. We denote the restriction matrices for the elliptic stable envelopes in (holomorphic normalization) by:
$$
{\bf{T}}_{\bf q, p}=\left.\bStab(\bf q)\right|_{\widehat{\Or}_{\bf p}}, \ \ \ {\bf{T}}'_{\lambda, \mu}=\left.\bStab' (\lambda)\right|_{\widehat{\Or}' _{\mu}}.
$$
Recall that the isomorphism $\kappa$ induces an isomorphism of extended orbits $\widehat{\Or}'_{\mu}\cong \widehat{\Or}_{\bf p}\cong \cE_{\bT\times \bT^{'}}$. First, we show that under this isomorphism we have the following identity
\be \label{matid}
{\bf{T}}'_{\lambda, \mu}={\bf{T}}_{\bf q, p}, \ \ \ \textrm{for} \ \ \ {\bf p}=
\textsf{bj}(\lambda), \ \ {\bf q}=
\textsf{bj}(\mu).
\ee
By Theorem \ref{uqth}, to prove this identity it is enough to check that the matrix elements ${\bf{T}}'_{\lambda, \mu}$ satisfies the conditions 1),2), 3).   

The condition 1) says that for fixed $\mu$ the set of functions ${\bf{T}}'_{\lambda, \mu}$ is a section of the line bundle $\frak{M}(\bf{q})$ see (\ref{twlin}). By Proposition \ref{progkm},
to check this property it is enough to show that ${\bf{T}}'_{\lambda, \mu}$ has the same quasiperiods in equivariant and K\"ahler variables as sections of $\left.\frak{M}(\bf{q})\right|_{\widehat{\Or}_{\bf p}}$ and that it satisfies the GKM conditions:
\be \label{gkmcon}
\left.{\bf{T}}'_{\lambda, \mu}\right|_{u_i=u_j} = \left.{\bf{T}}'_{\nu, \mu}\right|_{u_i=u_j},
\ee
if the fixed points ${\bf p}=\textsf{bj}(\lambda)$ and ${\bf s}=\textsf{bj}(\nu)$ are connected by equivariant curve, i.e., 
if ${\bf p}={\bf s} \setminus \{i\} \cup \{j\}$ as $k$-sets. 
We recall that the quasiperiods of ${\bf{T}}'_{\lambda, \mu}$ are the same as of function (\ref{ufund}) multiplied by (\ref{thptefdual}). The quesiperiods of sections of $\left.\frak{M}(\bf{q})\right|_{\widehat{\Or}_{\bf p}}$ are the same as of function (\ref{univ}) multiplied by (\ref{prefxor}). Both resulting functions are explicit product of theta functions and the quasiperiods are determined immediately from (\ref{thettrans}). A long but straightforward calculation then shows that the quasiperiods coincide. To check (\ref{gkmcon}) is however less trivial,we prove it in the next Subsection~\ref{gkmsec}.

The condition 2) is trivial and follows from our choice of holomorphic normalization. 

The condition 3) says that ${\bf{T}}'_{\lambda, \mu}$ must be divisible by some explicit product of theta functions and the result of division is a holomorphic function in variables $u_i$. We will refer to these properties as {\it divisibility} and {\it holomorphicity}. These properties of the matrix ${\bf{T}}'_{\lambda, \mu}$ will be proven in Subsections \ref{divsec} and \ref{holsec} respectively.

Let us consider the following scheme:
\be \label{sch}
\textsf{S}(X,X'):=\cE_{\bT\times\bT'} \times S^{k}(E) \times \prod\limits_{i=1}^{n-1} S^{\textsf{v}_i}(E).
\ee
Here $S^{k}(E)$  denotes $k$-th symmetric power of the elliptic curve $E$. We assume that coordinates on $S^{k}(E)$ are given by symmetric functions on Chern roots of tautological bundle on $X$. Similarly, $S^{\textsf{v}_i}(E)$
denotes the scheme with coordinates given by Chern roots of $i$-th tautological bundle on $X'$, i.e., symmetric functions in $x_{\Box}$ with $c_{\Box}=i$, see Section \ref{tbxp} for the notations. 

Recall that the stable envelopes $\bStab(\bf q)$ and $\bStab' (\lambda)$ are defined explicitly by (\ref{stabgr}) and (\ref{ellipticenvelope}). In particular, they are symmetric functions in the Chern roots of tautological bundles. This means that the function defined by\footnote{Note that ${\bf{T}}_{\bf q, p}$ it triangular matrix with non-vanishing diagonal, thus it is invertible and the sum in (\ref{mtil}) is well defined.
Note also that in (\ref{mtil}) we assume that ${\bf q}=\textsf{bj}(\mu)$ and the second sum over $\mu$ is the same as sum over ${\bf q} \in X^{\bT}$.
}
\be \label{mtil}
\tilde{\frak{m}}:=\sum\limits_{{\bf p,q} \in X^{\bT}}  ({\bf{T}})^{-1}_{\bf q, p}  \,\bStab(\bf p) \, \bStab' (\textsf{bj}^{-1}({\bf q}))
\ee
%\sum\limits_{\mu \in (X')^{\bT'}}
can be considered as a meromorphic section of certain line bundle on $\textsf{S}(X,X')$.  We denote this line bundle by $\tilde{\frak{M}}$.

Let us consider the map 
$$
\tilde{\bf c} : \textrm{Ell}_{\bT\times\bT'}(X\times X') \rightarrow S(X,X')
$$
which is defined as follows: the component of $\tilde{\bf c}$ mapping to the first factor of (\ref{sch}) is the projection to the base. The components of the map $\tilde{\bf c}$ to $S^{k}(E)$ and to $S^{\textsf{v}_l}(E)$ are given by the elliptic Chern classes of the corresponding tautological classes. For the definition of elliptic Chern classes see Section 1.8 in \cite{GKV} or Section 5 in \cite{ell1}.
It is known that $\tilde{\bf c}$ is an embedding \cite{kirv}, see also Section 2.4 in \cite{AOelliptic} for discussion.

Finally,  the line bundle and the section of the Theorem \ref{lbthm} can be defined as 
$\frak{M}=\tilde{\bf c}^{*}\tilde{\frak{M}}$ and $\frak{m}=\tilde{\bf c}^{*}(\tilde{\frak{m}})$.
Indeed, from the very definition (\ref{mtil}) and (\ref{matid}) it is obvious that
$$
(i^{*}_{\lambda})^*({\frak{m}})=\left.\tilde{\frak{m}}\right|_{\lambda}={\bf Stab}({\bf p}), \qquad (i^{*}_{{\bf p}})^{*}({\frak{m}})=
\left.\tilde{\frak{m}}\right|_{\bf p}={\bf Stab}' (\lambda). 
$$
i.e., the section $\frak{m}$ is the Mother function.

\subsection{Cancellation of trees}

Before checking Conditions 1)-3), we need a key lemma which describes that under specialization of some $u_i$ parameters, the contributions from trees cancel out with each other and  the summation simplifies  dramatically. 

Define the \emph{boundary} of $\bar\lambda$ to be the set 
$$
\{ (i,j) \in \bar\lambda \mid (i-1, j-1) \not\in \bar\lambda \}. 
$$
Define the \emph{upper boundary} of $\bar\lambda$ to be the set
$$
U := \{ (i,j) \in \bar\lambda \mid j=k \}. 
$$
Consider a $2\times 2$ square in $\bar\lambda$, consisting of $(c, d), (c+1, d), (c, d+1), (c+1, d+1)$, where $(c+1, d)$ is in the $a$-th diagonal. Let $\bar\ft$ be a tree, which contains the edge $(c+1, d+1) \to (c+1, d)$. 

The \emph{involution} of $\bar\ft$ at the box $(c+1, d)$ is defined to be the tree \footnote{The involution sends a tree to another tree. In fact, by definition, $\bar\ft$ contains the edge $\delta_1: (c+1, d+1) \to (c+1, d)$, but does not contain the edge $\delta_2: (c, d) \to (c+1, d)$. It also follows from definition that the subtree $\bar\fs:= [(c+1, d), \bar\ft]$ does not contain the other 3 boxes in the $2\times 2$ square (otherwise $\bar\ft$ would contain either a loop or a $\reflectbox{\textsf{L}}$-shape). It is then easy to see that $\inv(\bar\ft)$ is still a tree. } $\inv (\bar\ft, (c+1, d))$ obtained by removing $(c+1, d+1) \to (c+1, d)$ from $\bar\ft$ and adding the edge $(c,d) \to (c+1, d)$. We abbreviate the notation as $\inv(\bar\ft)$ if there's no confusion. Define $\inv (\inv (\bar\ft)) = \bar\ft$. Involution is a well-defined operation on all trees at all boxes that are not in $U$ or the boundary of $\bar\lambda$. 

Let $\bar\fs$ be the subtree 
$$
\bar\fs := [(c+1, d), \bar\ft] = [(c+1, d), \inv (\bar\ft)]. 
$$
The $u$-parameter contributed from $\bar\fs$ is
$$
u(\bar\fs) := \prod_{I \in \bar\fs} \frac{u_{c_I+1}}{u_{c_I}}.
$$

\begin{Lemma}[Cancellation lemma] \label{lemma-cancel}
	$$
	\frac{\cR(\bar\ft) \cW(\bar\ft)}{\cR(\inv (\bar\ft)) \cW (\inv (\bar\ft))} \bigg|_{u(\bar\fs) = 1} = -1 . 
	$$
	As a corollary,
	$$
	\sum_{\sigma \in \fS_{\bar\lambda}} \frac{\cN^\sigma_{\bar\lambda}}{\cD^\sigma_{\bar\lambda}} \cR^\sigma (\bar\ft) \cW^\sigma (\bar\ft) = - \sum_{\sigma \in \fS_{\bar\lambda}} \frac{\cN^\sigma_{\bar\lambda}}{\cD^\sigma_{\bar\lambda}} \cR^\sigma (\inv(\bar\ft)) \cW^\sigma (\inv(\bar\ft)). 
	$$
\end{Lemma}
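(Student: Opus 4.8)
The statement is an identity of $\Theta$-quasiperiodic functions, so the plan is to reduce it to a local computation at the box $(c+1,d)$ and verify the $u$-parameter specialization kills exactly one factor up to sign. First I would set up the bookkeeping: by definition of $\cR$ and $\cW$, the two trees $\bar\ft$ and $\inv(\bar\ft)$ differ only in a single edge --- the edge incident to the box $(c+1,d)$, which is $\delta_1:(c+1,d+1)\to(c+1,d)$ for $\bar\ft$ and $\delta_2:(c,d)\to(c+1,d)$ for $\inv(\bar\ft)$. Crucially, the canonically-oriented subtree $[(c+1,d),\bar\ft]$ equals $[(c+1,d),\inv(\bar\ft)]=\bar\fs$ (this is the content of the footnote: $\bar\fs$ avoids the other three boxes of the $2\times2$ square), so \emph{every} $\cW$-factor attached to an edge of $\bar\fs$ and to all edges outside the subtree hanging below $(c+1,d)$ is literally identical in the two trees, because the product $\prod_{\Box\in[h(e),\bar\ft]}z_{c_\Box}^{-1}\hbar^{-\textsf v(\Box)}$ over those subtrees is unchanged. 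Hence in the ratio $\cW(\bar\ft)/\cW(\inv(\bar\ft))$ all but one $\phi$-factor cancels, leaving the ratio of the $\phi$-factors for $\delta_1$ versus $\delta_2$, both of which carry the \emph{same} dynamical argument $\prod_{\Box\in\bar\fs}z_{c_\Box}^{-1}\hbar^{-\textsf v(\Box)}$.

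Next I would translate the surviving $z$-argument into a $u$-argument using the parameter identification (\ref{parident}); the combinatorial content is precisely that $\prod_{\Box\in\bar\fs}z_{c_\Box}^{-1}$ becomes (up to an explicit $\hbar$-power) $u(\bar\fs)^{\mp1}=\prod_{I\in\bar\fs}(u_{c_I}/u_{c_I+1})^{\mp1}$, matching the definition of $u(\bar\fs)$ in the lemma statement, and that the $\hbar$-powers coming from $\textsf v(\Box)$ on $\bar\fs$ combine with the $\hbar$-weights $\varphi^\lambda_{h(e)}/\varphi^\lambda_{t(e)}$ in the $\phi$-arguments of $\delta_1,\delta_2$ to give clean monomials. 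Then the $\cR$-factors: the $\cR(\bar\ft)/\cR(\inv(\bar\ft))$ ratio also reduces to the two $\theta$-factors associated to whichever of $\delta_1,\delta_2$ is \emph{absent} from the respective tree (the factors indexed by $(I\leftrightarrow J)\in\Gamma_{\bar\lambda}\backslash\bar\ft$), since $\Gamma_{\bar\lambda}\backslash\bar\ft$ and $\Gamma_{\bar\lambda}\backslash\inv(\bar\ft)$ differ only by swapping $\delta_1$ for $\delta_2$; the denominator of $\cR$ is tree-independent and cancels entirely. After collecting, $\tfrac{\cR(\bar\ft)\cW(\bar\ft)}{\cR(\inv(\bar\ft))\cW(\inv(\bar\ft))}$ becomes an explicit quotient of four $\theta$-functions whose arguments I would read off, and specializing $u(\bar\fs)=1$ I expect it to collapse to $-1$ via the elementary relation $\theta(1/x)=-\theta(x)$ from (\ref{thettrans}) together with the identification of the $2\times2$-square ratios $x_{c+1,d+1}/x_{c+1,d}$, $x_{c,d}/x_{c+1,d}$ and their $\varphi^\lambda$-counterparts at the limit. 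The corollary is then immediate: $\inv$ together with the permutation action of $\fS_{\bar\lambda}$ organizes the set of pairs $(\sigma,\bar\ft)$ into an involution under which each summand maps to its negative (after noting $\cN^\sigma_{\bar\lambda}/\cD^\sigma_{\bar\lambda}$ is tree-independent), provided the specialization $u(\bar\fs)=1$ is compatible with what the ambient proof imposes.

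The main obstacle I anticipate is the precise matching of $\hbar$-powers: the functions $\textsf v(\Box)=\beta^{(1)}_\lambda(\Box)+\beta^{(2)}_\lambda(\Box)$ enter both the $z$-shifts in $\cW$ and, through (\ref{parident}), carry $\hbar$-factors, while the $\varphi^\lambda_\Box$ values themselves ($a_1\hbar^{j-1}$ on $\lambda$, $a_2\hbar^{n-k-i+1}$ on $\bar\lambda$) contribute further $\hbar$-monomials; one must check that in the ratio these all conspire so that at $u(\bar\fs)=1$ the four $\theta$-arguments pair up as $x$ and $1/x$ exactly, with no residual $\hbar$-power. I would handle this by working out the telescoping sum $\sum_{I\in\bar\fs}\textsf v(I)$ along the subtree and comparing with the difference of diagonal numbers $c$ along $\delta_1$ versus $\delta_2$; this is the step where the specific combinatorics of $\beta^{(1)},\beta^{(2)}$ and the $45^\circ$-rotated rectangle $\textsf R_{n,k}$ is genuinely used, and it is essentially the $X'$-analogue of the corresponding lemma in \cite{EllHilb}, so I would cross-check against that computation. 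Everything else --- the cancellation of common factors and the final application of $\theta(1/x)=-\theta(x)$ --- is routine once the arguments are identified.
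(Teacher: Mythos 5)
Your plan follows the paper's proof almost exactly: isolate the two edges $\delta_1,\delta_2$ where $\bar\ft$ and $\inv(\bar\ft)$ differ, use $[(c+1,d),\bar\ft]=[(c+1,d),\inv(\bar\ft)]=\bar\fs$ so that the two swapped edge-factors of $\cW$ carry the same dynamical argument, read off the $\cR$-ratio from the swapped factors in $\Gamma_{\bar\lambda}\setminus\bar\ft$ versus $\Gamma_{\bar\lambda}\setminus\inv(\bar\ft)$, and close with $\theta(1/x)=-\theta(x)$ after checking $\varphi^\lambda_{(c+1,d)}/\varphi^\lambda_{(c+1,d+1)}=1$ and $\varphi^\lambda_{(c+1,d)}/\varphi^\lambda_{(c,d)}=\hbar^{-1}$. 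The corollary and the $\hbar$-power bookkeeping you flag are treated the same way in the paper.

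One intermediate claim is wrong as stated, though it does not affect the conclusion. You assert that the $\cW$-factors of all edges outside $\bar\fs\cup\{\delta_1,\delta_2\}$ are \emph{literally identical} in the two trees, so that ``all but one $\phi$-factor cancels'' before any specialization. This fails for the edges on the paths from the least common ancestor of $(c,d)$ and $(c+1,d+1)$ down to these two boxes: for such an edge $e$, exactly one of $[h(e),\bar\ft]$ and $[h(e),\inv(\bar\ft)]$ contains $\bar\fs$, so the corresponding theta-arguments differ by the multiplicative factor $u(\bar\fs)$ and the two factors are \emph{not} equal as functions. They become equal only after the specialization $u(\bar\fs)=1$, which is precisely the point the paper's proof makes explicit (``those contributions \ldots are both $1$ under $u(\bar\fs)=1$''). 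Since the lemma is evaluated at $u(\bar\fs)=1$ anyway, your final answer is unaffected, but the exact-cancellation claim should be replaced by this specialization argument.
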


\begin{proof}
	Direct computation shows that
	\be
	\frac{\cR(\bar\ft)}{\cR(\inv(\bar\ft))} &=& \frac{\theta \Big( \dfrac{x_{c+1,d} \hbar}{x_{c, d}} \Big) }{\theta \Big( \dfrac{x_{c+1, d+1}}{x_{c+1, d}} \Big) } .
	\ee
	The quotient $\cW (\bar\ft) / \cW (\inv(\bar\ft)) $ has contribution from an edge $e$ if the subtree $[h(e), \bar\ft]$ or $[h(e), \inv(\bar\ft)]$ contains $(c+1, d+1)$ or $(c,d)$. Those contributions are all of the form
	$$
	\frac{ \theta \Big( \dfrac{x_{t(e)} \varphi^\lambda_{h(e)} }{x_{h(e)} \varphi^\lambda_{t(e)} } \prod\limits_{I\in [h(e), \inv(\bar\ft)]} \dfrac{u_{c_I+1}}{u_{c_I}} \cdot u(\bar\fs) \Big) }{ \theta \Big( \dfrac{x_{t(e)} \varphi^\lambda_{h(e)} }{x_{h(e)} \varphi^\lambda_{t(e)} } \prod\limits_{I \in [h(e), \inv(\bar\ft)]} \dfrac{u_{c_I+1}}{u_{c_I}}  \Big) } 
	 \qquad \text{ or } \qquad 
	 \frac{ \theta \Big( \dfrac{x_{t(e)} \varphi^\lambda_{h(e)} }{x_{h(e)} \varphi^\lambda_{t(e)} } \prod\limits_{I\in [h(e), \bar\ft]} \dfrac{u_{c_I+1}}{u_{c_I}}  \Big) }{ \theta \Big( \dfrac{x_{t(e)} \varphi^\lambda_{h(e)} }{x_{h(e)} \varphi^\lambda_{t(e)} } \prod\limits_{I \in [h(e), \bar\ft]} \dfrac{u_{c_I +1}}{u_{c_I}}  u(\bar\fs) \Big) } 
	$$
	which are both $1$ under $u(\bar\fs) = 1$, and the only remaining factor comes from the edges $(c+1, d+1) \to (c+1, d)$ and $(c,d) \to (c+1,d)$: 
	$$
	\left. \frac{\theta \Big( \dfrac{x_{c+1, d+1}}{x_{c+1, d}} \dfrac{\varphi^\lambda_{c+1, d}}{\varphi^\lambda_{c+1, d+1}} u(\bar\fs) \Big) }{\theta \Big( \dfrac{x_{c,d}}{x_{c+1, d}} \dfrac{\varphi^\lambda_{c+1, d}}{\varphi^\lambda_{c,d}} u(\bar\fs) \Big)} \right|_{u(\bar\fs) = 1} = \frac{\theta \Big( \dfrac{x_{c+1, d+1}}{x_{c+1, d}} \Big)}{\theta \Big( \dfrac{x_{c,d}}{x_{c+1, d}} \hbar^{-1} \Big)} .
	$$
	The lemma follows. 
\end{proof}

\subsection{GKM conditions \label{gkmsec}}

The goal of this section is to prove that the elliptic stable envelopes $\bStab'(\lambda)$ satisfy the GKM condition (\ref{gkmcon}). For simplicity we assume that $(1,k) \in \bar\lambda$; in other words, $\bar\lambda$ starts with diagonal $1$. The general case can be easily reduced to this. 

A subtree of $\bar\ft$ is called a \emph{strip} if  it contains at most one box in each diagonal. We will also abuse the name \emph{strip} for a connected subset in a partition that contains at most one box in each diagonal. We call a strip that starts from diagonal $i$ to $j-1$ an \emph{$(i,j)$-strip}. 

Let $\lambda$ and $\mu$ be two partitions, and ${\bf p} = \textsf{bj} (\lambda)$, ${\bf q} = \textsf{bj} (\mu)$. Suppose that as fixed points in $X$, ${\bf p}$ and ${\bf q}$ are connected by a torus-invariant curve, which means that 
$$
{\bf q} = {\bf p} \setminus \{i\} \cup \{j\},
$$
for some $1\leq i, j\leq n$ (assume $i<j$). On the dual side, that means $\mu\supset \lambda$, and $\mu\backslash \lambda$ is an $(i,j)$-strip,  lying the boundary of $\bar\lambda$. 

Recall the GKM condition:
\begin{Proposition} \label{GKM-thm}
{\it	For partitions $\lambda$ and $\mu$ as above, 
	$$
	\left. \bStab' (\lambda) \right|_{u_i = u_j} = \left. \bStab' (\mu) \right|_{u_i = u_j}.
	$$}
\end{Proposition}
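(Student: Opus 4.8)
The plan is to compute both sides through the refined abelianization formula of Proposition \ref{refin-formula} and to use the cancellation lemma (Lemma \ref{lemma-cancel}) to collapse almost the entire tree sum on the left-hand side. First one translates the hypotheses. Since ${\bf q}={\bf p}\setminus\{i\}\cup\{j\}$ with $i<j$, necessarily $i\in{\bf p}$ and $j\notin{\bf p}$, so $\mu\supset\lambda$ and the skew shape $S:=\mu\setminus\lambda$ is a connected strip which occupies exactly the diagonals $i,i+1,\dots,j-1$ and lies on the boundary of $\bar\lambda$; in particular $\bar\lambda=\bar\mu\sqcup S$, and $S$ can be peeled off $\bar\lambda$ one box at a time. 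Under the identification (\ref{parident}) the specialization $u_i=u_j$ is equivalent to fixing the product $\prod_{l=i}^{j-1}z_l=\prod_{\Box\in S}z_{c_\Box}$ to a prescribed power of $\hbar$, which is exactly the hypothesis $u(\bar\fs)=1$ of Lemma \ref{lemma-cancel} for every subtree $\bar\fs$ supported on a substrip of $S$ (the $\hbar$-bookkeeping being already built into that lemma).

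Next one expands $\bStab'(\lambda)$ using (\ref{refined}) as $\Theta'_\lambda\cdot\epsilon(\lambda)\,\Theta(\widetilde N^{', -}_\lambda)\sum_{\sigma\in\fS_{\bar\lambda},\,\bar\ft}\frac{\cN^\sigma_{\bar\lambda}}{\cD^\sigma_{\bar\lambda}}\,\cR^\sigma(\bar\ft)\,\cW^\sigma(\bar\ft)$, and does the same for $\bStab'(\mu)$ with $\bar\mu$-trees. The key observation is that a $\bar\lambda$-tree is the union of a $\bar\mu$-tree together with a grafting of the boxes of $S$; applying the involution of Lemma \ref{lemma-cancel} at the boxes of $S$, proceeding box by box along the strip, pairs up these graftings, and by the corollary of Lemma \ref{lemma-cancel} each pair cancels once $u_i=u_j$ is imposed. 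For each $\bar\mu$-tree $\bar\ft'$ exactly one grafting has no canceling partner — the one in which $S$ hangs off $\bar\mu$ as a single unbranched strip attached at the distinguished end forced by the orientation of the tree rooted at $(n-k,k)$. This produces a bijection between the surviving summands for $\lambda$ and all summands for $\mu$.

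It then remains to match prefactors. After the cancellations, $\left.\bStab'(\lambda)\right|_{u_i=u_j}$ is $\left.\Theta'_\lambda\,\epsilon(\lambda)\,\Theta(\widetilde N^{', -}_\lambda)\right|_{u_i=u_j}$ times the surviving sum; by the bijection the surviving sum factors as one strip factor — the combined $\cN/\cD$, $\cR$ and $\cW$ contribution of the extra $|S|$ boxes in the canonical grafting — times the corresponding sum for $\bar\mu$. One then verifies the finite identity
\[
\left.\Theta'_\lambda\,\epsilon(\lambda)\,\Theta(\widetilde N^{', -}_\lambda)\cdot(\text{strip factor})\right|_{u_i=u_j}=\left.\Theta'_\mu\,\epsilon(\mu)\,\Theta(\widetilde N^{', -}_\mu)\right|_{u_i=u_j},
\]
in which the zero of $\Theta'_\lambda$ coming from the factor $\theta(u_j/u_i)$ is matched against a pole developed by the strip factor at $u_i=u_j$, and the surviving expression collapses via the two- and three-term theta relations derived from (\ref{thettrans}) together with the explicit values (\ref{fpsubs}) of $\varphi^\lambda_{\Box}$ along the diagonals $i,\dots,j-1$. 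This yields $\left.\bStab'(\lambda)\right|_{u_i=u_j}=\left.\bStab'(\mu)\right|_{u_i=u_j}$ under the standing assumption $(1,k)\in\bar\lambda$; when diagonal $1$ lies in $\lambda$ instead, the variable $u_1$ enters neither the $\bar\lambda$- nor the $\bar\mu$-tree contributions, and the same argument applies after discarding the irrelevant boundary boxes, which settles the general case.

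The main obstacle will be the middle step. Lemma \ref{lemma-cancel} cancels only one involution at a time, so it has to be iterated along the whole strip $S$; one must keep track of the varying point at which $S$ attaches to $\bar\mu$ and of the varying internal orientation of $S$, and check that the successive cancellations are consistent — that no summand is cancelled twice and none is left over — so that the bijection with $\bar\mu$-trees is exactly right. Once that combinatorial skeleton is fixed, the remaining prefactor identity is a finite, if lengthy, computation with theta functions of exactly the kind carried out by hand in the $n=4,\ k=2$ case of Section \ref{proofnab}, and this then completes the verification of Condition 1) in the proof of the main theorem.
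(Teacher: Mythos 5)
Your overall strategy---use Lemma \ref{lemma-cancel} to kill all trees except those containing the boundary strip $S=\mu\setminus\lambda$ as a subtree, identify the survivors with $\bar\mu$-trees, and match prefactors---is the same as the paper's, but the middle step you yourself flag as ``the main obstacle'' is where essentially all the content lies, and it hides two genuine gaps. First, you propose to compare the two sides directly as identities of symmetric functions via (\ref{refined}); but the two formulas are symmetrized over different groups: $\fS_{\bar\lambda}$ permutes the Chern roots of the boxes of $S$ within their diagonals, while $\fS_{\bar\mu}$ fixes them. The extra permutations cannot be discarded termwise, since individual summands have poles along $x_I=x_J$ that only cancel after full symmetrization, and the restriction of an individual term is not even well defined independently of the order of specialization. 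The paper sidesteps this by first reducing, via localization and triangularity, to the restrictions at each fixed point $\nu\supset\lambda$, and then choosing a specific limiting order (the ``$B$-column limit'') in which Lemma \ref{sigma-fixes-B} shows that only permutations fixing the strip survive; some such device is indispensable in your plan as well. Relatedly, once one restricts to $\nu$, the case where $\mu\not\subset\nu$---where $\bStab'(\mu)|_\nu=0$ trivially but $\bStab'(\lambda)|_{\nu,u_i=u_j}$ must vanish for an entirely different reason (Lemma \ref{not-containd})---needs a separate argument that is invisible in your setup.

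Second, the assertion that ``for each $\bar\mu$-tree exactly one grafting has no canceling partner'' is precisely what has to be proved, and your justification misstates the cancellation hypothesis: $u(\bar\fs)=\prod_{I\in\bar\fs}u_{c_I+1}/u_{c_I}$ equals $1$ at $u_i=u_j$ only when $\bar\fs$ is a \emph{full} $(i,j)$-strip (the product telescopes to $u_j/u_i$), not for every subtree supported on a substrip of $S$. The correct mechanism (the paper's Lemmas \ref{root}, \ref{bd} and \ref{dist}) is to single out, among all $(i,j)$-strip subtrees of $\bar\ft$, the one whose starting box has smallest height, and to show that the involution at its root cancels $\bar\ft$ unless that strip lies on the boundary of $\bar\lambda$, i.e.\ coincides with $S$. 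Even then the box $C$ at which $S$ attaches to the rest of the tree can a priori vary over distinguished trees with the same restriction to $\bar\mu$, so the fiber over a given $\bar\mu$-tree need not be a single grafting; the paper's computation instead shows that the $C$-dependent factor is exactly the one carrying the pole that cancels the zero $\theta(u_j/u_i)$ of $T_{\bf p,p}$, while the remaining strip factor is independent of $\bar\ft$ and of $C$, which is what makes the factorization---and hence your final prefactor identity---actually go through. Until these points are supplied, the argument is a plausible plan rather than a proof.
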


By localization and the triangular property of stable envelopes, it suffices to show that for any partition $\nu \supset \lambda$, 
$$
\left. \bStab' (\lambda) \right|_{\nu, u_i = u_j} = \left. \bStab' (\mu) \right|_{\nu, u_i = u_j}.
$$
Before proving the GKM condition, we need some analysis on the specialization of the stable envelopes under $u_i = u_j$.

\subsubsection{Specialization of $\bStab'(\lambda)$ under $u_i = u_j$}

Recall that ${\bf p} \subset {\bf n}$ and $i \in {\bf p}$, $j \not\in {\bf p}$, $i<j$. We would like to study the specialization $u_i = u_j$. 

By definition 
$$
\bStab' (\lambda) = T_{\bf p,p} \cdot \Stab'(\lambda),
$$
where 
$$
T_{\bf p,p} = (-1)^{(n-k)k} \prod_{i\in {\bf p}, \, j\in {\bf n} \backslash {\bf p}, \,  i<j} \theta \Big( \frac{u_i}{u_j} \Big) \prod_{i\in {\bf p}, \, j \in {\bf n} \backslash {\bf p}, \, i>j} \theta \Big( \frac{u_j}{u_i \hbar} \Big). 
$$
In particular, $T_{\bf p,p}$ contains a factor $\theta \Big( \dfrac{u_i}{u_j} \Big)$. 

For any tree $\bar\ft$ in $\bar\lambda$, consider all subtrees of $\bar\ft$ that are $(i,j)$-strips
$$
B = \{ B_i, B_{i+1}, \cdots, B_{j-1} \},
$$
where $B_l$ is the box in the $l$-th diagonal. We define $B(\bar\ft, i, j)$ to be one whose $B_i$ has the \emph{smallest height}. If $\bar\ft$ does not contain any $(i,j)$-stripes as subtrees, define $B (\bar\ft, i, j) = \emptyset$. A tree $\bar\ft$ in $\bar\lambda$ is called \emph{distinguished}, if its strip $B(\bar\ft, i, j)\neq \emptyset$, and lies in the boundary of $\bar\lambda$. 

A simple observation is that, for the contribution from $\bar\ft$ to $\bStab'(\lambda)$ to be nonzero under $u_i = u_j$, $B(\bar\ft, i, j)$ has to be nonempty.

\begin{Lemma} \label{root} {\it
	Let $B$ be an $(i,j)$-strip in $\bar\ft$ which is a subtree. Let $B_U$ be the box in $B\cap U$ with largest content. We have
	\begin{itemize}
		
		\item if $B_i \not\in U$, then $B_i$ is the root of $B$;
		
		\item if $B_i \in U$, then $B_U$ is the root of $B$. 
		
	\end{itemize}
}
\end{Lemma}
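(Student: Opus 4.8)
The plan is to analyze the tree structure of the $(i,j)$-strip $B$ as a subtree of the canonically oriented tree $\bar\ft$, using the constraint that adjacency in $\bar\lambda$ only connects boxes differing by one row or one column. First I would recall that the canonical orientation directs all edges away from the root $\bar r = (n-k,k)$ of $\bar\ft$, so that for any subtree $[\Box, \bar\ft]$ the box $\Box$ is its local root and every other box in the subtree lies on a directed path from $\Box$. Since $B$ is assumed to be a subtree of $\bar\ft$, it is connected and inherits this directed structure: it has a unique box $\Box_0 \in B$ with no incoming edge from within $B$, namely the box of $B$ closest to $\bar r$ along $\bar\ft$; this $\Box_0$ is the root of $B$. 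The content values $c_\Box$ of boxes in $B$ range over $i, i+1, \dots, j-1$, each hit exactly once (that is the definition of a strip), and adjacent boxes in $\bar\lambda$ have contents differing by exactly $1$. So $B$ is literally a path in the plane whose successive boxes move either right-and-up (increasing content by $1$ via a step in the $j$-direction, i.e. same $i$-coordinate) or right-and-down (increasing content by $1$ via a step in the $i$-coordinate); the content strictly increases along this path as we traverse it from the $B_i$ end to the $B_{j-1}$ end.

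Next I would locate the root $\Box_0$ of $B$ within this path. The key point is that the recollection $U = \{(i,j)\in \bar\lambda \mid j=k\}$ is the upper boundary diagonal of $\bar\lambda$: it is exactly the set of boxes of $\bar\lambda$ which have no box of $\bar\lambda$ directly above them. Boxes of $U$ are "local maxima" of the boundary graph $\Gamma$, so along the path $B$, a box of $U$ can only be reached by an up-step followed (if the path continues) by a down-step. I would split into the two cases of the Lemma. \textbf{Case $B_i\notin U$.} Here I claim $B_i$ is the root. Suppose not; then the root $\Box_0$ has content $> i$, so $\Box_0$ has a predecessor in the path $B$ with content one less, and that predecessor lies in $B$ (as $B$ is connected and contains all contents down to $i$). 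But then there is an edge of $\bar\ft$ between $\Box_0$ and that predecessor, which would have to point \emph{into} $\Box_0$ under the canonical orientation — contradicting that $\Box_0$ is the root of the subtree $B$. Wait: the subtlety is that the edge between $\Box_0$ and its content-$(c_{\Box_0}-1)$ neighbor need not be an edge of $\bar\ft$ at all (a strip $B$ is a subtree, but the neighbor with smaller content could be attached to $\bar\ft$ elsewhere). So instead I argue via the position of $\Box_0$: the root of any subtree $[\Box_0,\bar\ft]$ is the box of that subtree closest to $\bar r=(n-k,k)$ in the tree metric of $\bar\ft$, and since canonical orientation points away from $\bar r$, $\Box_0$ is where $B$ first "attaches" to the rest of $\bar\ft$. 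Because $\bar\ft$ is built from $\Gamma_{\bar\lambda}$ by deleting one edge from each $\reflectbox{\textsf{L}}$-shape, and because the strip with smallest-height $B_i$ hugs the lower side of the boundary, the attachment point of a boundary-hugging strip with $B_i\notin U$ is forced to be $B_i$ itself — the incoming edge into $B$ comes from the box directly below $B_i$.

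\textbf{Case $B_i \in U$.} Now $B_i$ lies on the very top row $j=k$ of $\bar\lambda$, so the only adjacent boxes of $\bar\lambda$ are the one to its left (smaller content) and the one below-right (larger content, an $i$-step); there is no box above it. Starting from $B_i$ the strip must immediately take a down-step, and more generally every box of $B\cap U$ is a local maximum from which the path goes down. The root $\Box_0$, being the attachment point to the rest of $\bar\ft$ and hence the point of $B$ "closest to" the global root $\bar r=(n-k,k)$ which sits at the far top-right corner, will be the box of $B\cap U$ of largest content, i.e. $B_U$: from $B_U$ the tree can reach the corner by moving right along the top, whereas from any earlier box of $B$ one is strictly further into the interior. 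I would make this precise by tracking, for a tree in $\Upsilon_{\bar\lambda}$, which edge of each $\reflectbox{\textsf{L}}$-shape gets deleted, and checking that the orientation forces exactly this root. The main obstacle I anticipate is precisely this last point: pinning down the root unambiguously for \emph{all} trees $\bar\ft$ (not just the special trees in $\Upsilon_{\bar\lambda}$) and all boundary-hugging strips of smallest height, since the root is a global property of $\bar\ft$ while the characterization is local; I would handle it by an induction on content along $B$, using at each step that the canonical orientation together with the subtree property of $B$ forbids $\Box_0$ from having content strictly between $c_{B_i}$ and $c_{B_U}$ in Case $B_i\in U$, or strictly above $c_{B_i}$ in Case $B_i\notin U$.
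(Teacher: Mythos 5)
Your setup is right (the root of $B$ is the box of $B$ nearest to $\bar r=(n-k,k)$, the content strictly increases along the strip, and the relevant constraint on $\bar\ft$ is the absence of $\reflectbox{\textsf{L}}$-shapes), but neither case is actually closed, and the places where you stop are exactly where the content of the proof lives. The missing mechanism is this: the only neighbour of a box $(a,k)\in U$ lying outside $U$ is the box $(a,k-1)$ directly below it, which has content $a+1$; hence any tree path \emph{entering} $U$ does so by a content-\emph{decreasing} step. On the other hand, an interior strict local maximum of the content along a path in $\bar\ft$ is a box joined to both its left and its upper neighbour (the only two adjacent boxes of smaller content), i.e.\ a complete $\reflectbox{\textsf{L}}$-shape, which is excluded precisely because $\bar\ft\in\Upsilon_{\bar\lambda}$. (The lemma is false for general $\bar\lambda$-trees — e.g.\ in a $2\times2$ block one can route the tree so that the root of a strip is its higher-content end — so your worry about handling ``all trees, not just those in $\Upsilon_{\bar\lambda}$'' is misplaced: without the $\Upsilon$ condition there is nothing to prove.) In Case $B_i\notin U$, if $B_i$ were not the root of $B$, the ancestor path of $B_i$ would begin with the content-increasing step $B_i\to B_{i+1}$ and, since $\bar r\in U$, would end (at the first box of $U$ it meets) with a content-decreasing step, hence would contain an interior local maximum — contradiction. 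This replaces your unproved assertion that the attachment point of a ``boundary-hugging'' strip is forced to be $B_i$; note the lemma concerns an arbitrary strip subtree, not only the lowest one.

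In Case $B_i\in U$ your local analysis is off: $(i,k)$ has up to three neighbours in $\bar\lambda$ (left $(i-1,k)$ and right $(i+1,k)$, both in $U$, and $(i,k-1)$ below), the strip need not ``immediately take a down-step'', and consecutive boxes of $B\cap U$ are joined by steps \emph{inside} $U$. What is true is that once the strip leaves $U$ it cannot return (re-entering $U$ is a content-decreasing step, impossible along the strip), so $B\cap U=\{B_i,\dots,B_U\}$ is an initial segment in the top row. The same local-maximum argument shows a tree path between two boxes of $U$ can never leave $U$: leaving and re-entering would again force an interior maximum. Hence the path in $\bar\ft$ from $\bar r$ to $B_i$ runs along the top row $(n-k,k),(n-k-1,k),\dots,(i,k)$ and meets $B$ first at $B_U$; this, and not an appeal to which box is ``closer to the corner'', identifies the root.
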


\begin{proof}
	If $B_i \not\in U$, and the root of $B$ is some box other than $B_i$. Then the unique path from $B_i$ to $U$ has a box $\Box$ in its interior with local maximal content. $\Box$ must be connected to both the boxes to the left and above it, which is not allowed. 
	
	If $B_i \in U$, then every box in $B$ from $B_i$ to $B_U$ is in $U$. It is clear that the root of $B$ is $B_U$. 
\end{proof}

\begin{Lemma} \label{bd} {\it
	Let $B$ be an $(i,j)$-strip in $\bar\ft$ which is a subtree. If $B_i$ lies in the boundary of $\bar\lambda$, then $B$ lies entirely in the boundary of $\bar\lambda$; in other words, $B(\bar\ft, i, j) = B$. }
\end{Lemma}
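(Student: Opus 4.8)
The plan is to argue by contradiction, exploiting the geometry of how a subtree of $\bar\ft$ sits inside the skew shape, together with the defining combinatorial properties of $\lambda$-trees (no loops, no $\reflectbox{\textsf{L}}$-shapes). Suppose $B=\{B_i,\dots,B_{j-1}\}$ is an $(i,j)$-strip which is a subtree of $\bar\ft$, and suppose $B_i=(a_i,b_i)$ lies on the boundary of $\bar\lambda$, i.e. $(a_i-1,b_i-1)\not\in\bar\lambda$. I want to show every $B_l$ lies on the boundary of $\bar\lambda$. Recall that a box $(c,d)$ in $\bar\lambda$ is \emph{not} on the boundary exactly when $(c-1,d-1)\in\bar\lambda$; equivalently, a box in the $l$-th diagonal of $\bar\lambda$ is on the boundary iff there is no box of $\bar\lambda$ in diagonal $l$ ``directly behind it'' (one step up and to the left). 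Since $B$ is a connected strip with exactly one box per diagonal from $i$ to $j-1$, consecutive boxes $B_l$ and $B_{l+1}$ are adjacent, so $B_{l+1}$ is either immediately to the right of $B_l$ or immediately above it.

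First I would set up the induction on $l$ from $i$ to $j-1$: assume $B_l$ lies on the boundary of $\bar\lambda$ and show $B_{l+1}$ does too. Write $B_l=(c,d)$; boundary means $(c-1,d-1)\not\in\bar\lambda$. There are two cases. If $B_{l+1}=(c+1,d)$ (a step to the right), then I must check $(c,d-1)\not\in\bar\lambda$. Here I would use that $\bar\lambda$ is a Young diagram in the rotated picture: if $(c,d-1)$ were in $\bar\lambda$, then since $(c,d)\in\bar\lambda$ as well, the box $(c-1,d-1)$ would be forced into $\bar\lambda$ by the staircase/convexity property of a partition's complement inside $\textsf{R}_{n,k}$ — contradicting the inductive hypothesis that $B_l$ is on the boundary. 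If instead $B_{l+1}=(c,d+1)$ (a step up), I must check $(c-1,d)\not\in\bar\lambda$; again, $(c-1,d)\in\bar\lambda$ together with $(c,d)\in\bar\lambda$ would force $(c-1,d-1)\in\bar\lambda$ by the same convexity, contradiction. This closes the induction and shows $B$ lies entirely in the boundary, hence $B=B(\bar\ft,i,j)$ by the very definition of $B(\bar\ft,i,j)$ as the distinguished such strip (once $B$ itself is a boundary strip, it is the unique $(i,j)$-strip subtree whose initial box has smallest height, since boundary boxes in a fixed diagonal are extremal).

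The one subtlety I would be careful about is the direction of the adjacency steps versus the orientation of the tree: $B$ is a subtree, so it is connected, but a priori $B_i$ need not be its root (Lemma \ref{root} already handles exactly this). However, the boundary argument above does not use the tree orientation at all — it only uses that consecutive diagonals of $B$ contain adjacent boxes, which is automatic for a connected strip with one box per diagonal. So the proof is purely about the shape of $\bar\lambda$, and Lemma \ref{root} is not logically needed here, though it clarifies the picture. The main (mild) obstacle is simply making the ``convexity of the complement Young diagram'' step precise: I would phrase it once as a small observation — for a box $(c,d)\in\bar\lambda\subset\textsf{R}_{n,k}$, if also $(c,d-1)\in\bar\lambda$ or $(c-1,d)\in\bar\lambda$ then $(c-1,d-1)\in\bar\lambda$ — which follows immediately because $\bar\lambda=\textsf{R}_{n,k}\setminus\lambda$ is itself a Young diagram (rotated), and then apply it twice as above. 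With that observation in hand the proof is a two-line induction.
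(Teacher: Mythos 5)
Your argument has a genuine gap, and the gap is not repairable within your strategy. The ``convexity of the complement'' observation you rely on is false: $\bar\lambda=\textsf{R}_{n,k}\setminus\lambda$ is closed under \emph{increasing} either coordinate, not decreasing, so from $(c,d),(c,d-1)\in\bar\lambda$ one cannot conclude $(c-1,d-1)\in\bar\lambda$ (that box has both coordinates smaller than $(c,d-1)$). Concretely, take $n=6$, $k=3$, $\lambda=[1]$, so $\bar\lambda$ is the $3\times3$ rectangle minus $(1,1)$: then $(2,2)$ and $(2,1)$ both lie in $\bar\lambda$ while $(1,1)$ does not. In the same example the connected strip $(1,3),(1,2),(2,2),(3,2)$ occupies diagonals $1,\dots,4$, starts at the boundary box $(1,3)$, and its first three boxes are on the boundary, yet $(3,2)$ is not (the boundary box of diagonal $4$ is $(2,1)$). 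So the statement you are actually proving --- that any connected strip with one box per diagonal whose initial box lies on the boundary stays on the boundary --- is false, and your induction step fails exactly at the transition $(2,2)\to(3,2)$. Your closing remark that the tree structure ``is not logically needed'' is the heart of the problem: no shape-only argument can work.

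What rescues the lemma is precisely the hypothesis you discard: $B$ is a subtree of $\bar\ft$ (a full descendant subtree, which is what makes the denominators in $\cW(\bar\ft)$ produce the pole $u_i/u_j$), and $\bar\ft\in\Upsilon_{\bar\lambda}$ omits exactly one edge from each $\reflectbox{\textsf{L}}$-shaped subgraph. Two consequences of this are used: an edge of the skeleton not contained in any $\reflectbox{\textsf{L}}$-shape lies in \emph{every} tree, and no path in $\bar\ft$ has an interior box of locally maximal content. In the example above, $(2,2)$ is a boundary box, so $(1,1)\notin\bar\lambda$ and the edge $(2,2)$--$(2,1)$ belongs to no $\reflectbox{\textsf{L}}$-shape; hence it is present in $\bar\ft$, forcing $(2,1)$ to be a child of $(2,2)$ and therefore a second box of the descendant subtree in diagonal $4$ --- so the offending strip simply cannot occur as a subtree. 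The paper's proof packages this as: a boundary box off the strip would have a path to $U$ with an interior local maximum of content, which is forbidden. A correct write-up must run the induction along the tree orientation (which is why Lemma \ref{root} \emph{is} needed, to know where the root of $B$ sits) and invoke the $\reflectbox{\textsf{L}}$-shape constraint at each step.
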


\begin{proof}
	Suppose $B_i$ lies in the boundary, but $B$ does not. Then there exists a box in the boundary of $\bar\lambda$, not in $B$, but in a diagonal less than $j-1$. Since $\bar\ft$ is a tree, there is a unique path from that box to some box in $U$. This path would contain a box with local maximal content in its interior. Contradiction. 
\end{proof}

\begin{Lemma} \label{dist}
{\it	Under the specialization $u_i = u_j$,
	$$
	\left. \bStab' (\lambda) \right|_{u_i = u_j} = T_{\bf p,p} \cdot  \epsilon(\lambda) \, \Theta (\widetilde N^{', -}_\lambda) \sum_{\substack{ \sigma \in \fS_{\bar\lambda} \\ \bar\ft \text{ disinguished} }}  \frac{\cN^\sigma_{\bar\lambda}}{\cD^\sigma_{\bar\lambda}} \cR^\sigma (\bar\ft) \cW^\sigma (\bar\ft) \bigg|_{u_a = u_b} .
	$$ }
\end{Lemma}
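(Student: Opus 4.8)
The plan is to derive the statement from the refined formula of Proposition~\ref{refin-formula} by a residue argument governed by the Cancellation Lemma~\ref{lemma-cancel}. Since $\bStab'(\lambda)=T_{\bf p,p}\,\Stab'(\lambda)$ and, by Proposition~\ref{refin-formula},
$$\Stab'(\lambda)=\epsilon(\lambda)\,\Theta(\widetilde N^{', -}_\lambda)\sum_{\sigma\in\fS_{\bar\lambda},\ \bar\ft}\frac{\cN^\sigma_{\bar\lambda}}{\cD^\sigma_{\bar\lambda}}\,\cR^\sigma(\bar\ft)\,\cW^\sigma(\bar\ft),$$
the first observation is that, as $i\in{\bf p}$, $j\notin{\bf p}$ and $i<j$, the prefactor $T_{\bf p,p}$ carries exactly one factor, $\theta(u_i/u_j)$, vanishing on the locus $u_i=u_j$, and it vanishes there to first order. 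After the identification $\kappa$, the only dependence of the summand on the parameters $u_l$ enters through the K\"ahler variables $z_l$ occurring inside $\cW^\sigma(\bar\ft)$, while $\cN^\sigma_{\bar\lambda}/\cD^\sigma_{\bar\lambda}$, $\cR^\sigma(\bar\ft)$ and $\Theta(\widetilde N^{', -}_\lambda)$ stay regular at $u_i=u_j$. Hence a term of the sum contributes to $\left.\bStab'(\lambda)\right|_{u_i=u_j}$ only when $\cW^\sigma(\bar\ft)$ develops a pole there that compensates the zero of $\theta(u_i/u_j)$.

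Next I would locate these poles. The denominators occurring in $\cW^\sigma(\bar\ft)$ are the theta values $\theta\bigl(\prod_{I\in\bar\fs}z_{c_I}^{-1}\hbar^{-\textsf{v}(I)}\bigr)$ over the subtrees $\bar\fs=[\bar r,\bar\ft]$ and $\bar\fs=[h(e),\bar\ft]$ of $\bar\ft$; these factors do not involve the $x$-variables, so the pole structure is independent of $\sigma$. On $\bar\lambda$ one has $\beta^{(1)}_\lambda\equiv 0$, hence $\textsf{v}\equiv\beta^{(2)}_\lambda$, and the substitution rule for $z_l$ in $\kappa$ together with $\hbar\mapsto\hbar^{-1}$ converts the above argument into $u(\bar\fs)=\prod_{I\in\bar\fs}u_{c_I+1}/u_{c_I}$. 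Therefore such a denominator vanishes at $u_i=u_j$ exactly when $\bar\fs$ is an $(i,j)$-strip, that is, a subtree meeting each of the diagonals $i,\dots,j-1$ precisely once and no other diagonal. Consequently only trees $\bar\ft$ possessing an $(i,j)$-strip subtree survive the specialization, and for a generic such tree the pole is simple; the higher-order poles that can occur when $\bar\ft$ carries two disjoint $(i,j)$-strips must cancel inside the $\sigma$-sum, since $\bStab'(\lambda)$, in the holomorphic normalization, is itself regular along $u_i=u_j$. This mirrors the corresponding step in~\cite{EllHilb}.

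Finally I would eliminate the contributions of the non-distinguished trees. Given a tree $\bar\ft$ with an $(i,j)$-strip subtree, set $B=B(\bar\ft,i,j)$. If $\bar\ft$ is not distinguished, then $B$ is not contained in the boundary of $\bar\lambda$, so by Lemma~\ref{bd} its initial box $B_i$ is not on that boundary; locating the root of $B$ via Lemma~\ref{root} and treating separately the cases $B_i\notin U$ and $B_i\in U$, one exhibits a box of $B$ that lies in a full $2\times2$ square of $\bar\lambda$, lies neither in $U$ nor on the boundary of $\bar\lambda$, and at which $\bar\ft$ carries one of the two edges entering the definition of the involution $\inv$ of Lemma~\ref{lemma-cancel}. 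One checks that $\inv$, performed at that box, is a fixed-point-free involution on the set of non-distinguished trees with $B(\bar\ft,i,j)\neq\emptyset$ and leaves $B(\bar\ft,i,j)$ unchanged; by Lemma~\ref{lemma-cancel} (whose hypothesis $u(\bar\fs)=1$ holds at $u_i=u_j$ because $\bar\fs$ is the $(i,j)$-strip) it pairs $\bar\ft$ with $\inv(\bar\ft)$ with opposite sign, so their contributions cancel. The trees with $B(\bar\ft,i,j)=\emptyset$ give summands regular at $u_i=u_j$, which are killed by the zero of $\theta(u_i/u_j)$. What remains is precisely the sum over distinguished $\bar\ft$, for which $\cW^\sigma(\bar\ft)$ has a simple pole cancelled against $\theta(u_i/u_j)$; evaluating the resulting regular expression at $u_i=u_j$ yields the asserted identity.

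The main obstacle will be the combinatorial bookkeeping of the last paragraph: proving that the involution box can always be chosen so that $\inv$ is defined and is a genuine fixed-point-free involution on exactly the set of non-distinguished trees with an $(i,j)$-strip subtree. This requires a careful analysis of how an $(i,j)$-strip can run along the upper boundary $U$ and meet the boundary of $\bar\lambda$, using Lemmas~\ref{root} and~\ref{bd} and the reduction to $(1,k)\in\bar\lambda$; a minor additional point is ruling out (or cancelling) the higher-order poles noted above.
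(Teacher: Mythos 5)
Your proposal follows essentially the same route as the paper's proof: trees without an $(i,j)$-strip subtree are killed by the zero of $\theta(u_i/u_j)$ in $T_{\bf p,p}$, and the non-distinguished trees carrying a strip are paired off and cancelled via the involution at the initial box $B_i$ of $B(\bar\ft,i,j)$ using Lemma \ref{lemma-cancel}, with Lemmas \ref{root} and \ref{bd} locating the root and reducing to the boundary case, exactly as in the paper. The paper disposes of your worry about the case $B_i\in U$ by noting that for $i\neq 1$ it cannot occur (the strip would be forced to contain $(1,k)$), while for $i=1$ the box $B_i=(1,k)$ already lies in the boundary so the tree is distinguished and no involution is needed; the higher-order-pole point you flag is likewise left implicit in the paper's argument.
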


\begin{proof}
	Let $B = B(\bar\ft, i, j)$. Since $T_{\bf p, p}$ contains a zero $u_i / u_j$, if $ B= \emptyset$, it is clear that the stable envelope will vanish. Now assume $B\neq \emptyset$. 
	
	If $i = 1$, then $B_i = (1,k)$. By Lemma \ref{bd} $B$ lies in the boundary and $\bar\ft$ is distinguished. 
	
	If $i \neq 1$, it is easy to see that $B_i \not\in U$ (otherwise as a subtree $B$ must contain $(1,k)$). If moreover $B_i$ is not in the boundary, then one can construct its involution $\inv (\bar\ft)$. By Lemma \ref{lemma-cancel}, the contributions from $\bar\ft$ and $\inv(\bar\ft)$ cancel with each other. Therefore, in the summation over trees, we are left with those $\bar\ft$ whose $B_i$ lies in the boundary of $\bar\lambda$, which by Lemma \ref{bd} are distinguished. 
\end{proof}

Fix a distinguished tree $\bar\ft$, and $B = B(\bar\ft, i, j)$. Let's consider the restriction of $\bStab'(\lambda)$ to a certain fixed point $\nu \supset \lambda$. For an individual contribution from given $\bar\ft$ and $\sigma$, we take the following limit, called \emph{B-column limit} for $\nu \backslash \lambda$: first, for each pair of $I,J \in \nu\backslash \lambda$ such that $I$ is above $J$ and $I, J \not\in B$, take
$$
x_I = x_J \hbar;
$$
for any $I \in B$, take $x_I = \varphi^\nu_I$; finally take any well-defined evaluation of the remaining variables. Note that this limit only depends on the partition $\lambda$ and the pair $i, j$, and does not depend on $\bar\ft$. 

\begin{Lemma} \label{sigma-fixes-B}
{\it	The restriction
	$$
	\frac{\cN^\sigma_{\bar\lambda}}{\cD^\sigma_{\bar\lambda}} \cR^\sigma (\bar\ft) \cW^\sigma (\bar\ft) \bigg|_\nu
	$$
	under the $B$-column limit vanishes unless $\sigma$ fixes $B$.} 
\end{Lemma}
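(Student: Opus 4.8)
The plan is to analyze which factors of $\cN^\sigma_{\bar\lambda}$ can vanish under the $B$-column limit, in the spirit of the earlier Lemmas \ref{fix-nubar} and \ref{ND-well-defined}, and then bootstrap from those vanishing conditions to force $\sigma$ to fix every box of $B$. First I would recall that the $B$-column limit is set up so that $x_I = \varphi^\nu_I$ for $I \in B$ while for the other boxes $I, J \in \nu\setminus\lambda$ above/below one another (and not both in $B$) we set $x_I = x_J\hbar$; this is a well-defined sublimit of the row-column-type limits already used in the proof of Proposition \ref{refin-formula}, so $\cN^\sigma_{\bar\lambda}/\cD^\sigma_{\bar\lambda}$ restricted this way is well-defined by the same argument as Lemma \ref{ND-well-defined}.

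Next I would run an induction on the diagonal index from $i$ up to $j-1$, matching the structure of $B = \{B_i, \dots, B_{j-1}\}$. For the base case $B_i$: by Lemma \ref{root}, if $B_i \notin U$ it is the root of its strip, and if $B_i \in U$ then its ``root end'' $B_U$ lies in $U$ and $\sigma$ fixes $U$-boxes by the $\theta(a_2\hbar/x_{\sigma(I)})$-factors exactly as in the first paragraph of the proof of Lemma \ref{fix-nubar}. For a box $B_l$ with $l>i$ assumed $\sigma(B_{l'}) = B_{l'}$ for all $i \le l' < l$ in $B$, I would examine the theta-factor in $\cN^\sigma_{\bar\lambda}$ coming from the pair $(B_{l-1}, B_l)$ (adjacent in $\bar\lambda$, hence recorded by $\cR$ or by the ``non-$\Gamma$'' part of $\cN$ depending on whether the edge $B_{l-1}\leftrightarrow B_l$ lies in $\bar\ft$). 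Since $B$ lies in the boundary of $\bar\lambda$ (because $\bar\ft$ is distinguished), $B_l$ has a definite position relative to $B_{l-1}$, and the corresponding factor is of the form $\theta\!\big(x_{\sigma(B_{l-1})}/x_{\sigma(B_l)} \cdot (\text{monomial})\big)$; the $B$-column limit evaluates $x_{B_{l-1}}, x_{B_l}$ to their $\varphi^\nu$-values, so this factor vanishes unless $\sigma(B_l)$ sits in the correct position relative to $\sigma(B_{l-1}) = B_{l-1}$, which by the boundary geometry of $\bar\lambda$ forces $\sigma(B_l) = B_l$. One also has to rule out $\sigma(B_l)$ landing on some other box $Z$ in the same diagonal: then a factor $\theta(x_Z/x_{\sigma(B_l)})$ or its $\hbar$-shifted analogue appears — here I would argue as in the three bulleted cases of the third lemma after Lemma \ref{fix-nubar} (comparing to the column specializations $x_I = x_J\hbar$ built into the limit) to see that it vanishes at $\nu$.

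Closing the induction gives $\sigma(B_l) = B_l$ for all $i \le l \le j-1$, i.e. $\sigma$ fixes $B$, which is the claim. I expect the main obstacle to be the careful case analysis in the inductive step: the precise theta-factor attached to the pair $(B_{l-1}, B_l)$ depends on whether the connecting edge belongs to $\bar\ft$ and on the sign $\rho^\lambda_{B_{l-1}}$ versus $\rho^\lambda_{B_l}$, and one must check that in \emph{every} configuration allowed by ``$B$ in the boundary of $\bar\lambda$'' the relevant factor genuinely forces $\sigma(B_l)=B_l$ and that no cancellation between the numerator $\cN^\sigma_{\bar\lambda}$ and denominator $\cD^\sigma_{\bar\lambda}$ saves a bad $\sigma$ — this last point is handled by the well-definedness from Lemma \ref{ND-well-defined}, but it must be invoked at the right moment. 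The bookkeeping is routine given the earlier lemmas, so I would keep the write-up short by repeatedly pointing to the proofs of Lemma \ref{fix-nubar} and the subsequent lemmas rather than reproducing them.
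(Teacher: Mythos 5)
Your overall plan (induct along the diagonals of $B$, using vanishing theta factors to pin down $\sigma$ box by box) matches the paper's strategy, but the two steps you lean on do not work as stated, and the gap is substantive.

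First, the inductive step. You propose to extract the constraint $\sigma(B_l)=B_l$ from ``the theta-factor in $\cN^\sigma_{\bar\lambda}$ coming from the pair $(B_{l-1},B_l)$, recorded by $\cR$ or by the non-$\Gamma$ part of $\cN$ depending on whether the edge lies in $\bar\ft$.'' But $B$ is by definition a subtree of $\bar\ft$, so the edge $B_{l-1}\leftrightarrow B_l$ always lies in $\bar\ft$; consequently it is recorded by \emph{neither} $\cN^\sigma_{\bar\lambda}$ (which only sees non-adjacent pairs) \emph{nor} $\cR^\sigma(\bar\ft)$ (which only sees edges of $\Gamma_{\bar\lambda}\setminus\bar\ft$). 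Its only trace is the factor of $\cW^\sigma(\bar\ft)$ attached to that tree edge, which involves the K\"ahler parameters and does not vanish for generic $z$ when $\sigma(B_l)\neq B_l$. So the factor your induction hinges on simply is not there. The paper's proof instead pairs $B_l$ (or a candidate preimage $\sigma^{-1}(B_l)$) against the \emph{other} boxes $X_m$, $Y_m$ ($m\geq 2$) in the adjacent diagonals of $\nu\setminus\lambda$; those pairs are not tree edges, so their factors genuinely sit in $\cN^\sigma_{\bar\lambda}$ or $\cR^\sigma(\bar\ft)$, and the $B$-column specialization $x_I=x_J\hbar$ for column-adjacent boxes outside $B$ is exactly what makes them vanish when $\sigma$ misplaces $B_l$.

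Second, the base case. You anchor the induction at $B_i$ via Lemma \ref{root} plus the claim that ``$\sigma$ fixes $U$-boxes by the $\theta(a_2\hbar/x_{\sigma(I)})$-factors as in Lemma \ref{fix-nubar}.'' Those factors only vanish when $\sigma(I)$ restricts to $a_2\hbar$, i.e.\ they only force $\sigma(n-k,k)=(n-k,k)$; they say nothing about the other boxes of $U$, and Lemma \ref{fix-nubar} itself only fixes boxes of $\bar\nu$, whereas $B\subset\nu\setminus\lambda$. The paper avoids this by running the induction in the opposite direction, starting at $B_{j-1}$: since $j\notin{\bf p}$, the box to the right of $B_{j-1}$ lies in the $j$-th diagonal of $\nu\setminus\lambda$, and the resulting factors $\theta\bigl(x_{\sigma(B_{j-1})}/(x_{Y_m}\hbar)\bigr)$ force $\sigma(B_{j-1})=B_{j-1}$ under the column specialization. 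To repair your write-up you would need to replace both the anchor and the per-step factor with arguments of this type; as it stands the proof does not go through.
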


\begin{proof}
	Suppose that the restriction does not vanish under the chosen limit. Recall that $B_l$, $i \leq l\leq j-1$ is the box in the $l$-th diagonal of $B$. We use induction on $l$, from $j-1$ to $i$. Recall that by the refined formula, $\sigma$ lies in $\fS_{\nu\backslash \lambda}$. 
	
	First we show that $B_{j-1}$ is fixed by $\sigma$. Let $Y_1, Y_2, \cdots$ be the boxes in the $j$-th diagonal of $\nu\backslash \lambda$, such that the heights of $Y_m$'s are increasing. Since $j \not\in {\bf p}$, $Y_1$ is the box to the right of $B_{j-1}$. Hence we have the theta factors
	$$
	\prod_{m\geq 1} \theta \Big( \frac{x_{\sigma ( B_{j-1})}}{x_{Y_m} \hbar} \Big),
	$$
	as $\rho_{B_{j-1}} > \rho_{Y_m}$ and $B_{j-1}$ is not connected to $Y_1$. Under the $B$-column limit for $\nu\backslash\lambda$, this product vanishes unless $\sigma (B_{j-1})$ has no box below it, which implies $\sigma (B_{j-1}) = B_{j-1}$. 
	
	Next, suppose that $B_{l+1}$ is fixed by $\sigma$, consider $B_l$. Let $e$ be the edge connecting $B_l$ and $B_{l+1}$. Let $X_1 = B_l, X_2, \cdots$ and $Y_1 = B_{l+1}, Y_2, \cdots$ be respectively the boxes in the $l$-th and $(l+1)$-th diagonals of $\nu\backslash \lambda$. 
	
	If $e$ is \emph{horizontal}, then we have factors
	$$
	\prod_{m\geq 2} \theta \Big( \frac{x_{\sigma (X_m)}}{x_{B_{l+1} }} \Big),
	$$
	since we know $\rho_{X_m} < \rho_{B_{l+1}}$ and $X_2$ is not connected to $B_{l+1}$. If $\sigma (X_m) = X_1 = B_l$ for some $m\neq 1$, then the factor $\theta \Big( \dfrac{x_{\sigma (X_m)}}{x_{B_{l+1} }} \Big) = \theta \Big( \dfrac{x_{B_l}}{x_{B_{l+1} }} \Big)$ vanishes under the $B$-column ordering. Hence $\sigma (B_l) = B_l$. 
	
	If $e$ is \emph{vertical}, then we have factors
	$$
	\prod_{m\geq 2} \theta \Big( \frac{ x_{\sigma (B_l)}}{x_{\sigma (Y_m)} \hbar } \Big) = \prod_{m\geq 2} \theta \Big( \frac{ x_{\sigma (B_l)}}{x_{Y_m} \hbar} \Big) ,
	$$
	since we know $\rho_{B_l} > \rho_{Y_m}$ and $B_l$ is not connected to $Y_2$. If $\sigma (B_l) = X_m$ for some $m\neq 1$, then the factor $\theta \Big( \dfrac{ x_{\sigma (B_l)}}{x_{Y_m} \hbar} \Big) = \theta \Big( \dfrac{ x_{X_m}}{x_{Y_m} \hbar} \Big)$ vanishes under the $B$-column ordering, since $X_m$ is the box above $Y_m$ and they are not in $B$ for $m\geq 2$. Hence $\sigma$ fixes $B_l$.  
\end{proof}

In summary, after restriction to $\nu$ in the $B$-column limit for $\nu\backslash \lambda$, only contributions from distinguished $\bar\ft$ and permutations $\sigma$ that fix $B$ survive. We are now ready to prove Proposition \ref{GKM-thm}.

\subsubsection{Proof of Proposition \ref{GKM-thm}: $\mu$ is not contained in $\nu$}

In this case, the strip $\mu\backslash \lambda$ is not entirely contained in $\nu\backslash \lambda$. Clearly we have $\bStab'(\mu) \big|_\nu = 0$. 

\begin{Lemma} \label{not-containd}
	$$
	\left. \bStab' (\lambda) \right|_{\nu, u_i = u_j} = 0. 
	$$
\end{Lemma}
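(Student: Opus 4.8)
The goal is to show that if the strip $\mu\backslash\lambda$ is not entirely contained in $\nu\backslash\lambda$, then $\left.\bStab'(\lambda)\right|_{\nu, u_i=u_j}=0$. Recall from Lemma \ref{dist} that under the specialization $u_i=u_j$ only distinguished trees $\bar\ft$ survive, and from Lemma \ref{sigma-fixes-B} that after restriction to $\nu$ in the $B$-column limit only permutations $\sigma$ fixing $B=B(\bar\ft,i,j)$ survive. So it suffices to show that each such surviving contribution vanishes under the combined specialization. The strategy is to exhibit, for every distinguished $\bar\ft$ and every $\sigma$ fixing $B$, an explicit theta factor in $\cN^\sigma_{\bar\lambda}$ (or in the numerator of $\cR^\sigma(\bar\ft)$ or $\cW^\sigma(\bar\ft)$) that is forced to vanish.

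\textbf{Key steps.} First I would fix notation: let $\mu\backslash\lambda$ be the $(i,j)$-strip lying on the boundary of $\bar\lambda$, with boxes $M_i, M_{i+1},\dots,M_{j-1}$, and let $B = B(\bar\ft,i,j) = \{B_i,\dots,B_{j-1}\}$ be the distinguished strip of $\bar\ft$, which also lies on the boundary of $\bar\lambda$ by Lemma \ref{bd}. Since both $\mu\backslash\lambda$ and $B$ are the unique $(i,j)$-strips on the boundary of $\bar\lambda$, we actually have $B=\mu\backslash\lambda$ as subsets of $\textsf{R}_{n,k}$; this is the crucial combinatorial observation. Now the hypothesis says $\mu\not\subset\nu$, i.e.\ $B$ is not contained in $\nu\backslash\lambda$, so there is some box $B_l\in B$ with $B_l\notin\nu$. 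Pick $l$ minimal with this property (so $B_i,\dots,B_{l-1}\in\nu\backslash\lambda$, and by Lemma \ref{sigma-fixes-B} applied to these, $\sigma$ fixes them). I would then argue that in the $B$-column limit the variable $x_{B_l}$ is set equal to $x_{B_{l-1}}$ times an appropriate power of $\hbar$ dictated by adjacency, while the presence of $B_l\notin\nu$ together with the structure of $\cN^\sigma_{\bar\lambda}$ produces a factor of the form $\theta(x_{\sigma(\Box)}/x_{\sigma(\Box')})$ or $\theta(x_{\sigma(\Box)}/(x_{\sigma(\Box')}\hbar))$ between $B_l$ (or a box in its diagonal) and an adjacent box which collapses to $\theta(1)=0$ or $\theta(\hbar/\hbar)=0$ under the limit. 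One must also check that $T_{\bf p, p}$ supplies its zero $\theta(u_i/u_j)$ consistently, but the point is that the restriction to $\nu$ already kills the remaining sum. Summing over all $\bar\ft$ and $\sigma$, every term vanishes, giving the claim.

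\textbf{Main obstacle.} The delicate point is bookkeeping: one must verify that the $B$-column limit is a \emph{legitimate} evaluation order (i.e.\ that the limit exists and is independent of further choices) precisely on the locus where $\sigma$ fixes $B$ and $\bar\ft$ is distinguished, and then locate the vanishing theta factor without it being accidentally cancelled by a pole from $\cD^\sigma_{\bar\lambda}$ or from the denominator of $\cR^\sigma(\bar\ft)$. In other words, the real work is to show the zero is genuine and not an indeterminate $0/0$. I expect this to follow the same pattern as the proofs of Lemmas \ref{fix-nubar} and \ref{sigma-fixes-B}: induct on the diagonal index $l$, track which boxes $\sigma$ is forced to fix, and identify at the first "missing" box $B_l$ an unmatched $\theta$-factor relating $x_{B_l}$ to the box of $\nu\backslash\lambda$ adjacent to it. Once that factor is pinned down, the vanishing is immediate from $\theta(1)=0$ and the transformation law $\theta(xq)=-\tfrac{1}{x\sqrt q}\theta(x)$, which guarantees the nearby variables were genuinely specialized. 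The rest is routine.
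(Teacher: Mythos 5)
Your reduction to distinguished trees and your observation that $B(\bar\ft,i,j)=\mu\backslash\lambda$ (the unique $(i,j)$-strip on the boundary of $\bar\lambda$) are correct and match the paper. The genuine gap is in your claimed mechanism for the vanishing: you assert that for every surviving term one can exhibit a numerator theta factor that vanishes after restriction to $\nu$, so that ``the restriction to $\nu$ already kills the remaining sum.'' This fails precisely when the first box of $B$ missing from $\nu\backslash\lambda$ is $B_i$ itself, i.e.\ when the root of the strip already lies in $\bar\nu$. The simplest instance is $\nu=\lambda$ (a legitimate case here, since $\mu\not\subset\lambda$): then $\left.\Stab'(\lambda)\right|_{\lambda}=T'_{\lambda,\lambda}=\Theta(N^{'-}_{\lambda})$ is a nonzero product of theta functions independent of the $u_i$, so nothing in the restricted sum vanishes at $u_i=u_j$. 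In that situation the vanishing of $\left.\bStab'(\lambda)\right|_{\nu,u_i=u_j}$ comes entirely from the zero $\theta(u_i/u_j)$ of the prefactor $T_{\bf p,p}$, and the nontrivial point is that this zero is not eaten by a pole: the tree weight $\cW$ carries, at the root $r_B$ of $B$, the ratio $\theta\bigl(\tfrac{x_C\varphi^\lambda_{r_B}}{x_{r_B}\varphi^\lambda_C}\tfrac{u_j}{u_i}\bigr)\big/\theta\bigl(\tfrac{u_j}{u_i}\bigr)$, whose denominator is an apparent pole at $u_i=u_j$. The paper shows this ratio restricts to $1$ on $\nu$ (because $r_B\in\bar\nu$ forces $x_{r_B}|_\nu=\varphi^\lambda_{r_B}$, and likewise for $C$), so the prefactor's zero survives. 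Your proposal never addresses this pole cancellation; your ``main obstacle'' paragraph looks for a $0/0$ coming from $\cD$ or the denominator of $\cR$, which is not where the danger lies.

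In the complementary case, where the first missing box is $X=B_l$ with $l>i$, your idea of locating a vanishing theta factor is right, but you point at the wrong factor and the wrong limit. The vanishing factor is not between $x_{B_l}$ and the adjacent box of $\nu\backslash\lambda$ (that adjacency is an edge of $\bar\ft$ inside $B$ and contributes a generically nonzero $\cW$-factor), but between $X$ and the box $Y$ directly above it: both lie in $\bar\nu$, $Y\notin B$, and the edge $X\leftrightarrow Y$ cannot belong to $\bar\ft$ because together with $B_{l-1}\leftrightarrow X$ it would form an L-shaped pair in the sense of Definition \ref{lsdef}; hence $\cR(\bar\ft)$ contains $\theta(x_Y/x_X)$, which vanishes under the \emph{column limit for} $\bar\nu$ (setting $x_I=x_J$ for same-column boxes of $\bar\nu$) --- a different evaluation order from the $B$-column limit for $\nu\backslash\lambda$ that you propose to reuse from Lemma \ref{sigma-fixes-B}. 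Without the two-case split and the pole-cancellation step, the argument does not close.
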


\begin{proof}
	By Lemma \ref{dist}, only distinguished trees $\bar\ft$, with strip $B = B(\bar\ft, i, j)$ contributes. Let $B_i, \cdots, B_{j-1}$ be boxes in $B$, and $X$ be the first box in $B$ that does not lie in $\nu\backslash \lambda$. For restriction to $\nu$ of an individual contribution by given $\bar\ft$ and $\sigma$, we take the \emph{column limit} for $\bar\nu$, i.e. first let $x_I = x_J$ for any $I, J \in \bar\nu$ in the same column, and then take any limit for the remaining variables. 
	
	If $X\neq B_i$, then there's a box $Y$ above it, which also lies in $\bar\nu$. Since $Y\not\in B$, the edge connecting $X$ and $Y$ is not in $\bar\ft$. The contribution from $\bar\ft$ then contains a factor $\theta (X/Y) $, which vanishes under the column limit. 
	
	If $X = B_i$, then either $i\neq 1$, or $i = 1$, and the entire $B\cap U$, and in particular $B_U$, lie in $\bar\nu$. By Lemma \ref{root} we know the root $r_B = B_i$ or $B_U$ respectively. Denote the box not in $B$ and  connected to $r$ by $C$. The factor in $\cW^{\sigma = 1} (\bar\ft)$ that contributes the pole $u_i / u_j$ is
	$$
	 \frac{ \left.\theta \Big( \dfrac{x_C \varphi^\lambda_{r_B} }{x_{r_B} \varphi^\lambda_C } \dfrac{u_j}{u_i} \Big) \right|_\nu}{ \theta \Big( \dfrac{u_j}{u_i} \Big)}  = 1. 
	$$
	$\bStab'(\lambda) \big|_\nu = 0$ under $u_i = u_j$ because of the zero $u_i / u_j$ in $T_{\bf p,p}$. 
\end{proof}

\subsubsection{Proof of Proposition \ref{GKM-thm}: $\mu \subset \nu$}

In this case $B$ is contained entirely in $\nu\backslash \lambda$; in other words, $\lambda \subset \mu \subset \nu$. Let $r_B$ be the root of $B$, which if $i=1$, is $B_U$; and if $i \neq 1$, is $B_a$. 

If $(n-k, k) \not\in B$, let $C\in \bar\ft \backslash B$ be the box connected to $r_B$. $C$ could be in or not in $\nu\backslash \lambda$. If $(n-k, k) \in B$, we denote by convention that $x_C / \varphi_C^\lambda = 1$. Then
\ben
&& \left. \theta \Big( \frac{u_j}{u_i} \Big) \Stab' (\lambda) \right|_{\nu, u_i = u_j} \\
&=& \theta \Big( \frac{u_j}{u_i} \Big) \epsilon (\lambda) \left. \Theta (\widetilde N^{', -}_\lambda) \right|_\nu \cdot \sum_{\sigma \in \fS_{\nu \backslash \mu}, \bar\ft \cap \bar\mu }  \frac{\cN^\sigma_{\bar\mu}}{\cD^\sigma_{\bar\mu}} \cR^\sigma (\bar\ft \cap \bar\mu) \cW^\sigma (\bar\ft \cap \bar\mu)  \bigg|_\nu  \cdot \prod_{\substack{ c_I = n-k, \, I\neq (n-k, k) \\ I\in \mu\backslash \lambda}} \left. \theta \Big( \frac{a_2 \hbar}{x_I} \Big) \right|_\nu \\
&&  \prod_{\substack{c_I + 1 = c_J, \,  \rho_I > \rho_J \\ ( I \leftrightarrow J ) \not\in \bar\ft, \,  I \text{ or } J \in \mu \backslash \lambda }} \left. \theta \Big( \frac{x_{\sigma(J)} \hbar}{x_{\sigma(I)}} \Big) \right|_\nu 
\prod_{\substack{ c_I + 1 = c_J, \,  \rho_I < \rho_J \\ ( I \leftrightarrow J ) \not\in  \bar\ft, \, I \text{ or } J \in \mu \backslash \lambda  }} \left. \theta \Big( \frac{x_{\sigma(I)}}{x_{\sigma(J)}} \Big) \right|_\nu 
\cdot \prod_{\substack{ c_I = c_J,\,  \rho_I > \rho_J \\ I \text{ or } J \in \mu \backslash \lambda }} \theta \Big( \frac{x_{\sigma(I)}}{x_{\sigma(J)}} \Big)^{-1} \theta \Big( \frac{x_{\sigma(I)}}{x_{\sigma(J)} \hbar} \Big)^{-1}  \bigg|_\nu \\
&&  \cdot \frac{ \left. \theta \Big( \dfrac{x_C \varphi^\lambda_{r_B} }{x_{r_B} \varphi^\lambda_C } \dfrac{u_j}{u_i} \Big) \right|_\nu}{ \theta \Big( \dfrac{u_j}{u_i} \Big) }  \prod_{e\in B \backslash U} \frac{\left.  \theta \Big( \dfrac{x_{t(e)} \varphi^\lambda_{h(e)} }{x_{h(e)} \varphi^\lambda_{t(e)} } \dfrac{u_j}{u_{h(e)}} \Big) \right|_\nu }{ \theta \Big( \frac{u_j}{u_{h(e)}} \Big)} 
\prod_{e\in B \cap U} \frac{ \left. \theta \Big( \dfrac{x_{t(e)} \varphi^\lambda_{h(e)} }{x_{h(e)} \varphi^\lambda_{t(e)} } \dfrac{u_{t(e)}}{u_i} \Big) \right|_\nu }{ \theta \Big( \dfrac{u_{t(e)}}{u_i} \Big)}  ,
\een
which can be compared with $\Stab' (\mu)$. Direct computation shows that
\ben
\theta \Big( \frac{u_j}{u_i} \Big) \frac{\Stab'(\lambda) }{\Stab' (\mu) } \bigg|_{\nu,u_i = u_j} &=&  (-1)^{j-i} \theta (\hbar^{-1})  \prod_{e\in B \backslash U} \frac{ \left. \theta \Big( \dfrac{x_{t(e)} \varphi^\lambda_{h(e)} }{x_{h(e)} \varphi^\lambda_{t(e)} } \dfrac{u_j}{u_{h(e)}} \Big) \right|_{\nu, u_i = u_j} }{ \theta \Big( \dfrac{u_j}{u_{h(e)}} \Big)} 
\prod_{e\in B \cap U} \frac{ \left.  \theta \Big( \dfrac{x_{t(e)} \varphi^\lambda_{h(e)} }{x_{h(e)} \varphi^\lambda_{t(e)} } \dfrac{u_{t(e)}}{u_i} \Big) \right|_{\nu, u_i = u_j} }{ \theta \Big( \dfrac{u_{t(e)}}{u_i} \Big)}  \\
&=& (-1)^{j-i} \theta (\hbar^{-1}) \prod_{\substack{i< m <j \\ m \in {\bf p} }} \frac{\theta \Big( \dfrac{\hbar u_j}{u_m} \Big)}{\theta \Big( \dfrac{u_j}{u_m} \Big) } \prod_{\substack{i<m<j \\ m \in {\bf n} \backslash {\bf p} } } \dfrac{\theta \Big( \dfrac{u_j}{u_m \hbar } \Big)}{\theta \Big( \dfrac{u_j}{u_m} \Big)} ,
\een
where the last equality is because for $e\in B \backslash U$,
$$
\frac{ \left. \theta \Big( \dfrac{x_{t(e)} \varphi^\lambda_{h(e)} }{x_{h(e)} \varphi^\lambda_{t(e)} } \dfrac{u_j}{u_{h(e)}} \Big) \right|_\nu }{ \theta \Big( \dfrac{u_j}{u_{h(e)}} \Big)}  = \left\{ \begin{aligned}
& \frac{ \theta \Big( \hbar \dfrac{u_j}{u_{h(e)}} \Big) }{ \theta \Big( \dfrac{u_j}{u_{h(e)}} \Big)} , && \qquad h(e) \in p, \ e\not\in U \\
& \frac{ \theta \Big( \hbar^{-1} \dfrac{u_j}{u_{h(e)}} \Big) }{ \theta \Big( \dfrac{u_j}{u_{h(e)}} \Big)} , && \qquad h(e) \not\in p, \ e\not\in U  
\end{aligned} \right. 
$$
and for $e\in B\cap U$,
$$
\frac{ \left. \theta \Big( \dfrac{x_{t(e)} \varphi^\lambda_{h(e)} }{x_{h(e)} \varphi^\lambda_{t(e)} } \dfrac{u_{t(e)}}{u_i} \Big) \right|_{\nu, u_i = u_j} }{ \theta \Big( \dfrac{u_{t(e)}}{u_i} \Big)}  = \dfrac{ \theta \Big( \hbar \dfrac{u_{t(e)}}{u_j} \Big) }{ \theta \Big( \dfrac{u_{t(e)}}{u_j} \Big)} .
$$

The proposition is proved by making the change of variable $\hbar \mapsto \hbar^{-1}$ in the above result, and compare with the following lemma. 

\begin{Lemma}
	$$
	\left. \theta \Big( \dfrac{u_i}{u_j} \Big)^{-1} \frac{T_{\bf p,p}}{T_{\bf q,q}} \right|_{u_i = u_j}
	= \theta (\hbar^{-1})^{-1} \prod_{\substack{i<m<j \\ m \in {\bf n} \backslash {\bf p}}} \theta \Big( \dfrac{u_j}{u_m} \Big) \prod_{\substack{ i<m<j \\ m \in {\bf p}}} \theta \Big( \frac{u_m}{u_j} \Big)  \prod_{\substack{i<m<j \\ m \in {\bf n} \backslash {\bf p}}} \theta \Big( \frac{u_m}{u_j \hbar} \Big)^{-1} \prod_{\substack{i<m<j \\ m \in {\bf p}}} \theta \Big( \frac{u_j}{u_m \hbar} \Big)^{-1} 
	$$
\end{Lemma}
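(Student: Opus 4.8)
The plan is to compare directly the theta-factors of $T_{\bf p,p}$ and $T_{\bf q,q}$, using only the explicit product formula
\[
T_{\bf p,p}=(-1)^{(n-k)k}\prod_{\substack{a\in {\bf p},\, b\in {\bf n}\setminus {\bf p}\\ a<b}}\theta\Big(\frac{u_a}{u_b}\Big)\prod_{\substack{a\in {\bf p},\, b\in {\bf n}\setminus {\bf p}\\ a>b}}\theta\Big(\frac{u_b}{u_a\hbar}\Big),
\]
and the analogous formula with ${\bf q}=({\bf p}\setminus\{i\})\cup\{j\}$ replacing ${\bf p}$ (recall $i\in{\bf p}$, $j\notin{\bf p}$, $i<j$). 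The overall sign $(-1)^{(n-k)k}$ is common to the two and drops out of the quotient. First I would put ${\bf r}:={\bf p}\cap {\bf q}={\bf p}\setminus\{i\}={\bf q}\setminus\{j\}$ and note that a pair $(a,b)$ with $a,b\notin\{i,j\}$ contributes to $T_{\bf p,p}$ exactly when $a\in {\bf r}$ and $b\in {\bf n}\setminus({\bf r}\cup\{i,j\})$, and the identical pair contributes the identical factor to $T_{\bf q,q}$; so all of these factors cancel in $T_{\bf p,p}/T_{\bf q,q}$. What remains in $T_{\bf p,p}$ are the factors with $a=i$ or $b=j$ (the cases $a=j$ and $b=i$ being impossible), and in $T_{\bf q,q}$ those with $a=j$ or $b=i$.

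Writing $\langle a,c\rangle$ for $\theta(u_a/u_c)$ when $a<c$ and $\theta(u_c/(u_a\hbar))$ when $a>c$, the surviving part of $T_{\bf p,p}$ is $\theta(u_i/u_j)\cdot\prod_{a\in {\bf r}}\langle a,j\rangle\cdot\prod_{b\in {\bf n}\setminus({\bf r}\cup\{i,j\})}\langle i,b\rangle$, and that of $T_{\bf q,q}$ is $\theta(u_i/(u_j\hbar))\cdot\prod_{a\in {\bf r}}\langle a,i\rangle\cdot\prod_{b\in {\bf n}\setminus({\bf r}\cup\{i,j\})}\langle j,b\rangle$. Then I would multiply the quotient by $\theta(u_i/u_j)^{-1}$, which cancels the $\theta(u_i/u_j)$ in the numerator, and specialise $u_i=u_j$; the denominator factor $\theta(u_i/(u_j\hbar))$ becomes $\theta(\hbar^{-1})$, producing the $\theta(\hbar^{-1})^{-1}$ of the statement. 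The core step is then to evaluate $\prod_{a\in {\bf r}}\langle a,j\rangle/\langle a,i\rangle$ and $\prod_b\langle i,b\rangle/\langle j,b\rangle$ at $u_i=u_j$: when the running index lies below $i$ or above $j$, numerator and denominator lie in the same branch of $\langle\,,\,\rangle$ and become literally equal after the substitution, so only the indices strictly between $i$ and $j$ survive. For $i<a<j$ with $a\in {\bf r}$ one gets $\theta(u_a/u_j)/\theta(u_j/(u_a\hbar))$, and $\{a\in {\bf r}\mid i<a<j\}=\{m\in {\bf p}\mid i<m<j\}$; for $i<b<j$ with $b\in {\bf n}\setminus({\bf r}\cup\{i,j\})$ one gets $\theta(u_j/u_b)/\theta(u_b/(u_j\hbar))$, and $\{b\mid i<b<j\}=\{m\in {\bf n}\setminus {\bf p}\mid i<m<j\}$. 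Multiplying these four families of factors together reproduces the right-hand side of the lemma on the nose, with no sign or theta-identity needed — the orders of arguments already match.

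The only real obstacle is the bookkeeping: keeping track of which branch of $\langle\,,\,\rangle$ each factor sits in, both before and after the specialisation $u_i=u_j$, and translating the auxiliary index sets ${\bf r}$ and ${\bf n}\setminus({\bf r}\cup\{i,j\})$ back into the subsets of ${\bf p}$ and ${\bf n}\setminus {\bf p}$ used in the statement. This is a finite, elementary check that should be carried out once and carefully; there is no conceptual difficulty beyond it.
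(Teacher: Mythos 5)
Your proposal is correct and is exactly the ``straightforward computation'' that the paper's one-line proof invokes: cancel the common pairs, isolate the factors involving $i$ or $j$, and check branch by branch which ratios become $1$ at $u_i=u_j$. I verified the surviving factors for $i<m<j$ in both index sets and they reproduce the right-hand side with the stated argument orders, so no further comment is needed.
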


\begin{proof}
	Straightforward computation. 
\end{proof}

\subsection{Divisibility \label{divsec}}

In this subsection we aim to prove the following divisibility result. Let ${\bf p} = \textsf{bj} (\lambda)$, ${\bf q} = \textsf{bj} (\mu) \in X^T$ be two fixed points. 

\begin{Proposition}
{\it	The function $\dfrac{T_{\bf p, p}}{T'_{\mu, \mu}} \cdot T'_{\lambda, \mu}$ is of the form
	$$
	f_{\mu, \lambda} \cdot \prod_{\substack{i\in {\bf p}, \, j \in {\bf n} \backslash {\bf p} \\ i>j} } \theta \Big( \frac{u_j}{u_i \hbar} \Big),
	$$
	where $f_{\mu, \lambda}$ is holomorphic in parameters $u_i$. }
\end{Proposition}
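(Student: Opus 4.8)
The plan is to prove divisibility by reducing, via the refined formula of Proposition \ref{refin-formula}, to a statement about the simpler tree-summation over $\bar\lambda$ and then using the analysis of specializations $u_i = u_j$ already developed for the GKM argument. Concretely, recall from the definition of the holomorphic normalization that $T'_{\mu,\mu} = \Theta'_\mu = T_{\bf q, q}$ (by the identity $\Theta'_\lambda = T_{\textsf{bj}(\lambda), \textsf{bj}(\lambda)}$ observed in the proof of Corollary \ref{corth}), and $T_{\bf p, p}$ is the explicit product of theta functions from Property 2 of Definition \ref{dfelgr}. So the claim is equivalent to: $\left. \mathbf{Stab}'(\lambda) \right|_\mu$, divided by $T_{\bf q, q}$, is holomorphic in the $u_i$ after extracting the factor $\prod_{i\in {\bf p},\, j\notin {\bf p},\, i>j} \theta(u_j/(u_i\hbar))$. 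Equivalently, $\left. \Stab'(\lambda) \right|_\mu = T'_{\lambda,\mu}$ must be divisible (as an entire-up-to-theta-factors function of the $u_i$) by $\prod_{i\in {\bf q},\, j\notin {\bf q},\, i>j} \theta(u_j/(u_i \hbar))$ — this is exactly Property 3 / Condition 3 in Theorem \ref{uqth}, restricted to the single matrix entry.

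First I would set up the divisors to be checked. For a pair $i \in {\bf q}$, $j \notin {\bf q}$ with $i > j$, the curve-class character is $u_j / u_i$ and the relevant vanishing locus is $u_j = u_i \hbar$, i.e. $\hbar \mapsto u_j/u_i$. I would show $\left. T'_{\lambda,\mu} \right|_{u_j = u_i \hbar} = 0$ for each such pair, which by the quasiperiodicity (the quasiperiods of $T'_{\lambda,\mu}$ match those of $\mathcal{U}_{\lambda,\mu}(X')\cdot\Theta'_\lambda$, hence only simple zeros of the stated type can be forced) yields the divisibility. The vanishing is established by the same mechanism as in Lemmas \ref{dist}, \ref{sigma-fixes-B} and \ref{not-containd}: expand $\Stab'(\lambda)|_\mu$ using the refined formula \eqref{refined}, so the sum runs only over $\sigma \in \fS_{\bar\lambda}$ and $\bar\lambda$-trees $\bar\ft$; then apply the Cancellation Lemma \ref{lemma-cancel} (involution of trees) to kill all contributions whose surviving $(i,j)$-type strip does not lie on the boundary of $\bar\lambda$, leaving only "distinguished" configurations; and finally observe that the factor $\Theta(\widetilde N^{',-}_\lambda)$ together with the boundary structure of $\bar\lambda$ forces a zero at $\hbar = u_j/u_i$. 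The key point, as in the GKM proof, is that under $\mu = \textsf{bj}^{-1}({\bf q})$ and $\lambda = \textsf{bj}^{-1}({\bf p})$ the combinatorial position "$i > j$, $i\in{\bf q}$, $j\notin{\bf q}$" translates to a down-left step on the boundary of $\bar\mu \supseteq$ (the relevant part of) $\bar\lambda$, and the corresponding $\hbar$-factor appears explicitly either in $\Theta(\widetilde N^{',-}_\lambda)$ or in $\cN^\sigma_{\bar\lambda}/\cD^\sigma_{\bar\lambda}$, with no cancellation from the tree sum once the involution has been applied.

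I would organize the proof as a short induction on diagonals mirroring Lemma \ref{sigma-fixes-B}: fix the pair $(i,j)$, perform the $B$-column limit appropriate to the strip, use Lemma \ref{sigma-fixes-B} to reduce to permutations $\sigma$ fixing the strip $B$, and then track the single $\theta$-factor that becomes $\theta(u_i/(u_j\hbar))$ (equivalently $\theta(\hbar^{-1})$ after the substitution) coming from the edge of $\bar\ft$ or the box of $\bar\lambda$ that sits at the relevant corner. Since Property 1 (line-bundle/quasiperiod compatibility) and the GKM conditions \eqref{gkmcon} are already being established in Sections \ref{gkmsec} and (for quasiperiods) by the direct theta-function computation, and since Property 2 is automatic from the holomorphic normalization, the present divisibility (Property 3, divisibility half) plus the holomorphicity statement of the next subsection complete the verification that $\mathbf{T}'_{\lambda,\mu}$ satisfies the hypotheses of Theorem \ref{uqth}.

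The main obstacle I anticipate is bookkeeping: making the extraction of exactly the product $\prod_{i\in{\bf p},\,j\notin{\bf p},\,i>j}\theta(u_j/(u_i\hbar))$ precise, i.e. showing that after dividing by it the result is genuinely \emph{entire} in each $u_i\in\matC^\times$ (a Laurent-series argument in the spirit of the last paragraph of the proof of Theorem \ref{uqth}), rather than merely having the right zero set with unknown multiplicities. This requires controlling that no \emph{extra} spurious zeros in $u_i$ are forced by the symmetrization over $\fS_{n,k}$, which I would handle exactly as in the uniqueness proof: the quasiperiods fix the possible theta-divisors, the GKM vanishing supplies precisely the ones listed, and the Laurent expansion argument rules out any residual holomorphic factor vanishing identically.
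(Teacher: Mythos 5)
There is a genuine gap, and it starts with your setup. First, the identification $T'_{\mu,\mu}=\Theta'_\mu=T_{\bf q,q}$ is wrong: the paper's Corollary gives $\Theta'_\mu=T_{\bf q,q}$ but $T'_{\mu,\mu}=\Theta_{\bf q}=\Theta(N^{'-}_{\mu})$, which depends only on $z$ (equivalently $a_1/a_2$) and $\hbar$ and is \emph{independent of the $u_i$}. This is the one simple observation that makes the whole proposition soft: dividing by $T'_{\mu,\mu}$ contributes nothing to the analysis in the $u_i$, whereas dividing by $T_{\bf q,q}$ (which has zeros in the $u_i$) would introduce exactly the kind of poles you then cannot control. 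Second, your reduction of the claim to ``$T'_{\lambda,\mu}$ is divisible by $\prod\theta(u_j/(u_i\hbar))$'' (indexed, moreover, by ${\bf q}$ rather than ${\bf p}$) and your plan to prove $T'_{\lambda,\mu}|_{u_j=u_i\hbar}=0$ are not what is needed and are in general false. The required factor $\prod_{i\in{\bf p},\,j\notin{\bf p},\,i>j}\theta\!\big(u_j/(u_i\hbar)\big)$ is \emph{literally present} as an explicit factor of $T_{\bf p,p}$; nothing on the $X'$ side has to vanish on those hyperplanes, and if $T'_{\lambda,\mu}$ did vanish there as well, the quasiperiod/Laurent-series argument of Theorem \ref{uqth} would force the entire matrix entry to be zero. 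What actually has to be checked is the opposite: that $T'_{\lambda,\mu}$ has no \emph{poles} on the $\hbar$-shifted loci that could cancel the zeros supplied by $T_{\bf p,p}$.

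The paper's proof is therefore two lines: by the refined formula (\ref{refined}) the only possible $u$-poles of $T'_{\lambda,\mu}$ sit on $u_i/u_j=1$ (the denominators $\theta\big(\prod_I u_{c_I}/u_{c_I+1}\big)$ of $\cW^\sigma(\bar\ft)$ carry no $\hbar$-shift), so after extracting the explicit product from $T_{\bf p,p}$ the remaining function $f_{\mu,\lambda}$ can have poles only at $u_i=u_j$, and these are excluded by Theorem \ref{holo-thm}. Your instinct to invoke the cancellation lemma and the $B$-column/strip analysis is the right machinery for that last step, but it belongs to the proof of holomorphicity (which this proposition merely cites), not to a separate vanishing statement on $u_j=u_i\hbar$. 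To repair your write-up: correct the identification of $T'_{\mu,\mu}$, observe that the theta product is a visible factor of $T_{\bf p,p}$, and replace the proposed vanishing argument by the pole-location argument from the refined formula.
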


\begin{proof}
	Recall that
	$$
	T_{\bf p,p} = (-1)^{k(n-k)} \prod_{i\in {\bf p}, \, j\in {\bf n} \backslash {\bf p}, \,  i<j} \theta \Big( \frac{u_i}{u_j} \Big) \prod_{i\in {\bf p}, \, j \in {\bf n} \backslash {\bf p}, \, i>j} \theta \Big( \frac{u_j}{u_i \hbar} \Big), 
	$$
	and $T'_{\mu, \mu}$ does not depend on $u_i$'s. By formula (\ref{refined}), we can see that all possible poles of $T'_{\lambda,\mu}$ take the form $u_i / u_j$. Therefore, all possible poles of the function $f_{\mu, \lambda}$  in the proposition are of the form $u_i / u_j$. Moreover, by the proof of holomorphicity (Theorem \ref{holo-thm} below), they have no poles at $u_i / u_j$. We conclude that $f_{\mu, \lambda}$ is holomorphic in $u_i$. 
\end{proof}

\subsection{Holomorphicity \label{holsec}}

In this subsection we will prove the holomorphicity, i.e., the normalized restriction matrices of stable envelopes on $X'$ are holomorphic in $u_i$'s. The idea is to apply  general results for $q$-difference equations associated to Nakajima quiver varieties. 

\subsubsection{Quantum differential equations}
Let $X$ be a Nakajima variety. For the cone of effective curves in
$H_{2}(X,\matZ)$ we consider the semigroup algebra which is spanned by monomials $z^{d}$ with $d\in H_{2}(X,\matZ)_{\textrm{eff}}$. It has a natural completion which we denote by $\matC[[z^d]]$. The cup product in the equivariant cohomology $H^{\bullet}_{\bT}(X)$ has a natural commutative deformation, parametrized by $z$:
\be \label{qmul}
\alpha \star \beta = \alpha \cup \beta + O(z)
\ee
known as the quantum product. 

The quantum multiplication defines a remarkable flat connection
on the trivial $H^{\bullet}_{\bT}(X)$-bundle over $\textrm{Spec}(\matC[[z^d]])$. Flat sections $\Psi (z)$ of this connection, considered as $H^{\bullet}_{\bT}(X)$-valued functions, are defined by the following system of differential equations (known as the quantum differential equation or Dubrovin connection):
$$
\varepsilon \dfrac{d}{d\lambda} \Psi(z)= \lambda \star \Psi(z), \qquad  \Psi(z)\in H^{\bullet}_{\bT}(X)[[z]],
$$
where $\lambda \in H^{2}(X,\matC)$ and the differential operator is defined by 
\be \label{zmon}
\dfrac{d}{d\lambda } z^d = (\lambda, d) z^{d}. 
\ee
\subsubsection{Quantum multiplication by divisor}
The equivariant cohomology of Nakajima varieties are equipped with a natural action of certain Yangian $Y_{\hbar}(\frak{g}_{X})$ \cite{Varan}. 
In the case of Nakajima varieties associated to quivers of ADE type
this algebra coincides with the Yangian of the corresponding Lie algebra
(but in general can be substantially larger). 

The Lie algebra $\frak{g}_{X}$ has a root decomposition:
$$
\frak{g}_{X}=\frak{h} \oplus \bigoplus_{\alpha} \frak{g}_{\alpha}
$$
in which $\frak{h}=H^{2}(X,\matC)\oplus \textrm{center}$, and $\alpha \in H_{2}(X,\matZ)_{\textrm{eff}}$. All root subspaces $\frak{g}_{\alpha}$ are finite dimensional and $\frak{g}_{-\alpha}=\frak{g}^{*}_{\alpha}$ with respect to the symmetric nondegenerate invariant form.

The quantum multiplication (\ref{qmul}) for Nakajima varieties 
can be universally described in therms of the corresponding Yangians:

\begin{Theorem}[Theorem 10.2.1 in \cite{MO}]
	{ \it The quantum multiplication by a class $\lambda \in H^{2}(X)$ is given by:
		\be \label{qml}
		\lambda\star  = \lambda \cup  + \hbar \sum\limits_{(\theta, \alpha) >0} \alpha(\lambda) \dfrac{z^{\alpha}}{1-z^{\alpha}} e_{\alpha} e_{-\alpha} +\cdots
		\ee
		where $\theta\in H^{2}(X,\matR)$ is a vector in the ample cone 
		(i.e., in the summation, $\theta$ selects the effective representative from each 
		$\pm \alpha$ pair) and $\cdots$ denotes a diagonal term, which can be fixed by the condition $\lambda \star 1 = \lambda$.
	}	
\end{Theorem}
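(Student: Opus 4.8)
The formula (\ref{qml}) is \cite[Theorem 10.2.1]{MO}; I outline the structure of the argument one would follow. The classical term $\lambda\cup$ is immediate from (\ref{qmul}), so the content is the purely quantum operator $Q(\lambda):=\lambda\star-\lambda\cup$, a power series in $z^{d}$ over nonzero effective classes $d\in H_{2}(X,\matZ)_{\mathrm{eff}}$. Its matrix coefficients are the genus-zero three-point Gromov--Witten (equivalently quasimap) invariants with one divisor insertion, so by the divisor equation $\langle\lambda,\gamma,\delta\rangle_{0,3,d}=(\lambda,d)\,\langle\gamma,\delta\rangle_{0,2,d}$ for $d\neq0$; hence $Q(\lambda)$ is the generating series of two-point invariants $\langle\gamma,\delta\rangle_{0,2,d}$, each weighted by $(\lambda,d)=\alpha(\lambda)$ when $d$ is proportional to a root $\alpha$. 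The plan is to compute these two-point invariants geometrically, organize them by root direction, and recognize the resulting operators inside the Yangian $Y_{\hbar}(\mathfrak g_{X})$ attached to $X$ by the stable-envelope construction.

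First I would carry out the rank-one computation. Using the torus $\matC^{\times}_{\hbar}$ that scales the symplectic form together with deformation invariance, the curve-counting in a class proportional to $\alpha$ is controlled by a minimal surface resolution (an $A_{n}$-type resolution of $\matC^{2}/\matZ_{n+1}$, locally a chain of $T^{\ast}\mathbb P^{1}$'s) embedded in $X$ along the root direction. On such a surface the purely quantum two-point function is the explicit geometric series $\sum_{m\ge1}z^{m\alpha}=z^{\alpha}/(1-z^{\alpha})$, which produces the rational factor in (\ref{qml}); more invariantly, the only poles of $\lambda\star$ in $z$ occur at $z^{\alpha}=1$, are governed by an $\mathfrak{sl}_{2}$-triple, and their residues are exactly this rank-one count.

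Next I would identify the operator-valued coefficient of each $z^{\alpha}/(1-z^{\alpha})$ with $e_{\alpha}e_{-\alpha}$. The root generators $e_{\pm\alpha}$ of $\mathfrak g_{X}$ (and of $Y_{\hbar}(\mathfrak g_{X})$) are defined from the geometric $R$-matrix $R(u)=\Stab_{-\sigma}^{-1}\circ\Stab_{\sigma}$ via the RTT formalism, and act by Hecke-type correspondences; the matching has two halves. One shows that the same rank-one correspondence governs both the Gromov--Witten computation of the previous step and the composition $e_{\alpha}e_{-\alpha}$, so the two operators agree up to a scalar; the scalar (the factor $\hbar$ and the normalization of $\alpha(\lambda)$) is then pinned down by the expansion of the classical $r$-matrix read off from stable envelopes, $r(u)=1+\hbar\,\Omega\,u^{-1}+O(u^{-2})$ with $\Omega=\sum_{\alpha}e_{\alpha}\otimes e_{-\alpha}+(\text{Cartan part})$. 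Finally, the diagonal term ``$\cdots$'' is diagonal in the root grading and is uniquely fixed by the unit axiom $\lambda\star 1=\lambda$, together with a rigidity statement: an operator with the correct $z\to0$ limit, the correct commutation with the lowering half of $Y_{\hbar}(\mathfrak g_{X})$, and the correct rank-one residues is unique, which upgrades the rank-one answer to the general formula.

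The main obstacle is this identification step: showing that the equivariant two-point Gromov--Witten invariants of $X$ assemble \emph{exactly} into the Yangian operators $e_{\alpha}e_{-\alpha}$ with the stated coefficients. This needs the full theory of stable envelopes, the construction of $Y_{\hbar}(\mathfrak g_{X})$ from the geometric $R$-matrix and the identification of its quasiclassical limit with the Casimir $\Omega$ of $\mathfrak g_{X}$, and a degeneration/gluing argument comparing the enumerative geometry of $X$ with that of its rank-one surface pieces. This is precisely the technical heart of \cite{MO}; the remaining ingredients — the divisor equation, the rank-one geometric series, and the vacuum normalization — are routine once it is in place.
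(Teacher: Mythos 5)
This theorem is not proved in the paper at all: it is imported verbatim from Maulik--Okounkov (Theorem 10.2.1 of \cite{MO}) and used only as an input to deduce Corollary \ref{difs} on the location of the singularities $z_i z_{i+1}\cdots z_j=1$ of the quantum connection for $X'$. So there is no in-paper argument to compare yours against; the authors' ``proof'' is the citation.

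Judged on its own terms, your outline correctly identifies the skeleton of the Maulik--Okounkov argument: the divisor equation reducing $\lambda\star-\lambda\cup$ to two-point invariants weighted by $\alpha(\lambda)$; the localization of the purely quantum part onto root directions with the multiple-cover series $\sum_{m\ge 1}z^{m\alpha}=z^{\alpha}/(1-z^{\alpha})$; the identification of the operator coefficient with $e_{\alpha}e_{-\alpha}$ built from stable envelopes and the geometric $R$-matrix, normalized via the quasiclassical expansion $r=\hbar\,\Omega\,u^{-1}+\cdots$; and the diagonal term pinned down by $\lambda\star 1=\lambda$. Two caveats. First, you yourself flag that the identification of the two-point invariants with the Yangian operators $e_{\alpha}e_{-\alpha}$ is left as a black box, and that step is the entire technical content of \cite{MO}, Chapter 10 (reduced virtual classes, the Steinberg-correspondence structure of the unbroken curve contributions, and a rigidity argument); as written your text is a roadmap, not a proof. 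Second, the specific mechanism you propose for the rank-one computation --- an embedded $A_n$-surface resolution of $\matC^2/\matZ_{n+1}$ along each root direction --- is not how \cite{MO} argue; their reduction is to an $\mathfrak{sl}_2$-subalgebra attached to the wall $\alpha^{\perp}$ together with a deformation of the symplectic form, and the residue at $z^{\alpha}=1$ is controlled by that subalgebra rather than by an actual surface inside $X$. For the purposes of this paper, citing \cite{MO} as the authors do is the appropriate resolution.
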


Let $z_i$ with $i=1,\cdots,n-1$ denote the K\"ahler parameters of the Nakajima variety $X'$ from Section \ref{elxd}. 

\begin{Corollary} \label{difs}
	{ \it The quantum connection associated with the Nakajima variety 
		$X'$ is a connection with regular singularities supported on the hyperplanes
		$$
		z_i z_{i+1}\dots z_{j}=1, \qquad 1 \leq i<j \leq n-1.
		$$	
	}
\end{Corollary}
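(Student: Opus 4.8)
The plan is to deduce the statement directly from the Maulik--Okounkov description of quantum multiplication by a divisor, Theorem 10.2.1 of \cite{MO}, quoted above as \eqref{qml}. First I would record that, since $X'$ is a Nakajima variety attached to the $A_{n-1}$ quiver, which is of finite ADE type, the Maulik--Okounkov Lie algebra $\mathfrak{g}_{X'}$ of \cite{Varan,MO} is the finite-dimensional simple Lie algebra $\mathfrak{sl}_n$. I identify $H^2(X',\matZ)\cong\matZ^{n-1}$ with the root lattice of $\mathfrak{sl}_n$ through the quiver vertices, so that the simple roots $\alpha_1,\dots,\alpha_{n-1}$ are dual to the basis of $H_2(X',\matZ)$ used to write $z_1,\dots,z_{n-1}$; under this identification $z^{\alpha_l}=z_l$ and, for a positive root $\alpha_{i,j}:=\alpha_i+\alpha_{i+1}+\cdots+\alpha_j$, one has $z^{\alpha_{i,j}}=z_iz_{i+1}\cdots z_j$. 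Because the dimension vector $\mathsf{v}=(1,2,\dots,k-1,k,\dots,k,k-1,\dots,2,1)$ has all components $\ge 1$, each such $\alpha_{i,j}$ satisfies $\alpha_{i,j}\le\mathsf{v}$ componentwise, so every effective curve class of $X'$ indexing the sum in \eqref{qml} is of this form, and conversely.

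Next I would read off the pole structure. Choosing $\lambda=\lambda_i$ in the basis dual to the simple coroots turns the operator $\tfrac{d}{d\lambda_i}$ of \eqref{zmon} into the logarithmic vector field $z_i\partial_{z_i}$, so the Dubrovin connection becomes $\nabla_i=\varepsilon\, z_i\partial_{z_i}-\lambda_i\star$. By \eqref{qml}, in any fixed $\matC$-basis of $H^\bullet_{\bT}(X')$ (for instance the fixed-point basis) every matrix entry of $\lambda_i\star$ is a $\matC$-linear combination of the rational functions $z^{\alpha_{i,j}}/(1-z^{\alpha_{i,j}})$ together with a constant coming from the classical term $\lambda_i\cup$; the diagonal correction denoted $\cdots$ in \eqref{qml} is fixed by $\lambda_i\star 1=\lambda_i$ and is again a combination of the same functions. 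Since each $z^{\alpha_{i,j}}/(1-z^{\alpha_{i,j}})=\sum_{m\ge 1}z^{m\alpha_{i,j}}$ is holomorphic at the large-volume point and has a pole of order exactly one along $\{z^{\alpha_{i,j}}=1\}$, the operator $\lambda_i\star$ is holomorphic on the torus $(\matC^{\times})^{n-1}$ away from $\bigcup_{i\le j}\{z_iz_{i+1}\cdots z_j=1\}$ and has at worst a simple pole along each component of that divisor; in particular it is regular at $z\to 0$, consistent with \eqref{qmul}.

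Finally I would assemble this into the assertion of regular singularities: the connection one-form $A=\varepsilon^{-1}\sum_i(\lambda_i\star)\,\tfrac{dz_i}{z_i}$ extends to a logarithmic connection along the divisor $\bigcup\{z_iz_{i+1}\cdots z_j=1\}$ (each component smooth), together with the toric boundary $z\to 0,\infty$ where regularity is immediate from the power-series form of \eqref{qmul}; having a simple pole along each $\{z^{\alpha_{i,j}}=1\}$ is precisely the statement that the quantum connection is Fuchsian there, and these components are in bijection with the positive roots $\alpha_{i,j}$. The one genuinely delicate point — and the place I expect the real work to be — is pinning down the exact singular support, i.e. deciding for which roots the operator $e_{\alpha_{i,j}}e_{-\alpha_{i,j}}$ acts nontrivially on $H^\bullet_{\bT}(X')$ (equivalently, identifying $H^\bullet_{\bT}(X')$ with the appropriate zero weight space of $\bigwedge^{\!k}\matC^n\otimes\bigwedge^{\!n-k}\matC^n$ and checking nonvanishing there), together with the routine verification that no cancellation among these terms raises the pole order. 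The ``supported on'' statement used in the sequel, however, follows from the upper bound on the poles alone, so for the purposes of this corollary the computation above suffices.
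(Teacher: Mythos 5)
Your proposal is correct and follows essentially the same route as the paper: identify $\mathfrak{g}_{X'}\cong\mathfrak{sl}_n$ for the $A_{n-1}$ quiver, match the K\"ahler parameters $z_i$ with the simple roots, and read off the singular locus $z^{\alpha}=1$ over positive roots $\alpha=\alpha_i+\cdots+\alpha_j$ from Theorem 10.2.1 of \cite{MO}. Your additional remarks on the logarithmic/Fuchsian form of the connection and on which roots genuinely contribute go beyond what the paper records, but the core argument is identical.
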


\begin{proof}
	The variety $X'$ is a Nakajima quiver variety associated with the $A_{n-1}$-quiver. Thus the corresponding Lie algebra $\frak{g}_X\cong \frak{sl}_{n}$. The K\"ahler parameters $z_i$ associated to the tautological line bundles on $X'$ correspond to the simple roots of this algebra. In other words, in the notation of (\ref{zmon}) they correspond to $z_i=z^{\alpha_i}$, where $\alpha_i$, $i=1,\cdots,n-1$ 
	are the simple roots of $\frak{sl}_{n}$ (more precisely, simple roots with respect to positive Weyl chamber $(\theta',\alpha_i)>0$ where $\theta'$ is the choice stability parameters for $X'$).

	By (\ref{qml}), the singularities of quantum differential equation of $X'$ 
	are located at
	$$
	z^{\alpha}=1 
	$$  
	for positive roots $\alpha$. All positive roots of $\frak{sl}_{n}$ are of the form $\alpha=\alpha_i+\alpha_{i+1}+\dots+\alpha_{j}$ with $1\leq i<j\leq n-1$. Thus, the singularities are at
	$$
	z^{\alpha}=z_i z_{i+1} \cdots z_{j}=1.
	$$
\end{proof}

\subsubsection{Quantum difference equation}
In the equivariant K-theory, the differential equation is substituted by its $q$-difference version:
\be \label{qde}
\Psi(z q^{\mathcal{L}}) \mathcal{L} = {\bf M}_{\mathcal{L}}(z) \Psi(z)  
\ee 
where $\mathcal{L} \in \Pic (X)$ is a line bundle and $q=e^{\varepsilon}$ and $\Psi(z) \in K_{\bT}(X)[[z]]$. The theory of quantum difference equations for Nakajima varieties was developed in \cite{OS}. In particular, the operators ${\bf M}_{\mathcal{L}}(z) \in \End (K_{\bT}(X))$ were constructed for an arbitrary line bundle $\mathcal{L}$. These operators are the $q$-deformations of (\ref{qml}), i.e., in the cohomological limit they behave as:
$$
{\bf M}_{\mathcal{L}}(z)=1+ \lambda \star +\cdots
$$
where $\dots$ stands for the terms vanishing the the cohomological limit and $\lambda=c_{1}(\mathcal{L})$. 

In K-theory the sum over roots in (\ref{qml}) is substituted by a product:
$$
{\bf M}_{\mathcal{L}}(z)=\mathcal{L} \prod\limits_{w} {\bf B}_{w}(z)
$$
over certain set of affine root hyperplanes  of an affine algebra $\widehat{\frak{g}}_{X}$.

The singularities of the quantum difference equations, i.e., the singularities of matrix ${\bf M}_{\mathcal{L}}(z)$ are located in the union of singularities of ${\bf B}_{w}(z)$.  The wall crossing operators ${\bf B}_{w}(z)$ are constructed in Section 5.3 of \cite{OS}. 
In particular, if $z^{\alpha}=1$ are the singularities of the quantum differential equation in cohomology then the singularities of (\ref{qde}) can only be located at  $z^{\alpha} q^p \hbar^s=1$ for some integral $p,s$. This, together with Corollary \ref{difs} gives:
\begin{Proposition} \label{prsl}
	{\it The singularities of the quantum difference equation associated with the Nakajima variety $X'$ are located at
		$$
		z_i z_{i+1}\cdots z_{j} q^p \hbar^s=1, \qquad 1\leq i<j\leq n, \  p,s\in \matZ.
		$$	}
\end{Proposition}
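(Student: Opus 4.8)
The plan is to reduce the difference case to the differential case already settled in Corollary \ref{difs}, using the general structure of the quantum difference equation of a Nakajima variety from \cite{OS}. First I would recall that $X'$ is the $A_{n-1}$ quiver variety, so $\frak{g}_{X'}\cong \frak{sl}_n$ and, exactly as in the proof of Corollary \ref{difs}, the K\"ahler parameters $z_i$, $i=1,\dots,n-1$, are identified with the simple roots $\alpha_i$ of $\frak{sl}_n$ with respect to the chamber selected by $\theta'$. Thus $z^\alpha = z_iz_{i+1}\cdots z_j$ for the positive root $\alpha=\alpha_i+\cdots+\alpha_j$, and these exhaust the positive roots of $\frak{sl}_n$.

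Next I would invoke the explicit construction of the operator ${\bf M}_{\cal L}(z)$ from Section 5.3 of \cite{OS}: for any line bundle $\cal L$ one has a factorization
$$
{\bf M}_{\cal L}(z) = {\cal L}\,\prod_{w} {\bf B}_w(z),
$$
where the product runs over the affine root hyperplanes $w$ of the affine algebra $\widehat{\frak g}_{X'}=\widehat{\frak{sl}}_n$, and where each wall-crossing operator ${\bf B}_w(z)$ is a rational function of the K\"ahler variables built from the corresponding (affinized) root subalgebra. The key input from \cite{OS} is that the singular locus of ${\bf B}_w(z)$ is obtained from the singular locus of the cohomological quantum multiplication by the root subalgebra attached to $w$ by allowing shifts by integral powers of $q$ and $\hbar$: if $z^\alpha=1$ is a cohomological wall, then the poles of the corresponding ${\bf B}_w(z)$ lie on $z^\alpha q^p\hbar^s=1$ for suitable $p,s\in\matZ$. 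Since the singular locus of ${\bf M}_{\cal L}(z)$ is contained in the union of the singular loci of the ${\bf B}_w(z)$, combining this with the list of cohomological walls $z_iz_{i+1}\cdots z_j=1$ from Corollary \ref{difs} yields that the poles of ${\bf M}_{\cal L}(z)$, and hence the singularities of the difference equation \eqref{qde}, all lie on $z_iz_{i+1}\cdots z_j\, q^p\hbar^s=1$ with $1\le i<j\le n$, as claimed.

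The routine remaining points are bookkeeping: one checks that the prefactor $\cal L$ is holomorphic and invertible away from $z=0$ and contributes no new walls, and that the affine walls of $\widehat{\frak{sl}}_n$ which are ``purely imaginary'' or come from the extra affine node do not produce additional hyperplanes beyond the stated ones after restricting to the K\"ahler torus of $X'$ (this is again governed by the root combinatorics of $\frak{sl}_n$, already used in Corollary \ref{difs}). I expect the only real subtlety — the ``hard part'' — to be extracting from Section 5.3 of \cite{OS} the precise statement that each ${\bf B}_w(z)$ has poles only along $q,\hbar$-shifts of the corresponding finite-type wall, and that the shift exponents $p,s$ range over $\matZ$ rather than some proper sublattice; once that is in hand the proposition follows immediately by taking the union over $w$.
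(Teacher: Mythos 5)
Your proposal is correct and follows essentially the same route as the paper: the paper likewise cites the factorization ${\bf M}_{\mathcal L}(z)=\mathcal L\prod_w{\bf B}_w(z)$ from Section 5.3 of \cite{OS}, observes that the singularities of the wall-crossing operators sit on $q,\hbar$-shifts $z^\alpha q^p\hbar^s=1$ of the cohomological walls, and combines this with the list of positive roots of $\frak{sl}_n$ from Corollary \ref{difs}. The caveats you flag (the prefactor $\mathcal L$, imaginary/affine walls) are exactly the points the paper delegates to \cite{OS} without further comment.
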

\subsubsection{Pole subtraction theorem} 
The elliptic stable envelopes describes the monodromy of $q$-difference equations.  More precisely, the $q$-difference equation (\ref{qde}) has two distinguished fundamental solution matrices, indexed by fixed points $X^{\bT}$. The $z$-solutions $\Psi^{z}$ form a basis of solutions, which are holomorphic in the K\"ahler parameters in the neighborhood $|z_i|<1$. Similarly, the $a$-solutions $\Psi^{a}$ for a basis of solutions holomorphic in  $|a|<1$. By general theory of $q$-difference equations, every two bases of solutions are related by a transition matrix:\footnote{Let us clarify the meaning of terms in (\ref{trm}): here $\Psi^{z}$ denotes the fundamental solution matrix - the $|X^{\bT}| \times |X^{\bT}|$ - dimensional matrix with columns of $\Psi^{z}$ satisfying the quantum difference equation. The set of $|X^{\bT}|$ columns of $\Psi^{z}$ forms a basis in the space of solutions.   The elements of this basis are holomorphic in variables $z$. Similarly, $\Psi^{a}$ is a matrix whose columns form a basis of solutions, which are holomorphic in parameters $a$. The theorem above says that $W(z)=\{W(z)_{\lambda,\mu}\}_{\lambda,\mu\in X^{\bT}}$ coincides with $\Psi^{a} (\Psi^{z})^{-1}$.} 
\be \label{trm}
\Psi^{a} = W(z) \Psi^{z},
\ee
known as the monodromy matrix from the solutions $\Psi^{z}$ to  $\Psi^{a}$. The central result of \cite{AOelliptic} (in the case when $X^{\bT}$ is finite)  is the following: 
\begin{Theorem}[Theorem 5 in \cite{AOelliptic}]
	{ \it Let $X$ be a Nakajima variety and let
		$$
		T_{\lambda,\mu}(z)=\left. \Stab(\lambda)\right|_{\mu} , \qquad \lambda, \mu \in X^\bT
		$$	
		be the restriction matrix for elliptic stable envelope in the basis of fixed points. Then, the matrix $W(z)=\{W(z)_{\lambda,\mu}\}_{\lambda,\mu\in X^{\bT}}$ from (\ref{trm}) in takes the form:
		$$
		W(z)_{\lambda,\mu} =\dfrac{T_{\lambda,\mu}(z)}{\Theta(\left.T^{1/2} X\right|_{\mu})}
		$$
		where $\Theta(\left.T^{1/2} X\right|_{\mu})$ is given by (\ref{thetathom}) applied to Laurent polynomial $\left.T^{1/2} X\right|_{\mu}$. In particular,  $\Theta(\left.T^{1/2} X\right|_{\mu})$ does not depend on the K\"ahler parameters.} 
\end{Theorem}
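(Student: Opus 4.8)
The statement is Theorem~5 of \cite{AOelliptic}, so the plan is to recall the structure of its proof and to indicate how it gets used to establish the holomorphicity of the next subsection. The starting point is the integral representation of the two fundamental systems of solutions of the $q$-difference equation (\ref{qde}):
$$
\Psi^{z}_{\lambda}=\int_{C_{z}}\Phi\cdot\bStab(\lambda)\,\prod_i\frac{dx_i}{x_i},\qquad
\Psi^{a}_{\mu}=\int_{C_{a}}\Phi\cdot\bStab(\mu)\,\prod_i\frac{dx_i}{x_i},
$$
where $\Phi$ is the master function, the $x_i$ are Chern roots of the tautological bundles, and $C_{z}$, $C_{a}$ are the cycles distinguished by the asymptotic regimes $|z|\to 0$ and $|a|\to 0$. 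First I would check that both families solve (\ref{qde}) — this follows from the $q$-difference identities satisfied by $\Phi$ together with the quasi-periodicity that defines the elliptic stable envelope — so that $W(z)=\Psi^{a}(\Psi^{z})^{-1}$ is a well-defined, $q$-periodic connection matrix.

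Next I would identify $W(z)$ by deforming $C_{a}$ into $C_{z}$ and collecting the resulting boundary terms. Each of them localizes at a fixed point $\mu\in X^{\bT}$, where the residue of the integrand equals, up to the normalization factor $\Theta(\left.T^{1/2}X\right|_{\mu})$ produced by the elliptic Thom class of the polarization at $\mu$, exactly $\left.\bStab(\lambda)\right|_{\mu}$; this is the \emph{pole subtraction} of \cite{AOelliptic}. Assembling over all $\mu$ yields $W(z)_{\lambda,\mu}=T_{\lambda,\mu}(z)/\Theta(\left.T^{1/2}X\right|_{\mu})$, with a denominator depending only on the equivariant variables since $\left.T^{1/2}X\right|_{\mu}$ is an equivariant Laurent monomial. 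An equivalent and, for us, more flexible route is by uniqueness: the matrix obtained from $W(z)$ by rescaling column $\mu$ by $\Theta(\left.T^{1/2}X\right|_{\mu})$ is triangular for the chamber order (the $z$- and $a$-solutions have matching leading terms at the fixed points), has the prescribed diagonal, and — by the contour analysis — is holomorphic in the K\"ahler parameters with the required quasi-periods; these are precisely the properties characterizing the restriction matrix of the elliptic stable envelope through Theorem~\ref{uqth} (and its analogue for $X'$), so the two must coincide.

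The hard part will be the holomorphicity input underlying both routes: one must show that the deformation $C_{a}\to C_{z}$ picks up no singularities beyond the fixed-point ones, i.e. that $W(z)$ has poles only along the singular locus of (\ref{qde}), which by Proposition~\ref{prsl} is the union of the hyperplanes $z_i z_{i+1}\cdots z_j q^{p}\hbar^{s}=1$. Granting this, the plan to reach the holomorphicity of Subsection~\ref{holsec} is to transport those loci through the parameter identification $\kappa$ of (\ref{parident}): each telescoping product $z_i\cdots z_j$ becomes a ratio $u_a/u_b$ times a power of $\hbar$, so the restriction matrix of $X'$ can have poles only along divisors $u_a/u_b\in q^{\matZ}\hbar^{\matZ}$. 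Finally, using that this matrix is a section of the explicit line bundle $\frak{M}'(\lambda)\cong\frak{M}({\bf q})$, whose quasi-periods are products of the very theta functions appearing in the diagonal entry, a theta-degree comparison — in the spirit of the Laurent-series argument in the proof of Theorem~\ref{uqth} — forces the residue along any such divisor to vanish, yielding holomorphicity in the $u_i$.
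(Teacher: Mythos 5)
This statement is not proved in the paper at all: it is imported verbatim as Theorem~5 of \cite{AOelliptic} and used as a black box to deduce Corollary~\ref{cor1}, so there is no ``paper's own proof'' to compare against. Your sketch is a reasonable reconstruction of the argument in the cited reference. Of your two routes, the second (rescale the column $\mu$ of $W(z)$ by $\Theta(\left.T^{1/2}X\right|_{\mu})$, check triangularity, the diagonal normalization, the quasi-periods, and the pole structure in the K\"ahler variables, then invoke the uniqueness characterization of the elliptic stable envelope) is the one closest to how \cite{AOelliptic} actually proceeds; the contour-deformation picture is the heuristic behind it but the formal proof is the uniqueness one. One small imprecision: $\left.T^{1/2}X\right|_{\mu}$ is a Laurent \emph{polynomial} in the equivariant parameters, not a monomial, so $\Theta(\left.T^{1/2}X\right|_{\mu})$ is a product of theta functions; the relevant point is only that it is independent of $z$, which is what the theorem asserts.

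A concrete caution about your final paragraph, which concerns the downstream use rather than the theorem itself: the claim that a ``theta-degree comparison'' of the quasi-periods of $\frak{M}'(\lambda)$ forces the residues along $u_a/u_b\in q^{\matZ}\hbar^{\matZ}$ to vanish is not sound as stated. A meromorphic section of a line bundle on an abelian variety with prescribed quasi-periods can perfectly well have poles along such divisors, so quasi-periodicity alone yields nothing. In the paper the theorem only serves to \emph{restrict} the possible poles of $T'_{\lambda,\mu}$ to the hyperplanes $u_i/u_j\hbar^{s}q^{p}=1$ (Corollary~\ref{cor1}); the actual vanishing of the residues is then established in Theorem~\ref{holo-thm} by entirely different means, namely the explicit abelianization formula of Proposition~\ref{refin-formula}, which rules out $s\neq 0$, together with the tree-cancellation Lemma~\ref{lemma-cancel}, which kills the remaining candidate poles at $u_i=u_j$. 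Without some such explicit input your plan for Subsection~\ref{holsec} would stall at that point.
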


The singularities of solutions $\Psi^a$ and $\Psi^z$ are supported on the singularities of the corresponding $q$-difference equation.
It implies that the transition matrix also may have only these singularities (if $W(z)$ is singular at a hyperplane $h$, which is not a singularity of $q$-difference equation then, by (\ref{trm}) $\Psi^{a}$ is also singular along $h$ which is not possible).

In particular, combining the last Theorem with Proposition \ref{prsl} we obtain:
\begin{Corollary}  \label{cor1}
	{\it Let $T'_{\lambda,\mu}$ be the restriction matrix of the elliptic stable envelope for the Nakajima variety $X'$ in the basis of fixed points. Then, the singularities of 
		$T'_{\lambda,\mu}$ are supported to the set of hyperplanes:
		$$
		z_i z_{i+1} \cdots z_{j} q^p \hbar^s=1, \qquad 1\leq i<j\leq n, \  p,s\in \matZ.
		$$ }
\end{Corollary}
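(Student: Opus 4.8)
The plan is to deduce the statement directly from the pole subtraction theorem (Theorem 5 of \cite{AOelliptic}) together with Proposition \ref{prsl}, so that the proof is short once those results are granted.

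First I would invoke the pole subtraction theorem for the Nakajima variety $X'$: it gives
$$
W(z)_{\lambda,\mu} = \frac{T'_{\lambda,\mu}(z)}{\Theta\big(\left.T^{1/2}X'\right|_\mu\big)},
$$
where $W(z) = \{W(z)_{\lambda,\mu}\}$ is the monodromy matrix from the $z$-solutions to the $a$-solutions of the quantum difference equation (\ref{qde}) for $X'$, and the denominator $\Theta(\left.T^{1/2}X'\right|_\mu)$ is independent of the K\"ahler parameters $z_1,\dots,z_{n-1}$. Consequently the poles of $T'_{\lambda,\mu}$ in the variables $z_i$ coincide with those of $W(z)_{\lambda,\mu}$, and it suffices to locate the singular locus of $W(z)$.

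Next I would use the relation $\Psi^a = W(z)\,\Psi^z$, i.e.\ $W(z) = \Psi^a (\Psi^z)^{-1}$. Both fundamental solution matrices $\Psi^z$ and $\Psi^a$ solve the same $q$-difference system, whose coefficient matrix ${\bf M}_{\mathcal{L}}(z)$ has, by Proposition \ref{prsl}, its singular locus contained in $\bigcup \{ z_i z_{i+1}\cdots z_j\, q^p\hbar^s = 1 \}$. By the general theory of such equations (as developed for Nakajima varieties in \cite{OS, AOelliptic}), the $z$-solution matrix $\Psi^z$ is holomorphic and invertible for $|z_i|<1$, and both its meromorphic continuation and $(\Psi^z)^{-1}$ acquire poles and zeros only along these hyperplanes; the $a$-solution matrix $\Psi^a$, being another solution of the same system, likewise has $z$-poles only there. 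Hence $W(z)$, and therefore $T'_{\lambda,\mu}(z)$, is regular away from $\bigcup\{ z_i z_{i+1}\cdots z_j\, q^p\hbar^s = 1 \}$, which is exactly the assertion. (Equivalently one argues by contradiction: a pole of $W(z)$ off this locus would force $\Psi^a$ to be singular there, which is impossible.)

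The step I expect to be the main obstacle is the analytic input that $(\Psi^z)^{-1}$ introduces no new poles — equivalently, that the zero divisor of $\det\Psi^z$ lies on the walls $z^\alpha q^p\hbar^s=1$. I would not reprove this but cite the structural description of the $z$- and $a$-solution bases and of the wall-crossing operators ${\bf B}_w(z)$ from \cite{OS, AOelliptic}. Once that is granted, the only remaining content is the purely combinatorial identification of which roots $\alpha$ of $\widehat{\mathfrak{sl}}_n$ occur, which was already carried out in Corollary \ref{difs} and Proposition \ref{prsl}.
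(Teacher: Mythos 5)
Your proposal is correct and follows essentially the same route as the paper: apply the pole subtraction theorem (Theorem 5 of \cite{AOelliptic}) to write $T'_{\lambda,\mu}$ as $W(z)_{\lambda,\mu}$ times a factor independent of the K\"ahler parameters, observe that the singularities of $\Psi^a$ and $\Psi^z$ (hence of the transition matrix $W(z)$, by the same contradiction argument via $\Psi^a = W(z)\Psi^z$) are supported on the singular locus of the $q$-difference equation, and conclude with Proposition \ref{prsl}. Your extra remark about the zeros of $\det\Psi^z$ is a reasonable point of care that the paper handles only implicitly through the same contradiction argument you give as an alternative.
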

This  implies that the poles of the restriction matrix $T'_{\lambda,\mu}$ in the coordinates $u_i$ related to K\"ahler variables $(\ref{parident})$ are of the form:
	\be \label{pospols}
	 \frac{u_i}{u_j} \hbar^{s} q^{p}, \qquad i\neq j, \ \ p,s \in \matZ.
	\ee

\subsubsection{Holomorphicity of stable envelope} 
Let us return to the Nakajima varieties $X$ and $X'$ defined in Sections \ref{elx} and \ref{elxd} respectively. We identify the fixed points as in Section \ref{bijsec}, and identify the equivariant and K\"ahler parameters by (\ref{parident}). Let $T'_{\lambda,\mu}$  and $T_{\bf p, q}$ be the restriction matrices of the elliptic stable envelopes for the Nakajima varieties $X'$ and $X$ respectively.
\begin{Theorem} \label{holo-thm}
	{ \it The functions
		$$
		T_{\bf p, p} T'_{\lambda,\mu} 
		$$ 
		are holomorphic in parameters $u_i$. \it }
\end{Theorem}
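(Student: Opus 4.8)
The plan is to combine the two sources of information about $T'_{\lambda,\mu}$ that we already have: the explicit abelianization formula (\ref{refined}) together with the cancellation lemma (Lemma \ref{lemma-cancel}), which tells us \emph{where} the poles of $T'_{\lambda,\mu}$ in the $u_i$ can sit; and Corollary \ref{cor1}, coming from the theory of $q$-difference equations, which tells us that the only singularities of the restriction matrix are on the hyperplanes $z_iz_{i+1}\cdots z_j q^p\hbar^s=1$, i.e.\ in the $u$-coordinates the poles are at $\tfrac{u_i}{u_j}\hbar^s q^p=1$ as in (\ref{pospols}). The prefactor $T_{\bf p,p}$ is the explicit product of theta functions from Definition \ref{dfelgr}(1); multiplying by it, the claim becomes that none of the possible poles of $T'_{\lambda,\mu}$ survives.

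First I would pin down precisely which factors $u_i/u_j$ can appear in the denominator of $T'_{\lambda,\mu}$. Examining the refined formula (\ref{refined}): the factor $\cN^\sigma_{\bar\lambda}/\cD^\sigma_{\bar\lambda}$ is regular in $u_i$ after restriction (by the analogue of Lemma \ref{ND-well-defined}), and $\cR^\sigma(\bar\ft)$ involves no K\"ahler parameters, so the only $u$-dependence sits in $\cW^\sigma(\bar\ft)$ through the arguments $\prod_{I\in[\,\cdot\,,\bar\ft]}z_{c_I}^{-1}\hbar^{-\textsf{v}(I)}$; under (\ref{parident}) each such product is a ratio $u_i/u_j$ times a power of $\hbar$. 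The zeros of the corresponding $\theta$-denominators are therefore exactly at $u_i/u_j\cdot\hbar^m=1$ for the specific pairs $(i,j)$ that occur as endpoints of the relevant $(i,j)$-strips in $\bar\lambda$, and with specific powers $m$ of $\hbar$. By Corollary \ref{cor1}, a genuine pole can only occur when $m=0$ (since $q$-shifts and nonzero $\hbar$-shifts are already excluded), so it suffices to control the locus $u_i=u_j$.

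Now comes the main step. Fix a pair $i<j$ with $i\in{\bf p}$, $j\notin{\bf p}$; then $T_{\bf p,p}$ contains exactly the factor $\theta(u_i/u_j)$, which vanishes at $u_i=u_j$. I claim that the product $T_{\bf p,p}\,T'_{\lambda,\mu}$ is regular there. The key input is Lemma \ref{dist}: under $u_i=u_j$, only \emph{distinguished} trees $\bar\ft$ (those whose $(i,j)$-strip $B=B(\bar\ft,i,j)$ is nonempty and lies in the boundary of $\bar\lambda$) contribute, and for such a tree the pole of $\cW^\sigma(\bar\ft)$ at $u_i=u_j$ is at worst simple, coming from the single $\theta$-denominator attached to the root $r_B$ of $B$ (the factor $\theta(u_j/u_i)$ displayed in the proof of Lemma \ref{not-containd} and in the $\mu\subset\nu$ computation). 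Thus $T'_{\lambda,\mu}$ has at worst a simple pole along $u_i=u_j$, which is killed by the simple zero in $T_{\bf p,p}$. For pairs $i,j$ with $i<j$, $i\notin{\bf p}$, $j\in{\bf p}$, the factor $u_j/(u_i\hbar)$ in $T_{\bf p,p}$ does not vanish at $u_i=u_j$, but here Corollary \ref{cor1}/(\ref{pospols}) already rules out a pole of $T'_{\lambda,\mu}$ at $u_i=u_j$ on the nose (the admissible shift would have to involve a nonzero power of $\hbar$); and for $i\in{\bf p},j\in{\bf p}$ or $i\notin{\bf p},j\notin{\bf p}$ the same argument applies since $\cW$ produces no $u_i/u_j$ denominator whose argument is free of $\hbar$. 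Finally, $T'_{\lambda,\mu}$ is manifestly holomorphic at $u_i=0,\infty$ after multiplication by $T_{\bf p,p}$ because the quasiperiodicity (matching that of (\ref{ufund}) times (\ref{thptefdual})) forces a Laurent expansion with nonzero radius of convergence, exactly as in the last paragraph of the proof of Theorem \ref{uqth}.

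The step I expect to be the genuine obstacle is the \emph{order-of-pole} bookkeeping in the previous paragraph: I must show that after summing over all distinguished trees $\bar\ft$ and all $\sigma\in\fS_{\bar\lambda}$ fixing $B$ (as allowed by Lemma \ref{sigma-fixes-B}), the total pole order of $T'_{\lambda,\mu}$ along $u_i=u_j$ does not exceed one. A priori different trees could contribute denominators vanishing to the same order and one would worry about a pole of order $>1$; the resolution is that for a distinguished tree the strip $B$ is \emph{unique} in the boundary of $\bar\lambda$ and the $\hbar$-free denominator is attached only to its root, so each surviving term has a simple pole, and simple poles stay simple under addition. Making this rigorous requires carefully tracking, via the $B$-column limit used in Section \ref{gkmsec}, that no \emph{other} $\theta$-denominator of $\cW^\sigma(\bar\ft)$ degenerates to an $\hbar$-free factor $u_i/u_j$ when $\sigma$ fixes $B$ — this is essentially a re-run of the computations in the proof of Proposition \ref{GKM-thm}, and I would organize the proof so as to quote those intermediate identities directly rather than redo them.
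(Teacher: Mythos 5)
Your overall architecture is the paper's: use Corollary \ref{cor1}/(\ref{pospols}) to confine poles to the hyperplanes $u_i/u_j\,\hbar^s q^p=1$, use the refined formula (\ref{refined}) to see that the only $u$-dependent denominators are the theta factors in $\cW^\sigma(\bar\ft)$ (which are $\hbar$- and $q$-free), and then analyze $u_i=u_j$. But the two places where you actually dispose of the poles are both wrong. First, in the main case $i\in{\bf p}$, $j\notin{\bf p}$, your claim that a distinguished tree contributes at most a simple pole ``coming from the single $\theta$-denominator attached to the root of $B$'' is false: the pole order of $\cW^\sigma(\bar\ft)$ at $u_i=u_j$ equals the number of edges $e$ for which the descendant subtree $[h(e),\bar\ft]$ is an $(i,j)$-strip, and a single tree can contain two (or more) disjoint such strips — the distinguished condition only forces the \emph{lowest} one, $B(\bar\ft,i,j)$, onto the boundary and says nothing about the others. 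The paper's proof confronts exactly this: the terms surviving the double zero $\theta(u_i/u_j)^2$ are those trees with at least two $(i,j)$-strips, and they are killed not by a pole-order bound but by pairing each such tree with its involution at a strip off the boundary and invoking Lemma \ref{lemma-cancel}. Without that cancellation your ``simple poles stay simple under addition'' step has nothing to stand on.

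Second, your dismissal of the pairs with $i\notin{\bf p}$ or $j\in{\bf p}$ does not work. The denominators of $\cW^\sigma(\bar\ft)$ are $\theta\bigl(\prod_{I\in[h(e),\bar\ft]}u_{c_I}/u_{c_I+1}\bigr)$, which telescope to an $\hbar$-free $u_i/u_j$ for \emph{any} $(i,j)$-strip subtree, irrespective of whether $i$ or $j$ lies in ${\bf p}$; and (\ref{pospols}) explicitly allows $s=0$, so the $q$-difference input does not exclude a pole at $u_i=u_j$ ``on the nose'' for these pairs. Since for $i<j$ with $j\in{\bf p}$ the prefactor $T_{\bf p,p}$ contributes only the non-vanishing factor $\theta(u_i/(u_j\hbar))$, you must show $T'_{\lambda,\mu}$ itself is regular there. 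The paper does this by a case split you are missing: when $j\in{\bf p}$ no $(i,j)$-strip can occur as a subtree at all (a path in $\bar\ft$ would acquire an interior box of locally maximal content), and when $i\notin{\bf p}$ the box to the left of the root of the strip lies in $\bar\lambda$, so the involution is always defined and the contributions cancel in pairs. In short, the one mechanism your proposal never uses — the involution cancellation of Lemma \ref{lemma-cancel} — is the mechanism that actually carries the proof.
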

\begin{proof}
	By (\ref{pospols}) we need to show that
	the denominators of functions $T_{\bf p, p} T'_{\lambda,\mu}$ do not contain factors of the form
	$$
	\prod\limits_{i\neq j} \theta \Big( \frac{u_i}{u_j} \hbar^{s_{ij}} \Big).
	$$
	On the other hand, by Proposition \ref{refin-formula},  the explicit formula for the elliptic stable envelope on $X'$ has the form:
	$$
	T'_{\lambda, \mu} = \epsilon (\lambda) \Theta (\widetilde N^{', -}_\lambda) \big|_\mu \cdot \sum_{\sigma \in \fS_{\mu \backslash \lambda}, \bar\ft}  \dfrac{\cN^\sigma_{\bar\lambda}}{\cD^\sigma_{\bar\lambda}} \cR^\sigma (\bar\ft) \calW^\sigma (\bar\ft) \big|_\mu, 
	$$
	where $\Theta (\widetilde N^{', -}_\lambda) \big|_\mu$, $\cN^\sigma_{\bar\lambda}$, $\cD^\sigma_{\bar\lambda}$, $\cR^\sigma (\bar\ft)$ are independent of $u_i$, and 
	$$
	\calW^\sigma (\bar\ft) =  \frac{\theta \Big( \dfrac{a_2 \hbar }{x_{\sigma(\bar r)}} \prod\limits_{I \in [\bar r, \bar\ft]} \dfrac{u_{c_I}}{u_{c_I+1}} \Big) }{ \theta \Big( \prod\limits_{I \in [\bar r, \bar\ft]} \dfrac{u_{c_I}}{u_{c_I+1}}  \Big)}  \prod_{e\in \bar\ft} \frac{ \theta \Big( \dfrac{x_{\sigma(t(e))} \varphi^\lambda_{h(e)} }{x_{\sigma(h(e))} \varphi^\lambda_{t(e)} } \prod\limits_{I \in [h(e), \bar\ft]} \dfrac{u_{c_I}}{u_{c_I+1}} \Big) }{ \theta \Big( \prod\limits_{I \in [h(e), \bar\ft]} \dfrac{u_{c_I}}{u_{c_I+1}} \Big)}.  
	$$
	Therefore, we conclude that among (\ref{pospols}) only factors with $s_{ij}=0$ may appear. To show that those are actually not poles, it suffices to prove that 
	$$
	\theta \Big( \frac{u_i}{u_j} \Big) T_{\bf p, p} T'_{\lambda, \mu} \bigg|_{u_i = u_j} = 0. 
	$$
	As discussed before, the only possible nontrivial terms of the LHS come from trees $\bar\ft$ which contains some $(i,j)$-strip $B$. 
	
	If $j \in {\bf p}$, one can see that $\bar\lambda\backslash B$ contains a path in $\bar\ft$ admitting a box with local maximal content, which is not allowed. In other words, contributions from all $\bar\ft$ are zero in this case. 
	
	If $i \in {\bf n} \backslash {\bf p}$, then the boxes above and to the left of the root of $B$ both lie in $\bar\lambda$, and the involution $\inv(\bar\ft)$ is also a tree in $\bar\lambda$. By the cancellation Lemma \ref{lemma-cancel}, contribution from $\bar\ft$ cancels with that from $\inv(\bar\ft)$. Sum over all $\bar\ft$ gives $0$. 
	
	If $i\in {\bf p}$ and $j \in {\bf n} \backslash {\bf p}$, then $T_{\bf p, p}$ contains a factor $\theta \Big( \dfrac{u_i}{u_j} \Big)$, and nontrivial terms come from trees $\bar\ft$ that contains at least two $(i,j)$-strips, e.g.,  $B_1$, $B_2$. At least one of them, say $B_1$, is not contained in the boundary of $\bar\lambda$ and hence the involution of $\bar\ft$ with respect to $B_1$ is well-defined. Contribution from $\bar\ft$ then cancels with that from $\inv (\bar\ft)$. Therefore, we exclude all possible poles $\theta (u_i / u_j)$, and $T_{\bf p, p} T'_{\lambda, \mu}$ is holomorphic in $u_i$. 
\end{proof}

%%%%%%%%%%%%%%%%%%%%%%%%%%%%%%%%%%%%%%%%%%%%%%%%%%%%%%%%%%%%%%%%%%%
%%%%%%%%%%%%%%%%%%%%%%%%%%%%%%%%%%%%%%%%%%%%%%%%%%%%%%%%%%%%%%%%%%%

\bibliographystyle{abbrv}
\bibliography{bib}

\newpage

\vspace{12 mm}

\noindent
Richárd Rimányi\\
Department of Mathematics,\\
University of North Carolina at Chapel Hill,\\
Chapel Hill, NC 27599-3250, USA\\
rimanyi@email.unc.edu

\vspace{3 mm}

\noindent
Andrey Smirnov\\
Department of Mathematics,\\
University of North Carolina at Chapel Hill,\\
Chapel Hill, NC 27599-3250, USA;\\
Institute for Problems of Information Transmission\\
Bolshoy Karetny 19, Moscow 127994, Russia\\
asmirnov@email.unc.edu

\vspace{3 mm}

\noindent
Zijun Zhou\\
Department of Mathematics,\\
Stanford University ,\\
450 Serra Mall, Stanford, CA 94305, USA\\
zz2224@stanford.edu

\vspace{3 mm}

\noindent
Alexander Varchenko\\
Department of Mathematics,\\
University of North Carolina at Chapel Hill,\\
Chapel Hill, NC 27599-3250, USA\\
Faculty of Mathematics and Mechanics\\
Lomonosov Moscow State
University,\\
Leninskiye Gory 1, 119991 \\
Moscow GSP-1, Russia,\\
anv@email.unc.edu,

\end{document}